\documentclass{article}
\usepackage{geometry}
\usepackage[british]{babel}
\usepackage{ae}
\usepackage{latexsym}
\usepackage{amsfonts}
\usepackage{amssymb}
\usepackage{amsthm}
\usepackage{graphicx}
\usepackage{multirow}
\usepackage{color}
\usepackage{youngtab}
\usepackage{young}
\usepackage{amsmath}
\usepackage{amscd}
\usepackage{float}
\usepackage{hyperref}

\newcounter{itemcounter}
\numberwithin{itemcounter}{subsection}

\newtheorem{thm}[itemcounter]{Theorem}
\newtheorem{lem}[itemcounter]{Lemma}
\newtheorem{defi}[itemcounter]{Definition}

\newtheorem{rem}[itemcounter]{Remark}

\newtheorem*{thm*}{Theorem}
\newtheorem*{ack*}{Acknowledgements}

\title{Brou\'{e}'s perfect isometry conjecture holds for the double covers of the symmetric and alternating groups}
\author{Michael Livesey}
\date{}

\begin{document}

\maketitle

\begin{abstract}
In~\cite{brugra2014} O. Brunat and J. Gramain recently proved that any two blocks of double covers of symmetric
groups are Brou\'{e} perfectly isometric provided they have the same weight and sign. They also proved a corresponding
statement
for double covers of alternating groups and Brou\'{e} perfect isometries between double covers of symmetric and
alternating groups
when the blocks have opposite signs. Using both the results and methods of O. Brunat and J. Gramain in this paper we
prove that when the weight of a block of a double cover of a symmetric or alternating group is less than $p$ then the
block is Brou\'{e} perfectly isometric to its Brauer correspondent. This means that Brou\'{e}'s perfect isometry
conjecture holds for the double covers of the symmetric and alternating groups. We also explicitly construct the
characters of these Brauer correspondents which may be of independent interest to the reader.
\end{abstract}

\section{Introduction}

Brou\'{e}'s abelian defect group conjecture postulates that every block with abelian defect is derived equivalent to
its Brauer correspondent. The conjecture was proved for symmetric groups in a combination of two papers by J. Chuang
and R. Kessar~\cite[Theorem 2]{chukes2002} and J. Chuang and R. Rouquier~\cite[Theorem 7.2]{churou2008}. Let $p$ be a
prime and recall that to each $p-$block of a symmetric group is associated a weight $w$. The defect group is abelian if
and only if $w<p$. The first of these two papers proved that for any weight $w<p$ there exists a block of a symmetric
group of weight $w$ that is Morita equivalent to its Brauer correspondent. The second paper proved that any two blocks
of, possible different, symmetric groups with the same weight are derived equivalent.
\newline
\newline
Presently an analogue
of~\cite[Theorem 2]{chukes2002} does not exist for the double covers of symmetric groups. However,
recently O. Brunat and J. Gramain proved an analogue of~\cite[Theorem 7.2]{churou2008} at the
level of characters for the double covers (see~\cite[Theroem 4.15]{brugra2014}). In other words they proved that any
two blocks of double covers of symmetric groups with the same weight and sign are Brou\'{e} perfectly isometric. They
also proved
a corresponding statement for double covers of alternating groups and Brou\'{e} perfect isometries between double covers
of symmetric and alternating groups when the blocks have opposite signs. The main notion they employed in their proof
was that of an MN-structure. This allows one to express character values of a block of a double cover of a symmetric
group in terms of character values of blocks of smaller double covers (originally a result of M. Cabanes,
see~\cite[Thoerem 20]{cabane1988}). The MN-structures of two blocks of the same
weight and sign commute with an isometry between these two blocks and this then allows one to prove that this isometry
is in fact a Brou\'{e} perfect isometry between the two blocks.
\newline
\newline
In this paper we develop an MN-structure for the
Brauer correspondent of a block of a double cover. This involves very explicitly constructing the characters of such a
group. This is a new result (see Theorem~\ref{thm:maincha}). Using this MN-structure we then go on to prove our main
Theorem.

\begin{thm}\label{thm:main}
Let $p$ be an odd prime, $n$ a positive integer and $B$ a $p-$block of $\tilde{S}_n$ or $\tilde{A}_n$ with abelian defect
group. Then there exists a Brou\'{e} perfect isometry between $B$ and its Brauer correspondent.
\end{thm}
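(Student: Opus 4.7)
The plan is to extend the MN-structure strategy of Brunat and Gramain so that it applies between a block $B$ and its Brauer correspondent $b$, rather than between two blocks on groups of the same form. The argument will proceed by induction on the weight $w<p$, with the base case $w=0$ being trivial since blocks of defect zero are isomorphic to their Brauer correspondents.

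First, I would use the explicit construction of the irreducible characters of $b$ supplied by Theorem~\ref{thm:maincha} to endow $b$ with an MN-structure in the sense of \cite{brugra2014}. Concretely, this means deriving a recursion formula of Cabanes type expressing character values of $b$ on $p$-singular classes in terms of characters of Brauer correspondents of blocks of smaller weight. This should be a delicate but essentially direct computation, since the characters of $b$ have been built inductively from characters of smaller Brauer correspondents via the explicit wreath-product-like structure of the normaliser of the defect group in the double cover.

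Next, I would construct, by induction on $w$, a signed bijection $I\colon \mathrm{Irr}(B) \to \mathrm{Irr}(b)$ that commutes with the two MN-structures. The inductive hypothesis supplies such bijections at smaller weights, and the MN-recursion then forces a natural candidate for $I$; the remaining freedom on $p$-regular classes can be used to ensure that $I$ is genuinely a signed bijection. Once $I$ respects both MN-structures, the argument of \cite[Theorem 4.15]{brugra2014} carries over to show that the linear extension $\mathbb{Z}\mathrm{Irr}(B) \to \mathbb{Z}\mathrm{Irr}(b)$ is a Brou\'e perfect isometry, as perfectness is controlled entirely by character values on $p$-singular elements. For blocks of $\tilde{A}_n$ the result is then deduced from the $\tilde{S}_n$ case via Clifford theory for the index two embedding $\tilde{A}_n\leq\tilde{S}_n$ together with the analogous relationship between the respective Brauer correspondents, treating self-associate and split blocks separately in the standard way.

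The main obstacle I anticipate is the first step: establishing the MN-recursion for $b$. The central involution of the double cover interacts non-trivially with the defect group, and the signs appearing in the Cabanes-type recursion must be tracked with some care, especially in the alternating case where one also has to contend with the behaviour of $I$ under restriction. Once this signed recursion is in place, the inductive construction of $I$ and the verification of the Brunat--Gramain perfectness criterion should be essentially formal.
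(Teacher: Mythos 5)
Your overall strategy --- explicitly constructing the characters of the local block, endowing it with an MN-structure via a Cabanes-type recursion, and producing an isometry commuting with the two MN-rules --- is the same as the paper's, but as written there are two genuine gaps. First, the Brauer correspondent of $\tilde{S}_{n,\gamma}$ lives in $N_{\tilde{S}_n}(P)=\tilde{S}_{n-pw}(\tilde{N}_p^w\tilde{S}_w[n-pw])$, not in $\tilde{N}_p^w\tilde{S}_w$, and your proposal never explains how the $\tilde{S}_{n-pw}$ factor carrying the core is removed. The paper handles this only at the very end, by a chain of Morita equivalences (Lemma~\ref{lem:mornor} together with a twisted tensor-product decomposition whose form depends on $\sigma(\gamma)$ and on whether one is in $\tilde{S}_n$ or $\tilde{A}_n$), and then composes the resulting perfect isometries with those of Theorems~\ref{thm:mainS} and~\ref{thm:mainA} and with~\cite[Theorem 4.15]{brugra2014} to reach an arbitrary block of the given weight. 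Without this reduction you have an isometry onto the wrong group.

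Second, the claim that once $I$ commutes with the MN-structures ``perfectness is controlled entirely by character values on $p$-singular elements'' and the rest is essentially formal is too optimistic. The union of classes $C$ underlying the MN-structure is \emph{not} the set of $p$-regular elements (it consists of the classes with no part an odd multiple of $p$), so Lemma~\ref{lem:gen} only yields a generalized perfect isometry with respect to $C$; the paper needs a further case analysis (via Theorem~\ref{thm:values} and Lemmas~\ref{lem:oneeven},~\ref{lem:>0=},~\ref{lem:lambda0=}) to obtain vanishing of $\hat{I}(x,x')$ when exactly one of $x,x'$ is $p$-singular. More seriously, the integrality condition $\hat{I}(x,x')\in|C_G(x)|\mathcal{R}\cap|C_{G'}(x')|\mathcal{R}$ is not a consequence of commuting MN-structures: for $x\notin\tilde{A}_n$ the only surviving contributions come from non-self-associate characters evaluated on their own classes, and one must show the resulting products lie in $p^{l(\Psi(\lambda)_0)}\mathcal{R}$ via the quadratic-Gauss-sum estimates of Lemmas~\ref{lem:sqrt},~\ref{lem:sqrtR} and~\ref{lem:sqrtAR}, and then compare with the centralizer orders using Lemmas~\ref{lem:dbcent} and~\ref{lem:ordcent}. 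This is precisely where the explicit signs in $I$ (the factors $\delta_{\overline{p}}(\lambda)\delta_{\overline{p}}(\Psi(\lambda))$ and the choice $\eta_{\overline{p}}(\lambda)$ of which associate maps to which) are forced; the ``remaining freedom on $p$-regular classes'' you invoke will not produce them automatically, and the $\tilde{A}_n$ case requires its own computation (the paper analyses $\hat{I}_A-\tfrac{1}{2}\hat{I}$) rather than a routine Clifford-theoretic transfer.
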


In other words, Brou\'{e}'s perfect isometry conjecture holds for the double covers of the symmetric and alternating
groups.
\newline
\newline
We now give a brief description of each section. $\S$\ref{sec:prelim} contains all the results on MN-structures and
generalised perfect isometries we need from~\cite{brugra2014} as well as a couple of lemmas on Clifford theory.
$\S$\ref{sec:comb} describes all the relevant combinatorics needed before we introduce the symmetric groups and their
double covers in $\S$\ref{sec:double}. Clifford algebras are needed to construct the characters of parabolic subgroups
of the double covers. We introduce Clifford algebras in $\S$\ref{sec:cliff} and parabolic subgroups and their
characters in $\S$\ref{sec:parachars}. $\S$\ref{sec:NS} introduces the group $\tilde{N}_p^t\tilde{S}_t$ and very
explicitly constructs its characters. We then go on to prove some more detailed lemmas about the character values
of $\tilde{N}_p^t\tilde{S}_t$ in $\S$\ref{sec:charsum}. The MN-rules used in the MN-structures of $\tilde{S}_n$
and $\tilde{N}_p^t\tilde{S}_t$ are described in $\S$\ref{sec:MN} before we prove Theorem~\ref{thm:main} in
$\S$\ref{sec:main}.

\section{Preliminaries}\label{sec:prelim}

Before we go on to look at the specific groups that are are the subject of this paper we need some general preliminaries
on representation theory.

\subsection{MN-structures and generalised perfect isometries}

In this section we define the notion of MN-structure as introduced by Brunat and Gramain in~\cite{brugra2014}. (For more
details see~\cite[$\S2$]{brugra2014}.) This will be the main tool used in proving Theorem~\ref{thm:main}. Let $G$ be a
finite group. We have the usual inner product
\begin{align*}
\langle\chi,\psi\rangle_G=\frac{1}{|G|}\sum_g\chi(g)\overline{\psi(g)}
\end{align*}
for $\chi,\psi\in\mathbb{C}\operatorname{Irr}(G)$. Let $C$ be a union of conjugacy classes of $G$. We define the map
\begin{align*}
\operatorname{res}_C:\operatorname{Irr}(G)&\to\operatorname{Irr}(G)\\
\operatorname{res}_C(\chi)(g)&=
\begin{cases}
\chi(g)&\text{if }g\in C,\\
0&\text{otherwise.}
\end{cases}
\end{align*}
Consider the equivalence relation on $\operatorname{Irr}(G)$ generated by $\chi\sim\psi$ if
$\langle\operatorname{res}_C(\chi),\operatorname{res}_C(\psi)\rangle_G\neq0$. We define the $C-$blocks of $G$ to be the
equivalence classes of this relation. If $B$ is a union of $C-$blocks then we set
$\operatorname{Irr}(B)^C=\{\operatorname{res}_C(\chi)|\chi\in\operatorname{Irr}(B)\}$
and if $b$ is a $\mathbb{C}-$basis of $\mathbb{C}\operatorname{Irr}(B)^C$ then we denote by $b^\vee$ the dual basis of
$\mathbb{C}\operatorname{Irr}(B)^C$ with respect to $\langle,\rangle_G$. In other words $b^\vee=\{\Phi_\psi\}_{\psi\in
b}$ where $\langle\psi,\Phi_\theta\rangle_G=\delta_{\psi,\theta}$. If, in addition, $b$ is a $\mathbb{Z}-$basis of
$\mathbb{Z}\operatorname{Irr}(B)^C$ we define the integers $d_{\chi,\psi}$ via the equation
\begin{align*}
\operatorname{res}_C(\chi)=\sum_{\psi\in b}d_{\chi,\psi}\psi\text{ for all }\chi\in\operatorname{Irr}(B).
\end{align*}

\begin{lem}\cite[Corollary 2.3]{brugra2014}\label{lem:vee}
With the above notation
\begin{align*}
\Phi_\psi=\sum_{\psi\in b}d_{\chi,\psi}\chi\text{ for all }\psi\in b.
\end{align*}
Moreover,
\begin{align*}
\mathbb{Z}\operatorname{Irr}(B)\cap\mathbb{Z}\operatorname{Irr}(G)^C=\mathbb{Z}b^\vee.
\end{align*}
\end{lem}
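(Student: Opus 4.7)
The plan is to lean on two very elementary properties of the operator $\operatorname{res}_C$: it is self-adjoint for $\langle,\rangle_G$ (both $\langle\operatorname{res}_C(\alpha),\beta\rangle_G$ and $\langle\alpha,\operatorname{res}_C(\beta)\rangle_G$ collapse to $\frac{1}{|G|}\sum_{g\in C}\alpha(g)\overline{\beta(g)}$), and it is idempotent. Combining these with the definition of a $C$-block I would first establish the containment $\mathbb{C}\operatorname{Irr}(B)^C\subseteq\mathbb{C}\operatorname{Irr}(B)$. Indeed, for $\chi\in\operatorname{Irr}(B)$ and $\eta\in\operatorname{Irr}(G)\setminus\operatorname{Irr}(B)$ one has $\langle\operatorname{res}_C(\chi),\eta\rangle_G=\langle\operatorname{res}_C(\chi),\operatorname{res}_C(\eta)\rangle_G=0$, the vanishing because $\chi$ and $\eta$ lie in distinct $C$-blocks and $B$ is a union of $C$-blocks. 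Expanding $\operatorname{res}_C(\chi)$ in the orthonormal basis $\operatorname{Irr}(G)$, only characters in $\operatorname{Irr}(B)$ can appear, which gives the containment.

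Once this is in hand the first identity is essentially a one-line calculation. Since $\Phi_\psi\in\mathbb{C}\operatorname{Irr}(B)^C\subseteq\mathbb{C}\operatorname{Irr}(B)$, we may write $\Phi_\psi=\sum_{\chi\in\operatorname{Irr}(B)}\langle\Phi_\psi,\chi\rangle_G\,\chi$, so the task reduces to identifying the coefficient $\langle\Phi_\psi,\chi\rangle_G$ with $d_{\chi,\psi}$. Using that $\Phi_\psi$ is supported on $C$ I would swap $\chi$ for $\operatorname{res}_C(\chi)=\sum_{\psi'\in b}d_{\chi,\psi'}\psi'$ inside the inner product and then apply the defining duality $\langle\psi',\Phi_\psi\rangle_G=\delta_{\psi',\psi}$; the $d_{\chi,\psi'}$ are integers, so the conjugations introduced by the inner product are harmless and the coefficient is exactly $d_{\chi,\psi}$.

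For the second assertion, the inclusion $\mathbb{Z}b^\vee\subseteq\mathbb{Z}\operatorname{Irr}(B)\cap\mathbb{Z}\operatorname{Irr}(G)^C$ drops straight out of the first formula, which displays $\Phi_\psi$ as an integer combination of $\operatorname{Irr}(B)$ and, after applying $\operatorname{res}_C$ to both sides (and noting that $\Phi_\psi$ is already supported on $C$), also as an integer combination of $\operatorname{res}_C(\chi)$'s. For the reverse inclusion I would take $\phi=\sum_\chi n_\chi\chi\in\mathbb{Z}\operatorname{Irr}(B)$ supported on $C$, note that this forces $\phi\in\mathbb{C}\operatorname{Irr}(B)^C$, and expand $\phi=\sum_\psi m_\psi\Phi_\psi$ with $m_\psi=\overline{\langle\psi,\phi\rangle_G}$. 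Since each $\psi\in b$ can be written as an integer combination $\sum_{\chi'}l_{\chi'}\operatorname{res}_C(\chi')$ and $\phi$ is supported on $C$, the pairing $\langle\psi,\phi\rangle_G$ collapses to $\sum_{\chi'}l_{\chi'}n_{\chi'}\in\mathbb{Z}$, giving $m_\psi\in\mathbb{Z}$.

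The only step I expect to require real thought is the initial containment $\mathbb{C}\operatorname{Irr}(B)^C\subseteq\mathbb{C}\operatorname{Irr}(B)$, which is the one place the hypothesis that $B$ is a union of $C$-blocks actually gets used; after that, every step is bookkeeping with the inner product and the definitions of $b$ and $b^\vee$.
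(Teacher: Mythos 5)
Your argument is correct and complete: the key observation that $\mathbb{C}\operatorname{Irr}(B)^C\subseteq\mathbb{C}\operatorname{Irr}(B)$ (via self-adjointness and idempotence of $\operatorname{res}_C$ together with $B$ being a union of $C$-blocks) is exactly what makes both assertions fall out, and your integrality bookkeeping for the reverse inclusion is sound. The paper gives no proof of its own, simply citing Brunat--Gramain, and your argument is the natural one underlying that reference.
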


We now define the notion of an MN-structure.

\begin{defi}\cite[Definition 2.5]{brugra2014}\label{def:MN}
Let $G$ be a finite group, $C$ a union of conjugacy classes with $1\in C$ and $B$ a union of $C-$blocks. We say $G$ has
an MN-structure with respect to $C$ and $B$ if the following hold:
\begin{enumerate}
\item There exists a union of conjugacy classes $S$ of $G$ with $1\in S$.
\item There exists a subset $A\subset C\times S$ and a bijection $A\to G$ given by $(x_C,x_S)\mapsto x_Cx_S=x_Sx_C$.
Furthermore, $(x_C,1),(1,x_S)\in A$ for all $x_C\in C$ and $x_S\in S$ and also $({}^gx_C,{}^gx_S)\in A$ for all
$(x_C,x_S)\in A$ and $g\in G$.
\item For all $x_S\in S$ there exists $G_{x_S}\leq C_G(x_S)$ such that $C\cap G_{x_S}=\{x_C\in C|(x_C,x_S)\in A\}$.
\item For all $x_S\in S$ there exists a union of $(G_{x_S}\cap C)-$blocks $B_{x_S}$ and a map
$r^{x_S}:\mathbb{C}\operatorname{Irr}(B)\to\operatorname{Irr}(B_{x_S})$ such that
\begin{align*}
r^{x_S}(\chi)(x_C)=\chi(x_Cx_S)\text{ for all }\chi\in B\text{ and }(x_C,x_S)\in A.
\end{align*}
Also $G_1=G$, $B_1=B$ and $r^1=\operatorname{id}$.
\end{enumerate}
\end{defi}

For the rest of this section we assume $G$ is a finite group with an MN-structure and adopt the notation of
Definition~\ref{def:MN}. For each $x_S\in S$ we define
\begin{align*}
d_{x_S}&:\mathbb{C}\operatorname{Irr}(B)\to\mathbb{C}\operatorname{Irr}(B_{x_S})\\
d_{x_S}&(\chi)=\operatorname{res}_C\circ r^{x_S}(\chi)\text{ for all }\chi\in\operatorname{Irr}(B)
\end{align*}
and extend linearly. We also let $e_{x_S}:\mathbb{C}\operatorname{Irr}(B_{x_S})\to\operatorname{Irr}(B)$ be the
adjoint of $d_{x_S}$. In other words
\begin{align*}
\langle e_{x_S}(\chi),\psi\rangle_G=\langle\chi,d_{x_S}(\psi)\rangle_{G_{x_S}}
\text{ for all }\chi\in\mathbb{C}\operatorname{Irr}(B_{x_S})\text{ and }\psi\in\mathbb{C}\operatorname{Irr}(B).
\end{align*}
Now let $S=\cup_{\lambda\in\Lambda}\lambda$ where each $\lambda$ is a conjugacy class of $G$ and pick a set
$\{s_\lambda\}_{\lambda\in\Lambda}$ of reprsentatives for the conjugacy classes in $\Lambda$. For each
$\lambda\in\Lambda$ we define $G_\lambda:=G_{s_\lambda}$, $B_\lambda:=B_{s_\lambda}$, $r^\lambda:=r^{s_\lambda}$,
$d_\lambda:=d_{s_\lambda}$, $e_\lambda:=e_{s_\lambda}$ and
\begin{align*}
l_\lambda:\mathbb{C}\operatorname{Irr}(G_\lambda)^C&\to\mathbb{C}\operatorname{Irr}(G)\\
l_\lambda(\chi)(x)&=
\begin{cases}
\chi(x_C)&\text{if }x_S^G=\lambda\\
0&\text{otherwise.}
\end{cases}
\end{align*}
We also pick a $\mathbb{Z}-$basis $b_\lambda$ for $\mathbb{Z}\operatorname{Irr}(B_\lambda)^{C\cap G_\lambda}$ and set
$b^\vee_\lambda=\{\Phi_\psi|\psi\in b_\lambda\}$ to be the dual basis of $b_\lambda$. Let $G'$ be a finite group, $C'$
a union of conjugacy classes and $B'$ a union of $C'-$blocks of $G'$. Consider the isomorphism
\begin{align*}
\Theta:\mathbb{C}\operatorname{Irr}(B)\otimes\mathbb{C}\operatorname{Irr}(B')&\to\operatorname{End}(\mathbb{C}
\operatorname{Irr}(B),\mathbb{C}\operatorname{Irr}(B'))\\
\chi\otimes\chi'&\mapsto(\psi\mapsto\langle\psi,\overline{\chi}\rangle\chi').
\end{align*}
We note that if $f\in\operatorname{End}(\mathbb{C}\operatorname{Irr}(B),\mathbb{C}\operatorname{Irr}(B'))$ then
\begin{align*}
\hat{f}:=\Theta^{-1}(f)=\sum_{j=1}^r\overline{e_j^\vee}\otimes f(e_j)
\end{align*}
for some basis $(e_1,\dots,e_r)$ of $\mathbb{C}\operatorname{Irr}(B)$.

\begin{thm}\cite[Theorem 2.9]{brugra2014}\label{thm:mainJ}
Let $G$ and $G'$ be two finite groups. Suppose that
\begin{enumerate}
\item The group $G$ (respectively $G'$) has an MN-structure with respect to $C$ and $B$ (respectively $C'$ and $B'$). We
keep the same notation as above, and the objects relative to $G'$ are denoted with a ``prime''.
\item Assume there are subsets $\Lambda_0\subset\Lambda$ and $\Lambda_0'\subset\Lambda'$ such that:
\begin{enumerate}
\item For every $\lambda\in\Lambda\backslash\Lambda_0$ (respectively $\lambda'\in\Lambda'\backslash\Lambda_0'$), we
have $r^\lambda=r'^{\lambda'}=0$.
\item There is a bijection $\sigma:\Lambda_0\to\Lambda_0'$ with $\sigma(\{1\})=\{1\}$ and for $\lambda\in\Lambda_0$, an
isometry
$I_\lambda:\mathbb{C}\operatorname{Irr}(B_\lambda)\to\mathbb{C}\operatorname{Irr}(B_\lambda')$ such that
$I_\lambda\circ r^\lambda=r'^{\sigma(\lambda)}\circ I_{\{1\}}$.
\end{enumerate}
\item For $\lambda\in\Lambda_0$, we have $I_\lambda(\mathbb{C}b^\vee_\lambda)=\mathbb{C}b'^\vee_{\sigma(\lambda)}$.
\end{enumerate}
We write $J_\lambda=I_\lambda|_{\mathbb{C}b^\vee_\lambda}$ and $J^*_\lambda$ for the adjoint of $J_\lambda$.
Then for all $x\in G,x'\in G'$, we have
\begin{align*}
\hat{I}_{\{1\}}(x,x')=\sum_{\lambda\in\Lambda_0}\sum_{\psi\in
b_\lambda}\overline{e_\lambda(\Phi_\psi)(x)}l_{\sigma(\lambda)}'(J^{*-1}_\lambda(\psi))(x').
\end{align*}
\end{thm}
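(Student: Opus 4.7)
The plan is to verify the identity pointwise in $(x,x')\in G\times G'$. With $\operatorname{Irr}(B)$ as orthonormal basis, $\hat{I}_{\{1\}}(x,x')=\sum_{\chi\in\operatorname{Irr}(B)}\overline{\chi(x)}\,I_{\{1\}}(\chi)(x')$. Decompose $x=x_Cx_S$ and $x'=x'_{C'}x'_{S'}$ via the MN-structures, and after conjugating assume $x_S=s_\alpha$ and $x'_{S'}=s'_{\beta'}$ for unique $\alpha\in\Lambda$ and $\beta'\in\Lambda'$. Both sides vanish unless $\alpha\in\Lambda_0$ and $\beta'\in\Lambda_0'$: if $\alpha\notin\Lambda_0$ then $\chi(x)=r^\alpha(\chi)(x_C)=0$ for every $\chi\in\operatorname{Irr}(B)$ by hypothesis~2(a), which kills the left hand side and, via the adjoint formula $e_\lambda(\Phi_\psi)(x)=\sum_\chi d^\lambda_{\chi,\psi}\chi(x)$ coming from the definition of $e_\lambda$ as the adjoint of $d_\lambda$, every summand on the right; symmetrically if $\beta'\notin\Lambda_0'$. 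So one assumes $\alpha\in\Lambda_0$ and $\beta'=\sigma(\beta)\in\Lambda_0'$, in which case only the $\lambda=\beta$ term survives on the right.

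I would then reduce matters to the single identity
$$I_\beta(r^\beta(\chi))(x'_{C'})=\sum_{\psi\in b_\beta}d^\beta_{\chi,\psi}\,J^{*-1}_\beta(\psi)(x'_{C'}),\qquad\chi\in\operatorname{Irr}(B),\ x'_{C'}\in C'\cap G'_{\sigma(\beta)}.$$
Granted this identity, the compatibility $r'^{\sigma(\beta)}\circ I_{\{1\}}=I_\beta\circ r^\beta$ rewrites $I_{\{1\}}(\chi)(x')$ as $I_\beta(r^\beta(\chi))(x'_{C'})$, and substituting into the left hand side and rearranging using $e_\beta(\Phi_\psi)=\sum_\chi d^\beta_{\chi,\psi}\chi$ (with integer $d^\beta_{\chi,\psi}$, so $\overline{d^\beta_{\chi,\psi}}=d^\beta_{\chi,\psi}$) yields precisely the right hand side of the theorem.

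To prove this single identity, the key geometric input is that $I_\beta$, being an isometry with $I_\beta(\mathbb{C}b^\vee_\beta)=\mathbb{C}b'^\vee_{\sigma(\beta)}$, must also send the orthogonal complement $(\mathbb{C}b^\vee_\beta)^\perp=\ker(\operatorname{res}_{C\cap G_\beta})\cap\mathbb{C}\operatorname{Irr}(B_\beta)$ onto the analogous kernel on the primed side. Consequently $I_\beta(\phi)(x'_{C'})=I_\beta(\operatorname{res}_{C\cap G_\beta}(\phi))(x'_{C'})$ for every $\phi\in\mathbb{C}\operatorname{Irr}(B_\beta)$ and $x'_{C'}\in C'\cap G'_{\sigma(\beta)}$. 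Taking $\phi=r^\beta(\chi)$, using $\operatorname{res}_{C\cap G_\beta}(r^\beta(\chi))=d_\beta(\chi)=\sum_\psi d^\beta_{\chi,\psi}\psi$, and identifying $\mathbb{C}b_\beta$ with $\mathbb{C}b^\vee_\beta$ as function spaces on $C\cap G_\beta$ (so that the unitary $J_\beta$ coincides with $J^{*-1}_\beta$) completes the argument. The main obstacle will be this last dual-basis bookkeeping: correctly identifying the two function-space incarnations of $\mathbb{C}b_\beta$ and $\mathbb{C}b^\vee_\beta$ via Lemma~\ref{lem:vee} and tracking how the formal adjoint $J^{*-1}_\beta$ relates to the isometry $I_\beta$ restricted to $\mathbb{C}b^\vee_\beta$, so that the sum over $\psi\in b_\beta$ on the right hand side of the theorem lines up with the image of $d_\beta(\chi)$ under $I_\beta$.
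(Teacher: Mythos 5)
Your argument is correct, but note that the paper itself offers no proof of this statement: it is quoted verbatim from Brunat--Gramain and justified only by the citation \cite[Theorem 2.9]{brugra2014}, so there is no in-paper proof to compare against. Your route --- expanding $\hat{I}_{\{1\}}(x,x')=\sum_{\chi}\overline{\chi(x)}I_{\{1\}}(\chi)(x')$, reducing via the MN-structure and the intertwining relation $I_\lambda\circ r^\lambda=r'^{\sigma(\lambda)}\circ I_{\{1\}}$ to the single term $\lambda=\beta$, and then using that the isometry $I_\beta$ carries $\ker(\operatorname{res}_{C\cap G_\beta})=(\mathbb{C}b_\beta^\vee)^\perp$ into the primed kernel so that only $d_\beta(\chi)=\sum_\psi d^\beta_{\chi,\psi}\psi$ contributes on $C'$ --- is exactly the argument of the cited source, and the final bookkeeping you flag does close up: since $\mathbb{C}b_\beta=\mathbb{C}b_\beta^\vee$ as subspaces and $J_\beta$ is unitary between $\mathbb{C}b_\beta^\vee$ and $\mathbb{C}b'^\vee_{\sigma(\beta)}$ by hypothesis (3), one indeed has $J_\beta^{*-1}=J_\beta=I_\beta|_{\mathbb{C}b_\beta^\vee}$, which together with $e_\beta(\Phi_\psi)=\sum_\chi d^\beta_{\chi,\psi}\chi$ and the integrality of the $d^\beta_{\chi,\psi}$ yields the stated formula.
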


\begin{rem}\cite[Remark 2.10]{brugra2014}\label{rem:part3}
If for all $\lambda\in\Lambda_0$ (respectively $\Lambda_0'$) $G_\lambda$ (respectively $G_\lambda'$) has an MN-structure
with respect to $C\cap G_\lambda$ and $B_\lambda$ (respectively $C'\cap G_\lambda'$ and $B_\lambda'$) and $G_\lambda$
and $G_\lambda'$ satisfy parts (1) and (2) of Theorem~\ref{thm:mainJ} with respect to $C\cap G_\lambda$, $B_\lambda$
and $C'\cap G_\lambda'$, $B_\lambda'$ then part (3) is automatically satisfied.
\end{rem}

We set $\overline{C}=G\backslash C$ and $\overline{C}'=G'\backslash C'$.

\begin{defi}\cite[Proposition 2.14]{brugra2014}
An isometry $I:\mathbb{C}\operatorname{Irr}(B)\to\mathbb{C}\operatorname{Irr}(B')$ is called a generalized perfect
isometry if $I(\mathbb{Z}\operatorname{Irr}(B))=\mathbb{Z}\operatorname{Irr}(B')$ and if $\hat{I}(x,x')=0$ for all
$(x,x')\in(C\times\overline{C}')\cup(\overline{C}\times C')$.
\end{defi}

\begin{lem}\label{lem:gen}
Suppose all the hypotheses of Theorem~\ref{thm:mainJ} hold then $I_{\{1\}}$ is a generalized perfect isometry.
\end{lem}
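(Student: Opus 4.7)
The plan is to deduce both requirements of a generalised perfect isometry directly from the closed formula
\begin{align*}
\hat{I}_{\{1\}}(x,x')=\sum_{\lambda\in\Lambda_0}\sum_{\psi\in b_\lambda}\overline{e_\lambda(\Phi_\psi)(x)}\,l'_{\sigma(\lambda)}(J^{*-1}_\lambda(\psi))(x')
\end{align*}
supplied by Theorem~\ref{thm:mainJ}. Two things must be checked: the support condition $\hat{I}_{\{1\}}(x,x')=0$ on $(C\times\overline{C}')\cup(\overline{C}\times C')$, and the lattice condition $I_{\{1\}}(\mathbb{Z}\operatorname{Irr}(B))=\mathbb{Z}\operatorname{Irr}(B')$.

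The support condition is essentially a bookkeeping argument about which summands can contribute. Directly from its definition, $l'_{\sigma(\lambda)}(\cdot)(x')$ vanishes unless $(x'_{S'})^{G'}=\sigma(\lambda)$. The crux is the analogous statement for the other factor: I would first establish the auxiliary identity $e_\lambda=[C_G(s_\lambda):G_\lambda]\,l_\lambda$ on $\mathbb{C}\operatorname{Irr}(G_\lambda)^{C\cap G_\lambda}$ by a short direct computation using the adjoint definition of $e_\lambda$ and the bijection $(x_C,x_S)\mapsto x_Cx_S$ of Definition~\ref{def:MN}(2), together with the fact that $\Phi_\psi\in\mathbb{Z}b_\lambda^\vee$ is supported on $C\cap G_\lambda$. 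This forces $e_\lambda(\Phi_\psi)(x)=0$ whenever $(x_S)^G\ne\lambda$. Now for $(x,x')\in C\times\overline{C}'$ we have $x_S=1$ and $x'_{S'}\ne 1$, so the first factor selects $\lambda=\{1\}$ as the only potentially nonzero term while the second factor forces $\sigma(\lambda)\ne\{1\}$; since $\sigma(\{1\})=\{1\}$ by hypothesis~(2b), no summand survives and $\hat{I}_{\{1\}}(x,x')=0$. The case $\overline{C}\times C'$ is entirely symmetric.

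For the integrality I would show that $\hat{I}_{\{1\}}$ in fact lies in $\mathbb{Z}\operatorname{Irr}(B)\otimes\mathbb{Z}\operatorname{Irr}(B')$, which via the isomorphism $\Theta$ gives $I_{\{1\}}(\mathbb{Z}\operatorname{Irr}(B))\subseteq\mathbb{Z}\operatorname{Irr}(B')$; equality then follows from $I_{\{1\}}$ being an isometry of equal-discriminant sublattices. The first tensor factor is tractable: by adjointness, $\langle e_\lambda(\Phi_\psi),\chi\rangle_G=\langle\Phi_\psi,d_\lambda(\chi)\rangle_{G_\lambda}$ is simply the integer coefficient of $\psi\in b_\lambda$ in the expansion $d_\lambda(\chi)=\operatorname{res}_{C\cap G_\lambda}r^\lambda(\chi)\in\mathbb{Z}b_\lambda$, so $e_\lambda(\Phi_\psi)\in\mathbb{Z}\operatorname{Irr}(B)$. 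The main obstacle is the matching claim $l'_{\sigma(\lambda)}(J^{*-1}_\lambda(\psi))\in\mathbb{Z}\operatorname{Irr}(B')$: hypothesis~(3) of Theorem~\ref{thm:mainJ} is only a statement about $\mathbb{C}$-spans and must be promoted to an integer statement. I would do this by invoking Lemma~\ref{lem:vee}, whose intrinsic description $\mathbb{Z}b_\lambda^\vee=\mathbb{Z}\operatorname{Irr}(B_\lambda)\cap\mathbb{Z}\operatorname{Irr}(G_\lambda)^{C\cap G_\lambda}$ forces any isometry preserving the ambient integer lattices on both sides to restrict to an integral isomorphism of the dual bases, delivering the required integrality of $J^{*-1}_\lambda$ and hence of each summand in the formula.
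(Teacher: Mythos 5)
The paper offers no proof of this lemma (it is in effect a citation of the corresponding statement in~\cite{brugra2014}), so I am judging your argument on its own merits. Your treatment of the vanishing condition is the right strategy, but the ``short direct computation'' conceals the only hard point. Adjointness gives $\langle e_\lambda(\Phi_\psi),\chi\rangle_G=[C_G(s_\lambda):G_\lambda]\langle l_\lambda(\Phi_\psi),\chi\rangle_G$ only for $\chi\in\operatorname{Irr}(B)$, and $e_\lambda(\Phi_\psi)$ lies in $\mathbb{C}\operatorname{Irr}(B)$ by definition, so what you actually obtain is that $e_\lambda(\Phi_\psi)$ is the orthogonal projection of $[C_G(s_\lambda):G_\lambda]\,l_\lambda(\Phi_\psi)$ onto $\mathbb{C}\operatorname{Irr}(B)$. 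To conclude that $e_\lambda(\Phi_\psi)$ itself is supported on $\{x:x_S^G=\lambda\}$ you must additionally show that $l_\lambda(\Phi_\psi)$ is a class function on $G$ with no component on $\operatorname{Irr}(G)\setminus\operatorname{Irr}(B)$; this is not formal and is exactly where the compatibility of the MN-structure with the $C$-block decomposition enters (it is the content of the key lemma preceding Theorem 2.9 in Brunat--Gramain). Note also that only one of your two cases needs it: for $(x,x')\in\overline{C}\times C'$ the factor $l'_{\sigma(\lambda)}$ already kills every $\lambda\neq\{1\}$, and $e_{\{1\}}(\Phi_\psi)=\Phi_\psi$ vanishes on $\overline{C}$ by Lemma~\ref{lem:vee}.

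The integrality half does not work. Every hypothesis of Theorem~\ref{thm:mainJ} is a statement about $\mathbb{C}$-spans ($I_\lambda$ is merely an isometry of $\mathbb{C}$-vector spaces, and~(3) equates $\mathbb{C}$-spans of dual bases), so the assertion $I_{\{1\}}(\mathbb{Z}\operatorname{Irr}(B))=\mathbb{Z}\operatorname{Irr}(B')$ cannot be extracted from them; your appeal to ``any isometry preserving the ambient integer lattices on both sides'' assumes precisely what is to be proved. The term-by-term claim is also false: for $\lambda\neq\{1\}$ the function $l'_{\sigma(\lambda)}(J^{*-1}_\lambda(\psi))$ is supported on $\{x':(x'_{S'})^{G'}=\sigma(\lambda)\}$ and is in general not a virtual character of $G'$ at all, so the summands do not individually lie in $\mathbb{Z}\operatorname{Irr}(B)\otimes\mathbb{Z}\operatorname{Irr}(B')$; only the full sum, which equals $\sum_{\chi\in\operatorname{Irr}(B)}\overline{\chi}\otimes I_{\{1\}}(\chi)$, is so constrained. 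In this paper, as in Brunat--Gramain, the lattice condition is not deduced from the formula: the isometries to which the lemma is applied are signed bijections between $\operatorname{Irr}(B)$ and $\operatorname{Irr}(B')$, for which $I(\mathbb{Z}\operatorname{Irr}(B))=\mathbb{Z}\operatorname{Irr}(B')$ holds by construction. You should either take that as an additional hypothesis or verify it directly for the isometries at hand.
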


Now let $C$ (respectively $C'$) be the set of $p-$regular elements of $G$ (respectively $G'$). Also let
$(K,\mathcal{R},k)$ be a splitting $p-$modular system for all the groups considered in the rest of this paper. In
particular for both $G$ and $G'$.

\begin{defi}
We describe $I:\mathbb{C}\operatorname{Irr}(B)\to\mathbb{C}\operatorname{Irr}(B')$ as a Brou\'{e} perfect isometry if
\begin{enumerate}
\item For every $(x,x')\in G\times G'$, $\hat{I}(x,x')\in|C_G(x)|\mathcal{R}\cap|C_{G'}(x')|\mathcal{R}$.
\item $I$ is a generalized perfect isometry.
\end{enumerate}
\end{defi}

\begin{lem}\label{lem:virprocha}
Let $x\in G$ and suppose $A\leq G_{x_S}$ has $p'-$index with $x_C\in A$ and $\Phi\downarrow_A$ a virtual
projective character of $A$ for all $\Phi\in\mathbb{Z}b^\vee$, where $b$ is a $\mathbb{Z}-$basis of
$\mathbb{Z}\operatorname{Irr}(B_{x_S})^{(C\cap G_{x_S})}$. Then $\hat{I}(x,x')\in|C_G(x)|\mathcal{R}$.
\end{lem}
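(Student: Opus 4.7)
The plan is to combine the explicit expansion of $\hat{I}$ provided by Theorem~\ref{thm:mainJ} with the classical divisibility properties of virtual projective characters. Theorem~\ref{thm:mainJ} gives
\begin{align*}
\hat{I}(x,x')=\sum_{\lambda\in\Lambda_0}\sum_{\psi\in b_\lambda}\overline{e_\lambda(\Phi_\psi)(x)}\,l'_{\sigma(\lambda)}(J^{*-1}_\lambda(\psi))(x').
\end{align*}
Each factor $l'_{\sigma(\lambda)}(J^{*-1}_\lambda(\psi))(x')$ is a value of an integral virtual character of $G'$ and so lies in $\mathcal{R}$. It therefore suffices to show $e_\lambda(\Phi_\psi)(x)\in|C_G(x)|\mathcal{R}$ for every contributing pair $(\lambda,\psi)$.

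Unfolding the adjunction definition of $e_{s_\lambda}$ and using $r^{s_\lambda}(\chi)(y)=\chi(ys_\lambda)$ together with the support of $d_{s_\lambda}(\chi)$ on $C\cap G_\lambda$, one obtains
\begin{align*}
e_{s_\lambda}(\Phi_\psi)(x)=\frac{1}{|G_\lambda|}\sum_{y\in C\cap G_\lambda}\Phi_\psi(y)\sum_{\chi\in\operatorname{Irr}(B)}\overline{\chi(ys_\lambda)}\chi(x).
\end{align*}
By column orthogonality within the block $B$, the inner sum is nonzero only when $ys_\lambda$ is $G$-conjugate to $x=x_Cx_S$, which by uniqueness of the MN-decomposition forces $s_\lambda\sim_G x_S$. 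Replacing $s_\lambda$ by $x_S$ in these terms and grouping the surviving $y$'s by $G_{x_S}$-conjugacy, the expression collapses to
\begin{align*}
e_{x_S}(\Phi_\psi)(x)=|C_G(x)|\sum_K\frac{\Phi_\psi(y_K)}{|C_{G_{x_S}}(y_K)|},
\end{align*}
where $K$ runs over the $G_{x_S}$-conjugacy classes inside $x_C^{C_G(x_S)}\cap G_{x_S}$ and $y_K\in K$.

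It thus remains to show $\Phi_\psi(y_K)/|C_{G_{x_S}}(y_K)|\in\mathcal{R}$ for each $K$. The classical fact here is that for any virtual projective character $\Theta$ of a finite group $H$ and any $h\in H$, one has $|C_H(h)|_p\mid\Theta(h)$, and hence $\Theta(h)/|C_H(h)|\in\mathcal{R}$ because $|C_H(h)|_{p'}$ is a unit in $\mathcal{R}$. Applying this to $\Theta=\Phi_\psi\downarrow_A$ and combining it with the equality $|C_A(a)|_p=|C_{G_{x_S}}(a)|_p$ for $a\in A$ (forced by the $p'$-index of $A$ in $G_{x_S}$) yields the required divisibility at every $y_K\in A$. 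The main obstacle is controlling the representatives $y_K$ that do not lie in $A$: the plan is to induce $\Phi_\psi\downarrow_A$ to $G_{x_S}$, using that $\operatorname{Ind}_A^{G_{x_S}}(\Phi_\psi\downarrow_A)$ is itself virtual projective on $G_{x_S}$, and to combine this with a Mackey/Frobenius-reciprocity computation to express the sum over $K$ in terms of quantities supported on $A$, where the divisibility is already known. In the principal case of interest $G_{x_S}=C_G(x_S)$, one has $x_C^{C_G(x_S)}\cap G_{x_S}=x_C^{G_{x_S}}$, the sum reduces to a single term involving $x_C\in A$, and the argument is immediate.
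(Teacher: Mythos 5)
Your overall strategy --- expand $\hat{I}(x,x')$ via Theorem~\ref{thm:mainJ}, reduce to showing $e_\lambda(\Phi_\psi)(x)\in|C_G(x)|\mathcal{R}$, rewrite $e_{x_S}(\Phi_\psi)(x)$ as an induced-class-function value, and invoke the divisibility $\Theta(h)\in|C_H(h)|_p\mathcal{R}$ for virtual projective characters $\Theta$ --- is the right one, and it is the skeleton of the proof the paper actually cites (\cite[Theorem 2.19]{brugra2014}). However, two of the steps carrying the real content are defective.

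First, the passage via ``column orthogonality within the block $B$'' is not valid: the relation $\sum_\chi\overline{\chi(g)}\chi(h)=|C_G(h)|\delta_{g\sim h}$ holds only when $\chi$ runs over all of $\operatorname{Irr}(G)$, and for a proper union of $C$-blocks the inner sum $\sum_{\chi\in\operatorname{Irr}(B)}\overline{\chi(ys_\lambda)}\chi(x)$ need not vanish for non-conjugate arguments. The formula you are aiming for is correct, but it is obtained differently: one checks that the class function $g\mapsto\frac{1}{|G_{x_S}|}\sum_{\{h:({}^hg)_S=x_S\}}\Phi_\psi(({}^hg)_C)$ has the same inner product as $e_{x_S}(\Phi_\psi)$ against every $\chi\in\operatorname{Irr}(B)$, and then that it actually lies in $\mathbb{C}\operatorname{Irr}(B)$ (which uses the block-compatibility of the maps $r^{x_S}$ across all of $\operatorname{Irr}(G)$, not orthogonality inside $B$). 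As written, your argument computes $F$ but what you need is its projection onto $\mathbb{C}\operatorname{Irr}(B)$, and the identification of the two is exactly the point that must be justified.

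Second, the final divisibility is only proved for representatives $y_K$ whose $G_{x_S}$-class meets $A$; for the rest you offer a plan, and the plan fails as stated: $\operatorname{Ind}_A^{G_{x_S}}(\Phi_\psi\downarrow_A)$ vanishes identically on every $G_{x_S}$-class disjoint from $A$, so inducing gives no information about $\Phi_\psi(y_K)$ there. The fallback ``principal case $G_{x_S}=C_G(x_S)$'' also does not occur in this paper: for $x_S=s_\beta$ one has $G_{x_S}=\tilde{S}_{n-p|\beta|}[p|\beta|]$, a proper subgroup of $C_G(x_S)$, so the sum over $K$ genuinely has several terms. What saves the intended application is that $A=G_{x_S}\cap\tilde{A}_n$ with $\tilde{A}_n$ normal in the ambient group, so every $y_K\in x_C^{C_G(x_S)}$ lies in $A$ as soon as $x_C$ does; some such observation is needed to close the argument, and without it the general statement is not established.
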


\begin{proof}
See the proof of~\cite[Theorem 2.19]{brugra2014}.
\end{proof}

\subsection{Projective representations and inertial subgroups}\label{sec:projinr}

\begin{thm}(Clifford correspondence)\label{thm:inrnor}
Let $G$ be a finite group, $N$ a normal subgroup of $G$, $\chi$ a character of $N$ and $I_G(\chi)$ the inertial subgroup
of $\chi$ in $G$. If $\psi$ is an irreducible constituent of
$\chi\uparrow^{I_G(\chi)}$ then $\psi\uparrow^G$ is irreducible. Furthermore, every irreducible character of $G$ is of
this form for a unique $G-$orbit of a characters of $N$.
\end{thm}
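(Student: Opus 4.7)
The plan is to follow the classical route for the Clifford correspondence: establish the irreducibility of $\psi\uparrow^G$ by a direct inner product calculation using Mackey's formula, and then obtain the bijection with $\operatorname{Irr}(G)$ by combining Clifford's restriction theorem with Frobenius reciprocity. Throughout write $I=I_G(\chi)$.

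For the first assertion, I would compute
\begin{align*}
\langle\psi\uparrow^G,\psi\uparrow^G\rangle_G=\langle\psi,(\psi\uparrow^G)\downarrow_I\rangle_I
\end{align*}
and then expand the restriction via the Mackey decomposition
\begin{align*}
(\psi\uparrow^G)\downarrow_I=\sum_{g\in I\backslash G/I}(\psi^g\downarrow_{I^g\cap I})\uparrow^I.
\end{align*}
Since $\psi$ lies above the $I$-invariant character $\chi$, Clifford's theorem applied inside $I$ forces $\psi\downarrow_N=e\chi$ for some positive integer $e$, hence $\psi^g\downarrow_N=e\chi^g$. For the double coset of $g=1$ the summand contributes $\langle\psi,\psi\rangle_I=1$. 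For any $g\notin I$ we have $\chi^g\neq\chi$, so $\psi\downarrow_N$ and $\psi^g\downarrow_N$ share no irreducible constituent; since $N$ is normal and contained in $I^g\cap I$, any common irreducible constituent of $\psi\downarrow_{I^g\cap I}$ and $\psi^g\downarrow_{I^g\cap I}$ would restrict to a common constituent on $N$, which is impossible. Therefore only the trivial double coset contributes and $\langle\psi\uparrow^G,\psi\uparrow^G\rangle_G=1$.

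For the second assertion, given $\rho\in\operatorname{Irr}(G)$, pick any irreducible constituent $\chi$ of $\rho\downarrow_N$. By Clifford's theorem, $\rho\downarrow_N=e\sum_{g}\chi^g$ where $g$ ranges over coset representatives of $I$ in $G$, so the $G$-orbit of $\chi$ is determined by $\rho$. Choose any irreducible constituent $\psi$ of $\rho\downarrow_I$ lying above $\chi$; such a $\psi$ exists because $\chi$ appears in $(\rho\downarrow_I)\downarrow_N$. By Frobenius reciprocity $\rho$ is a constituent of $\psi\uparrow^G$, and since $\psi\uparrow^G$ is irreducible by the first part we must have $\psi\uparrow^G=\rho$. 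This shows surjectivity onto $\operatorname{Irr}(G)$. For injectivity, if $\psi\uparrow^G=\psi'\uparrow^G$ with $\psi,\psi'$ above $\chi$, Frobenius reciprocity gives $\langle\psi',\psi\uparrow^G\downarrow_I\rangle_I\geq 1$ and an analogous Mackey argument to the one above shows that $(\psi\uparrow^G)\downarrow_I$ contains each $G$-conjugate of $\psi$ above $\chi^g$ exactly once, forcing $\psi'=\psi$.

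The only genuine content lies in the Mackey computation; everything else is bookkeeping via Frobenius reciprocity. The main obstacle, such as it is, is verifying that the cross terms in the Mackey sum vanish, and this is precisely where the definition of $I=I_G(\chi)$ as the stabiliser of $\chi$ is used: it is exactly the condition that translates non-conjugacy of $\chi$ and $\chi^g$ for $g\notin I$ into orthogonality of the restrictions of $\psi$ and $\psi^g$.
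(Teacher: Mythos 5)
Your proof is correct. Note, though, that the paper does not actually prove this statement: its ``proof'' is a citation to Isaacs' \emph{Character Theory of Finite Groups}, Theorem 6.11, so you have supplied a self-contained argument where the paper defers to a reference. Your route --- Frobenius reciprocity plus the Mackey decomposition of $(\psi\uparrow^G)\downarrow_{I}$, with the cross terms killed because $\psi\downarrow_N=e\chi$ and $\psi^g\downarrow_N=e\chi^g$ with $\chi^g\neq\chi$ for $g\notin I$ --- is one of the two standard proofs and all the steps check out: the vanishing of the off-diagonal double-coset contributions is correctly reduced to orthogonality over the normal subgroup $N\leq I\cap I^g$, and the surjectivity/uniqueness bookkeeping via Clifford's theorem and Frobenius reciprocity is sound (indeed you prove slightly more than is claimed, namely uniqueness of $\psi$ itself and not just of the $G$-orbit of $\chi$). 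For comparison, Isaacs' own proof avoids Mackey's formula and instead runs a multiplicity count: it first establishes that $[\,(\psi\uparrow^G)\downarrow_N,\chi\,]=[\,\psi\downarrow_N,\chi\,]$ and then compares degrees and multiplicities of constituents to force irreducibility. The Mackey argument is arguably more transparent about \emph{why} the inertia group is the right subgroup, which is exactly the point you emphasise at the end; Isaacs' argument generalises more readily to settings where Mackey decompositions are awkward. Either way, nothing is missing.
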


\begin{proof}
See~\cite[Theorem 6.11]{isaacs1976}.
\end{proof}

\begin{rem}\label{rem:inrnor}
Note that by the transitivity of induction $I_G(\chi)$ can be replaced with any subgroup containing $I_G(\chi)$ in the above theorem.
\end{rem}

\begin{lem}\label{lem:proj}
Let $G$ be a finite group, $H$ a normal subgroup and $\rho$ a representation of $G$ over $\mathbb{C}$. Suppose
$\rho=\rho_1\otimes\rho_2$, where $\rho_1$ and $\rho_2$ are irreducible
projective representations of $G$ with $\rho_2(h)=1$ for all $h\in H$ and $\rho_1\downarrow_H$ an irreducible
representation of $H$, then $\rho$ is irreducible.
\end{lem}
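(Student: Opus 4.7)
The plan is to show that the commutant of $\rho(G)$ in $\operatorname{End}(V_1\otimes V_2)$ consists only of scalars, which is equivalent to irreducibility by Schur's lemma (this equivalence is valid for projective representations too, since an operator commuting with all $\rho(g)$ has an eigenspace that is a genuine $\rho(G)$-invariant subspace, independent of the cocycle). So I would take an arbitrary $T\in\operatorname{End}(V_1\otimes V_2)$ commuting with every $\rho(g)$ and argue that $T$ is a scalar.

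The first step is to restrict attention to $H$. Since $\rho_2(h)=1$ for $h\in H$, one has $\rho(h)=\rho_1(h)\otimes I_{V_2}$. Writing $T=\sum_i A_i\otimes B_i$ with the $B_i$ linearly independent in $\operatorname{End}(V_2)$, the relation $[\rho_1(h)\otimes I,\,T]=0$ forces $[\rho_1(h),A_i]=0$ for every $i$ and every $h\in H$. Since $\rho_1\downarrow_H$ is an irreducible representation of $H$, Schur's lemma gives $A_i=\lambda_i I_{V_1}$, so
\begin{align*}
T=I_{V_1}\otimes T_2,\qquad T_2:=\sum_i\lambda_iB_i\in\operatorname{End}(V_2).
\end{align*}

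The second step is to use the full group $G$. Commutation of $T$ with $\rho(g)=\rho_1(g)\otimes\rho_2(g)$ yields
\begin{align*}
\rho_1(g)\otimes T_2\rho_2(g)=\rho_1(g)\otimes\rho_2(g)T_2
\end{align*}
for every $g\in G$, and since $\rho_1(g)\neq 0$ we may cancel the first factor to conclude $T_2\rho_2(g)=\rho_2(g)T_2$ for all $g\in G$. Because $\rho_2$ is an irreducible projective representation of $G$, Schur's lemma (again valid projectively, by the eigenspace argument above) gives $T_2=\mu I_{V_2}$, whence $T=\mu I_{V_1\otimes V_2}$, and $\rho$ is irreducible.

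There is no real obstacle here; the argument is a direct two-step application of Schur's lemma, first along $H$ to pin down the $V_1$-tensor factor and then along $G$ to pin down the $V_2$-factor. The only point that warrants a line of explanation in the write-up is why Schur's lemma applies to the projective representations $\rho_1\downarrow_H$ and $\rho_2$, which follows because the cocycles never affect the lattice of invariant subspaces.
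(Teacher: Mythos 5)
Your argument is correct, but it runs along a different track from the one in the paper. You compute the commutant: restricting to $H$ and using irreducibility of $\rho_1\downarrow_H$ pins any commuting operator down to the form $I_{V_1}\otimes T_2$, and then irreducibility of the projective representation $\rho_2$ forces $T_2$ to be scalar; finally you pass from ``trivial commutant'' back to irreducibility. The paper instead chases submodules directly: since $\rho_2(h)=1$ on $H$, the restriction of $M=M_1\otimes M_2$ to $H$ is $M_1^{\oplus\dim\rho_2}$, so by Schur every irreducible $\mathbb{C}H$-submodule is of the form $M_1\otimes\mathbb{C}m$, and irreducibility of $\rho_2$ then shows the $G$-submodule generated by any such subspace is all of $M$. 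The two proofs are dual uses of the same two facts; the paper's version verifies irreducibility directly, while yours needs one extra (standard) input, namely that for the honest complex representation $\rho$ of the finite group $G$ a trivial commutant implies irreducibility --- that direction is Maschke plus a projection onto a direct summand rather than Schur's lemma, and your eigenspace remark actually justifies only the converse implication. Since $\rho$ is an ordinary representation of a finite group over $\mathbb{C}$ this is harmless, but it deserves the correct attribution in the write-up. The remaining steps (linear independence of the $B_i$ forcing $[\rho_1(h),A_i]=0$, cancellation of the nonzero first tensor factor, and Schur's lemma for irreducible projective representations via invariance of eigenspaces) are all sound.
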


\begin{proof}
Let $M=M_1\otimes M_2$ be the $\mathbb{C}G-$module afforded by $\rho$ with $\rho_1$ affording $M_1$ and $\rho_2$
affording $M_2$. We can view $M_1$ as an irreducible
$\mathbb{C}H-$submodule of $M$ and as $\mathbb{C}H-$modules $M\cong M_1^{\oplus\dim(\rho_2)}$. By Schur's lemma any
irreducible $\mathbb{C}H-$submodule of $M$ is of the form
$M_1\otimes\mathbb{C}m$ for some non-zero $m\in M_2$. As $\rho_2$ is irreducible then any $\mathbb{C}G-$submodule of $M$
containing $M_1\otimes\mathbb{C}m$ must be the whole of $M$.
Hence $\rho$ is irreducible.
\end{proof}

\section{Combinatorics of partitions}\label{sec:comb}

The combinatorics of partition is very important when describing the characters of both the symmetric groups and their
double covers. In this section we introduce all the combinatorics relevant to us in this paper.
\newline
\newline
For $n\in\mathbb{N}_0$ let $\mathcal{P}_n$ be the set of partitions of $n$. If
$\lambda=(\lambda_1\geq\dots\geq\lambda_t>0)\in\mathcal{P}_n$ we define $l(\lambda):=t$ and
$\sigma(\lambda)=(-1)^{n-t}$. For $\lambda\in\mathcal{P}_n$ and $q$ a positive integer we take from~\cite{olsson1993}
the notion of a $q-$hook and $q-$sign $\delta_q(\lambda)$. If $b$ is a $q-$hook of $\lambda$ such that the partition
obtained
by removing $b$ is $\mu$ then we set $c^\lambda_\mu:=b$ and $L(b)$ to be the leg length of $b$. Let $\lambda_{(q)}$,
$(\lambda)^{(q)}$ and $w_q(\lambda)$ be the $q-$core, $q-$quotient and $q-$weight of $\lambda$ respectively. Finally we
set
$M_q(\lambda)$ to be the set of partitions of $(n-q)$ obtained by removing a $q-$hook from $\lambda$.
\newline
\newline
Let $\mathcal{D}_n$ be the set of partitions of $n$ with distinct parts and
\begin{align*}
\mathcal{D}_n^+:=\{\lambda\in\mathcal{D}_n|\sigma(\lambda)=1\},
\mathcal{D}_n^-:=\{\lambda\in\mathcal{D}_n|\sigma(\lambda)=-1\}.
\end{align*}
In addition we define
\begin{align*}
\mathcal{O}_n:=\{\lambda\in\mathcal{P}_n|\text{all parts of }\lambda\text{ are odd}\}.
\end{align*}
For $\lambda\in\mathcal{D}_n$ and $q$ an odd positive integer we again take from~\cite{olsson1993} the notion of a
$q-$bar and
$\overline{q}-$sign $\delta_{\overline{q}}(\lambda)$. If $b$ is a $q-$bar of $\lambda$ such that the partition obtained
by removing $b$ is $\mu$ then we set $\overline{c}^\lambda_\mu=b$ and $L(b)$ to be the leg length of $b$. Let
$\lambda_{(\overline{q})}$, $\lambda^{(\overline{q})}$ and $w_{\overline{q}}(\lambda)$ be the $q-$bar core, $q-$bar
quotient
and $\overline{q}-$weight of $\lambda$ respectively. Finally we set $M_{\overline{q}}(\lambda)$ to be the set of
partitions of $(n-q)$ obtained by removing a $q-$bar from $\lambda$.

\begin{lem}\label{lem:leglength}$ $
\begin{enumerate}
\item Let $\lambda\in\mathcal{P}_n$ and $q$ a positive integer. Then for all $\mu\in M_q(\lambda)$ we have
$(-1)^{L(c^\lambda_\mu)}=\delta_q(\lambda)\delta_q(\mu)$.
\item Let $\lambda\in\mathcal{D}_n$ and $q$ an odd positive integer. Then for all $\mu\in M_{\overline{q}}(\lambda)$ we
have
$(-1)^{L(\overline{c}^\lambda_\mu)}=\delta_{\overline{q}}(\lambda)\delta_{\overline{q}}(\mu)$.
\end{enumerate}
\end{lem}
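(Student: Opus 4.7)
The plan is to reduce both parts to a direct bead-counting argument on the $q$-abacus (respectively bar $q$-abacus) description of partitions, following the conventions of~\cite{olsson1993}. In both cases the sign $\delta_q$ (resp.\ $\delta_{\overline q}$) admits an explicit description as $(-1)^{N(\lambda)}$, where $N(\lambda)$ counts certain ``inversions'' among the beads with respect to the runner ordering, and a single hook (resp.\ bar) removal is a single bead slide whose effect on $N$ is easy to track.

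For part (1), I would fix a beta-set $X = \{x_1 > x_2 > \cdots > x_m\}$ for $\lambda$, with $m$ a multiple of $q$. A $q$-hook $c^\lambda_\mu$ corresponds to an index $i$ such that $x_i - q \notin X$; then $\mu$ has beta-set $(X \setminus \{x_i\}) \cup \{x_i - q\}$, and it is standard that
\[
L(c^\lambda_\mu) = |\{ j : x_i - q < x_j < x_i \}| =: L.
\]
On the other hand, writing each $x_k$ as $qy_k + r_k$ with $0 \le r_k < q$, one checks from the abacus definition that $\delta_q(\lambda) = (-1)^{N(\lambda)}$, where $N(\lambda)$ is the number of pairs $(j,k)$ with $x_j < x_k$ but $r_j > r_k$. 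When we pass from $\lambda$ to $\mu$, the only pairs whose contribution to $N$ can change are those involving the moved bead and a bead at a position strictly between $x_i - q$ and $x_i$. There are exactly $L$ such intervening beads, and each of them has residue mod $q$ different from $r_i$ (since the $q-1$ positions $x_i - 1, x_i - 2, \dots, x_i - q + 1$ all have residues distinct from $r_i$), so each contributes exactly one inversion change. Therefore $N(\lambda) + N(\mu) \equiv L \pmod 2$, and $\delta_q(\lambda)\delta_q(\mu) = (-1)^{L} = (-1)^{L(c^\lambda_\mu)}$.

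For part (2), the same argument goes through using bar-beta-numbers for $\lambda \in \mathcal{D}_n$ and the $q$-bar abacus of~\cite{olsson1993}. A $q$-bar removal is again a single bead slide on this abacus, the leg length equals the number of beads passed (appropriately counted on the bar runners), and $\delta_{\overline q}(\lambda)$ is given by an analogous inversion count on the bar-beta-set; the parity argument then carries over verbatim.

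The only real obstacle is bookkeeping around conventions: one has to match Olsson's definitions of $\delta_q$ and $\delta_{\overline q}$ to the explicit inversion counts above, and in the bar case be careful with the two ``flavours'' of bead that can occur at position $0$ on the principal runner. Since all of this is entirely internal to the abacus description already imported from~\cite{olsson1993}, the proof amounts to a short verification rather than any new combinatorial input.
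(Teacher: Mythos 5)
The paper does not prove this lemma at all: both parts are quoted directly from Olsson (\cite[Lemmas 3.12 and 4.9]{olsson1993}), so any honest argument you give is already ``a different route'' from the paper's. Your part (1) is essentially the standard abacus proof and is correct in substance. One imprecision: the identity $\delta_q(\lambda)=(-1)^{N(\lambda)}$ is not literally true --- the inversion count of a $\beta$-set of a $q$-core need not vanish (e.g.\ $\lambda=(1)$, $q=3$, $X=\{3,1,0\}$ has $N=1$), so $(-1)^{N}$ and $\delta_q$ differ by a factor depending on the core and on $|X|$. This is harmless for your purposes, because $\lambda$ and $\mu$ share the same $q$-core and you only ever use the difference $N(\lambda)-N(\mu)\equiv L\pmod 2$; but you should phrase the intermediate claim as ``$\delta_q(\lambda)\delta_q(\mu)=(-1)^{N(\lambda)-N(\mu)}$'' (which follows from the bead-slide computation applied along any removal sequence down to the common core) rather than as an absolute formula for $\delta_q$.

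Part (2) as written has a genuine gap: it is not true that ``a $q$-bar removal is again a single bead slide.'' Bars of $\lambda\in\mathcal{D}_n$ come in two kinds --- decreasing a part $\lambda_i$ by $q$ (or deleting a part equal to $q$), which is a bead slide on a runner of the bar abacus, and removing a \emph{pair} of parts $\lambda_i,\lambda_j$ with $\lambda_i+\lambda_j=q$, which deletes two beads on conjugate runners and is not a slide at all. The leg lengths of the two kinds are computed by different formulas, and the effect of a pair-removal on any inversion-type invariant has to be worked out separately; this is exactly the content of Olsson's Lemma 4.9 and is where the bookkeeping lives, not in the position-$0$ subtlety you mention. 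So the strategy is right, but ``carries over verbatim'' conceals the one case that actually requires an argument.
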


\begin{proof}$ $
\begin{enumerate}
\item See~\cite[Lemma 3.12]{olsson1993}.
\item See~\cite[Lemma 4.9]{olsson1993}.
\end{enumerate}
\end{proof}

We also wish to define a $\overline{q}-$sign for $\overline{q}-$quotients. If
$\lambda^{(\overline{q})}=(\lambda_0,\dots,\lambda_{(q-1)/2})$ then we define
\begin{align*}
\delta_{\overline{q}}(\lambda^{(\overline{q})}):=\delta_{\overline{q}}
(\lambda_0)\delta_q(\lambda_1)\dots\delta_q(\lambda_{(q-1)/2}).
\end{align*}

For now we give no further comment on this $\overline{q}-$sign for $\overline{q}-$quotients. Its purpose will become
clear when we explicitly describe the Brou\'{e} perfect isometry of Theorem~\ref{thm:main}.

\section{Double covers of symmetric and alternating groups}\label{sec:double}

In this section we introduce double covers of symmetric and alternating groups and gather together some basic facts
about them.
\newline
\newline
Let $n$ be a positive integer. We define the symmetric group on $n$ letters,
\begin{align*}
S_n:=\langle s_1,\dots,s_{n-1}\mid s_j^2=1,s_js_k=s_ks_j\text{ if }|j-k|>1,(s_js_{j+1})^3=1\text{ for }(0\leq j\leq
n-1)\rangle.
\end{align*}
We now label the irreducible chraracters of $S_n$. For more details see~\cite[Theorem 2.1.11]{jamker1981}.

\begin{thm}
The irreducible characters of $S_n$ are labeled by partitions of $n$.
\end{thm}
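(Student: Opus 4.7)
The plan is to exhibit a canonical bijection between $\mathcal{P}_n$ and $\operatorname{Irr}(S_n)$ via an explicit construction of one irreducible character for each partition, since a priori the two sets only have the same cardinality (they biject abstractly because the conjugacy classes of $S_n$ are parametrised by cycle types, i.e.\ by $\mathcal{P}_n$). A counting argument alone would not suffice to attach a preferred label to each irreducible.

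First I would fix a partition $\lambda\in\mathcal{P}_n$ and introduce the Young subgroup $S_\lambda=S_{\lambda_1}\times\dots\times S_{\lambda_t}\leq S_n$, together with the trivial permutation module $M^\lambda=\operatorname{ind}_{S_\lambda}^{S_n}\mathbb{C}$ realised on $\lambda$-tabloids. Inside $M^\lambda$ I would define the Specht module $S^\lambda$ as the subspace spanned by the polytabloids $e_t=\sum_{\pi\in C_t}\operatorname{sgn}(\pi)\pi\{t\}$, where $t$ runs over the $\lambda$-tableaux and $C_t$ is the column stabiliser. The key structural input, which I would invoke rather than reprove, is the sign lemma (comparing column and row stabilisers) and the subsequent fact that standard polytabloids form a $\mathbb{C}$-basis of $S^\lambda$.

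The core of the argument is then to show that $\{S^\lambda\}_{\lambda\in\mathcal{P}_n}$ is a complete set of non-isomorphic irreducible $\mathbb{C}S_n$-modules. Irreducibility follows because $S^\lambda$ carries a $S_n$-invariant bilinear form whose radical is zero in characteristic $0$, forcing every non-zero submodule to exhaust $S^\lambda$. Pairwise non-isomorphism is proved using the dominance order: one shows that if $\lambda\not\trianglelefteq\mu$ then $\operatorname{Hom}_{S_n}(S^\lambda,M^\mu)=0$, while $\operatorname{Hom}_{S_n}(S^\lambda,M^\lambda)\neq0$. Combining this with the observation that $|\mathcal{P}_n|$ equals the number of conjugacy classes (hence the number of irreducible characters over $\mathbb{C}$), the list $\{S^\lambda\}_{\lambda\in\mathcal{P}_n}$ must be exhaustive.

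The only genuine obstacle is the non-isomorphism/irreducibility argument, which rests on the sign lemma for polytabloids; everything else is bookkeeping with Young subgroups and the standard duality between $M^\lambda$ and $M^\mu$. Since the result is classical and the paper cites~\cite[Theorem 2.1.11]{jamker1981}, the clean way to present it here is simply to quote the James--Kerber construction above and note that the labelling is uniquely determined once one fixes the convention that $S^{(n)}$ is the trivial character and $S^{(1^n)}$ is the sign character.
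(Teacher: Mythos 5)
Your proposal is correct and follows exactly the James--Kerber/Specht-module route that the paper itself invokes (it gives no proof, only the citation to \cite[Theorem 2.1.11]{jamker1981}). The construction via polytabloids, the bilinear-form irreducibility argument, and the dominance-order separation are precisely the content of that reference, so nothing further is needed.
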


For $\lambda\vdash n$ we label by $\operatorname{Sym}_\lambda$ the irreducible character of $S_n$ corresponding to
$\lambda$
through the above theorem. If $\lambda=(\lambda_1,\dots,\lambda_t)$ with $\lambda_j\vdash n_j$ then $\lambda$ labels a
character
of $S_{n_1}\times\dots\times S_{n_t}$ that we label $\operatorname{Sym}_\lambda$.
\newline
\newline
We also define the double covers $S^+_n$ and $S^-_n$ of $S_n$,
\begin{align*}
S^+_n:=&\langle z,t_1,\dots,t_{n-1}\mid z^2=1, t_jz=zt_j, t_j^2=1,\\&t_jt_k=zt_kt_j\text{ if
}|j-k|>1,(t_jt_{j+1})^3=z\text{ for }(0\leq j\leq n-1)\rangle,\\
S^-_n:=&\langle z,t_1,\dots,t_{n-1}\mid z^2=1, t_jz=zt_j, t_j^2=z,\\&t_jt_k=zt_kt_j\text{ if
}|j-k|>1,(t_jt_{j+1})^3=1\text{ for }(0\leq j\leq n-1)\rangle.
\end{align*}
From now on we will use $\tilde{S}_n$ to denote either $S^+_n$ or $S^-_n$. Note we have a homomorphism
\begin{align*}
\theta_n:\tilde{S}_n&\to S_n\\
t_j&\mapsto s_j
\end{align*}
with kernel $\{1,z\}$. We define $\tilde{A}_n$ to be $\theta_n^{-1}(A_n)$.
\newline
\newline
The conjugacy classes of $S_n$ are labelled by partitions of $n$ corresponding to the cycle type of the elements in
that conjugacy class. If $\pi\vdash n$ and $C_\pi$ is the corresponding conjugacy class of $S_n$ then
$\tilde{C}_\pi=\theta_n^{-1}(C_\pi)$ is the union of one or two conjugacy classes of $\tilde{S}_n$. If
$\pi=(1^{\pi_1},2^{\pi_2},\dots)$ and $g\in C_\pi$ then $C_{S_n}(g)\cong\prod_j(C_j\wr S_{\pi_j})$ and so
$|C_{S_n}(g)|=\prod_jj^{\pi_j}\pi_j!$. From the above statement about conjugacy classes we can deduce that for
$g\in\tilde{S}_n$
\begin{align*}
|C_{\tilde{S}_n}(g)|=|C_{S_n}(\theta_n(g))|\text{ or }2|C_{S_n}(\theta_n(g))|.
\end{align*}
We therefore have the following lemma.

\begin{lem}\label{lem:dbcent}
Let $g\in\tilde{S}_n$ be in the conjugacy class $\tilde{C}_\pi$ for some $\pi=(1^{\pi_1},2^{\pi_2},\dots)\vdash n$.
Then
\begin{align*}
|C_{\tilde{S}_n}(g)|=\prod_jj^{\pi_j}\pi_j!\text{ or }2\prod_jj^{\pi_j}\pi_j!.
\end{align*}
\end{lem}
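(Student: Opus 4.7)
The plan is to combine the two observations made in the paragraph immediately preceding the lemma. Specifically, the paper has already noted that for any $g \in C_\pi \subset S_n$, the centraliser $C_{S_n}(g)$ is isomorphic to $\prod_j (C_j \wr S_{\pi_j})$, which has order $\prod_j j^{\pi_j} \pi_j!$. The paper has also asserted that $|C_{\tilde{S}_n}(g)| = |C_{S_n}(\theta_n(g))|$ or $2|C_{S_n}(\theta_n(g))|$. Combining these two facts gives the claimed formula immediately, so the bulk of my proof will simply be to justify the second assertion.

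To justify that assertion, I would argue as follows. Since $\theta_n$ is surjective with kernel of order $2$, the preimage $\tilde{C}_\pi = \theta_n^{-1}(C_\pi)$ has cardinality $2|C_\pi|$. The preimage is closed under conjugation by $\tilde{S}_n$ (since $C_\pi$ is closed under conjugation by $S_n$), so $\tilde{C}_\pi$ is a union of $\tilde{S}_n$-conjugacy classes. Moreover, if $h \in \tilde{C}_\pi$ is conjugate in $\tilde{S}_n$ to $h'$, then $zh$ is conjugate to $zh'$ as well, so the involution $h \mapsto zh$ permutes the $\tilde{S}_n$-classes inside $\tilde{C}_\pi$. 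This forces $\tilde{C}_\pi$ to consist of either one class (when $h$ and $zh$ are already $\tilde{S}_n$-conjugate) or exactly two classes of equal size.

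In the first case, the single class has size $2|C_\pi|$, so
\begin{align*}
|C_{\tilde{S}_n}(g)| = \frac{|\tilde{S}_n|}{2|C_\pi|} = \frac{2|S_n|}{2|C_\pi|} = |C_{S_n}(\theta_n(g))|.
\end{align*}
In the second case, the class of $g$ has size $|C_\pi|$, so
\begin{align*}
|C_{\tilde{S}_n}(g)| = \frac{|\tilde{S}_n|}{|C_\pi|} = 2|C_{S_n}(\theta_n(g))|.
\end{align*}
Substituting the value $|C_{S_n}(\theta_n(g))| = \prod_j j^{\pi_j} \pi_j!$ yields the two possibilities stated in the lemma.

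There is essentially no hard step here; this is a bookkeeping consequence of the orbit--stabiliser theorem together with the fact that $|{\ker\theta_n}| = 2$. The only mildly substantive observation is that multiplication by the central element $z$ is an $\tilde{S}_n$-equivariant bijection $\tilde{C}_\pi \to \tilde{C}_\pi$, which forces the two possibilities and nothing else. The finer question of when exactly each of the two cases occurs (in terms of the cycle type $\pi$ and the choice of sign $\pm$ for $\tilde{S}_n$) is not asked for by this lemma and would be taken up elsewhere.
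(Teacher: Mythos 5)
Your proposal is correct and follows essentially the same route as the paper, which simply combines the formula $|C_{S_n}(g)|=\prod_j j^{\pi_j}\pi_j!$ with the fact that $\tilde{C}_\pi=\theta_n^{-1}(C_\pi)$ is a union of one or two $\tilde{S}_n$-classes (the paper states this and the resulting dichotomy for centraliser orders in the paragraph preceding the lemma and offers no further proof). Your orbit--stabiliser bookkeeping just makes explicit what the paper leaves implicit.
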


If $g\in S_n$ is in the conjugacy class $C_\pi$ with $\pi\in\mathcal{O}_n$. Then $g$ has odd order say $m$. Now
let $h\in\tilde{S}_n$ with $\theta_n(h)=g$ and so $h^m\in\{1,z\}$. By multiplying by $z$ we can assume $h$ has order
$m$ and we use $o(g)$ to denote such an $h$.
\newline
\newline
For any subgroup $H$ of $\tilde{S}_n$ with $z\in H$ we call an irreducible character $\chi$ of $H$ an irreducible spin
character if $z\notin\ker(\chi)$ and denote the set of such
characters by $\operatorname{IrrSp}(H)$. If in addition $H\not\leq\tilde{A}_n$ then an irreducible spin character
$\chi$ of $H$ is referred to as self-associate if $\epsilon.\chi=\chi$, where $\epsilon$ is the sign character of $H$
with kernel $H\cap\tilde{A}_n$. When the embedding of $H$ in an $\tilde{S}_n$ is clear from the context then
$\epsilon$ will always have this meaning. We denote by $\operatorname{IrrSp}^+(H)$ and $\operatorname{IrrSp}^-(H)$ the
set of self-associate and non-self-associate characters of $H$ respectively.
\newline
\newline
If $\chi\in\operatorname{IrrSp}^+(H)$ then $\chi\downarrow_{H\cap\tilde{A}_n}$ is the sum of two distinct irreducible
spin characters of $H\cap\tilde{A}_n$. We label these two
characters $\overline{\chi}^+,\overline{\chi}^-$. If $\chi\in\operatorname{IrrSp}^-(H)$ then
$\chi\downarrow_{H\cap\tilde{A}_n}=(\epsilon.\chi)\downarrow_{H\cap\tilde{A}_n}$
is an irreducible spin character of $H\cap\tilde{A}_n$. We label this character $\overline{\chi}$.
\newline
\newline
Schur proved in~\cite{schur1911} that there is the following labelling of the irreducible spin characters of
$\tilde{S}_n$
and we adopt Schur's labelling for the rest of this paper.

\begin{thm}
The irreducible spin characters of $\tilde{S}_n$ are labelled in the following way. Each $\lambda\in\mathcal{D}^+$
labels an irreducible self-associate spin character of
$\tilde{S}_n$. We denote such a character by $\xi_\lambda$. Each $\lambda\in\mathcal{D}^-$ labels an associate pair of
irreducible spin characters of $\tilde{S}_n$. We denote such a
pair by $\xi^+_\lambda,\xi^-_\lambda$. Furthermore, the above characters form a complete list of irreducible spin
characters of $\tilde{S}_n$.
\end{thm}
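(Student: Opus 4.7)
The plan is to parametrise the irreducible spin characters of $\tilde{S}_n$ in two stages: first count them via a conjugacy-class analysis, then construct one character (or associate pair) for each label in $\mathcal{D}_n$. Because $z$ is central of order two and $\tilde{S}_n/\langle z\rangle\cong S_n$, the irreducible characters of $\tilde{S}_n$ partition into those trivial on $z$, namely the pullbacks of $\operatorname{Irr}(S_n)$ indexed by $\mathcal{P}_n$, together with the irreducible spin characters. Hence the number of spin characters equals the total number of conjugacy classes of $\tilde{S}_n$ minus $|\mathcal{P}_n|$, and equivalently equals the number of unordered pairs $\{[g],[zg]\}$ with $g\not\sim zg$ in $\tilde{S}_n$: any spin character vanishes on a class with $g\sim zg$ because $\chi(g)=\chi(zg)=-\chi(g)$, while on the remaining classes its values on $[g]$ and $[zg]$ are rigidly linked by a sign.

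The first step is therefore to decide, for each $\pi\vdash n$, whether $\tilde{C}_\pi=\theta_n^{-1}(C_\pi)$ is one or two conjugacy classes of $\tilde{S}_n$, and in the splitting case whether $g$ and $zg$ lie in the same sub-class. This follows from Lemma~\ref{lem:dbcent} combined with a direct computation in the defining relations of $S_n^\pm$. A careful enumeration shows the splitting partitions with $g\not\sim zg$ are exactly those in $\mathcal{O}_n\cup\mathcal{D}_n^-$ (a disjoint union, since distinct odd parts force $\sigma(\pi)=+1$), of cardinality $|\mathcal{O}_n|+|\mathcal{D}_n^-|=|\mathcal{D}_n^+|+2|\mathcal{D}_n^-|$ using $|\mathcal{O}_n|=|\mathcal{D}_n|$. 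This is therefore the target count of irreducible spin characters.

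For the second step I would construct an explicit irreducible spin character for each $\lambda\in\mathcal{D}_n$, following Schur's original strategy in~\cite{schur1911}. Given $\lambda=(\lambda_1,\dots,\lambda_t)\in\mathcal{D}_n$, form a parabolic subgroup of $\tilde{S}_n$ covering the Young subgroup $S_{\lambda_1}\times\dots\times S_{\lambda_t}$, build an irreducible spin representation on it as a tensor product of basic spin representations on each factor (via the Clifford-algebra methods developed in \S\ref{sec:cliff}--\S\ref{sec:parachars} below), and induce up to $\tilde{S}_n$. Irreducibility of the induced character follows from Theorem~\ref{thm:inrnor} combined with Lemma~\ref{lem:proj}. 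The behaviour under the sign character $\epsilon$ is controlled by $\sigma(\lambda)$: when $\lambda\in\mathcal{D}_n^+$ the induced character is self-associate, giving a single $\xi_\lambda$, while when $\lambda\in\mathcal{D}_n^-$ the induction is an associate pair $\xi_\lambda^+,\xi_\lambda^-$.

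Finally, one verifies pairwise distinctness and exhaustion. Distinctness is read off from a leading non-vanishing character value: evaluated on a lift $o(g)$ of a representative of cycle type $\lambda$, only the character indexed by $\lambda$ contributes non-trivially, so different labels give different characters. Counting then produces $|\mathcal{D}_n^+|+2|\mathcal{D}_n^-|$ distinct irreducible spin characters, matching Step~1 and forcing completeness. The substantive obstacle is the second step: exhibiting the basic spin representation of $\tilde{S}_n$ and extending it through Clifford algebras to the parabolic subgroups is the core of Schur's original argument, and the Clifford-theoretic bookkeeping needed to decide self-associate versus associate-pair behaviour from $\sigma(\lambda)$ is the genuinely delicate piece.
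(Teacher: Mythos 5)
Your Step 1 (the counting of split conjugacy classes, the identity $|\mathcal{O}_n|=|\mathcal{D}_n|$, and the resulting target count $|\mathcal{D}_n^+|+2|\mathcal{D}_n^-|$) is sound and is indeed how one half of Schur's argument goes. The genuine gap is in Step 2. The character obtained by inducing the tensor product of basic spin representations from the parabolic subgroup covering $S_{\lambda_1}\times\dots\times S_{\lambda_t}$ is \emph{not} irreducible in general, and neither of the tools you cite establishes that it is: Theorem~\ref{thm:inrnor} is the Clifford correspondence and only applies to induction from (a subgroup containing) the inertia group of a character of a \emph{normal} subgroup, which a parabolic is not; and Lemma~\ref{lem:proj} only gives irreducibility of the tensor-product character \emph{on the parabolic itself} (this is exactly what $\S$\ref{sec:parachars} uses it for), saying nothing about the induction to $\tilde{S}_n$. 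A concrete counterexample: for $n=4$, $\lambda=(3,1)$, the induced character has degree $4\cdot 2=8$, whereas $\deg\xi_{(3,1)}=4$; the induction decomposes as $\xi_{(3,1)}+\xi^+_{(4)}+\xi^-_{(4)}$.

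What is missing is precisely the core of Schur's proof: the induced characters you construct form a (virtual) spanning set, and one must show that the matrix expressing them against some triangularising order is unitriangular and then extract the irreducibles, which Schur does via the characteristic map into symmetric functions and the $Q$-functions $Q_\lambda$ (equivalently, via a Gram--Schmidt argument on the inner products of these induced characters). This also repairs your distinctness step: for $\lambda\in\mathcal{D}_n^+$ the character $\xi_\lambda$ vanishes off $\mathcal{O}_n$, so there is no single ``leading class of cycle type $\lambda$'' on which only $\xi_\lambda$ survives (and for $\lambda\in\mathcal{D}_n^-$ the relevant class consists of elements of even order, so the lift $o(g)$ is not even defined); distinctness instead comes from the linear independence of the $Q_\lambda$. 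Note the paper itself does not prove this theorem but quotes it from~\cite{schur1911}, so any complete argument here must supply this symmetric-function (or equivalent) machinery.
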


Due to the remarks preceeding the above theorem we can also label the irreducible spin characters of $\tilde{A}_n$ and
we
make the choice of labelling as described in~\cite[$\S4$]{brugra2014}.

\begin{thm}\cite{schur1911}\label{thm:values}
Let $\lambda\in\mathcal{D}_n$, $\pi\in\mathcal{P}_n$ and $g\in\tilde{C}_\pi$.
\begin{enumerate}
\item If $\sigma(\lambda)=1$ then $\xi_\lambda(g)\neq0$ only if $\pi\in\mathcal{O}_n$.
\item If $\sigma(\lambda)=-1$ then $\xi^\pm_\lambda(g)\neq0$ only if $\pi\in\mathcal{O}_n$ or $\pi=\lambda$.
Furthermore if $\pi=\lambda$ then
\begin{align*}
\xi_\lambda^\pm(g)=\pm i^{(n-l(\lambda)+1)/2}\frac{\sqrt{(\lambda_1\lambda_2\dots)}}{2}.
\end{align*}
\end{enumerate}
Now suppose $\sigma(\pi)=1$ and so $g\in\tilde{A}_n$.
\begin{enumerate}
\item[3.] If $\sigma(\lambda)=1$ then $\overline{\xi}^+_\lambda(g)\neq\overline{\xi}^-_\lambda(g)$ only if $\pi=\lambda$
and in this case
\begin{align*}
\overline{\xi}^+_\lambda(g)-\overline{\xi}^-_\lambda(g)=\pm i^{(n-l(\lambda))/2}\sqrt{(\lambda_1\lambda_2\dots)}.
\end{align*}
\end{enumerate}
\end{thm}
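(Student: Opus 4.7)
The statement is Schur's classical character-value theorem; my plan is to split the proof into a vanishing half, handled by a conjugacy-class analysis together with the associate character action, and an explicit half that computes the value on the single exceptional class $\tilde{C}_\lambda$ via a Clifford-algebra model of the basic spin representation.

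For the vanishing I would first observe that on any $\tilde{S}_n$-conjugacy class on which $g$ and $zg$ are conjugate, every spin character vanishes, since $z$ acts as $-1$. Comparing centraliser orders with Lemma~\ref{lem:dbcent} shows that $\tilde{C}_\pi$ fails to split precisely when $\pi\notin\mathcal{O}_n\cup\mathcal{D}^-_n$, killing all spin characters on those classes. For part~(1), $\xi_\lambda$ is self-associate, so $\epsilon\cdot\xi_\lambda=\xi_\lambda$ forces $\xi_\lambda$ to vanish whenever $\sigma(\pi)=-1$; combined with the splitting analysis this leaves $\pi\in\mathcal{O}_n$. For part~(2) the splitting analysis already forces $\pi\in\mathcal{O}_n\cup\mathcal{D}^-_n$, and it remains to rule out $\pi\in\mathcal{D}^-_n$ with $\pi\neq\lambda$. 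I would note that $\xi^+_\lambda+\xi^-_\lambda$ is $\epsilon$-invariant and hence, as in part~(1), vanishes on every class with $\sigma(\pi)=-1$, in particular on every $\mathcal{D}^-_n$-class, while $\xi^+_\lambda-\xi^-_\lambda$ is $\epsilon$-skew and hence vanishes on all classes with $\sigma(\pi)=1$, in particular on every $\mathcal{O}_n$-class; second orthogonality applied to the $|\mathcal{D}^-_n|$ skew-functions $\xi^+_\mu-\xi^-_\mu$ on the $|\mathcal{D}^-_n|$ essentially independent $\mathcal{D}^-_n$-classes, together with the explicit non-vanishing on $\tilde{C}_\mu$ furnished below, then forces each $\xi^+_\lambda-\xi^-_\lambda$ to be supported only on $\tilde{C}_\lambda$ among $\mathcal{D}^-_n$-classes.

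For the explicit value on $\tilde{C}_\lambda$ when $\sigma(\lambda)=-1$ I would use Schur's Clifford-algebra realisation of the basic spin representation, compatible with the setup in $\S$\ref{sec:cliff}. A representative $g\in\tilde{C}_\lambda$ can be chosen whose image in $S_n$ is a product of disjoint cycles of lengths $\lambda_1,\dots,\lambda_{l(\lambda)}$; in the Clifford model $g$ acts, up to normalisation, as a product of Clifford generators, whose trace is then computed directly. The magnitude $\sqrt{\lambda_1\lambda_2\cdots}/2$ comes from the normalisations attached to each cycle, the factor $i^{(n-l(\lambda)+1)/2}$ from the parity of the total number of generators involved, and the sign $\pm$ records the choice of one of the two associates. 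Part~(3) then follows from part~(2) by restriction to $\tilde{A}_n$ and the labelling of the self-associate characters on the alternating subgroup chosen as in~\cite[$\S4$]{brugra2014}.

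The main obstacle is the bookkeeping in the Clifford-algebraic trace computation: the normalisation, the sign and the power of $i$ all have to come out correctly, and the calculation must be consistent with the choice of double cover $S_n^+$ versus $S_n^-$. This delicate computation is essentially the heart of Schur's original argument in~\cite{schur1911}.
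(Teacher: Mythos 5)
The paper itself gives no proof of this theorem --- it is quoted directly from Schur --- so there is no in-text argument to compare against; your sketch is an attempt to reconstruct Schur's proof, and its skeleton (vanishing on non-split classes, the $\epsilon$-action confining the support of sums and differences of associates, an explicit Clifford-algebra evaluation on the exceptional class) is indeed the classical route. However, two of your steps have genuine gaps. First, the claim that orthogonality plus mere non-vanishing at $\tilde{C}_\mu$ forces each $\xi^+_\lambda-\xi^-_\lambda$ to be supported on the single class $\tilde{C}_\lambda$ is a non sequitur. The dual (column) orthogonality relation for the $|\mathcal{D}_n^-|$ mutually orthogonal skew functions $D_\mu:=\xi^+_\mu-\xi^-_\mu$ reads $\sum_\mu|D_\mu(g_\pi)|^2=2\pi_1\pi_2\cdots$ for $g_\pi$ of type $\pi\in\mathcal{D}_n^-$, and to conclude $D_\mu(g_\pi)=0$ for $\mu\neq\pi$ you must show that the single term $|D_\pi(g_\pi)|^2$ already exhausts this sum; you need the exact modulus, not just non-vanishing. (Note that with the value as printed in the theorem, $\sqrt{\lambda_1\lambda_2\cdots}/2$, one gets $|D_\lambda(g_\lambda)|^2=\lambda_1\lambda_2\cdots$, which does \emph{not} saturate the relation; the classical value is $\sqrt{\lambda_1\lambda_2\cdots/2}$, and only with it does your argument close.) Second, the trace computation you describe --- $g$ acting ``as a product of Clifford generators'' --- evaluates only the basic spin character, i.e.\ the case $\lambda=(n)$; for general $\lambda\in\mathcal{D}_n^-$ the representation affording $\xi^\pm_\lambda$ is not realised inside $\mathcal{C}_n$ in this way, and relating $\xi^\pm_\lambda(g_\lambda)$ to the basic spin characters of the factors $\tilde{S}_{\lambda_1},\dots,\tilde{S}_{\lambda_{l(\lambda)}}$ (via the parabolic construction of $\S$\ref{sec:parachars} and Schur's $Q$-function machinery) is the real heart of the proof, which your sketch defers entirely.

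Two smaller points. The splitting criterion for $\tilde{C}_\pi$ cannot be ``derived by comparing centraliser orders with Lemma~\ref{lem:dbcent}'': that lemma only records that the centraliser order is one of two values, without deciding which, so the criterion needs its own (standard, but separate) proof. And part (3) does not follow ``by restriction'' from part (2): it concerns $\sigma(\lambda)=1$, where $\xi_\lambda$ is self-associate and $\overline{\xi}^{\pm}_\lambda$ are genuinely new characters of $\tilde{A}_n$; the statement is the parallel one for $\tilde{A}_n$ and requires its own Clifford-theoretic support-and-orthogonality argument inside $\tilde{A}_n$, not a formal consequence of the $\tilde{S}_n$ case.
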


We conclude this section by discussing the $p-$blocks of $\tilde{S}_n$ and $\tilde{A}_n$, where $p$ is an odd prime.

\begin{thm}
Let $\lambda,\mu\in\mathcal{D}_n$. If $w_{\overline{p}}(\lambda)=0$ and $\sigma(\lambda)=-1$ then
$\xi^+_\lambda$ and $\xi^-_\lambda$ lie in $p-$blocks of $\tilde{S}_n$ on their own, otherwise $\xi^{(\pm)}_\lambda$ and
$\xi^{(\pm)}_\mu$ lie in the same $p-$block if and only if $\lambda_{(\overline{p})}=\mu_{(\overline{p})}$.
\end{thm}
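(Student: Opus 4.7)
The plan is to invoke Brauer's central character criterion: two irreducible characters $\chi,\psi$ of $\tilde{S}_n$ lie in the same $p$-block if and only if $\omega_\chi(\widehat{K})\equiv\omega_\psi(\widehat{K})\pmod{\mathfrak{p}}$ for every conjugacy class sum $\widehat{K}$, where $\omega_\chi(\widehat{K})=|K|\chi(g)/\chi(1)$ for $g\in K$, and $\mathfrak{p}$ is the maximal ideal of $\mathcal{R}$. The main engine will be a Morris-type recursion for spin characters, which expresses $\xi_\lambda(\tilde{C}_\pi)$ for $\pi$ containing a part equal to $p$ as a signed sum over $\mu\in M_{\overline{p}}(\lambda)$ of $\xi_\mu$ evaluated on the corresponding class of $\tilde{S}_{n-p}$, with each sign given by $(-1)^{L(\overline{c}^\lambda_\mu)}$. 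By Lemma~\ref{lem:leglength}(2) this factorises as $\delta_{\overline{p}}(\lambda)\delta_{\overline{p}}(\mu)$, so iterating strips away all $p$-parts of $\pi$ while leaving $\lambda_{(\overline{p})}$ untouched.

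First I would iterate the recursion until $\pi$ has no $p$-parts and show that the resulting expression for $\xi_\lambda(\tilde{C}_\pi)$ depends on $\lambda$ only through $\lambda_{(\overline{p})}$ and through a sum indexed by $p$-bar quotient data, multiplied by an overall $\delta_{\overline{p}}(\lambda)$. Combined with the standard $p$-adic valuations of $|K|$ (here $\pi$ has a $p$-part, so $|K|$ gains a factor killing much of the denominator) and of $\chi(1)$ (which, by the Schur hook-length formula for bar degrees, has the same $p$-part across all $\lambda$ with a given $p$-bar core), one concludes $\omega_{\xi_\lambda}(\widehat{K})\equiv\omega_{\xi_\mu}(\widehat{K})\pmod{\mathfrak{p}}$ whenever $\lambda_{(\overline{p})}=\mu_{(\overline{p})}$; on $p$-regular classes the congruence reduces to equality of residues controlled purely by core data. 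This yields the ``same core implies same block'' direction.

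For the converse I would use orthogonality of the $\xi_\lambda$ together with an explicit choice of class. Given distinct $p$-bar cores, a suitable class of cycle type adapted to one of the cores (together with a class built from $p$-parts) separates the central characters modulo $\mathfrak{p}$, so the characters cannot share a block. The special case $w_{\overline{p}}(\lambda)=0$ with $\sigma(\lambda)=-1$ is handled directly from Theorem~\ref{thm:values}(2): the characters $\xi^\pm_\lambda$ have support contained in $\mathcal{O}_n$-classes together with $\tilde{C}_\lambda$ (which is $p$-regular since $\lambda$ has no part divisible by $p$), they agree on every class except $\tilde{C}_\lambda$, and on $\tilde{C}_\lambda$ their difference $\pm i^{(n-l(\lambda)+1)/2}\sqrt{\lambda_1\lambda_2\cdots}$ has $p$-adic valuation zero, so $\omega_{\xi^+_\lambda}\not\equiv\omega_{\xi^-_\lambda}$ on $\widehat{\tilde{C}_\lambda}$ mod $\mathfrak{p}$; hence $\xi^\pm_\lambda$ form singleton (defect zero) blocks.

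The main obstacle I expect is the careful bookkeeping in the converse direction: not just producing a class on which the central characters of representatives from two different $p$-bar cores disagree modulo $\mathfrak{p}$, but also managing the sign factors $\delta_{\overline{p}}$ and the $p$-adic contributions from $\chi(1)$ and $|K|$ uniformly across $\sigma(\lambda)=\pm 1$ types. The recursive structure handles the forward direction cleanly; it is in ruling out accidental coincidences of residues across different cores, and in aligning normalisations between the $\xi_\lambda,\xi^\pm_\lambda$ cases, where the technical work concentrates.
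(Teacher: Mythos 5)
The paper does not actually prove this statement: it is quoted from Humphreys, and the proof is deferred entirely to the citation \cite[Theorem 1.1]{humphr1986}. So your proposal has to be judged against the literature rather than against an argument in the text. Your overall strategy --- Brauer's central character criterion combined with the bar-version of the Murnaghan--Nakayama recursion --- is indeed the classical route to the ``same bar core implies same block'' direction, and your treatment of the case $w_{\overline{p}}(\lambda)=0$, $\sigma(\lambda)=-1$ is the right outline, although even there you still need the bar-length degree formula to check that $|K|/\xi^\pm_\lambda(1)$ is a $p$-unit: the unit-ness of $\sqrt{\lambda_1\lambda_2\cdots}$ alone does not give $\omega_{\xi^+_\lambda}\not\equiv\omega_{\xi^-_\lambda}\pmod{\mathfrak{p}}$.

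There are two genuine gaps. First, your parenthetical claim that $\chi(1)$ ``has the same $p$-part across all $\lambda$ with a given $p$-bar core'' is false: characters in a block of positive defect generally have distinct heights, so their degrees have distinct $p$-parts. The classical argument does not use this; it tracks the ratio of $|K_\pi|/\xi_\lambda(1)$ to $|K_{\pi'}|/\xi_\mu(1)$ through the recursion, where the varying $p$-parts of the degrees cancel against the class sizes. As written, this step of your forward direction fails. Second, and more seriously, the converse (distinct $p$-bar cores force distinct blocks) is asserted rather than proved: ``a suitable class \dots separates the central characters modulo $\mathfrak{p}$'' is precisely the hard content of the Morris conjecture. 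Because spin characters vanish off $\mathcal{O}_n\cup\{\tilde{C}_\lambda\}$ (Theorem~\ref{thm:values}), the supply of classes on which central characters can be compared is severely restricted, and showing that the residues $\omega_{\xi_\lambda}(\widehat{K_\pi})\bmod\mathfrak{p}$ for $\pi\in\mathcal{O}_n$ determine $\lambda_{(\overline{p})}$ is the substance of Humphreys' paper, not a routine orthogonality argument. Without an explicit construction of the separating classes and a computation of the residues on them, the proof is incomplete.
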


\begin{proof}
See~\cite[Theorem 1.1]{humphr1986}.
\end{proof}

If $\gamma$ is a $p-$bar core then we denote by $\tilde{S}_{n,\gamma}$ the block of $\tilde{S}_n$ corresponding to
$\gamma$ or
by $\tilde{S}_{n,\gamma}^\pm$ the block of $\tilde{S}_n$ containing $\xi_\gamma^\pm$ if $\gamma\in\mathcal{D}^-_n$.
We label the corresponding block idempotent $e_{n,\gamma}^{(\pm)}$.

\begin{thm}
Let $w\in\mathbb{N}_0$ and $\gamma\vdash(n-pw)$ be a $p-$bar core. Consider the subgroup
\begin{align*}
S_{pw}\leq S_{n-pw}\times S_{pw}\leq S_n.
\end{align*}
Any Sylow $p-$subgroup $P$ of $\theta_n^{-1}(S_{pw})\leq\tilde{S}_n$ is a defect group of
$\tilde{S}_{n,\gamma}^{(\pm)}$.
Furthermore,
\begin{align*}
N_{\tilde{S}_n}(P)=\tilde{S}_{n-pw}N_{\tilde{S}_{pw}}(P)
\end{align*}
and if $w>0$ then the idempotent of the Brauer correspondent of $\tilde{S}_{n,\gamma}$ is
\begin{align*}
\begin{cases}
e_{n-pw,\gamma}&\text{if }\sigma(\gamma)=1,\\
e_{n-pw,\gamma}^++e_{n-pw,\gamma}^-&\text{if }\sigma(\gamma)=-1.
\end{cases}
\end{align*}
\end{thm}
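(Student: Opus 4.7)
The plan is to reduce all three assertions to analogous facts about the ordinary symmetric group $S_n$ via the covering $\theta_n$, using that $p$ is odd so that $\ker(\theta_n)=\{1,z\}$ is a $p'$-group; in particular $\theta_n$ restricts to an isomorphism on each $p$-subgroup of $\tilde{S}_n$ and induces a bijection between the $p$-subgroups of $\tilde{S}_n$ and those of $S_n$. Write $\bar{P}:=\theta_n(P)$, a Sylow $p$-subgroup of $S_{pw}$.

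For the defect-group assertion, I would compute $v_p(\xi_\lambda^{(\pm)}(1))$ for each $\lambda\in\mathcal{D}_n$ with $\lambda_{(\overline{p})}=\gamma$ using Schur's degree formula ($\xi_\lambda^{(\pm)}(1)$ is an integer power of $2$ times $n!/\prod h$, with $h$ running over the bar-hook lengths of $\lambda$); the standard $p$-adic analysis of bar-hook lengths, parallel to Olsson's treatment in~\cite{olsson1993} for ordinary hook lengths, shows that this valuation is independent of $\lambda$ and equals $v_p(|\tilde{S}_n|)-v_p(|S_{pw}|)$. Hence the block has defect $v_p(|S_{pw}|)=v_p(|P|)$, and Humphreys' determination of spin defect groups in~\cite{humphr1986} (transported through $\theta_n$) identifies $P$, up to conjugacy, as a defect group, as every Sylow $p$-subgroup of $\theta_n^{-1}(S_{pw})$ is $\tilde{S}_n$-conjugate to $P$.

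For the normalizer assertion, I would use the classical identity $N_{S_n}(\bar{P})=S_{n-pw}\times N_{S_{pw}}(\bar{P})$, immediate from the orbit decomposition of $\bar{P}$ on $\{1,\dots,n\}$ (fixing the first $n-pw$ letters pointwise). Lifting is clean: since $|P|$ is odd, $P$ is the \emph{unique} Sylow $p$-subgroup of $\theta_n^{-1}(\bar{P})=P\times\{1,z\}$, so any $g\in\tilde{S}_n$ with $\theta_n(g)\in N_{S_n}(\bar{P})$ automatically normalises $P$. Hence $N_{\tilde{S}_n}(P)=\theta_n^{-1}(N_{S_n}(\bar{P}))$, and the preimage of the direct product $S_{n-pw}\times N_{S_{pw}}(\bar{P})$ is visibly the central product $\tilde{S}_{n-pw}\cdot N_{\tilde{S}_{pw}}(P)$ amalgamated over $\{1,z\}$, which is the asserted expression.

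For the Brauer correspondent, I would appeal to Brauer's first main theorem together with the structure of $N_{\tilde{S}_n}(P)$ just established. Since $\gamma$ is a $p$-bar core, $\tilde{S}_{n-pw,\gamma}^{(\pm)}$ is a defect-zero block of $\tilde{S}_{n-pw}$, so $e_{n-pw,\gamma}^{(\pm)}$ is a primitive central idempotent of $\mathcal{R}\tilde{S}_{n-pw}$; because $\tilde{S}_{n-pw}$ is normal in $N_{\tilde{S}_n}(P)$ and commutes with $N_{\tilde{S}_{pw}}(P)$ modulo $\{1,z\}$, it remains central in $\mathcal{R}N_{\tilde{S}_n}(P)$. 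Expanding the central product exhibits $e_{n-pw,\gamma}^{(\pm)}\mathcal{R}N_{\tilde{S}_n}(P)$ as a matrix algebra over the spin part of $\mathcal{R}N_{\tilde{S}_{pw}}(P)$ (the summand on which $z$ acts as $-1$); its blocks biject with the spin blocks of $N_{\tilde{S}_{pw}}(P)$, and the one matching $\tilde{S}_{n,\gamma}^{(\pm)}$ under Brauer's first main theorem is picked out by comparing defect groups and central characters. The chief obstacle is the $\sigma(\gamma)=-1$ case: one must verify that the two associate defect-zero blocks $\tilde{S}_{n-pw,\gamma}^\pm$ of $\tilde{S}_{n-pw}$ truly fuse into a single block of $N_{\tilde{S}_n}(P)$, so that $e_{n-pw,\gamma}^++e_{n-pw,\gamma}^-$ is a single (primitive) block idempotent rather than a sum of two. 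This requires exhibiting an element of $N_{\tilde{S}_n}(P)\setminus\tilde{A}_n$ whose conjugation action swaps $\xi_\gamma^+$ with $\xi_\gamma^-$, which can be extracted from the behaviour of associated spin character pairs under the action of $\tilde{S}_n$ recorded earlier in this section.
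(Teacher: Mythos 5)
The paper does not actually prove this theorem: its ``proof'' is the citation to Theorems A and B and Corollary 26 of~\cite{cabane1988}, which is where all three assertions are established. Measured against that, your reconstruction gets the normalizer identity right and complete: the fixed-point argument giving $N_{S_n}(\bar P)=S_{n-pw}\times N_{S_{pw}}(\bar P)$, and the lift through $\theta_n$ using that $P$ is the unique Sylow $p$-subgroup of $\theta_n^{-1}(\bar P)=P\times\{1,z\}$, is exactly the standard proof. The other two parts, however, contain genuine gaps.

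For the defect group, the bar-length valuation computation only yields the \emph{order} of a defect group. (Incidentally, the claim that $v_p(\xi_\lambda^{(\pm)}(1))$ is independent of $\lambda$ within the block is false once $w\geq p$ --- the statement is for all $w\in\mathbb{N}_0$, and heights are not all zero outside the abelian-defect case; only the minimum over the block equals $v_p(n!)-v_p((pw)!)$.) Knowing $|D|=|P|$ does not identify $D$ up to conjugacy: $\tilde{S}_n$ has many conjugacy classes of $p$-subgroups of that order, and one must still prove $D\leq_{\tilde{S}_n}\theta_n^{-1}(S_{pw})$. You defer this to~\cite{humphr1986}, but that reference is used in this paper only for the block distribution by $p$-bar cores; the identification of the defect groups up to conjugacy is precisely Cabanes' Theorem A, i.e.\ the very statement being proved, so as written the argument is circular at its crux. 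For the Brauer correspondent, your central-product analysis does correctly show that $e_{n-pw,\gamma}$ (resp.\ $e^+_{n-pw,\gamma}+e^-_{n-pw,\gamma}$) is a single block idempotent of $\mathcal{R}N_{\tilde{S}_n}(P)$ with defect group $P$: the required fusion when $\sigma(\gamma)=-1$ is forced because conjugation by any $g\in N_{\tilde{S}_{pw}}(P)\setminus\tilde{A}_n$ induces $x\mapsto z^{(1-\epsilon(x))/2}x$ on $\tilde{S}_{n-pw}$ and hence swaps $\xi^+_\gamma$ with $\xi^-_\gamma$. But ``picked out by comparing defect groups and central characters'' is not a proof that this block is the Brauer correspondent of $\tilde{S}_{n,\gamma}$: several blocks of $N_{\tilde{S}_n}(P)$ can have defect group $P$, and verifying $b^{\tilde{S}_n}=\tilde{S}_{n,\gamma}$ requires an actual computation --- of $\operatorname{Br}_P(e_{n,\gamma})$, or of the induced central character $\omega_b^{\tilde{S}_n}$ on class sums via a Murnaghan--Nakayama-type argument. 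That computation is the substance of Cabanes' Theorem B and is nowhere carried out in your sketch.
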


\begin{proof}
See~\cite[Theorems A and B and Corrollary 26]{cabane1988}.
\end{proof}

Using clifford theory we can deduce the corresponding theorems for $\tilde{A}_n$. For proofs see~\cite[Proposition
3.16]{kessar1996}.

\begin{thm}
Let $\lambda,\mu\in\mathcal{D}_n$. If $w_{\overline{p}}(\lambda)=0$ and $\sigma(\lambda)=1$ then
$\overline{\xi}^+_\lambda$
and $\overline{\xi}^-_\lambda$ lie in $p-$blocks of $\tilde{A}_n$ on their own, otherwise
$\overline{\xi}^{(\pm)}_\lambda$
and $\overline{\xi}^{(\pm)}_\mu$ lie in the same $p-$block if and only if
$\lambda_{(\overline{p})}=\mu_{(\overline{p})}$.
\end{thm}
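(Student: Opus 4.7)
The plan is to bootstrap from the already-established block structure of $\tilde{S}_n$ via Clifford theory applied to the index-$2$ normal inclusion $\tilde{A}_n\trianglelefteq \tilde{S}_n$. Since $p$ is odd, $[\tilde{S}_n:\tilde{A}_n]=2$ is a $p'$-number, so standard Fong--Reynolds theory applies: every $p$-block of $\tilde{A}_n$ is covered by a unique $p$-block of $\tilde{S}_n$, and each block of $\tilde{S}_n$ covers either one or two blocks of $\tilde{A}_n$ depending on whether the covered $\tilde{A}_n$-block is $\tilde{S}_n$-stable. I would combine this with the restriction behaviour recorded earlier: for $\lambda\in\mathcal{D}^+_n$, $\xi_\lambda\downarrow_{\tilde{A}_n}=\overline{\xi}^+_\lambda+\overline{\xi}^-_\lambda$ with the two summands swapped by the outer action, whereas for $\lambda\in\mathcal{D}^-_n$, both $\xi^+_\lambda$ and $\xi^-_\lambda$ restrict to the single $\tilde{S}_n$-stable character $\overline{\xi}_\lambda$.

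First I would dispose of the exceptional case. If $w_{\overline{p}}(\lambda)=0$ and $\sigma(\lambda)=1$, the preceding theorem says $\{\xi_\lambda\}$ is a defect-zero block of $\tilde{S}_n$. Since $|\tilde{A}_n|_p=|\tilde{S}_n|_p$ and $\overline{\xi}^\pm_\lambda(1)=\xi_\lambda(1)/2$, each $\overline{\xi}^\pm_\lambda$ still has degree divisible by $|\tilde{A}_n|_p$; together with the inherited vanishing on $p$-singular elements, this places each $\overline{\xi}^\pm_\lambda$ in its own defect-zero block of $\tilde{A}_n$. In the remaining ``generic'' case I would argue both directions at once. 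If $\lambda_{(\overline{p})}=\mu_{(\overline{p})}$ and we are not in the $(0,-1)$ case, then $\xi^{(\pm)}_\lambda$ and $\xi^{(\pm)}_\mu$ lie in a common $\tilde{S}_n$-block $B$; as soon as $B$ contains one non-self-associate spin character, that character restricts to an $\tilde{S}_n$-stable irreducible of $\tilde{A}_n$, so $B$ covers a unique $\tilde{A}_n$-block and every $\overline{\xi}^{(\pm)}_\lambda,\overline{\xi}^{(\pm)}_\mu$ lies in it. For the converse, if $\overline{\xi}^{(\pm)}_\lambda$ and $\overline{\xi}^{(\pm)}_\mu$ share an $\tilde{A}_n$-block, Clifford uniqueness says the covering $\tilde{S}_n$-blocks of $\xi^{(\pm)}_\lambda$ and $\xi^{(\pm)}_\mu$ agree, and then the $\tilde{S}_n$ theorem gives $\lambda_{(\overline{p})}=\mu_{(\overline{p})}$. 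The only subtlety is the $w_{\overline{p}}(\lambda)=0,\sigma(\lambda)=-1$ case, where the two distinct $\tilde{S}_n$-blocks $\{\xi^+_\lambda\},\{\xi^-_\lambda\}$ both cover the single $\tilde{A}_n$-block containing $\overline{\xi}_\lambda$; but then $\lambda$ is its own $\overline{p}$-bar core, and similarly for $\mu$, so the bar-core criterion still works.

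The main obstacle I foresee is the combinatorial assertion underlying the ``generic'' case: every $\tilde{S}_n$-block with $w_{\overline{p}}>0$ contains at least one non-self-associate spin character, equivalently the class $\{\nu\in\mathcal{D}_n : \nu_{(\overline{p})}=\gamma\}$ meets $\mathcal{D}^-_n$ whenever the $\overline{p}$-weight is positive. I would settle this by a direct sign-toggling argument: starting from any $\lambda$ in the block, removing and re-adding a single $p$-bar in a controlled way changes the parity of $n-l(\lambda)$, and by Lemma~\ref{lem:leglength}(2) this flips $\delta_{\overline{p}}$, producing a partition of opposite $\sigma$-value in the same block. Once this combinatorial lemma is in hand, the Clifford-theoretic bookkeeping above delivers the theorem.
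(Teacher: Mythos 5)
The paper does not actually prove this theorem: it remarks that it follows from the $\tilde{S}_n$ block theorem by Clifford theory and refers to~\cite[Proposition 3.16]{kessar1996}. Your strategy is therefore the intended one, and most of it is sound. The defect-zero argument in the exceptional case $w_{\overline{p}}(\lambda)=0$, $\sigma(\lambda)=1$ is correct (the degree computation alone already forces $\overline{\xi}^{\pm}_\lambda$ to lie alone in defect-zero blocks), and the forward direction for $w>0$ works once one knows that every positive-weight spin block of $\tilde{S}_n$ contains a non-self-associate character. That combinatorial fact is true, and is seen most cleanly via $\overline{p}$-bar quotients: the quotients $((w),\varnothing,\dots,\varnothing)$ and $(\varnothing,(w),\varnothing,\dots,\varnothing)$ both correspond to partitions of $n$ with bar core $\gamma$ and they have opposite values of $\sigma$. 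Your remove-and-re-add sketch needs more care than you suggest, since adding a bar which is a new part equal to $p$ does \emph{not} flip $\sigma$, and the sign-flipping addition available to you depends on the shape of the intermediate partition; the quotient argument avoids this bookkeeping.

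The genuine gap is in the converse direction. Your opening claim that ``every $p$-block of $\tilde{A}_n$ is covered by a unique $p$-block of $\tilde{S}_n$'' is false: uniqueness of the covering block holds when the index is a power of $p$, not when it is a $p'$-number, and your own discussion of the case $w_{\overline{p}}(\lambda)=0$, $\sigma(\lambda)=-1$ exhibits two blocks $\{\xi^+_\lambda\}$, $\{\xi^-_\lambda\}$ of $\tilde{S}_n$ covering a single block of $\tilde{A}_n$. Consequently the step ``Clifford uniqueness says the covering $\tilde{S}_n$-blocks of $\xi^{(\pm)}_\lambda$ and $\xi^{(\pm)}_\mu$ agree'' is unjustified: a priori one block $b$ of $\tilde{A}_n$ could be covered by two spin blocks of $\tilde{S}_n$ with \emph{different} bar cores, in which case $b$ would contain characters $\overline{\xi}^{(\pm)}_\lambda$, $\overline{\xi}^{(\pm)}_\mu$ with $\lambda_{(\overline{p})}\neq\mu_{(\overline{p})}$ and the theorem would fail. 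To close this you need an additional input. One repair: for a $\tilde{S}_n$-stable block $b$ of $\tilde{A}_n$, the blocks of $\tilde{S}_n$ covering $b$ number at most two and, if there are two, they are interchanged by tensoring with the sign character $\epsilon$ (this follows from the fact that $Z(\mathcal{O}\tilde{A}_ne_b)^{\tilde{S}_n}$ is local); since every positive-weight spin block of $\tilde{S}_n$ is $\epsilon$-stable (it contains $\xi_\nu=\epsilon\cdot\xi_\nu$ or the pair $\xi^{\pm}_\nu$), such a $b$ has a unique covering block after all, and your converse then goes through. Alternatively one can compare central characters modulo the maximal ideal of $\mathcal{R}$ directly, as in the proof of the $\tilde{S}_n$ statement.
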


Let $w\in\mathbb{N}_0$ and $\gamma\vdash(n-pw)$ be a $p-$bar core. Then we label the corresponding block(s) of
$\tilde{A}_n$ by $\tilde{A}_{n,\gamma}^{(\pm)}$ and by $\overline{e}^{(\pm)}_\gamma$
the corresponding block idempotent of $\tilde{A}_{n,\gamma}^{(\pm)}$.

\begin{thm}$ $
\begin{enumerate}
\item If $w=0$ and $\sigma(\lambda)=-1$ then $\overline{e}_{n,\gamma}=e^+_{n,\gamma}+e^-_{n,\gamma}$.
\item If $w=0$ and $\sigma(\lambda)=1$ then $e_{n,\gamma}=\overline{e}^+_{n,\gamma}+\overline{e}^-_{n,\gamma}$.
\item If $w>0$ then $e_{n,\gamma}=\overline{e}_{n,\gamma}$.
\end{enumerate}
\end{thm}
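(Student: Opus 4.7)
The plan is to derive all three statements from standard Clifford theory for the index-two normal inclusion $\tilde{A}_n\trianglelefteq\tilde{S}_n$, applied to the primitive central idempotents $e_\chi=(\chi(1)/|G|)\sum_g\overline{\chi(g)}\,g$ of the characters making up each block. Writing $f_\psi$ for the primitive central idempotent in $\mathbb{C}\tilde{A}_n$ attached to $\psi\in\operatorname{Irr}(\tilde{A}_n)$, the two key identities are: if $\chi\in\operatorname{IrrSp}^+(\tilde{S}_n)$ is self-associate with $\chi\downarrow_{\tilde{A}_n}=\overline{\chi}^++\overline{\chi}^-$ then $e_\chi=f_{\overline{\chi}^+}+f_{\overline{\chi}^-}$; and if $\chi\in\operatorname{IrrSp}^-(\tilde{S}_n)$ is non-self-associate with $\chi\downarrow_{\tilde{A}_n}=(\epsilon.\chi)\downarrow_{\tilde{A}_n}=\overline{\chi}$ then $e_\chi+e_{\epsilon.\chi}=f_{\overline{\chi}}$. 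Both identities are short direct checks using $|\tilde{S}_n|=2|\tilde{A}_n|$ together with the facts that $\chi(g)=0$ for $g\notin\tilde{A}_n$ in the self-associate case, and $\chi(g)+(\epsilon.\chi)(g)=0$ for such $g$ in the non-self-associate case. Once these are in hand, each of the three parts reduces to reading off the characters in the relevant block using the two preceding block theorems.

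For part (1), with $w=0$ and $\sigma(\gamma)=-1$, the blocks $\tilde{S}_{n,\gamma}^\pm=\{\xi_\gamma^\pm\}$ are singletons consisting of a non-self-associate pair exchanged by $\epsilon$; both restrict to $\overline{\xi}_\gamma$, which sits alone in $\tilde{A}_{n,\gamma}$ by the $\tilde{A}_n$-block theorem. The second Clifford identity then yields $e_{n,\gamma}^++e_{n,\gamma}^-=f_{\overline{\xi}_\gamma}=\overline{e}_{n,\gamma}$. For part (2), with $w=0$ and $\sigma(\gamma)=1$, the block $\tilde{S}_{n,\gamma}=\{\xi_\gamma\}$ is a singleton containing a self-associate character with $\xi_\gamma\downarrow_{\tilde{A}_n}=\overline{\xi}_\gamma^++\overline{\xi}_\gamma^-$, and the two constituents sit alone in the blocks $\tilde{A}_{n,\gamma}^\pm$; the first Clifford identity then yields $e_{n,\gamma}=\overline{e}_{n,\gamma}^++\overline{e}_{n,\gamma}^-$.

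For part (3), with $w>0$, the $\tilde{S}_n$-block theorem produces a single block $\tilde{S}_{n,\gamma}$, and because the $\pm$-splitting only occurs when $w=0$ and $\sigma(\lambda)=-1$, every non-self-associate $\xi_\lambda^+$ in $\tilde{S}_{n,\gamma}$ has its associate $\xi_\lambda^-$ in $\tilde{S}_{n,\gamma}$ too. Summing the two Clifford identities over $\operatorname{Irr}(\tilde{S}_{n,\gamma})$ therefore pairs the non-self-associate contributions and splits the self-associate ones, yielding
\begin{align*}
e_{n,\gamma}=\sum_{\substack{\lambda\in\mathcal{D}_n^+\\ \lambda_{(\overline{p})}=\gamma}}\bigl(f_{\overline{\xi}_\lambda^+}+f_{\overline{\xi}_\lambda^-}\bigr)+\sum_{\substack{\lambda\in\mathcal{D}_n^-\\ \lambda_{(\overline{p})}=\gamma}}f_{\overline{\xi}_\lambda}.
\end{align*}
By the $\tilde{A}_n$-block theorem, applied with $w>0$, every $\overline{\xi}_\lambda^{(\pm)}$ appearing on the right lies in the single block $\tilde{A}_{n,\gamma}$, and conversely every irreducible spin character of $\tilde{A}_{n,\gamma}$ appears on the right, so the right-hand side equals $\overline{e}_{n,\gamma}$.

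The main obstacle is this last step of part (3): one must invoke both block theorems in tandem, the first to guarantee the clean pairing of non-self-associate characters inside $\tilde{S}_{n,\gamma}$ (so that the sum of the $e_\chi$ genuinely lies in $\mathbb{C}\tilde{A}_n$), and the second to rule out any further splitting of $\overline{e}_{n,\gamma}$ into distinct $\tilde{A}_n$-block idempotents when $w>0$. Parts (1) and (2) are then essentially the two remaining endpoint cases where exactly one of $\tilde{S}_{n,\gamma}$ or $\tilde{A}_{n,\gamma}$ is forced to split by the presence of a self-associate or non-self-associate singleton.
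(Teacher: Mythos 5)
Your argument is correct and is exactly the Clifford-theoretic deduction that the paper itself invokes but does not write out (the paper simply states that these facts follow ``using Clifford theory'' and refers to Kessar's Proposition 3.16 for proofs). Your two idempotent identities $e_\chi=f_{\overline{\chi}^+}+f_{\overline{\chi}^-}$ and $e_\chi+e_{\epsilon.\chi}=f_{\overline{\chi}}$, combined with the block-distribution theorems for $\tilde{S}_n$ and $\tilde{A}_n$ quoted just before the statement, give precisely the intended proof in all three cases.
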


\begin{thm}
Any Sylow $p-$subgroup $P$ of $\theta_n^{-1}(S_{pw})\leq\tilde{S}_n$ is a defect group of
$\tilde{A}_{n,\gamma}^{(\pm)}$. Furthermore,
\begin{align*}
N_{\tilde{A}_n}(P)=\tilde{S}_{n-pw}N_{\tilde{S}_{pw}}(P)\cap\tilde{A}_n
\end{align*}
and the idempotent of the Brauer correspondent of $\tilde{A}_{n,\gamma}^{(\pm)}$ is
\begin{align*}
\begin{cases}
\overline{e}_{n-pw,\gamma}&\text{if }\sigma(\gamma)=-1,\\
\overline{e}_{n-pw,\gamma}^++\overline{e}_{n-pw,\gamma}^-&\text{if }\sigma(\gamma)=1.
\end{cases}
\end{align*}
\end{thm}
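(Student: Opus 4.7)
The plan is to deduce the statement from its $\tilde{S}_n$ analogue via Clifford theory applied to the normal subgroup $\tilde{A}_n\trianglelefteq\tilde{S}_n$ of index $2$, exploiting crucially that $p$ is odd.

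First I would check that $P\leq\tilde{A}_n$: since $p$ is odd, $P$ has odd order, hence $z\notin P$ and $\theta_n(P)$ is a $p$-subgroup of $S_{pw}$, which must lie in $A_{pw}$. For the defect group assertion I would use that $[\tilde{S}_n:\tilde{A}_n]=2$ is coprime to $p$; standard block theory for $p'$-index normal subgroups then guarantees that any block of $\tilde{A}_n$ covered by (or covering) a given block of $\tilde{S}_n$ inherits its defect group (intersected with $\tilde{A}_n$, which here is all of $P$ since $P\leq\tilde{A}_n$). Combining this with the covering pattern encoded in the preceding theorem on idempotents gives the claim for $\tilde{A}_{n,\gamma}^{(\pm)}$.

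The normalizer formula is immediate from its $\tilde{S}_n$ counterpart together with the normality of $\tilde{A}_n$ in $\tilde{S}_n$:
\begin{align*}
N_{\tilde{A}_n}(P)=N_{\tilde{S}_n}(P)\cap\tilde{A}_n=\tilde{S}_{n-pw}N_{\tilde{S}_{pw}}(P)\cap\tilde{A}_n.
\end{align*}
I would also note that this intersection contains $\tilde{A}_{n-pw}=\tilde{S}_{n-pw}\cap\tilde{A}_n$, which is the natural subgroup in which to interpret the $\tilde{A}_{n-pw}$-block idempotents $\overline{e}^{(\pm)}_{n-pw,\gamma}$.

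For the Brauer correspondent idempotent I would pass through the $\tilde{S}_n$ Brauer correspondent (already established in the preceding theorem) and then invoke the $w=0$ idempotent relation of the penultimate theorem, applied within the normalizer to $\tilde{S}_{n-pw}$ versus $\tilde{A}_{n-pw}$. If $\sigma(\gamma)=-1$, then $\tilde{A}_{n,\gamma}$ covers both $\tilde{S}_{n,\gamma}^+$ and $\tilde{S}_{n,\gamma}^-$, so its Brauer correspondent has idempotent $e_{n-pw,\gamma}^++e_{n-pw,\gamma}^-$, which by the $w=0$ relation equals $\overline{e}_{n-pw,\gamma}$. If $\sigma(\gamma)=1$, then $\tilde{S}_{n,\gamma}$ covers the single $\tilde{A}_n$-block $\tilde{A}_{n,\gamma}$, whose Brauer correspondent must therefore have idempotent $e_{n-pw,\gamma}=\overline{e}_{n-pw,\gamma}^++\overline{e}_{n-pw,\gamma}^-$, again by the $w=0$ relation. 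The main obstacle is simply the careful case analysis on $\sigma(\gamma)$ and the associated block covering pattern; once this bookkeeping is in place, everything follows from Clifford theory combined with the $\tilde{S}_n$ statements already proved.
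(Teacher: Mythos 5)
Your overall strategy --- deducing the $\tilde{A}_n$ statement from its $\tilde{S}_n$ analogue by Clifford theory for the index-$2$ (hence $p'$-index) normal subgroup $\tilde{A}_n\trianglelefteq\tilde{S}_n$ --- is precisely what the paper intends: it gives no proof of its own and simply refers to \cite[Proposition 3.16]{kessar1996}. Your observations that $P\leq\tilde{A}_n$ because $p$ is odd, that defect groups pass to covered blocks of a $p'$-index normal subgroup, and that $N_{\tilde{A}_n}(P)=N_{\tilde{S}_n}(P)\cap\tilde{A}_n$ are all correct.

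Two points in your idempotent computation need repair. First, for $w>0$ there are no blocks $\tilde{S}_{n,\gamma}^{\pm}$ of $\tilde{S}_n$: the $\pm$-splitting occurs only at weight $0$, so for $w>0$ there is a single block $\tilde{S}_{n,\gamma}$ covering the single block $\tilde{A}_{n,\gamma}$ (indeed $e_{n,\gamma}=\overline{e}_{n,\gamma}$); moreover your phrase ``$\tilde{A}_{n,\gamma}$ covers both $\tilde{S}_{n,\gamma}^{+}$ and $\tilde{S}_{n,\gamma}^{-}$'' has the covering relation backwards. What you actually want is that the Brauer correspondent of $\tilde{S}_{n,\gamma}$ in $N_{\tilde{S}_n}(P)$ has the idempotent given by the preceding theorem, which the $w=0$ relations rewrite as $\overline{e}_{n-pw,\gamma}$ when $\sigma(\gamma)=-1$ and as $\overline{e}^+_{n-pw,\gamma}+\overline{e}^-_{n-pw,\gamma}$ when $\sigma(\gamma)=1$. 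Second, the inference ``whose Brauer correspondent must therefore have idempotent $\dots$'' is not automatic: you need the Harris--Kn\"{o}rr theorem (the Brauer correspondent of a covering block covers the Brauer correspondent of the covered block), together with a check that the displayed idempotent is a \emph{primitive} central idempotent of $\mathcal{R}N_{\tilde{A}_n}(P)$, since a priori the correspondent of $\tilde{A}_{n,\gamma}$ could be a proper summand of it. For $\sigma(\gamma)=1$ primitivity holds because an element $ah\in N_{\tilde{A}_n}(P)$ with $a\in\tilde{S}_{n-pw}\setminus\tilde{A}_{n-pw}$ and $h\in N_{\tilde{S}_{pw}}(P)\setminus\tilde{A}_{pw}$ conjugates $\overline{e}^+_{n-pw,\gamma}$ to $\overline{e}^-_{n-pw,\gamma}$; for $\sigma(\gamma)=-1$ one must still rule out a further splitting of $\overline{e}_{n-pw,\gamma}$ over $N_{\tilde{A}_n}(P)$. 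With these repairs the argument goes through and is the one the paper (via Kessar) has in mind.
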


\section{Clifford algebras}\label{sec:cliff}

For any positive integer $n$ we define the Clifford algebra $\mathcal{C}_n$ to be the $\mathbb{C}-$algebra generated by
$e_1,\dots,e_n$ subject to the relations $e_j^2=1$ and
$e_je_k=-e_ke_j$ if $j\neq k$. In particular $\mathcal{C}_n$ has $\mathbb{C}-$basis $\{e_I\}_I$ where $I$ runs over the
subsets of $[n]:=\{1,\dots,n\}$ and
$e_I:=e_{j_1}.\dots.e_{j_t}$ if $I=\{j_1<\dots<j_t\}$ and $e_\varnothing:=1$. We also want to define the special Clifford
algebra $\mathcal{C}_n^+$. We define this to be the subalgebra
of $\mathcal{C}_n$ with $\mathbb{C}-$basis $\{e_I\}_I$ where $I$ runs over the subsets of $[n]$ of even size.

\begin{lem}\label{lem:cliffrep}
$ $
\begin{enumerate}
\item If $n=2k$ is even then, as $\mathbb{C}-$algebras, $\mathcal{C}_n\cong M_{2^k}(\mathbb{C})$, the algebra of
$2^k\times 2^k$ matrices over $\mathbb{C}$. The character of this
representation is given by $\chi_n^{\mathcal{C}}:\sum_Ic_Ie_I\mapsto 2^kc_\varnothing$.
\item If $n=2k+1$ is odd then, as $\mathbb{C}-$algebras, $\mathcal{C}_n\cong M_{2^k}(\mathbb{C})\oplus
M_{2^k}(\mathbb{C})$. Futhermore, this isomorphism induces two irreducible
characters of $\mathcal{C}_n$ given by $\chi_n^{\mathcal{C}\pm}:\sum_Ic_Ie_I\mapsto 2^kc_\varnothing\pm(2i)^kc_{[n]}$.
\item If $n=2k$ is even then as $\mathbb{C}-$algebras, $\mathcal{C}_n^+\cong M_{2^k}(\mathbb{C})\oplus
M_{2^k}(\mathbb{C})$. Futhermore, this isomorphism induces two irreducible characters of $\mathcal{C}_n$ given by
$\chi_n^{\mathcal{C}\pm}:\sum_Ic_Ie_I\mapsto 2^{k-1}c_\varnothing\pm i(2i)^{k-1}c_{[n]}$.
\item If $n=2k+1$ is odd then, as $\mathbb{C}-$algebras, $\mathcal{C}_n^+\cong M_{2^k}(\mathbb{C})$ and the character
associated to this isomorphism is given by $\chi_n^{\mathcal{C}}:\sum_Ic_Ie_I\mapsto 2^kc_\varnothing$.
\end{enumerate}
\end{lem}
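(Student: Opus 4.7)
The plan is to proceed by the standard period-two induction for Clifford algebras, then reduce the even subalgebra $\mathcal{C}_n^+$ to a smaller full Clifford algebra, and read off the characters from the explicit isomorphisms.

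For parts (1) and (2) I begin with the base case $\mathcal{C}_2 \cong M_2(\mathbb{C})$, realised by sending $e_1, e_2$ to the Pauli matrices $\sigma_x, \sigma_y$, and then establish the periodicity isomorphism $\mathcal{C}_{n+2} \cong \mathcal{C}_n \otimes_\mathbb{C} \mathcal{C}_2$. The latter is defined on generators by $e_j \mapsto e'_j \otimes e'_1 e'_2$ for $1 \le j \le n$, $e_{n+1} \mapsto 1 \otimes e'_1$, $e_{n+2} \mapsto 1 \otimes e'_2$; a quick check of the anticommutation and squaring relations shows this extends to an algebra map, and a dimension count $2^{n+2}$ on each side forces it to be an isomorphism. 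Iterating yields part (1): $\mathcal{C}_{2k} \cong M_2(\mathbb{C})^{\otimes k} \cong M_{2^k}(\mathbb{C})$. The character is then computed by the trace-vanishing argument: for $\varnothing \neq I \subseteq [n]$ with $n$ even, one always finds a generator $e_j$ with $e_j e_I e_j^{-1} = -e_I$ (take $j \in I$ if $|I|$ is even, $j \notin I$ if $|I|$ is odd; the extreme case $|I|=n$ uses $j\in I$ and the parity of $n-1$), so $\operatorname{tr}(e_I)=0$, while $e_\varnothing$ has trace $2^k$.

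For part (2) with $n = 2k+1$ odd, the element $e_{[n]}$ is central, because moving any $e_j$ through the other $n-1$ generators produces an even number of sign flips, and $e_{[n]}^2 = (-1)^{n(n-1)/2} = (-1)^k$ by direct computation. Hence $\omega := i^{-k} e_{[n]}$ is a central involution and $p_\pm := (1 \pm \omega)/2$ are orthogonal central idempotents summing to $1$, decomposing $\mathcal{C}_n \cong \mathcal{C}_n p_+ \oplus \mathcal{C}_n p_-$. Each summand has dimension $2^n/2 = (2^k)^2$, and the subalgebra generated by $e_1, \dots, e_{n-1}$ embeds into each summand as $\mathcal{C}_{n-1} \cong M_{2^k}(\mathbb{C})$ from part (1), forcing each summand to equal $M_{2^k}(\mathbb{C})$. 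On the $\pm$ summand, $\omega$ acts as $\pm 1$, so $e_{[n]}$ acts as $\pm i^k$, contributing $\pm(2i)^k c_{[n]}$ to the character; all other $e_I$ with $I \ne \varnothing, [n]$ still admit a sign-reversing conjugator and contribute zero.

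For parts (3) and (4) I use the isomorphism $\varphi : \mathcal{C}_{n-1} \xrightarrow{\sim} \mathcal{C}_n^+$ defined on generators by $e'_j \mapsto i\, e_j e_n$ for $1 \le j \le n-1$. The relations $(i e_j e_n)^2 = 1$ and $(i e_j e_n)(i e_k e_n) = -(i e_k e_n)(i e_j e_n)$ for $j \neq k$ are immediate, and the map is a bijection since both algebras have dimension $2^{n-1}$ and the image clearly generates $\mathcal{C}_n^+$. Part (4) (with $n$ odd) then follows from part (1) applied to $\mathcal{C}_{n-1}$, and part (3) (with $n$ even) follows from part (2) applied to $\mathcal{C}_{n-1}$. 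The character formulas transfer through $\varphi$: the identity matches $e_\varnothing$, and the top element $e'_{[n-1]}$ of $\mathcal{C}_{n-1}$ matches $e_{[n]}$ of $\mathcal{C}_n^+$ up to an explicit power of $i$, from which the constants in the stated formulas drop out.

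The main obstacle is the bookkeeping of powers of $i$ when transferring the top character value through $\varphi$, since the precise scalar depends on both the parity of $n$ and on the number of transpositions needed to bring the product $(ie_1 e_n)(ie_2 e_n) \cdots (ie_{n-1} e_n)$ into the standard form $i^{n-1} e_{[n]}$ times a sign. Everything else is either immediate from a dimension count or follows from the standard trace-vanishing conjugation argument.
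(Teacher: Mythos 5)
Your treatment of parts (3) and (4) is exactly the paper's: the paper's entire proof of those parts is the isomorphism $\mathcal{C}_{n-1}\to\mathcal{C}_n^+$, $e_j\mapsto ie_je_n$, and you use the same map and correctly reduce to parts (1) and (2). For parts (1) and (2) the paper simply cites Stembridge's \S3, so you are supplying a proof where the paper defers to a reference; what you give (periodicity plus the trace-vanishing conjugation argument, and for odd $n$ the central element $e_{[n]}$ with $e_{[n]}^2=(-1)^k$ and the idempotents $(1\pm i^{-k}e_{[n]})/2$) is the standard argument and is essentially correct. One concrete slip: your periodicity map $e_j\mapsto e_j'\otimes e_1'e_2'$ for $1\le j\le n$ does not preserve the relation $e_j^2=1$, since $(e_1'e_2')^2=-1$ and hence $(e_j'\otimes e_1'e_2')^2=-1$; you need $e_j\mapsto i\,e_j'\otimes e_1'e_2'$ (harmless over $\mathbb{C}$, but the ``quick check of the squaring relations'' as stated fails). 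The remaining bookkeeping you flag in parts (3)--(4) does work out: one finds $\varphi(e'_{[n-1]})=i^{n-1}(-1)^{(n-1)(n-2)/2}e_{[n]}$, which for $n=2k$ gives $\chi_n^{\mathcal{C}\pm}(e_{[n]})=\mp i(2i)^{k-1}$, matching the stated formula after the (free) choice of which summand is labelled $+$.
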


\begin{proof}
$ $
\begin{enumerate}
\item[1,2.] See~\cite[$\S3$]{stembr1989}.
\item[3,4.] We get the desired results through the isomorphism
\begin{align*}
\mathcal{C}_{n-1}&\to\mathcal{C}_n^+\\
e_j&\mapsto ie_je_n.
\end{align*}
\end{enumerate}
\end{proof}

\begin{lem}\label{lem:embed}$ $
\begin{enumerate}
\item One can realise $S^+_n$ and $S^-_n$ as subgroups of $\mathcal{C}_n$ via
\begin{align*}
\phi_n^+:S^+_n&\to\mathcal{C}_n& \text{ and }&&\phi_n^-:S^-_n&\to\mathcal{C}_n\\
t_j&\mapsto\frac{1}{\sqrt{2}}(e_j+e_{j+1})&\text{  }&&t_j&\mapsto\frac{i}{\sqrt{2}}(e_j+e_{j+1}).
\end{align*}
\item For $\phi_n=\phi_n^\pm$ we have
\begin{align*}
\phi_n(t_j)e_j\phi_n(t_j^{-1})=e_{j+1}
\end{align*}
\end{enumerate}
\end{lem}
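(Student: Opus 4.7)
The plan is to verify the defining relations of $\tilde{S}_n$ under the assignments $\phi_n^\pm$, once we (implicitly) set $\phi_n^\pm(z)=-1$, and then to do a short direct computation for part (2).

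First I would handle part (1). For each of the two presentations I would check the four families of relations in turn. The relation $t_j^2=1$ (resp.\ $t_j^2=z$) reduces to computing $\tfrac12(e_j+e_{j+1})^2$, and the cross terms cancel because $e_je_{j+1}+e_{j+1}e_j=0$, leaving $1$ in $\mathcal{C}_n$; multiplying by $i^2=-1$ in the $S^-$ case gives $-1=\phi_n^-(z)$. For $|j-k|>1$, all four of $e_j,e_{j+1},e_k,e_{k+1}$ are distinct, so expanding $\phi_n(t_j)\phi_n(t_k)$ and $\phi_n(t_k)\phi_n(t_j)$ and using pairwise anticommutation of the $e_l$'s shows these products differ by a sign, i.e.\ $t_jt_k=-t_kt_j=zt_kt_j$. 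The only genuinely involved relation is $(t_jt_{j+1})^3=z$ (resp.\ $1$). Here I would set $A=e_je_{j+1}$, $B=e_je_{j+2}$, $C=e_{j+1}e_{j+2}$ and verify that $A^2=B^2=C^2=-1$ and that $A,B,C$ pairwise anticommute, so that $(1+A+B+C)^2$ collapses to $-2+2(A+B+C)$ and hence $(uv)^3$, where $uv=\tfrac12(1+A+B+C)$, equals $-1$ on the nose. This gives $(t_jt_{j+1})^3=-1=\phi_n^+(z)$ in the $S^+$ case, and multiplying by $i^6=-1$ yields $+1$ in the $S^-$ case, as required. Injectivity then follows once part (2) is in hand, because part (2) shows that $\phi_n(t_j)$ acts by swapping $e_j\leftrightarrow e_{j+1}$, so the image of the $t_j$'s together with $-1$ has order at least $2\cdot n!=|\tilde{S}_n|$.

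For part (2), since $t_j^2\in\{1,z\}$ and $\phi_n^\pm(z)=-1$, one has $\phi_n(t_j^{-1})=\pm\phi_n(t_j)$ with signs such that in both cases the conjugation $\phi_n(t_j)e_j\phi_n(t_j)^{-1}$ reduces to computing $\tfrac12(e_j+e_{j+1})\,e_j\,(e_j+e_{j+1})$ (the scalars $1/\sqrt2$ or $i/\sqrt2$ cancel out in conjugation up to the sign that has already been absorbed by $t_j^{-1}$). Expanding $(e_j+e_{j+1})e_j=1+e_{j+1}e_j$ and then multiplying on the right by $(e_j+e_{j+1})$, using $e_j^2=e_{j+1}^2=1$ and anticommutation, collapses the sum to $e_{j+1}$.

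I expect the main obstacle to be the $(t_jt_{j+1})^3$ relation, since this is where the most delicate sign bookkeeping happens and where the difference between $S_n^+$ and $S_n^-$ actually shows up; choosing the auxiliary elements $A,B,C$ with their quaternion-like multiplication table is what makes the calculation tractable. Everything else is routine manipulation of anticommuting generators.
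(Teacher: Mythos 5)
Your argument is correct, and for part (2) it is essentially the computation the paper itself performs: expand $\tfrac12(e_j+e_{j+1})e_j(e_j+e_{j+1})$ and collapse using $e_j^2=1$ and anticommutation (the paper notes $t_j^{-1}=t_j$ in the $S^+$ case and says the $S^-$ case is similar; your remark about the sign absorbed by $t_j^{-1}$ is exactly what makes the $S^-$ case work). For part (1), however, the paper gives no argument at all --- it simply cites Stembridge --- whereas you supply a self-contained verification of the defining relations. Your sign bookkeeping checks out: $A=e_je_{j+1}$, $B=e_je_{j+2}$, $C=e_{j+1}e_{j+2}$ do satisfy $A^2=B^2=C^2=-1$ and pairwise anticommute, giving $(uv)^3=\tfrac14\bigl((A+B+C)^2-1\bigr)=-1$, which matches $(t_jt_{j+1})^3=z$ in $S^+_n$ and, after the factor $i^6=-1$, the relation $(t_jt_{j+1})^3=1$ in $S^-_n$. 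One small imprecision in your injectivity argument: conjugation by $\phi_n(t_j)$ does send $e_j\mapsto e_{j+1}$ and $e_{j+1}\mapsto e_j$, but it sends $e_k\mapsto -e_k$ for $k\neq j,j+1$, so it is a \emph{signed} permutation rather than a swap fixing the other generators. This does not harm the argument --- the induced action on the lines $\mathbb{C}e_1,\dots,\mathbb{C}e_n$ still realises $s_j$, so the image surjects onto $S_n$ and contains the central element $-1$ acting trivially, giving order at least $2\cdot n!=|\tilde S_n|$ --- but the statement as written should be corrected. On balance your route buys a fully explicit, citation-free proof of part (1) at the cost of a page of routine relation-checking that the paper outsources.
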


\begin{proof}$ $
\begin{enumerate}
\item See~\cite[$\S3$]{stembr1989}.
\item We treat only the case of $\phi_n^+$, as the case of $\phi_n^-$ is similar. Note that $t_j^{-1}=t_j$ and so
\begin{align*}
\phi_n(t_j)e_j\phi_n(t_j^{-1})=\frac{1}{\sqrt{2}}(e_j+e_{j+1})e_j\frac{1}{\sqrt{2}}(e_j+e_{j+1})=
\frac{1}{2}(e_j+e_{j+1}+e_{j+1}-e_j)=e_{j+1}.
\end{align*}
\end{enumerate}
\end{proof}

\section{Characters of parabolic subgroups}\label{sec:parachars}

Let $H\leq\tilde{S}_n$ with $z\in H$ and $H\not\leq\tilde{A}_n$. If $V$ is the representation space of a self-associate
irreducible spin representation $\rho$ of $H$ then there exists
some $S:V\to V$ such that $S\rho(h)S^{-1}=\epsilon(h)\rho(h)$ for all $h\in H$. Note that by Schur's lemma
$S^2=\lambda\operatorname{Id}_V$ for some $\lambda\in\mathbb{C}$. We can
always scale $S$ so that $\lambda=1$. In this case we call $S$ an associator and $S$ is uniquely determined up to
multiplication by $\pm1$.
\newline
\newline
Let $t\in\mathbb{N}$, $n_1,\dots,n_t\in\mathbb{N}$ and $n:=\sum_jn_j$. Suppose we have a family of groups $H_j$ with
$H_j\leq\tilde{S}_{n_j}$ and $H_j\not\leq\tilde{A}_{n_j}$ for
$(1\leq j\leq t)$. Now we explicitly construct the irreducible spin characters of
\begin{align*}
H_1\dots H_t\leq\tilde{S}_{n_1}\dots\tilde{S}_{n_t}\leq\tilde{S}_n.
\end{align*}
For this construction we
follow~\cite[$\S4$]{stembr1989}, where it is assumed each $H_j=\tilde{S}_{n_j}$. However, the proofs run through with
no extra complications in this more general setup.
\newline
\newline
Let $\chi_j\in\operatorname{IrrSp}(H_j)$ and assume without loss of generality that $\chi_1,\dots,\chi_r$ are all
self-associate and $\chi_{r+1},\dots,\chi_t$ are all
non-self-associate. Let $V_j$ be the representation space of the character $\chi_j$ and $S_j$ an associator map of $V_j$
for $(1\leq j\leq r)$ and $\operatorname{Id}_{V_j}$
for $(r+1\leq j\leq t)$. Finally let $V$ be an irreducible representation space of $\mathcal{C}_{t-r}$. If $x\in H_j$
then we define the action of $x$ on
$V\otimes V_1\otimes\dots\otimes V_t$ as follows:
\begin{align}\label{des:parachars}
x.(v\otimes v_1\otimes\dots\otimes v_t)=
\begin{cases}
e_j.v\otimes A_1.v_1\otimes\dots\otimes A_t.v_t&\text{if }j>r\text{ and }x\in H_j\backslash\tilde{A}_{n_j},\\
v\otimes A_1.v_1\otimes\dots\otimes A_t.v_t&\text{otherwise},
\end{cases}
\end{align}
where
\begin{align*}
(A_1,\dots,A_t)=
\begin{cases}
(1,\dots,1,x,1,\dots,1)&\text{if }x\in H_j\cap\tilde{A}_{n_j},\\
(S_1,\dots,S_{j-1},x,1,\dots,1)&\text{if }x\in H_j\backslash\tilde{A}_{n_j}.
\end{cases}
\end{align*}
This defines a representation of $H_1\dots H_t$ and it is non-self-associate if and only if $(t-r)$ is odd. Furthermore,
one can obtain its associate by replacing $V$ with the other irreducible representation of $\mathcal{C}_{t-r}$ or by
replacing $\chi_j$ with $\epsilon.\chi_j$ for some $j>r$.
\newline
\newline
Now suppose that for each $j$ with $(1\leq j\leq t)$ we have sets $\Lambda_j^+$ and $\Lambda_j^-$ with
\begin{align*}
\operatorname{IrrSp}^+(H_j)=\{\chi_\lambda|\lambda\in\Lambda_j^+\},
\operatorname{IrrSp}^-(H_j)=\{\chi^\pm_\lambda|\lambda\in\Lambda_j^-\}.
\end{align*}
Now let $\lambda_j\in\Lambda_j:=\Lambda_j^+\cup\Lambda_j^-$ for $(1\leq j\leq t)$. Then we denote by
$\chi_{(\lambda_1,\dots,\lambda_t)}$ the corresponding character of $H_1\dots H_t$
if $\lambda_j\in\Lambda_j^-$ for an even number of $j$'s or by $\chi^\pm_{(\lambda_1,\dots,\lambda_t)}$ if
$\lambda_j\in\Lambda_j^-$ for an odd number of $j$'s.

\begin{thm}
A complete set of irreducible spin characters of $H_1\dots H_t$ are given by
\begin{align*}
\{\chi_{(\lambda_1,\dots,\lambda_t)}|\lambda_j\in\Lambda_j^-\text{ for an even number of }j\text{'s}\}\cup
\{\chi^\pm_{(\lambda_1,\dots,\lambda_t)}|\lambda_j\in\Lambda_j^-\text{ for an odd number of }j\text{'s}\}.
\end{align*}
\end{thm}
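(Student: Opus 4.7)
The plan is to follow the proof of the corresponding result in~\cite[$\S$4]{stembr1989} (which covers the case $H_j=\tilde S_{n_j}$) and verify that the argument adapts to the present more general setting. The only properties of $\tilde S_{n_j}$ Stembridge actually uses are the existence of an associator $S_j$ for each self-associate spin representation and the anticommutation relation $hh'=zh'h$ for $h\in\tilde S_{n_j}\setminus\tilde A_{n_j}$, $h'\in\tilde S_{n_k}\setminus\tilde A_{n_k}$ with $j\neq k$. The associator exists for any $H_j$ by the opening discussion of $\S$\ref{sec:parachars}, and the anticommutation relation is inherited from $\tilde S_n$; hence the generalisation should go through with essentially no extra work.

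First I would verify that (\ref{des:parachars}) defines a representation of $H_1\cdots H_t$. The relations internal to each $H_j$ are automatic because $x\in H_j$ acts as $\rho_j(x)$ on the $V_j$ slot and as either an associator or the identity elsewhere, and each of these gives an algebra homomorphism out of $H_j$. The cross relations $hh'=zh'h$ for $h\in H_j\setminus\tilde A_{n_j}$, $h'\in H_k\setminus\tilde A_{n_k}$, $j<k$, require a case analysis on whether $j,k\leq r$ or $>r$: the required sign is produced by the Clifford anticommutation $e_je_k=-e_ke_j$ when $j,k>r$, by the defining identity $S_\ell\rho_\ell(h)S_\ell^{-1}=\epsilon(h)\rho_\ell(h)$ of the associator when exactly one of $j,k$ is $\leq r$, and by a combination of both when $j,k\leq r$. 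Irreducibility of the resulting representation then follows from Lemma~\ref{lem:proj}, together with Lemma~\ref{lem:cliffrep} to ensure that $\mathcal C_{t-r}$ acts on $V$ through a full matrix algebra; alternatively one checks directly that the image algebra of $H_1\cdots H_t$ fills $\mathrm{End}(V\otimes V_1\otimes\dots\otimes V_t)$.

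Self-associacy is then dictated by the parity of $t-r$: by Lemma~\ref{lem:cliffrep}, $\mathcal C_{t-r}$ has a single irreducible module when $t-r$ is even and two when $t-r$ is odd, which matches the dichotomy in the statement. Distinctness of the $\chi_{(\lambda_1,\dots,\lambda_t)}$ follows by restricting to products of elements each supported in a single $H_j$, where the character value factors (up to a Clifford prefactor) as $\prod_j\chi_{\lambda_j}(x_j)$, and the hypothesis that the $\chi_{\lambda_j}$ are pairwise distinct separates different label tuples. Completeness is a dimension count: summing $(\dim\chi_{(\lambda_1,\dots,\lambda_t)})^2$ across the constructed list, with non-self-associate pairs counted twice, should yield $|H_1\cdots H_t|/2$, matching the total squared-dimension of the spin part of the regular representation. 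The main obstacle is the case analysis underlying step one, where the signs coming from the Clifford generators, the associator identities, and the triviality of $\epsilon$ on the alternating parts all have to combine to produce the correct action of $z$ in each cross relation; once that bookkeeping is settled the remaining arguments are routine.
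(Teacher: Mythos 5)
Your proposal is correct and matches the paper's approach: the paper's entire proof is a citation to \cite[4.3]{stembr1989} together with the earlier remark that Stembridge's arguments ``run through with no extra complications'' when $\tilde S_{n_j}$ is replaced by $H_j$, and your proposal is precisely a reconstruction of that argument with the verification that only the associator identity and the cross anticommutation relations are used. The extra detail you supply (the sign bookkeeping for the cross relations, the irreducibility via Lemma~\ref{lem:proj}, and the squared-dimension count for completeness) is exactly what the cited reference contains.
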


\begin{proof}
See~\cite[4.3]{stembr1989}.
\end{proof}

We can explicitly write down the irreducible spin characters when $t=2$.

\begin{lem}\label{lem:t=2}
Let $s_j\in H_j$ for $j=1,2$.
\begin{enumerate}
\item If $\lambda_1\in\Lambda^+_1,\lambda_2\in\Lambda^+_2$ then
\begin{align*}
\chi_{(\lambda_1,\lambda_2)}(s_1s_2)=\chi_{\lambda_1}(s_1)\chi_{\lambda_2}(s_2).
\end{align*}
\item If $\lambda_1\in\Lambda^+_1,\lambda_2\in\Lambda^-_2$ then
\begin{align*}
\chi_{(\lambda_1,\lambda_2)}^\pm(s_1s_2)=
\begin{cases}
\overline{\chi}^\pm_{\lambda_1}(s_1)\chi^\pm_{\lambda_2}(s_2)+\overline{\chi}^\mp_{\lambda_1}(s_1)\chi^\mp_{\lambda_2}
(s_2)&\text{if }s_1\in\tilde{A}_{n_1},\\
0&\text{otherwise.}
\end{cases}
\end{align*}
\item If If $\lambda_1\in\Lambda^-_1,\lambda_2\in\Lambda^-_2$ then
\begin{align*}
\chi_{(\lambda_1,\lambda_2)}(s_1s_2)=
\begin{cases}
\chi^+_{\lambda_1}(s_1)\overline{\chi}_{\lambda_2}(s_2)+\chi^-_{\lambda_1}(s_1)\overline{\chi}_{\lambda_2}(s_2)&\text{if
}s_2\in\tilde{A}_{n_2},\\
0&\text{otherwise.}
\end{cases}
\end{align*}
\end{enumerate}
\end{lem}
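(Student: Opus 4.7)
The plan is to compute $\chi^{(\pm)}_{(\lambda_1,\lambda_2)}(s_1s_2)$ directly from the explicit construction~(\ref{des:parachars}) by tracing the operator $\rho(s_1)\rho(s_2)$ on $V\otimes V_1\otimes V_2$. Since~(\ref{des:parachars}) writes each $\rho(s_j)$ as a tensor of explicit operators on the three factors, the trace factorises and the task reduces, in each of the three cases and for each of the four sub-configurations determined by whether $s_j\in\tilde{A}_{n_j}$, to evaluating three single-factor traces.

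Three standard identities do the bulk of the work. First, Lemma~\ref{lem:cliffrep} provides $\mathrm{tr}_V(1)$, $\mathrm{tr}_V(e_j)$ and $\mathrm{tr}_V(e_1e_2)$ on the irreducible module $V$ of $\mathcal{C}_{t-r}$, where $t-r=0,1,2$ in Cases 1, 2, 3 respectively. Second, for a self-associate character $\chi$ with associator $S$, the relation $S\rho(g)=\epsilon(g)\rho(g)S$ together with $S^2=1$ forces $\mathrm{tr}(\rho(g)S)=0$ for $g\notin\tilde{A}_n$ and $\mathrm{tr}(\rho(g)S)=\overline{\chi}^+(g)-\overline{\chi}^-(g)$ for $g\in H\cap\tilde{A}_n$, the latter being the defining labelling of $\overline{\chi}^\pm$ as the $\pm 1$-eigenspaces of $S$. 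Third, the restriction rules $\chi(g)=\overline{\chi}^+(g)+\overline{\chi}^-(g)$ on $\tilde{A}_n$ and $\chi(g)=0$ otherwise for self-associate $\chi$, together with $\chi^\pm|_{\tilde{A}_n}=\overline{\chi}$ and $\chi^-=\epsilon\chi^+$ for non-self-associate $\chi^\pm$, handle the conversion between characters of $H$ and of $H\cap\tilde{A}_n$.

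With these in hand, each case is bookkeeping. Case 1 ($r=2$, $\dim V=1$) is easiest: when $s_2\notin\tilde{A}_{n_2}$ both sides vanish, on the right because $\chi_{\lambda_2}(s_2)=0$ and on the left because this same factor appears in the product; when $s_2\in\tilde{A}_{n_2}$ no $S_1$ is inserted and the trace collapses to $\chi_{\lambda_1}(s_1)\chi_{\lambda_2}(s_2)$. Case 2 ($r=1$, $\dim V=1$ with $e_1\mapsto\pm 1$) is the most delicate: the $s_2\in\tilde{A}_{n_2}$ subcase yields $\chi_{\lambda_1}(s_1)\chi^\pm_{\lambda_2}(s_2)$, while the $s_2\notin\tilde{A}_{n_2}$ subcase inserts both $e_1$ on $V$ and $S_1$ on $V_1$ and produces $\pm(\overline{\chi}^+_{\lambda_1}-\overline{\chi}^-_{\lambda_1})(s_1)\chi^\pm_{\lambda_2}(s_2)$; using $\chi^-_{\lambda_2}=-\chi^+_{\lambda_2}$ off $\tilde{A}_{n_2}$ to rewrite the right-hand side, both subcases collapse to the two-term formula $\overline{\chi}^\pm_{\lambda_1}\chi^\pm_{\lambda_2}+\overline{\chi}^\mp_{\lambda_1}\chi^\mp_{\lambda_2}$ on $\tilde{A}_{n_1}$, and vanishing off $\tilde{A}_{n_1}$ is supplied by either $\chi_{\lambda_1}(s_1)=0$ or the associator identity. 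Case 3 ($r=0$, $V$ the two-dimensional $\mathcal{C}_2$-module, all $S_j=\mathrm{Id}$) is pinned down by $\mathrm{tr}_V(e_1)=\mathrm{tr}_V(e_2)=\mathrm{tr}_V(e_1e_2)=0$ and $\mathrm{tr}_V(1)=2$: the trace is $2\overline{\chi}_{\lambda_1}(s_1)\overline{\chi}_{\lambda_2}(s_2)$ when both $s_j\in\tilde{A}_{n_j}$ and $0$ otherwise, which matches the right-hand side after noting $\chi^+_{\lambda_1}+\chi^-_{\lambda_1}=(1+\epsilon)\chi^+_{\lambda_1}$.

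The main obstacle is consistency of sign conventions. The $\pm$ on $\chi^\pm_{(\lambda_1,\lambda_2)}$ on the left is dictated by the choice of the two irreducible $\mathcal{C}_1$-modules $V$ (via $\mathrm{tr}_V(e_1)=\pm 1$), whereas the $\pm$ on $\overline{\chi}^\pm_{\lambda_1}$ is dictated by the two $\pm 1$-eigenspaces of $S_1$; one must fix both once and for all and then check that the $\pm$ in the formula $\chi^\pm_{(\lambda_1,\lambda_2)}=\overline{\chi}^\pm_{\lambda_1}\chi^\pm_{\lambda_2}+\overline{\chi}^\mp_{\lambda_1}\chi^\mp_{\lambda_2}$ pairs the correct $V$-irrep with the correct eigenspace of $S_1$ and the correct associate of $\chi_{\lambda_2}$. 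Once these labelling choices are made compatibly, every sign lines up and the three cases follow as above.
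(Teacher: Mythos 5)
Your proposal is correct: the paper itself gives no argument for this lemma, deferring entirely to Stembridge's~[4.2], and your direct trace computation on the explicit model $V\otimes V_1\otimes V_2$ of~(\ref{des:parachars}) — using the Clifford-module traces of Lemma~\ref{lem:cliffrep}, the vanishing $\operatorname{tr}(\rho(g)S)=0$ off $\tilde{A}_n$ together with $\operatorname{tr}(\rho(g)S)=\overline{\chi}^+(g)-\overline{\chi}^-(g)$ on it, and the restriction identities — is exactly the argument of that cited source. You are also right to flag that the $\pm$ matching in parts (2) and (3) is a labelling convention being fixed rather than a fact to be proved, which the paper acknowledges in the remark following the statement.
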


Note that in (2) we are also setting up a labelling of characters.

\begin{proof}
See~\cite[4.2]{stembr1989}.
\end{proof}

We would also like to describe the irreducible spin characters of $H_1.H_2\cap\tilde{A}_n$.

\begin{lem}\label{lem:t=2A}
Let $s_j\in H_j$ for $j=1,2$ with $s_1s_2\in H_1.H_2\cap\tilde{A}_n$.
\begin{enumerate}
\item If $\lambda_1\in\Lambda^+_1,\lambda_2\in\Lambda^+_2$ then
\begin{align*}
\overline{\chi}_{(\lambda_1,\lambda_2)}^\pm(s_1s_2)=
\begin{cases}
\overline{\chi}^+_{\lambda_1}(s_1)\overline{\chi}^\pm_{\lambda_2}(s_2)+\overline{\chi}^-_{\lambda_1}(s_1)\overline{\chi}
^\mp_{\lambda_2}(s_2)&\text{if }
s_1\in\tilde{A}_{n_1}\text{ and }s_2\in\tilde{A}_{n_2},\\
0&\text{otherwise.}
\end{cases}
\end{align*}
\item If $\lambda_1\in\Lambda^+,\lambda_2\in\Lambda^-$ then
\begin{align*}
\overline{\chi}_{(\lambda_1,\lambda_2)}(s_1s_2)=
\begin{cases}
\chi_{\lambda_1}(s_1)\chi^\pm_{\lambda_2}(s_2)&\text{if }s_1\in\tilde{A}_{n_1}\text{ and }s_2\in\tilde{A}_{n_2},\\
0&\text{otherwise.}
\end{cases}
\end{align*}
\item If $\lambda_1\in\Lambda^-,\lambda_2\in\Lambda^-$ then
\begin{align*}
\overline{\chi}_{(\lambda_1,\lambda_2)}^\pm(s_1s_2)=
\begin{cases}
\chi^+_{\lambda_1}(s_1)\chi^+_{\lambda_2}(s_2)&\text{if }s_1\in\tilde{A}_{n_1}\text{ and }s_2\in\tilde{A}_{n_2},\\
\pm i\chi^+_{\lambda_1}(s_1)\chi^+_{\lambda_2}(s_2)&\text{otherwise.}
\end{cases}
\end{align*}
\end{enumerate}
\end{lem}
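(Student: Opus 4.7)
The plan is to handle case~(2) as a direct corollary of Lemma~\ref{lem:t=2}(2), and to prove cases~(1) and~(3) using an associator for the self-associate representation of $H_1H_2$ constructed in Description~\ref{des:parachars}. In case~(2), $t-r=1$ is odd, so $\chi_{(\lambda_1,\lambda_2)}^\pm$ is non-self-associate and its restriction to $H_1H_2\cap\tilde{A}_n$ is the irreducible character $\overline{\chi}_{(\lambda_1,\lambda_2)}$. For $s_1s_2\in\tilde{A}_n$ one has $s_1\in\tilde{A}_{n_1}\Leftrightarrow s_2\in\tilde{A}_{n_2}$; when both hold, combining Lemma~\ref{lem:t=2}(2) with $\overline{\chi}^+_{\lambda_1}+\overline{\chi}^-_{\lambda_1}=\chi_{\lambda_1}|_{\tilde{A}_{n_1}}$ and $\chi^+_{\lambda_2}|_{\tilde{A}_{n_2}}=\chi^-_{\lambda_2}|_{\tilde{A}_{n_2}}$ yields the formula; when neither holds, the same lemma gives $0$.

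For cases~(1) and~(3), $t-r$ is even, so $\chi:=\chi_{(\lambda_1,\lambda_2)}$ is self-associate. Picking an associator $S$ of its representation space, for any $h\in H_1H_2\cap\tilde{A}_n$ one has
\begin{align*}
\overline{\chi}^\pm(h)=\tfrac{1}{2}\bigl(\chi(h)\pm\operatorname{tr}(S\rho(h))\bigr),
\end{align*}
where $\rho$ is the representation of Description~\ref{des:parachars}. In case~(1) the representation acts on $V_1\otimes V_2$ and I would take $S=S_1\otimes S_2$ with $S_j$ the associator of $V_j$; checking $S^2=I$ and $S\rho(s)S^{-1}=\epsilon(s)\rho(s)$ is immediate from the construction. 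In case~(3) the representation acts on $V\otimes V_1\otimes V_2$ with $V$ the two-dimensional irreducible $\mathcal{C}_2$-module, and the associator must act trivially on $V_1,V_2$ and anticommute with both $e_1,e_2$ acting on $V$; since $(e_1e_2)^2=-1$ in $\mathcal{C}_2$, this forces $S=\pm i\,e_1e_2\otimes I\otimes I$.

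With $S$ identified, $\operatorname{tr}(S\rho(s_1s_2))$ factors across the tensor factors, and I split according to whether $s_1\in\tilde{A}_{n_1}$ (equivalently $s_2\in\tilde{A}_{n_2}$). In case~(1) the ``both in'' subcase yields $\operatorname{tr}(S_1\rho_1(s_1))\operatorname{tr}(S_2\rho_2(s_2))=\bigl(\overline{\chi}^+_{\lambda_1}(s_1)-\overline{\chi}^-_{\lambda_1}(s_1)\bigr)\bigl(\overline{\chi}^+_{\lambda_2}(s_2)-\overline{\chi}^-_{\lambda_2}(s_2)\bigr)$, which combined with $\chi(s_1s_2)$ from Lemma~\ref{lem:t=2}(1) produces the claimed split; in the ``both out'' subcase the self-associate characters $\chi_{\lambda_j}$ vanish on the non-alternating parts, and $\operatorname{tr}(S\rho(s_1s_2))$ vanishes similarly because $S_j\rho(s_j)S_j=-\rho(s_j)$ has trace $-\chi_{\lambda_j}(s_j)=0$. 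In case~(3), the ``both in'' difference vanishes because $\operatorname{tr}_V(ie_1e_2)=0$, while on ``both out'' the Clifford factor simplifies via $(ie_1e_2)(e_1e_2)=\mp i\cdot I_V$ to give the $\pm i$ prefactor; the ``both out'' sum vanishes by Lemma~\ref{lem:t=2}(3). The main obstacle is the explicit identification of the associator in case~(3) within $\mathcal{C}_2$, including fixing the sign ambiguity that corresponds to the labelling of $\overline{\chi}^\pm$; once this is done, the remaining trace calculations are routine.
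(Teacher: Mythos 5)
Your proof is correct and is essentially the paper's argument in different clothing: the paper proves (1) and (3) by directly decomposing the modules $V_1\otimes V_2$ and $V\otimes V_1\otimes V_2$ into the two $\mathbb{C}(H_1.H_2\cap\tilde{A}_n)$-invariant summands, which are precisely the $\pm1$-eigenspaces of your associators $S_1\otimes S_2$ and $ie_1e_2\otimes I\otimes I$, and handles (2) by noting the restriction is irreducible, just as you do. The sign ambiguity you flag in case (3) is harmless, since the paper explicitly treats (1) and (3) as setting up the labelling of $\overline{\chi}^\pm$.
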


Again note that in 1 and 3 we are setting up a labelling of characters.

\begin{proof}
We can do this by directly decomposing the vector space in (\ref{des:parachars}).

\begin{enumerate}
\item Let $V_j\cong V_j^+\oplus V_j^-$ be a decomposition of $V_j$ as a $\mathbb{C}(H_j\cap\tilde{A}_{n_j})-$module
for $j=1,2$. Then
\begin{align*}
V_1\otimes V_2\cong(V_1^+\otimes V_2^+\oplus V_1^-\otimes V_2^-)\oplus(V_1^+\otimes V_2^-\oplus V_1^-\otimes V_2^+)
\end{align*}
is a decomposition of $V_1\otimes V_2$ as a $\mathbb{C}(H_1.H_2\cap\tilde{A}_n)-$module. The result follows.
\item If $\lambda_1\in\Lambda^+,\lambda_2\in\Lambda^-$ then
$\chi_{(\lambda_1,\lambda_2)}^\pm\downarrow_{H_1.H_2\cap\tilde{A}_n}$ is irreducible.
\item If $\lambda_1\in\Lambda^-,\lambda_2\in\Lambda^-$ then let $V\cong V^+\oplus V^-$ be a decomposition of $V$ in
(\ref{des:parachars}) as a
$\mathcal{C}_2^+-$module. Then
\begin{align*}
V\otimes V_1\otimes V_2\cong(V^+\otimes V_1\otimes V_2)\oplus(V^+\otimes V_1\otimes V_2)
\end{align*}
is a decomposition of $V\otimes V_1\otimes V_2$ as a $\mathbb{C}(H_1.H_2\cap\tilde{A}_n)-$module. The result follows.
\end{enumerate}
\end{proof}

\section{\texorpdfstring{$\tilde{N}_p^t\tilde{S}_t$}{TEXT} and its characters}\label{sec:NS}

We now introduce the group $\tilde{N}_p^t\tilde{S}_t$. The study of this group, its characters and its MN-structure will
form the bulk of the rest of the paper. We will eventually use this group to construct intermediate Brou\'{e} perfect
isometries in order to prove Theorem~\ref{thm:main}.

\subsection{Introduction to \texorpdfstring{$\tilde{N}_p^t\tilde{S}_t$}{TEXT}}\label{subsec:NS}

Let $p$ be an odd prime and consider the subgroup $N_p:=N_{S_p}(C_p)\leq S_p$ where $C_p$ is generated by a $p-$cycle.
Note that $N_p\cong C_p\rtimes C_{p-1}$ where $C_{p-1}$ acts as the full automorphism group of $C_p$. We denote by
$\tilde{N}_p$ the subgroup $\theta_p^{-1}(N_p)$ of $\tilde{S}_p$. Note that $\tilde{N}_p\cong
C_p\rtimes\theta_p^{-1}(C_{p-1})$.
\newline
\newline
Consider the subgroup $N_p^t\leq S_p^t\leq S_{pt}$ for some positive integer $t$. We denote by $\tilde{N}_p^t$ the
subgroup
$\theta_{pt}^{-1}(N_p^t)\leq\theta_{pt}^{-1}(S_p^t)\leq\tilde{S}_{pt}$ and by $\tilde{N}_p^t\tilde{S}_t$ the subgroup
$\theta_{pt}^{-1}(N_p^t\rtimes S_t)$ of $\theta_{pt}^{-1}(S_p^t\rtimes S_t)\leq\tilde{S}_{pt}$, where $S_t$ acts by
permuting the factors of $N_p^t$. We will constantly view $\tilde{N}_p^t\tilde{S}_t$ as a subgroup of $\tilde{S}_{pt}$.
By~\cite[Lemma 3.5]{micols1990} we have that $\theta_{pt}^{-1}(S_t)\cong S^\pm_t$. In either
case we will denote $\theta_{pt}^{-1}(S_t)$ by $\tilde{S}_t$. For any pair of subgroups $A\leq\tilde{N}_p^t$,
$H\leq\tilde{S}_t$ we denote by $AH$ the subgroup of $\tilde{N}_p^t\tilde{S}_t$ generated by $A$ and $H$.
\newline
\newline
Let $(1\leq j<l\leq t)$. We denote by $[j\to l]$ the isomorphism between the $j^{\operatorname{th}}$ and
$l^{\operatorname{th}}$ factor of $\tilde{N}_p^t$ given by conjugating by some $s\in\tilde{S}_t$, where
$\theta_t(s)=(j,j+1,\dots,l-1,l)$. Note that by considering the case
$l=j+1$ and induction, one has
\begin{align*}
[l\to m]\circ[j\to l]=[j\to m]\text{ for }(1\leq j<l<m\leq t).
\end{align*}
Now for some positive integer $q\leq t$ consider
the subgroup of $\tilde{N}_p^t\tilde{S}_t$ that fixes $\{1,\dots,pq\}$ where $\tilde{N}_p^t\tilde{S}_t$ is viewed as a
subgroup of $\tilde{S}_{pt}$. We denote this subgroup by $\tilde{N}_p^{t-q}\tilde{S}_{t-q}[q]$ and set
\begin{align*}
\tilde{N}_p^{t-q}[q]&:=\tilde{N}_p^{t-q}\tilde{S}_{t-q}[q]\cap\tilde{N}_p^t,\\
\tilde{S}_{t-q}[q]&:=\tilde{N}_p^{t-q}\tilde{S}_{t-q}[q]\cap\tilde{S}_t.
\end{align*}

\begin{lem}\label{lem:shift}
There exists an isomorphism $[q]$ between $\tilde{N}_p^{t-q}\tilde{S}_{t-q}$ and $\tilde{N}_p^{t-q}\tilde{S}_{t-q}[q]$
given by
\begin{align*}
[q]:\tilde{N}_p^{t-q}\tilde{S}_{t-q}&\to\tilde{N}_p^{t-q}\tilde{S}_{t-q}[q]\\
x&\mapsto[j\to j+q](x)&&\text{ for }x\text{ in the }j^{\operatorname{th}}\text{ factor of }\tilde{N}_p^{t-q}\text{ for
}(1\leq j\leq t-q)\\
t_j&\mapsto t_{j+q}&&\text{ for }(1\leq j\leq t-q).
\end{align*}
\end{lem}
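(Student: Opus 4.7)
The plan is to define $[q]$ on a generating set of $\tilde{N}_p^{t-q}\tilde{S}_{t-q}$, verify that it extends to a homomorphism by checking the defining relations, and then conclude it is an isomorphism by a counting argument.

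First I would take as generators for $\tilde{N}_p^{t-q}\tilde{S}_{t-q}$ the union of generating sets for each of the $t-q$ factors of $\tilde{N}_p^{t-q}$ together with the Coxeter-type generators $t_1,\dots,t_{t-q-1}$ of $\tilde{S}_{t-q}$. Analogously, $\tilde{N}_p^{t-q}\tilde{S}_{t-q}[q]$ is generated by lifts of the last $t-q$ factors of $\tilde{N}_p^t$ together with the Coxeter-type generators $t_{q+1},\dots,t_{t-1}$ of $\tilde{S}_t$. I define $[q]$ on these generators by the formulas in the statement: shift the $j$th factor via $[j\to j+q]$, and send $t_j\mapsto t_{j+q}$.

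Next I would verify that the defining relations are respected. The factors of $\tilde{N}_p^{t-q}$ commute pairwise up to the central element $z$, as do the factors of $\tilde{N}_p^t$ indexed by $q+1,\dots,t$, so the internal factor relations and the cross-factor relations are preserved, while each $[j\to j+q]$ is an isomorphism of factors by construction. The Coxeter-type relations among $t_1,\dots,t_{t-q-1}$ depend only on the relative positions of indices, so the shift $t_j\mapsto t_{j+q}$ respects them. The mixed relations — conjugation of a factor of $\tilde{N}_p^{t-q}$ by some $t_j$ — are preserved because conjugation by $t_{j+q}$ implements $[j+q\to j+q+1]$ between the relevant factors of $\tilde{N}_p^t$, and one checks via the composition law $[l\to m]\circ[j\to l]=[j\to m]$ recorded just before the lemma that this action is compatible with the shift $[j\to j+q]$.

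Finally, the image of $[q]$ clearly lies in $\tilde{N}_p^{t-q}\tilde{S}_{t-q}[q]$, since no generator moves a letter in $\{1,\dots,pq\}$, and the image contains a generating set of $\tilde{N}_p^{t-q}\tilde{S}_{t-q}[q]$, so $[q]$ is surjective. Both source and target surject onto isomorphic copies of $N_p^{t-q}\rtimes S_{t-q}$ under $\theta_{p(t-q)}$ and $\theta_{pt}$ respectively, each with kernel $\{1,z\}$; hence they have the same finite order and a surjective homomorphism between them must be an isomorphism. The main obstacle in this plan is the mixed-relation check in the second step: verifying that the permutation action of the Coxeter generators on the factors really does commute with the shift. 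This is essentially a bookkeeping exercise using the composition law for the $[j\to l]$ maps, but it requires care because the identifications of factors under conjugation and under the shift must be matched.
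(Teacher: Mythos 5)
Your proposal is correct and follows essentially the same route as the paper: the paper likewise observes that $[q]$ restricts to isomorphisms on $\tilde{N}_p^{t-q}$ and on $\tilde{S}_{t-q}$ separately, and then devotes the whole proof to the mixed-relation check you flag as the main obstacle, carrying it out by explicitly comparing the cycles $\theta_t(s't_l)$ and $\theta_t(t_{l+q}s)$ in the two cases $l\neq j,j+1$ and $l=j$. The only difference is that you leave this bookkeeping as a sketch while the paper writes it out, and you add an (unnecessary but harmless) order-counting step for bijectivity.
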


\begin{proof}
It is easy to see that $[q]$ induces an isomorphism between $\tilde{N}_p^{t-q}$ and $\tilde{N}_p^{t-q}[q]$ and also
between $\tilde{S}_{t-q}$ and $\tilde{S}_{t-q}[q]$. Let $(1\leq j,l\leq t)$ and $x$ be in the $j^{\operatorname{th}}$
factor of $\tilde{N}_p^{t-q}$. Then if $l\neq j,j+1$,
\begin{align*}
[q](t_lxt_l^{-1})=[q](z^{(1-\epsilon(x))/2}x)=s(z^{(1-\epsilon(x))/2}x)s^{-1},
\end{align*}
where $\theta_t(s)=(j,j+1,\dots,j+q-1,j+q)$. On the other hand
\begin{align*}
[q](t_l)[q](x)[q](t_l^{-1})=t_{l+q}sxs^{-1}t_{l+q}^{-1}=z^{(1-\epsilon(x))/2}sxs^{-1}.
\end{align*}
Now if $l=j$ then
\begin{align*}
[q](t_lxt_l^{-1})=s't_lxt_ls'^{-1},
\end{align*}
where $\theta_t(s')=(j+1,\dots,j+q-1,j+q+1)$. Therefore $\theta_t(s't_l)=(j,j+1,\dots,j+q,j+q+1)$
\begin{align*}
[q](t_l)[q](x)[q](t_l^{-1})=t_{l+q}sxs^{-1}t_{l+q}^{-1}=(t_{l+q}s)x(t_{l+q}s)^{-1}
\end{align*}
and $\theta_t(t_{l+q}s)=(j,j+1,\dots,j+q,j+q+1)$.
\end{proof}

Let $y_0$ be a generator of $C_p$ and $y_1$ a generator of $C_{p-1}$ then
\begin{align*}
y_0,y_1,y_2:=y_1^2,\dots,y_{p-1}:=y_1^{p-1}
\end{align*}
is a complete list of representatives of conjugacy classes of $N_p$. We write
\begin{align*}
N_p\wr S_t=((N_p)_1\times\dots\times(N_p)_t)\rtimes S_t.
\end{align*}
Let $n_1,\dots,n_l\leq t$ be distinct positive integers and $x_j\in(N_p)_{n_j}$ for $(1\leq j\leq l)$. We denote by
\begin{align*}
x:=((x_1,\dots,x_l);(n_1,\dots,n_l))
\end{align*}
the element $(x_1,\dots,x_l).(n_1,\dots,n_l)$ of $N_p\wr S_t$, where
$(x_1,\dots,x_l)$ is understood to be in $(N_p)_{n_1}\times\dots\times(N_p)_{n_l}$ and $(n_1,\dots,n_l)$ is the usual
notation for a cycle in $S_t$. We describe $x$ as a cycle of length $l$ in $N_p\wr S_t$ and associate to it the element
$f(x):=(x_1\cdot\ldots\cdot x_l)\in N_p$. Now let $g=\prod x\in N_p\wr S_t$ be a product of disjoint cycles of
$N_p\wr S_t$, where disjoint means the corresponding elements of $S_t$ are disjoint cycles. We associate to $g$ the
multipartition
$\pi=(\pi_0,\dots,\pi_{p-1})$ where $\pi_j$ has as its parts the lengths of the cycles $x$ with $f(x)$ conjugate to
$y_j$ in $N_p$.
We say $g$ is of type $\pi$. By~\cite[4.2.8]{jamker1981} the type of an element in $N_p\wr S_t$ completely determines
its conjugacy
class.
\newline
\newline
Now let $x_j\in\tilde{N}_p$ for $(1\leq j\leq l)$ and $\tau\in\tilde{S}_t$ with $\theta_t(\tau)=(n_1,\dots,n_l)$. We
denote by
\begin{align*}
x:=((x_1,\dots,x_l);\tau)\in\tilde{N}_p^t\tilde{S}_t
\end{align*}
the element $(x_1\dots x_l).\tau$ of $\tilde{N}_p^t\tilde{S}_t$
where it is
understood that $x_j\in\theta^{-1}_{pt}((N_p)_{n_j})\cong\tilde{N}_p$ for $(1\leq j\leq l)$. We describe
$g=\prod x\in\tilde{N}_p^t\tilde{S}_t$ as a product of disjoint cycles in $\tilde{N}_p^t\tilde{S}_t$ if
$\theta_{pt}(g)=\prod\theta_{pt}(x)$ is a product of disjoint cycles in $N_p\wr S_t$. We also say $g$ is of type $\pi$
if
$\theta_{pt}(g)$ is of type $\pi$. Elements of a given type form either one or two conjugacy classes of
$\tilde{N}_p^t\tilde{S}_t$.
\newline
\newline
If $x\in N_p\wr S_t$ then $x$ is conjugate in $N_p\wr S_t$ to an element with disjoint cycle decomposition
\begin{align*}
\prod_j((x_j,1,\dots,1);\tau_j).
\end{align*}
Then the order of $((x_j,1,\dots,1);\tau_j)$ is $|\tau_j|\operatorname{ord}(x_j)$ and so
\begin{align*}
\operatorname{ord}(x)=\operatorname{ord}(\prod_j((x_j,1,\dots,1);\tau_j))=\operatorname{lcm}_j(|\tau_j|\operatorname{ord
}(x_j)).
\end{align*}
Now for each $\tau_j=(n_{j,1},\dots,n_{j,{l_j}})$ set
\begin{align*}
C_j:=\{(g,\dots,g)|g\in C_{N_p}(x_j)\}\leq N_p^{l_j}.
\end{align*}
Then
\begin{align*}
C_{N_p\wr S_t}(x)\cap N_p^t\cong\prod_jC_j.
\end{align*}
Now let $x\in\tilde{N}_p^t\tilde{S}_t$. It is clear that
\begin{align*}
\operatorname{ord}(x)=\operatorname{ord}(\theta_{pt}(x))\text{ or }2\operatorname{ord}(\theta_{pt}(x)).
\end{align*}
Also, as in $\S$\ref{sec:double}, we have
\begin{align*}
|C_{\tilde{N}_p^t\tilde{S}_t}(x)|=|C_{N_p\wr S_t}(\theta_{pt}(x))|\text{ or }2|C_{N_p\wr S_t}(\theta_{pt}(x))|.
\end{align*}
Bringing all the above together we have the following lemma: 

\begin{lem}\label{lem:ordcent}
Let $x\in\tilde{N}_p^t\tilde{S}_t$ have type $\pi$ with $t<p$. Then
\begin{enumerate}
\item $x$ is $p-$regular if and only if $\pi_0=\varnothing$,
\item $|C_{\tilde{N}_p^t\tilde{S}_t}(x)|_p=p^{l(\pi_0)+l(\pi_{p-1})}$.
\end{enumerate}
\end{lem}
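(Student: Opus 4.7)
The plan is to prove both parts by passing through the ordinary wreath product $N_p\wr S_t$ via the projection $\theta_{pt}$ and using the displayed formulae for orders and centralizers that precede the statement. Since $p$ is odd, the two possibilities $\operatorname{ord}(x)=\operatorname{ord}(\theta_{pt}(x))$ or $2\operatorname{ord}(\theta_{pt}(x))$ and $|C_{\tilde{N}_p^t\tilde{S}_t}(x)|=|C_{N_p\wr S_t}(\theta_{pt}(x))|$ or $2|C_{N_p\wr S_t}(\theta_{pt}(x))|$ have no effect on $p$-regularity or on the $p$-part of the centralizer, so it suffices to work with $\theta_{pt}(x)\in N_p\wr S_t$.

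For part (1), I would first identify which class representatives $y_c$ of $N_p$ have order divisible by $p$. Since $y_c=y_1^c$ lies in the complement $C_{p-1}$ for $c\geq 1$, only $y_0$ has order divisible by $p$, so $p\mid\operatorname{ord}(y_c)$ iff $c=0$. Taking a disjoint-cycle decomposition $\theta_{pt}(x)=\prod_j((x_j,1,\dots,1);\tau_j)$ as in the text, the formula $\operatorname{ord}(\theta_{pt}(x))=\operatorname{lcm}_j(|\tau_j|\operatorname{ord}(x_j))$ combined with $|\tau_j|\leq t<p$ shows that $p$ divides $\operatorname{ord}(\theta_{pt}(x))$ iff $p\mid\operatorname{ord}(x_j)$ for some $j$, iff some $x_j$ is conjugate to $y_0$, iff $\pi_0\neq\varnothing$.

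For part (2), the key ingredient is the standard wreath-product centralizer formula
\begin{align*}
|C_{N_p\wr S_t}(\theta_{pt}(x))|=\prod_{c=0}^{p-1}|C_{N_p}(y_c)|^{l(\pi_c)}\,z_{\pi_c},
\end{align*}
where $z_{\pi_c}=\prod_i i^{m_i(\pi_c)}m_i(\pi_c)!$ (the order of the $S_{|\pi_c|}$-centralizer of a permutation of cycle type $\pi_c$). Since all part sizes and multiplicities occurring in any $\pi_c$ are bounded by $t<p$, none of the factors $z_{\pi_c}$ contributes a power of $p$. So the $p$-part reduces to $\prod_c|C_{N_p}(y_c)|_p^{l(\pi_c)}$. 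I would then compute: $|C_{N_p}(y_0)|=p$, $|C_{N_p}(1)|=|C_{N_p}(y_{p-1})|=p(p-1)$, and for each $1\leq c\leq p-2$ a direct calculation in the split extension $C_p\rtimes C_{p-1}$ (using that $y_1$ acts on $C_p$ via an automorphism of order $p-1$) shows $C_{N_p}(y_c)=\langle y_1\rangle$ of order $p-1$. Hence $|C_{N_p}(y_c)|_p=p$ only for $c\in\{0,p-1\}$ and equals $1$ otherwise, giving $|C_{N_p\wr S_t}(\theta_{pt}(x))|_p=p^{l(\pi_0)+l(\pi_{p-1})}$, which transfers to $\tilde{N}_p^t\tilde{S}_t$ as noted above.

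The main obstacle is the centralizer bookkeeping in the wreath product together with identifying the correct centralizer orders in $N_p$ itself; this is essentially a routine but fiddly verification inside $C_p\rtimes C_{p-1}$. Once that is in hand, the reduction from $\tilde{N}_p^t\tilde{S}_t$ to $N_p\wr S_t$ via $\theta_{pt}$ and the hypothesis $t<p$ do the rest.
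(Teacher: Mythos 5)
Your proof is correct and follows essentially the same route as the paper: reduce to $N_p\wr S_t$ via $\theta_{pt}$ (the factor of $2$ being harmless as $p$ is odd), use the lcm formula with $|\tau_j|\le t<p$ for part (1), and for part (2) observe that only $C_{N_p}(y_0)=C_p$ and $C_{N_p}(y_{p-1})=C_{N_p}(1)=N_p$ have order divisible by $p$ while the hypothesis $t<p$ kills any $p$-contribution from the permutation part of the centralizer. The only cosmetic difference is that you invoke the explicit wreath-product centralizer formula with the $z_{\pi_c}$ factors, whereas the paper gets the same conclusion from its description of $C_{N_p\wr S_t}(x)\cap N_p^t\cong\prod_jC_j$ together with $p\nmid[N_p\wr S_t:N_p^t]=t!$.
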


\begin{proof}$ $
\begin{enumerate}
\item This follows from the above comments.
\item This follows from the above comments, that $C_p$ is self-centralising and that $p\nmid[N_p\wr S_t:N_p^t]$.
\end{enumerate}
\end{proof}

\subsection{Characters of \texorpdfstring{$\tilde{N}_p^t\tilde{S}_t$}{TEXT}}

There are $p$ irreducible spin characters of $\tilde{N}_p$. The $(p-1)$ linear spin characters are given by inflating
the
$(p-1)$ linear spin characters of $\theta_p^{-1}(C_{p-1})$ and the remaining character is given by inducing any faithful
linear character of $\theta_p^{-1}(C_p)\cong C_p\times\langle z\rangle$. All the linear characters are
non-self-associate and the
non-linear character is self associate. We label the $(p-1)/2$ associate pairs of linear characters $\zeta_j^+$ and
$\zeta_j^-$
for $(1\leq i\leq(p-1)/2)$ and the self-associate character $\zeta_0$.
\newline
\newline
Denote by $\chi_{\boldsymbol{t}}^{(\pm)}$ the character(s) of $\tilde{N}_p^t$ labelled by
\begin{align*}
(\zeta_0,\dots,\zeta_0,\dots,\zeta_{(p-1)/2},\dots,\zeta_{(p-1)/2}),
\end{align*}
where $\boldsymbol{t}=(t_0,\dots,t_{(p-1)/2})$ with each $\zeta_j$ appearing $t_j$ times.
We want to describe the inertial subgroup of $\chi_{\boldsymbol{t}}^{(\pm)}$.

\begin{lem}
\begin{align*}
I_{\tilde{N}_p^t\tilde{S}_t}(\chi_{\boldsymbol{t}}^{(\pm)})/\tilde{N}_p^t\cong
\begin{cases}
S_{t_0}\times S_{t_1}\times\dots\times S_{t_{(p-1)/2}}&\text{if }t-t_0 \text{ is even,}\\
A_{t_0}\times S_{t_1}\times\dots\times S_{t_{(p-1)/2}}&\text{if }t-t_0 \text{ is odd.}
\end{cases}
\end{align*}
\end{lem}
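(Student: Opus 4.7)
The plan is to use that $\tilde{S}_t \cap \tilde{N}_p^t = \langle z \rangle$ (since $S_t$ and $N_p^t$ intersect trivially in $S_{pt}$), giving $\tilde{N}_p^t\tilde{S}_t / \tilde{N}_p^t \cong S_t$. Then $I/\tilde{N}_p^t$ sits naturally inside $S_t$, acting on $\tilde{N}_p^t$ by permutation of its $t$ direct factors. Since the characters $\zeta_0, \zeta_1, \dots, \zeta_{(p-1)/2}$ of $\tilde{N}_p$ are pairwise non-isomorphic, a factor permutation sends the character $\zeta_{i_1}\otimes\cdots\otimes\zeta_{i_t}$ of $\tilde{N}_p^t$ to a character with a permuted label tuple, and two such characters agree only if the permutation preserves the multiset of labels. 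Hence any $\sigma$ in the inertial subgroup must lie in $S_{t_0} \times S_{t_1} \times \cdots \times S_{t_{(p-1)/2}}$.

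When $t - t_0$ is even, $\chi_{\boldsymbol{t}}$ is self-associate and is the unique irreducible character of $\tilde{N}_p^t$ with its label multiset, so every element of the Young subgroup $S_{t_0} \times \cdots \times S_{t_{(p-1)/2}}$ fixes it, giving the first case.

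When $t - t_0$ is odd, $\chi_{\boldsymbol{t}}^{\pm}$ is an associate pair, and each $\sigma$ in the Young subgroup either fixes $\chi^+_{\boldsymbol{t}}$ or swaps it with $\chi^-_{\boldsymbol{t}}$. To decide which, I use the explicit parabolic construction from Section~\ref{sec:parachars}: the characters $\chi^\pm_{\boldsymbol{t}}$ are realised on $V^{\pm} \otimes V_1 \otimes \cdots \otimes V_t$, where $V^{\pm}$ are the two irreducible modules of $\mathcal{C}_{t-t_0}$, distinguished by the action of the central element $e_{t_0+1}\cdots e_t$. I then compute $\chi^+_{\boldsymbol{t}}$ on ``distinguishing elements'' of $\tilde{N}_p^t$ consisting of one sign-changing generator in each non-self-associate factor (where $\chi^+$ and $\chi^-$ genuinely differ), and track the effect of $\sigma$-conjugation on these values. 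For $\sigma$ permuting non-self-associate factors, the induced automorphism of $\mathcal{C}_{t-t_0}$ is compensated by the accompanying swap of the corresponding $V_j$'s and is realisable by an element of the Clifford algebra itself, hence preserves $\chi^+$; for $\sigma$ permuting self-associate factors, the associators $S_j$ appearing in the action formula interact with the swap so as to introduce a sign equal to the sign of $\sigma$, and only even permutations preserve $\chi^+$. Combining these gives the claimed description.

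The main obstacle is the sign bookkeeping in the last step: one must precisely track how the swap of two associators $S_i, S_j$ interacts with the anti-commutation of sign-changing elements from distinct factors of $\tilde{N}_p^t$ and with the Clifford algebra action on $V^\pm$, to confirm that exactly the sign of the permutation on the $\zeta_0$-factors appears.
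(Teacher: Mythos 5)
Your reduction to the Young subgroup is sound: conjugation by $\tilde{S}_t$ permutes the label tuple, the labels $\zeta_0,\zeta_1,\dots,\zeta_{(p-1)/2}$ are pairwise non-associate, so the inertial quotient is contained in $S_{t_0}\times S_{t_1}\times\dots\times S_{t_{(p-1)/2}}$; and the even case then follows from the uniqueness of the irreducible spin character attached to a given tuple. (For what it is worth, the paper does not prove this lemma either, but cites Michler--Olsson.) The problem is the odd case, which is the entire nontrivial content of the lemma, and there your argument stops exactly where the work begins: the two decisive claims --- that a swap of two non-self-associate factors fixes $\chi^+_{\boldsymbol{t}}$, while a swap of two $\zeta_0$-factors interchanges $\chi^+_{\boldsymbol{t}}$ and $\chi^-_{\boldsymbol{t}}$ --- are asserted rather than proved, and you yourself flag the required sign bookkeeping as the main obstacle. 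What you have actually established is only that the inertial quotient is a subgroup of index at most $2$ in the Young subgroup.

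This gap is not cosmetic, because the naive version of your first claim points the wrong way. The two irreducible $\mathcal{C}_{t-t_0}$-modules are distinguished by the scalar through which the central element $e_{[t-t_0]}$ acts, and the swap automorphism $e_i\leftrightarrow e_j$ sends $e_{[t-t_0]}$ to $-e_{[t-t_0]}$, hence \emph{interchanges} the two modules. So the statement that a swap of non-self-associate factors preserves $\chi^+_{\boldsymbol{t}}$ can only be correct because a compensating $-1$ arises elsewhere --- from the anticommutation in $\tilde{S}_{pt}$ of the odd elements sitting in the $i$-th and $j$-th factors, and from the effect of the conjugating element of $\tilde{S}_t$ on the remaining coordinates --- and you have not exhibited it. Likewise, for the $\zeta_0$-swap you must actually evaluate ${}^{\tau}\chi^+_{\boldsymbol{t}}$ on an element where $\chi^+_{\boldsymbol{t}}$ and $\chi^-_{\boldsymbol{t}}$ differ; such an element necessarily carries an odd entry in every non-self-associate coordinate, and since the associator of $\zeta_0$ has trace zero one must also choose nontrivial entries in the $\zeta_0$-coordinates, after which the interaction of the two associators $S_i,S_j$ with the swap has to be tracked explicitly. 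Until these two computations are carried out (or the statement is simply quoted from Michler--Olsson, as the paper does), the proof is incomplete.
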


\begin{proof}
See~\cite[Proposition 3.13]{micols1990}.
\end{proof}

Adopting the notation of Lemma~\ref{lem:shift} we set
\begin{align*}
\tilde{N}_p^t\tilde{S}_{\boldsymbol{t}}=\tilde{N}_p^t
(\tilde{S}_{t_0}[t'_0]\tilde{S}_{t_1}[t'_1]\dots\tilde{S}_{t_{(p-3)/2}}[t_{(p-3)/2}']\tilde{S}_{t_{(p-1)/2}}[t_{(p-1)/2}
']),
\end{align*}
where $t_l'=\sum_{j=0}^{l-1}$ for all $(0\leq l\leq(p-1)/2)$.
\newline
\newline
By rearranging factors we can see that every irreducible spin character of $\tilde{N}_p^t$ is conjugate to
$\chi_{\boldsymbol{t}}^{(\pm)}$
for a unique $\boldsymbol{t}$. However, when $(t-t_0)$ is odd we need to determine if $\chi_{\boldsymbol{t}}^+$ is
conjugate to
$\chi_{\boldsymbol{t}}^-$. In this case we have
\begin{align*}
[\tilde{N}_p^t\tilde{S}_{\boldsymbol{t}}:I_{\tilde{N}_p^t\tilde{S}_t}(\chi_{\boldsymbol{t}}^{\pm})]=
\begin{cases}
1&\text{if }t_0\leq1,\\
2&\text{if }t_0>1.
\end{cases}
\end{align*}
Therefore we have an element that swaps $\chi_{\boldsymbol{t}}^+$ and $\chi_{\boldsymbol{t}}^-$ if and only if $t_0>1$
and hence we have the following lemma.

\begin{lem}\label{lem:list}
A complete list of representatives of $\tilde{N}_p^t\tilde{S}_t-$conjugacy classes of irreducible spin characters of
$\tilde{N}_p^t$ are given by
\begin{align*}
\{\chi_{\boldsymbol{t}}|\boldsymbol{t},t-t_0\text{ even}\}\cup\{\chi_{\boldsymbol{t}}^+|\boldsymbol{t},t-t_0\text{ odd,
}t_0>1\}
\cup\{\chi_{\boldsymbol{t}}^\pm|\boldsymbol{t},t-t_0\text{ odd, }t_0\leq1\}.
\end{align*}
\end{lem}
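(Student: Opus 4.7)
The plan is to use the parameterisation of irreducible spin characters of $\tilde{N}_p^t$ as outer tensor products of the $p$ irreducible spin characters $\zeta_0,\zeta_1^{\pm},\dots,\zeta_{(p-1)/2}^{\pm}$ of $\tilde{N}_p$, and to analyse the $\tilde{N}_p^t\tilde{S}_t$-action on these in two stages: first by the permutation of tensor slots, then by the refined fusion between associate pairs.

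First I would observe that conjugation by $\tilde{S}_t$ permutes the tensor factors of $\tilde{N}_p^t$, so any tuple labelling an irreducible spin character is $\tilde{S}_t$-conjugate to one in which the $\zeta_0$'s appear in the first $t_0$ slots, followed by $\zeta_1^{\pm}$'s in the next $t_1$ slots, and so on. The multiplicity vector $\boldsymbol{t}=(t_0,\dots,t_{(p-1)/2})$ is a complete invariant of this ordering, because it is preserved under permutation of factors. The construction of $\S$\ref{sec:parachars} with $H_j=\tilde{N}_p$ then shows that the resulting character is self-associate (giving $\chi_{\boldsymbol{t}}$) when the number of non-self-associate slots $t-t_0$ is even, and splits into an associate pair $\chi_{\boldsymbol{t}}^{\pm}$ when $t-t_0$ is odd. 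This produces the three families appearing in the statement and handles uniqueness of $\boldsymbol{t}$.

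Second, when $t-t_0$ is odd I have to decide whether $\chi_{\boldsymbol{t}}^+$ and $\chi_{\boldsymbol{t}}^-$ lie in the same $\tilde{N}_p^t\tilde{S}_t$-orbit. The stabiliser of the underlying unsigned tuple inside $\tilde{N}_p^t\tilde{S}_t$ is exactly $\tilde{N}_p^t\tilde{S}_{\boldsymbol{t}}$, so the two associates fuse precisely when $[\tilde{N}_p^t\tilde{S}_{\boldsymbol{t}}:I_{\tilde{N}_p^t\tilde{S}_t}(\chi_{\boldsymbol{t}}^{\pm})]=2$, and are distinct orbits when this index is $1$. Combining the previous lemma, which gives $I_{\tilde{N}_p^t\tilde{S}_t}(\chi_{\boldsymbol{t}}^{\pm})/\tilde{N}_p^t\cong A_{t_0}\times S_{t_1}\times\cdots\times S_{t_{(p-1)/2}}$ in the odd case, with $\tilde{N}_p^t\tilde{S}_{\boldsymbol{t}}/\tilde{N}_p^t\cong S_{t_0}\times S_{t_1}\times\cdots\times S_{t_{(p-1)/2}}$, yields the displayed dichotomy $[S_{t_0}:A_{t_0}]=2$ if $t_0>1$ and $1$ if $t_0\leq 1$.

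The main obstacle is the second step: verifying that an odd permutation of the first $t_0$ slots genuinely swaps $\chi_{\boldsymbol{t}}^+$ and $\chi_{\boldsymbol{t}}^-$, as opposed to stabilising each individually. For this I would unpack the explicit construction (\ref{des:parachars}): a transposition on two $\zeta_0$-slots acts on the Clifford module $V$ of $\mathcal{C}_{t-t_0}$ in a way that intertwines the two irreducible $\mathcal{C}_{t-t_0}$-characters $\chi_{t-t_0}^{\mathcal{C}\pm}$ of Lemma~\ref{lem:cliffrep}(2) (recall $t-t_0$ is odd here), and it is precisely this swap of Clifford-module characters that distinguishes $\chi_{\boldsymbol{t}}^+$ from $\chi_{\boldsymbol{t}}^-$. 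Once this swap is identified, the three steps combine to show that the listed set is an irredundant transversal of $\tilde{N}_p^t\tilde{S}_t$-conjugacy classes of irreducible spin characters of $\tilde{N}_p^t$, which is the lemma.
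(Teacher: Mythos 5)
Your proposal is correct and follows essentially the same route as the paper: reduce to a sorted tuple $\boldsymbol{t}$ via permutation of the tensor slots, read off self-associativity from the parity of $t-t_0$, and compare $\tilde{N}_p^t\tilde{S}_{\boldsymbol{t}}$ with the inertia subgroup to get the index $2$ exactly when $t_0>1$. The ``main obstacle'' you isolate is already resolved by that index computation: an element of $\tilde{N}_p^t\tilde{S}_{\boldsymbol{t}}\setminus I_{\tilde{N}_p^t\tilde{S}_t}(\chi_{\boldsymbol{t}}^+)$ cannot fix $\chi_{\boldsymbol{t}}^+$ by definition of the inertia group, yet must send it to a spin character lying over the same sorted tuple, leaving $\chi_{\boldsymbol{t}}^-$ as the only possibility --- so the explicit Clifford-module verification you sketch is unnecessary (and the paper does not carry it out).
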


With Theorem~\ref{thm:inrnor} and Remark~\ref{rem:inrnor} in mind we want to describe the irreducible constituents of
$\chi_{\boldsymbol{t}}^{(\pm)}\uparrow^{\tilde{N}_p^t\tilde{S}_{\boldsymbol{t}}}$. To do this let's first assume
$t_0=0$.

\begin{lem}\label{lem:exten>0}
If $t_0=0$ then $\chi_{\boldsymbol{t}}^{(\pm)}$ extends to a character of $\tilde{N}_p^t\tilde{S}_{\boldsymbol{t}}$.
Moreover, this character
is self-associate if and only if $t$ is even.
\end{lem}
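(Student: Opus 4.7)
The plan is to build the extension directly out of the Clifford-algebra construction recalled in Section~\ref{sec:cliff}. With $t_0=0$ only the non-self-associate linear characters $\zeta_l$ ($l\geq1$) appear in $\chi_{\boldsymbol{t}}^{(\pm)}$, so in the construction (\ref{des:parachars}) we have $r=0$, the space $V$ is an irreducible $\mathcal{C}_t$-module, and each $V_j$ is one-dimensional (in particular $V_1\otimes\cdots\otimes V_t$ is one-dimensional).

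I would define the extension by letting each $s\in\tilde{S}_{\boldsymbol{t}}$ act on $V$ by left multiplication by $\phi_t(s)\in\mathcal{C}_t$ from Lemma~\ref{lem:embed}, and act trivially on $V_1\otimes\cdots\otimes V_t$ (there is nothing else to do in dimension one). That $\phi_t$ is a homomorphism, by Lemma~\ref{lem:embed}(1), makes this a representation of $\tilde{S}_{\boldsymbol{t}}$ on $V\otimes V_1\otimes\cdots\otimes V_t$. The compatibility with the $\tilde{N}_p^t$-action reduces, by multiplicativity, to checking $\phi_t(t_j)e_j\phi_t(t_j)^{-1}=e_{j+1}$ — which is exactly Lemma~\ref{lem:embed}(2) — together with the fact that $\tilde{S}_{\boldsymbol{t}}$ permutes only indices whose $V_j$'s carry the same $\zeta_l$, so the scalar action on $V_1\otimes\cdots\otimes V_t$ is preserved. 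This produces an irreducible character of $\tilde{N}_p^t\tilde{S}_{\boldsymbol{t}}$ restricting to $\chi_{\boldsymbol{t}}^{(\pm)}$, giving the desired extension $\tilde{\chi}$.

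For the self-associatedness claim, the paragraph after (\ref{des:parachars}) tells us $\chi_{\boldsymbol{t}}^{(\pm)}$ is self-associate on $\tilde{N}_p^t$ iff $t-r=t$ is even; if $t$ is odd no extension can be self-associate because its restriction to $\tilde{N}_p^t$ is not. When $t=2k$ is even I would argue $\tilde{\chi}$ vanishes on $(\tilde{N}_p^t\tilde{S}_{\boldsymbol{t}})\setminus\tilde{A}_{pt}$, from which $\epsilon\tilde{\chi}=\tilde{\chi}$ is immediate. By Lemma~\ref{lem:cliffrep}(1) the trace $\chi_t^{\mathcal{C}}$ reads off only the $e_\varnothing$-coefficient, and since every $\phi_t(t_j)=\tfrac{1}{\sqrt{2}}(e_j+e_{j+1})$ is a sum of Clifford generators of length~$1$, an element $\phi_t(s)$ decomposes into $e_I$-terms whose lengths are all congruent to the length of $s$ modulo $2$; odd permutations thus contribute no $e_\varnothing$-term. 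A parallel parity count for the Clifford contributions coming from $\tilde{N}_p^t$-factors finishes the vanishing.

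The main obstacle is the careful bookkeeping: one must verify that the simultaneous Clifford multiplication on $V$ and the implicit reordering of factors under $\tilde{S}_{\boldsymbol{t}}$ really patch into a single representation of $\tilde{N}_p^t\tilde{S}_{\boldsymbol{t}}$, with the signs generated by commuting Clifford generators absorbed by the Coxeter relations encoded in $\phi_t$. Once Lemma~\ref{lem:embed}(2) is exploited systematically this becomes a direct check, but it is the technical heart of the argument.
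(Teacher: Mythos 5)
Your proposal is correct and follows essentially the same route as the paper: view the representation space of $\chi_{\boldsymbol{t}}^{(\pm)}$ as an irreducible $\mathcal{C}_t$-module, extend the action to $\tilde{S}_{\boldsymbol{t}}$ via $\phi_t$ from Lemma~\ref{lem:embed}, use part (2) of that lemma for compatibility with the $\tilde{N}_p^t$-action, and read off the self-associateness from the parity statement in Lemma~\ref{lem:cliffrep}. The paper's proof is just a terser version of the same argument (citing Nagao--Olsson for details).
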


\begin{proof}
Using the description in $\S$\ref{sec:parachars} we can view the representation space of $\chi_{\boldsymbol{t}}^{(\pm)}$
as an irreducible
$\mathcal{C}_t-$module. We can now define an action of $\tilde{S}_{\boldsymbol{t}}$ via the homomorphism
\begin{align*}
\tilde{S}_{\boldsymbol{t}}\hookrightarrow\tilde{S}_t\to\mathcal{C}_t,
\end{align*}
where the second map is given by $\phi_t$ in Lemma~\ref{lem:embed}. Note that by part (2) of this lemma and the
identification of factors in
$\S$\ref{subsec:NS} this does indeed define a representation of $\tilde{N}_p^t\tilde{S}_{\boldsymbol{t}}$. The second
part is immediate
from~\ref{lem:cliffrep}. For more details see~\cite[Lemma 4.3]{micols1990}.
\end{proof}

We label the character(s) in the above lemma $\chi^{(\pm)}_{\overline{\boldsymbol{t}}}$. Continuing with our assumption
that $t_0=0$ we now
are in a position to describe all the irreducible constituents of
$\chi_{\boldsymbol{t}}^{(\pm)}\uparrow^{\tilde{N}_p^t\tilde{S}_{\boldsymbol{t}}}$ in this case.

\begin{lem}\label{lem:j>0}
If $t_0=0$ then the irreducible constituents of
$\chi_{\boldsymbol{t}}^{(\pm)}\uparrow^{\tilde{N}_p^t\tilde{S}_{\boldsymbol{t}}}$ are
precisely the characters of the form $\chi^{(\pm)}_{\overline{\boldsymbol{t}}}\otimes\chi$ where $\chi$ is an
irreducible character of
$S_{t_1}\times\dots\times S_{t_{(p-1)/2}}$ inflated to $\tilde{N}_p^t\tilde{S}_{\boldsymbol{t}}$. Moreover,
$\chi^{(\pm)}_{\overline{\boldsymbol{t}}}\otimes\chi$ is self-associate if and only if $t$ is even and if $t$ is odd
then
$\chi^{\pm}_{\overline{\boldsymbol{t}}}\otimes\chi$ are associates.
\end{lem}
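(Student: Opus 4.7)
The plan is to reduce the statement to Gallagher's theorem. Since $t_0=0$, Lemma~\ref{lem:exten>0} already supplies an explicit extension $\chi^{(\pm)}_{\overline{\boldsymbol{t}}}$ of $\chi_{\boldsymbol{t}}^{(\pm)}$ from $\tilde{N}_p^t$ up to $\tilde{N}_p^t\tilde{S}_{\boldsymbol{t}}$. Gallagher's theorem, a standard consequence of Clifford theory, then asserts that the irreducible constituents of $\chi_{\boldsymbol{t}}^{(\pm)}\uparrow^{\tilde{N}_p^t\tilde{S}_{\boldsymbol{t}}}$ are precisely the pairwise-distinct characters $\chi^{(\pm)}_{\overline{\boldsymbol{t}}}\otimes\chi$ where $\chi$ ranges over the irreducible characters of the quotient $\tilde{N}_p^t\tilde{S}_{\boldsymbol{t}}/\tilde{N}_p^t$, inflated to $\tilde{N}_p^t\tilde{S}_{\boldsymbol{t}}$.

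Next I would identify this quotient. Applying $\theta_{pt}$ shows $\tilde{S}_{\boldsymbol{t}}\cap\tilde{N}_p^t=\langle z\rangle$, since any non-trivial element of $\tilde{S}_{\boldsymbol{t}}$ has non-trivial image in $S_t$ permuting the $p$-blocks of $\{1,\dots,pt\}$, whereas $N_p^t$ is contained inside those blocks. Hence
\begin{align*}
\tilde{N}_p^t\tilde{S}_{\boldsymbol{t}}/\tilde{N}_p^t\cong\tilde{S}_{\boldsymbol{t}}/\langle z\rangle\cong S_{t_1}\times\dots\times S_{t_{(p-1)/2}},
\end{align*}
using $t_0=0$, and together with the previous paragraph this yields the first assertion.

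For the claims about (self-)associates I would observe that the sign character $\epsilon$ of $\tilde{N}_p^t\tilde{S}_{\boldsymbol{t}}$ is linear, so
\begin{align*}
\epsilon\cdot(\chi^{(\pm)}_{\overline{\boldsymbol{t}}}\otimes\chi)=(\epsilon\cdot\chi^{(\pm)}_{\overline{\boldsymbol{t}}})\otimes\chi.
\end{align*}
Lemma~\ref{lem:exten>0} tells us that $\chi_{\overline{\boldsymbol{t}}}$ is self-associate precisely when $t$ is even, and that $\epsilon\cdot\chi^+_{\overline{\boldsymbol{t}}}=\chi^-_{\overline{\boldsymbol{t}}}$ when $t$ is odd. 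Combined with Gallagher distinctness, which ensures $\chi^+_{\overline{\boldsymbol{t}}}\otimes\chi\neq\chi^-_{\overline{\boldsymbol{t}}}\otimes\chi$, this delivers both parts of the second claim.

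I do not expect any genuine obstacle here: the only delicate ingredient is already encapsulated in Lemma~\ref{lem:exten>0}, and the rest is a direct application of Clifford theory together with a short bookkeeping check on the quotient $\tilde{N}_p^t\tilde{S}_{\boldsymbol{t}}/\tilde{N}_p^t$.
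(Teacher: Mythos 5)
Your proof is correct, and it reaches the conclusion by a cleaner but genuinely different route from the paper's. Where you invoke Gallagher's theorem wholesale, the paper in effect reproves it in this instance: irreducibility of $\chi^{(\pm)}_{\overline{\boldsymbol{t}}}\otimes\chi$ comes from Lemma~\ref{lem:proj} (a tensor-of-projective-representations argument), distinctness from a Schur's-lemma argument, and exhaustiveness from a dimension count of $\mathbb{C}Ge$ for $e$ the idempotent of $\chi^{(\pm)}_{\overline{\boldsymbol{t}}}$ in $\mathbb{C}\tilde{N}_p^t$. For the associate behaviour the paper argues via character values --- showing $(\chi^{(\pm)}_{\overline{\boldsymbol{t}}}\otimes\chi)(x)=0$ for all $x\notin\tilde{A}_{pt}$ when $t$ is even, and exhibiting an $x$ with non-zero value when $t$ is odd --- whereas you twist by $\epsilon$ and quote Lemma~\ref{lem:exten>0}. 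Your version is shorter; the paper's version has the side benefit that the dimension-count template and the vanishing-off-$\tilde{A}_{pt}$ information are reused verbatim in Lemma~\ref{lem:j=0}, Theorem~\ref{thm:maincha} and the later character-value lemmas. One small imprecision: Gallagher's distinctness statement compares $\tilde{\chi}\otimes\beta$ and $\tilde{\chi}\otimes\beta'$ for a \emph{fixed} extension $\tilde{\chi}$, so it does not by itself give $\chi^{+}_{\overline{\boldsymbol{t}}}\otimes\chi\neq\chi^{-}_{\overline{\boldsymbol{t}}}\otimes\chi$ when $t$ is odd; for that, simply restrict to $\tilde{N}_p^t$, where the two characters restrict to $\dim(\chi)\chi^{+}_{\boldsymbol{t}}$ and $\dim(\chi)\chi^{-}_{\boldsymbol{t}}$ respectively, and these are distinct because $\chi^{\pm}_{\boldsymbol{t}}$ form a genuine associate pair of $\tilde{N}_p^t$ for $t$ odd.
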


\begin{proof}
Adopting the notation from Lemma~\ref{lem:proj} we set
$G:=\tilde{N}_p^t\tilde{S}_{\boldsymbol{t}}$, $H:=\tilde{N}_p^t$ and $M_1,M_2$ to be the representation spaces
corresponding to
$\chi^{(\pm)}_{\overline{\boldsymbol{t}}}$ and $\chi$ respectively. Lemma~\ref{lem:proj} now says that the required
character is
irreducible. Next suppose $M=M_1\otimes M_2$ and $M'=M_1\otimes M_2'$ are two such representations. If $M$ and $M'$ are
isomorphic as
$\mathbb{C}G-$modules then they must be isomorphic as $\mathbb{C}H-$modules and hence by Schur's lemma any isomorphism
is of the form
$1\otimes\psi$ for some invertible linear map $\psi:M_2\to M_2'$. Hence, distinct $\chi$ give rise to distinct
characters
$\chi^{(\pm)}_{\overline{\boldsymbol{t}}}\otimes\chi$. We now show there are no more characters of $G$ appearing. Let
$e$ be the idempotent/sum of idempotents
of $\mathbb{C}H$ corresponding to the character(s) $\chi^{(\pm)}_{\overline{\boldsymbol{t}}}$. Then
\begin{align*}
&\dim(\mathbb{C}Ge)\geq\sum_{\chi\in\operatorname{Irr}(S_{t_1}\times\dots\times
S_{t_{(p-1)/2}})}\dim(\chi^{(\pm)}_{\overline{\boldsymbol{t}}}\otimes\chi)^2\\
=&\dim(\chi^{(\pm)}_{\overline{\boldsymbol{t}}})^2\sum_{\chi\in\operatorname{Irr}(S_{t_1}\times\dots\times
S_{t_{(p-1)/2}})}\dim(\chi)^2=\dim(\mathbb{C}He)t_1!\dots t_{(p-1)/2}!=\dim(\mathbb{C}Ge).
\end{align*}
Therefore we have equality throughout and all irreducible constituents of
$\chi_{\boldsymbol{t}}^{(\pm)}\uparrow^{\tilde{N}_p^t\tilde{S}_{\boldsymbol{t}}}$ are of the desired form.
\newline
\newline
For the second part we note that if $t$ is even then by Lemma~\ref{lem:exten>0}
$\chi_{\overline{\boldsymbol{t}}}^{(\pm)}(x)=0$ and hence
$(\chi^{(\pm)}_{\overline{\boldsymbol{t}}}\otimes\chi)(x)=0$ for any
$x\in\tilde{N}_p^t\tilde{S}_t\backslash\tilde{A}_{pt}$. If $t$ is odd
then there exists $x\in\tilde{N}_p^t\backslash\tilde{A}_{pt}$ such that
$\chi_{\overline{\boldsymbol{t}}}^{(\pm)}(x)\neq0$ and hence
$(\chi^{(\pm)}_{\overline{\boldsymbol{t}}}\otimes\chi)(x)\neq0$. It is clear that
$\chi^{(\pm)}_{\overline{\boldsymbol{t}}}\otimes\chi$ are
associates.
\end{proof}

If $\lambda=(\varnothing,\lambda_1,\dots,\lambda_{(p-1)/2})$ with $\lambda_j\vdash t_j$ then $\lambda$ labels a
character of
$S_{t_1}\times\dots\times S_{t_{(p-1)/2}}$ and we denote by $\chi_\lambda$ if $t$ is even or by $\chi_\lambda^\pm$ if
$t$ is odd the
corresponding character of $\tilde{N}_p^t\tilde{S}_{\boldsymbol{t}}$. As described in $\S$\ref{sec:double} we also have
the character(s)
$\overline{\chi}_\lambda^{(\pm)}$ of $\tilde{N}_p^t\tilde{S}_{\boldsymbol{t}}\cap\tilde{A}_{pw}$. We do not worry about
the choices made
for the labelling of $\chi_\lambda^\pm$ or $\overline{\chi}_\lambda^\pm$ only that it is fixed from now on.
\newline
\newline
We now turn our attention to the case $t_1=\dots=t_{(p-1)/2}=0$. In this case, unless $t\leq 3$,
$\chi_{\boldsymbol{t}}$ does not extend to a character of $\tilde{N}_p^t\tilde{S}_t$. However, it does always extend to
a character of $\tilde{N}_p^t\rtimes S_t$, where the action of $S_t$ on $\tilde{N}_p^t$ is given, through $\theta_{t}$,
by the
action of $\tilde{S}_t$ in $\tilde{N}_p^t\tilde{S}_t$. Let $\rho:\operatorname{GL}(U)\to\operatorname{GL}(U)$ be the
representation
corresponding to $\zeta_0$ and $S:U\to U$ an associator for $\rho$. Let $U^+$ (respectively $U^-$) be the $1-$eigenspace
(respectively
$(-1)-$eigenspace) of $S$ and let $u_1$ and $u_2$ be eigenvectors for $S$. We define the map
\begin{align*}
T:U\otimes U&\to U\otimes U\\
u_1\otimes u_2&\mapsto\eta(u_2\otimes u_1),\\
\text{where }\eta&:=
\begin{cases}
-1 &\text{ if }u_1,u_2\in U^-,\\
1 &\text{ otherwise,}
\end{cases}
\end{align*}
and extend linearly.

\begin{lem}\label{lem:T}$ $
\begin{enumerate}
\item $T$ commutes with $S\otimes S$.
\item $((S\otimes T)\circ(T\otimes S))^3=1$.
\item If $x\in\tilde{N}_p$ then $T\circ(\rho_1(x)\otimes1)\circ
T^{-1}=S^{(1-\epsilon(x))/2}\otimes\rho_1(x)$.
\end{enumerate}
\end{lem}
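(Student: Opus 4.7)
The plan is to verify all three identities on simple tensors of $S$-eigenvectors and then extend by linearity. This suffices because $S$ is an associator scaled so that $S^2=\operatorname{Id}_U$, giving the decomposition $U=U^+\oplus U^-$; tensors of eigenvectors then span $U\otimes U$ and $U\otimes U\otimes U$. A further useful observation is that $T^2=\operatorname{Id}$: on eigenvectors $T^2(u_1\otimes u_2)=\eta^2(u_1\otimes u_2)=u_1\otimes u_2$, so $T^{-1}=T$. Part (1) is then a line: on $u_1\otimes u_2$ with $Su_i=\epsilon_i u_i$, both $(S\otimes S)\circ T$ and $T\circ(S\otimes S)$ send $u_1\otimes u_2$ to $\epsilon_1\epsilon_2\eta(u_2\otimes u_1)$, since $S\otimes S$ acts as $\epsilon_1\epsilon_2$ on both $u_1\otimes u_2$ and $u_2\otimes u_1$.

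For part (2), I would compute directly on $u_1\otimes u_2\otimes u_3$ with $Su_i=\epsilon_i u_i$. One application of $(S\otimes T)\circ(T\otimes S)$ yields a scalar $\mathcal{F}(u_1,u_2,u_3)$ times $u_2\otimes u_3\otimes u_1$, where $\mathcal{F}$ is the product of two $\eta$-signs (one from each $T$) and two eigenvalues (one from each $S$). Three successive applications then induce the cyclic chain $(u_1,u_2,u_3)\mapsto(u_2,u_3,u_1)\mapsto(u_3,u_1,u_2)\mapsto(u_1,u_2,u_3)$, and the key observation is that in the triple product $\mathcal{F}(u_1,u_2,u_3)\mathcal{F}(u_2,u_3,u_1)\mathcal{F}(u_3,u_1,u_2)$ each pairwise sign $\eta(u_i,u_j)$ and each eigenvalue $\epsilon_i$ occurs exactly twice, so the total scalar is $1$.

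For part (3), I would first invoke the associator property $S\rho(x)=\epsilon(x)\rho(x)S$, which implies that $\rho(x)$ sends an $S$-eigenvector of eigenvalue $\epsilon_u$ to an $S$-eigenvector of eigenvalue $\epsilon(x)\epsilon_u$. Using $T^{-1}=T$ and unwinding both applications of $T$ on $u_1\otimes u_2$ gives
\begin{align*}
T\circ(\rho(x)\otimes 1)\circ T(u_1\otimes u_2)=\eta(u_1,u_2)\,\eta(\rho(x)u_2,u_1)\,(u_1\otimes\rho(x)u_2).
\end{align*}
When $\epsilon(x)=1$ the two $\eta$-signs coincide and cancel, matching $(1\otimes\rho(x))(u_1\otimes u_2)=(S^0\otimes\rho(x))(u_1\otimes u_2)$. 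When $\epsilon(x)=-1$ the eigenvalue of $\rho(x)u_2$ flips to $-\epsilon_2$, and a brief case analysis over $(\epsilon_1,\epsilon_2)\in\{\pm 1\}^2$ shows the product of the two signs equals $\epsilon_1$, so the output is $\epsilon_1(u_1\otimes\rho(x)u_2)=(S\otimes\rho(x))(u_1\otimes u_2)$, matching $S^1\otimes\rho(x)$.

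The main bookkeeping hurdle is part (3): the product of the two $\eta$-factors depends on a mild interaction between $\epsilon(x)$ and the eigenvalues of $u_1,u_2$, and one has to check uniformly across the four sign patterns that the result reproduces exactly the scalar predicted by $S^{(1-\epsilon(x))/2}\otimes\rho(x)$.
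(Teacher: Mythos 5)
Your proof is correct and follows the same route as the paper: all three identities are verified on simple tensors of $S$-eigenvectors (using $S^2=\operatorname{Id}$ and $T^2=\operatorname{Id}$) by tracking the swap sign $\eta$ together with the eigenvalues. If anything, your finish of part (2) — noting that in the triple product $\mathcal{F}(u_1,u_2,u_3)\mathcal{F}(u_2,u_3,u_1)\mathcal{F}(u_3,u_1,u_2)$ each pairwise sign and each eigenvalue occurs exactly twice — makes explicit a step the paper leaves implicit, and your conjugated form of part (3) is equivalent to the paper's intertwining identity $T\circ(\rho_1(x)\otimes 1)=(S^{(1-\epsilon(x))/2}\otimes\rho_1(x))\circ T$.
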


\begin{proof}$ $
\begin{enumerate}
\item Let $u_1,u_2\in U$ be eigenvalues of $S$. Then,
\begin{align*}
(T\circ(S\otimes S))(u_1\otimes u_2)=\eta(u_2\otimes u_1)=((S\otimes S)\circ T)(u_1\otimes u_2),\\
\text{where }\eta:=
\begin{cases}
1 &\text{ if }u_1,u_2\in U^+,\\
-1 &\text{ otherwise,}
\end{cases}
\end{align*}
and hence $T$ and $S\otimes S$ commute.
\item Let $u_1,u_2,u_3\in U$ be eigenvalues of $S$. Then,
\begin{align*}
&((S\otimes T)\circ(T\otimes S))(u_1\otimes u_2\otimes u_3)=\eta(u_2\otimes u_3\otimes u_1),\\
\text{where }\eta:=&
\begin{cases}
-1 &\text{ if }u_1\in U^+\text{ and }u_2,u_3\text{ do not have the same eigenvalue,}\\
1 &\text{ otherwise,}
\end{cases}
\end{align*}
and hence $((S\otimes T)\circ(T\otimes S))^3=1$.
\item Let $u_1,u_2\in U$ be eigenvalues of $S$. Then,
\begin{align*}
(T\circ(\rho_1(x)\otimes1))(u_1\otimes u_2)&=\eta u_2\otimes u_1=((S^{(1-\epsilon(x))/2}\otimes\rho_1(x))\circ
T)(u_1\otimes u_2),\\
\text{where }\eta&:=
\begin{cases}
-1 &\text{ if }u_1\in U^+\text{ and }u_2\in U^-,\\
1 &\text{ otherwise,}
\end{cases}
\end{align*}
and hence $T\circ(\rho_1(x)\otimes1)\circ T^{-1}=S^{(1-\epsilon(x))/2}\otimes\rho_1(x)$.
\end{enumerate}
\end{proof}

We decompose $\zeta_0$ as a character of $\tilde{N}_p\cap\tilde{A}_p$ as
$\zeta_0=\overline{\zeta}^+_0+\overline{\zeta}^-_0$ where $\overline{\zeta}^+_0$ (respectively $\overline{\zeta}^-_0$)
corresponds to the subspace $U^+$ (respectively $U^-$).
\newline
\newline
Now let $U_1,\dots,U_t$ be $t$ isomorphic copies of $U$ and let $S_j$ be the linear map on $U_j$ corresponding to $S$
through the isomorphism with $U$. Now for $(1\leq j\leq t-1)$ define the linear map
\begin{align*}
T_j:U_1\otimes\dots\otimes U_t&\to U_1\otimes\dots\otimes U_t\\
u_1\otimes\dots\otimes u_t&\mapsto S_1(u_1)\otimes\dots\otimes S_{j-1}(u_{j-1})\otimes T\otimes
S_{j+2}(u_{j+2})\otimes\dots\otimes S_t(u_t).
\end{align*}
We now define a representation of $\tilde{N}_p^t\rtimes S_t$. First identify $U_1\otimes\dots\otimes U_t$ with the
representation space of $\chi_{\boldsymbol{t}}$ via the description in $\S$\ref{sec:parachars}. 

\begin{lem}\label{lem:exten0}
The following action defines a representation $\rho$ of $\tilde{N}_p^t\rtimes S_t$ on $U_1\otimes\dots\otimes U_t$
that extends $\chi_{\boldsymbol{t}}$,
\begin{align*}
\rho(s_j)(u):=T_j(u)\text{ for all }u\in U_1\otimes\dots\otimes U_t\text{ and }(1\leq j\leq t-1).
\end{align*}
\end{lem}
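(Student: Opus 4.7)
The plan is to verify directly that the assignment $\rho(s_j) := T_j$, together with the prescribed action of $\tilde{N}_p^t$ on $U_1 \otimes \cdots \otimes U_t$ coming from the parabolic construction of $\S$\ref{sec:parachars}, satisfies all the defining relations of $\tilde{N}_p^t \rtimes S_t$. Because $\rho|_{\tilde{N}_p^t}$ is by construction the representation affording $\chi_{\boldsymbol{t}}$, the extension claim becomes automatic once $\rho$ is known to be a representation, so the substance is entirely in checking the Coxeter relations of the $T_j$ and the compatibility relation with the $\tilde{N}_p^t$-action.

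I would first dispatch the Coxeter relations for $T_1, \ldots, T_{t-1}$. The identity $T_j^2 = 1$ reduces to $T^2 = 1$ on $U \otimes U$, verified by a direct case check on the four combinations of eigenvectors of $S$, combined with $S_k^2 = 1$ on each slot. The commutation relation $T_j T_k = T_k T_j$ for $|j - k| > 1$ is immediate since the $T$-pieces act on disjoint pairs of tensor slots and the remaining contributions are $S$-factors commuting with themselves. For the braid relation $(T_j T_{j+1})^3 = 1$, each slot outside $\{j, j+1, j+2\}$ contributes $(S_k^2)^3 = 1$, while on the three central slots $T_j$ acts as $T \otimes S$ and $T_{j+1}$ as $S \otimes T$; the product $((T \otimes S)(S \otimes T))^3$ equals $1$ since it is the conjugate-inverse of $((S \otimes T)(T \otimes S))^3$, which is $1$ by Lemma~\ref{lem:T}(2).

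Next I would verify $T_j \rho(x) T_j^{-1} = \rho(s_j x s_j^{-1})$ for $x$ in the $l$-th copy of $\tilde{N}_p$. When $l \notin \{j, j+1\}$, the operators of $\rho(x)$ on positions $j, j+1$ are either $(1,1)$ or $(S,S)$; by Lemma~\ref{lem:T}(1), $T$ passes through these, and the only nontrivial contribution is on position $l$, where $S_l \rho_l(x) S_l^{-1} = \epsilon(x)\rho_l(x)$. Since any lift of $s_j$ inside $\tilde{S}_t \subset \tilde{S}_{pt}$ is a product of $p$ disjoint transpositions and hence an odd element, the semidirect product action $s_j \cdot x$ equals $zx$ for $x$ odd and $x$ for $x$ even; combined with $\rho(z) = -1$ this matches the sign $\epsilon(x)$ coming from the $S_l$-conjugation. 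When $l \in \{j, j+1\}$, say $l = j$, after cancellation of the $S_k$-factors on positions $k \neq j, j+1$ the computation reduces on positions $(j, j+1)$ to $T \circ (\rho_j(x) \otimes 1) \circ T^{-1}$, which by Lemma~\ref{lem:T}(3) equals $S^{(1-\epsilon(x))/2} \otimes \rho_j(x)$; this matches exactly the parabolic description of $\rho$ applied to the element now lying in copy $j+1$ (the $S$-factor in slot $j$ appearing precisely when the element is odd).

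The main obstacle is the sign bookkeeping in the first sub-case above: one must align the $\epsilon(x)$ produced by $S_l \rho_l(x) S_l^{-1}$ with the central factor $z$ showing up in the semidirect product action, keeping in mind that this action is not a trivial permutation but is twisted because the lift of $s_j$ is an odd element in $\tilde{S}_{pt}$. Once this alignment is understood, the three identities (1), (2), (3) of Lemma~\ref{lem:T} provide exactly the algebraic input required and no further ingredient is needed.
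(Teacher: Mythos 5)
Your proposal is correct and follows essentially the same route as the paper: the Coxeter relations for the $T_j$ are reduced to parts (1) and (2) of Lemma~\ref{lem:T}, and the compatibility $T_j\rho(x)T_j^{-1}=\rho(s_jxs_j^{-1})$ is checked slot by slot using part (3) for $l\in\{j,j+1\}$ and the associator relation $S_l\rho_l(x)S_l^{-1}=\epsilon(x)\rho_l(x)$ otherwise. Your explicit alignment of the sign $\epsilon(x)$ with the central element $z$ arising from the twisted conjugation action (the lift of $s_j$ being a product of $p$ transpositions) is the one point the paper leaves implicit, and you handle it correctly.
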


\begin{proof}
One needs to check that $\rho$ defines a representation of $S_t$ and that
$\rho(s_jxs_j^{-1})=\rho(s_j)\rho(x)\rho(s_j^{-1})$ for all $(1\leq j\leq t-1)$ and $x\in\tilde{N}_p^t$.
\newline
\newline
First we note that $T_j^2=1$ and, as $T$ commutes with $S\otimes S$ by Lemma~\ref{lem:T} part (1), we also have that
$T_jT_l=T_lT_j$ for $|j-l|>1$. Lemma~\ref{lem:T} part (2) says that $((S\otimes T)\circ(T\otimes S))^3=1$ and hence that
$(T_jT_{j+1})^3=1$ and so $\rho$ does indeed define a representation of $S_t$.
\newline
\newline
Now let $x\in\tilde{N}_p^t$ such that $x$ lies in the $l^{\operatorname{th}}$ factor of $\tilde{N}_p^t$. If $l\neq
j,j+1$
then
\begin{align*}
T_j\rho(x)T_j^{-1}=\epsilon(x)\rho(x)=\rho(s_jxs_j^{-1}).
\end{align*}
If $l=j$ then Lemma~\ref{lem:T} part (3) tells us that
$T_j\rho(x)T_j^{-1}=\rho(s_jxs_j^{-1})$ and similarly for $l=j+1$.
\end{proof}

We denote by $\operatorname{Exten}^+_t$ the character of the $\rho$ in the above lemma.
We now wish to explicitly describe the character values of $\operatorname{Exten}_t^+$. We denote by
$\frac{1}{2}(\tilde{N}_p^t\rtimes S_t)$ the preimage of $(N_p\wr S_t)\cap A_{pt}$ under the natural map
$\tilde{N}_p^t\rtimes S_t\twoheadrightarrow N_p\wr S_t$.
\newline
\newline
We mirror the notation from $\S$\ref{subsec:NS} for labelling the elements of $\tilde{N}_p^t\rtimes S_t$.

\begin{lem}\label{lem:semidirect}
Let $0=n_0<\dots<n_m=t$ and $x_l\in\tilde{N}_p$ for $(1\leq l\leq m)$. Consider
\begin{align*}
x=\prod_l((x_l,1,\dots,1);(n_{l-1}+1,\dots,n_l))\in\tilde{N}_p^t\rtimes S_t.
\end{align*}
If for any $l$ we have $x_l\notin\tilde{A}_p$ then $\operatorname{Exten}^+_t(x)=0$, otherwise
\begin{align*}
\operatorname{Exten}^+_t(x)=
\begin{cases}
\prod\limits_{l,(n_l-n_{l-1})\text{odd}}\zeta_0(x_l).\prod\limits_{l,(n_l-n_{l-1})\text{even}}(\overline{\zeta}
^+_0(x_l)-\overline{\zeta}^-_0(x_l))&\text{if }x\in\frac{1}{2}(\tilde{N}_p^t\rtimes S_t),\\
\prod\limits_l(\overline{\zeta}^+_0(x_l)-\overline{\zeta}^-_0(x_l))&\text{if
}x\notin\frac{1}{2}(\tilde{N}_p^t\rtimes S_t).
\end{cases}
\end{align*}
\end{lem}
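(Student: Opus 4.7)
The proof is by direct trace computation on the representation space $U_1\otimes\cdots\otimes U_t$. Since the cycles $\xi_l:=((x_l,1,\dots,1);(n_{l-1}+1,\dots,n_l))$ are supported on disjoint subsets of $\{1,\dots,t\}$, they commute in $\tilde{N}_p^t\rtimes S_t$ and so $\rho(x)=\prod_l\rho(\xi_l)$. Unwinding the parabolic construction of $\chi_{\boldsymbol{t}}$ together with the fact that each $T_j$ acts as $S$ on every tensor factor outside $\{j,j+1\}$, one sees that $\rho(\xi_l)$ decomposes as a tensor product of a ``cycle operator'' $A_l$ on the block $W_l:=U_{n_{l-1}+1}\otimes\cdots\otimes U_{n_l}$ together with scalar powers of $S$ on each position outside that block. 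Composing over $l$ produces $\rho(x)=\bigotimes_l\bigl(A_l\cdot(S^{e_l})^{\otimes k_l}\bigr)$, where $k_l:=n_l-n_{l-1}$ and $e_l\in\{0,1\}$ is the parity of the total $S$-exponent contributed to block $l$ by the other cycles. Hence $\operatorname{Exten}_t^+(x)=\prod_l\operatorname{tr}_{W_l}\bigl(A_l(S^{e_l})^{\otimes k_l}\bigr)$.

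For the vanishing claim, suppose some $x_l\notin\tilde{A}_p$. Since $\zeta_0$ is self-associate with associator $S$, the operator $\rho_1(x_l)$ anti-commutes with $S$ and hence interchanges the $S$-eigenspaces $U^+$ and $U^-$, so in an $S$-eigenbasis of $U$ every diagonal entry of $\rho_1(x_l)$ vanishes. A short expansion of $\operatorname{tr}_{W_l}(A_l(S^{e_l})^{\otimes k_l})$ in this basis shows that each term in the diagonal sum contains such an entry as a factor, so the block-$l$ trace is zero and therefore $\operatorname{Exten}_t^+(x)=0$.

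Now assume $x_l\in\tilde{A}_p$ for every $l$. A direct induction on $k_l$ using Lemma~\ref{lem:T} yields
\[A_l(u_1\otimes\cdots\otimes u_{k_l})=\eta\cdot\rho_1(x_l)(u_{k_l})\otimes u_1\otimes\cdots\otimes u_{k_l-1}\]
when $k_l$ is even, while for odd $k_l$ the vectors $u_1,\dots,u_{k_l-1}$ in the image are replaced by $S(u_1),\dots,S(u_{k_l-1})$, with an explicit sign $\eta$ that depends only on the $S$-eigenvalues of the $u_i$'s. Evaluating the trace in an $S$-eigenbasis of $U$ (which $\rho_1(x_l)$ now preserves) collapses the sum to diagonal entries of $\rho_1(x_l)$ restricted to $U^+$ and $U^-$; a short sign accounting yields $\operatorname{tr}_{W_l}(A_l(S^{e_l})^{\otimes k_l})=\zeta_0(x_l)$ when $k_l$ is odd and $e_l=0$, and $\overline{\zeta}_0^+(x_l)-\overline{\zeta}_0^-(x_l)$ in every other sub-case.

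Finally, $e_l$ must be related to the condition $x\in\tfrac{1}{2}(\tilde{N}_p^t\rtimes S_t)$. Each of the $k_{l'}-1$ transpositions in a reduced expression for $\tau_{l'}$ contributes one factor of $S$ to every position of $W_l$ (for $l'\neq l$), so $e_l\equiv\sum_{l'\neq l}(k_{l'}-1)\equiv(t-m)+(k_l-1)\pmod 2$, where $m$ is the number of cycles. On the other hand, $\theta_{pt}(\tau_{l'})$ is a product of $p$ disjoint $k_{l'}$-cycles in $S_{pt}$ of sign $(-1)^{p(k_{l'}-1)}=(-1)^{k_{l'}-1}$ (since $p$ is odd), and each $\theta_p(x_l)\in A_p$ contributes $+1$, so $\operatorname{sgn}(\theta_{pt}(x))=(-1)^{t-m}$ and hence $x\in\tfrac{1}{2}(\tilde{N}_p^t\rtimes S_t)$ iff $t-m$ is even. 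Substituting into the block-trace formula: when $t-m$ is even, odd $k_l$ contributes $\zeta_0(x_l)$ and even $k_l$ contributes $\overline{\zeta}_0^+(x_l)-\overline{\zeta}_0^-(x_l)$, matching the first case; when $t-m$ is odd, every $l$ contributes $\overline{\zeta}_0^+(x_l)-\overline{\zeta}_0^-(x_l)$, matching the second. The main obstacle is the step-by-step iteration computing $A_l$ together with the sign $\eta$—combinatorially delicate, but reducible to routine parity checks.
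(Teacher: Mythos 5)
Your proposal is correct and follows essentially the same route as the paper: decompose $U^{\otimes t}$ into blocks $W_l$, compute the trace of each cycle operator (twisted by the ``extra $S$'' factors contributed by the other cycles) in an $S$-eigenbasis, and observe that only diagonal vectors contribute, which forces the trace to vanish when some $x_l$ swaps $U^+$ and $U^-$. Your explicit parity computation identifying $e_l$ with the condition $x\in\tfrac{1}{2}(\tilde{N}_p^t\rtimes S_t)$ is a detail the paper leaves implicit, and it checks out.
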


\begin{proof} Let $u^+_1,\dots u^+_{(p-1)/2}$ be a basis of $U^+$ and $u^-_1,\dots u^-_{(p-1)/2}$ a basis of $U^-$.
Consider the action of $x$ on $U^{\otimes t}$ with respect to the basis
\begin{align*}
\{u_{j_1}^{\epsilon_1}\otimes\dots\otimes u_{j_t}^{\epsilon_t}\}_{j_l\in[\frac{p-1}{2}],\epsilon_l\in\{\pm1\}}.
\end{align*}
We can view the action of $x$ as a Kronecker product of its actions on the vector spaces $V_l$ with bases
\begin{align*}
\{u_{j_{n_{l-1}+1}}^{\epsilon_{n_{l-1}+1}}\otimes\dots\otimes u_{j_{n_l}}^{\epsilon_{n_l}}\}
\end{align*}
for varying $l$. Note that the action of $x$ on $V_l$ is not necessarily the same as the action of
$((x_l,1,\dots,1);(n_{l-1}+1,\dots,n_l))$ on $V_l$. $((x_r,1,\dots,1);(n_{r-1}+1,\dots,n_r))$ may, for some $r\neq l$,
act via
$S^{\otimes(n_l-n_{l-1})}$ on $V_l$. If this is true for an odd number of $r\neq l$ then ``an extra $S$'' acts
on $V_l$. We now write down how $x$ acts on $V_l$.
\begin{align*}
x.&(u_{j_{n_{l-1}+1}}^{\epsilon_{n_{l-1}+1}}\otimes\dots\otimes u_{j_{n_l}}^{\epsilon_{n_l}})=
\eta(x_l.u_{j_{n_{l-1}+2}}^{\epsilon_{n_{l-1}+2}}\otimes u_{j_{n_{l-1}+3}}^{\epsilon_{n_{l-1}+3}}\otimes\dots\otimes
u_{j_{n_l}}^{\epsilon_{n_l}}\otimes u_{j_{n_{l-1}+1}}^{\epsilon_{n_{l-1}+1}}),\\
&\text{where}\\
\eta=&
\begin{cases}
\big{(}\prod_{r=n_{l-1}+2}^{n_l}\epsilon_r\big{)}&\text{if }\epsilon_{n_{l-1}+1}=1\text{ and no extra }S,\\
\big{(}\prod_{r=n_{l-1}+2}^{n_l}\epsilon_r\big{)}\big{(}\prod_{r=n_{l-1}+1}^{n_l}\epsilon_r\big{)}&\text{if
}\epsilon_{n_{l-1}+1}=1\text{ and extra }S,\\
\big{(}\prod_{r=n_{l-1}+2}^{n_l}\epsilon_r\big{)}&\text{if }\epsilon_{n_{l-1}+1}=-1\text{ and no extra }S,\\
\big{(}\prod_{r=n_{l-1}+2}^{n_l}\epsilon_r\big{)}\big{(}\prod_{r=n_{l-1}+1}^{n_l}\epsilon_r\big{)}&\text{if
}\epsilon_{n_{l-1}+1}=-1\text{ and extra }S.
\end{cases}
\end{align*}
Now one can see that the only basis vectors $v$ of $V_l$ that could possibly appear with non-zero coefficient in $x.v$
are those of the form $u\otimes\dots\otimes u$. Since elements of $\tilde{N}_p\backslash(\tilde{N}_p\cap\tilde{A}_p)$
swap
$U^+$ and $U^-$ we have that $\operatorname{Exten}_t^+(x)=0$ if $x_l\notin\tilde{A}_p$ for some $l$. So let's assume
$x_l\in\tilde{A}_p$ for all $l$. Now for $j_{n_{l-1}+1}=\dots=j_{n_l}$ and $\epsilon_{n_{l-1}+1}=\dots=\epsilon_{n_l}$
we have
\begin{align*}
\eta=&
\begin{cases}
1&\text{if }\epsilon_{n_{l-1}+1}=1\text{ and no extra }S,\\
1&\text{if }\epsilon_{n_{l-1}+1}=1\text{ and extra }S,\\
(-1)^{n_l-n_{l-1}-1}&\text{if }\epsilon_{n_{l-1}+1}=-1\text{ and no extra }S,\\
-1&\text{if }\epsilon_{n_{l-1}+1}=-1\text{ and extra }S.
\end{cases}
\end{align*}
Then $u\otimes\dots\otimes u$ appears in $x.(u\otimes\dots\otimes u)$ with coefficient $\eta$ (as above) multiplied by
the coefficient of $u$ in $x_lu$. Therefore, the trace of the action of $x$ on
$V_l$ is
\begin{align*}
\begin{cases}
\zeta_0(x_l)&\text{if }n_l-n_{l-1}\text{ is odd and no extra }S,\\
\overline{\zeta}^+_0(x_l)-\overline{\zeta}^-_0(x_l)&\text{otherwise.}
\end{cases}
\end{align*}
The lemma now follows.
\end{proof}

We now wish to show that if $t>1$ then $\operatorname{Exten}^+_t\downarrow_{\frac{1}{2}(\tilde{N}_p^t\rtimes S_t)}$ is
irreducible. This is equivalent to there existing some
$x\in(\tilde{N}_p^t\rtimes S_t)\backslash\frac{1}{2}(\tilde{N}_p^t\rtimes S_t)$ with $\operatorname{Exten}^+_t(x)\neq0$.
Due to Lemma~\ref{lem:semidirect} it is enough to show that there exists some $x\in\tilde{N}_p\cap\tilde{A}_p$ such that
$\overline{\zeta}^+_0(x)\neq\overline{\zeta}^-_0(x)$. This is of course the case. Will denote by
$\operatorname{Exten}^-_t$ the associate of $\operatorname{Exten}^+_t$ with respect to the subgroup
$\frac{1}{2}(\tilde{N}_p^t\rtimes S_t)\leq\tilde{N}_p^t\rtimes S_t$.
\newline
\newline
As a $\mathbb{C}(\tilde{N}_p^t\cap\tilde{A}_{pt})-$module $U^{\otimes t}$ decomposes as
\begin{align*}
\bigg{(}\bigoplus_{\prod_j\epsilon_j=1}\bigotimes_jU_j^{\epsilon_j}\bigg{)}\oplus\bigg{(}\bigoplus_{\prod_j\epsilon_j=-1
}\bigotimes_jU_j^{\epsilon_j}\bigg{)}.
\end{align*}
Note that this decomposition respects the action of $A_t$ (or even $S_t$) and so extends to an irreducible decomposition
of $U^{\otimes t}$ as a $\mathbb{C}((\tilde{N}_p\cap\tilde{A}_p)\rtimes A_t)-$module. We label the two characters of
these modules $\overline{\operatorname{Exten}}_t^+$ and $\overline{\operatorname{Exten}}_t^-$ respectively. We now
describe these two characters.

\begin{lem}\label{lem:aa}
Let $0=n_0<\dots<n_m=t$ and $x_l\in\tilde{N}_p$ for $(1\leq l\leq m)$. Consider
\begin{align*}
x=\prod_l((x_l,1,\dots,1);(n_{l-1}+1,\dots,n_l))\in(\tilde{N}_p^t\cap\tilde{A}_{pt})\rtimes A_t.
\end{align*}
If for any $l$ we have $x_l\notin\tilde{A}_p$ then $\overline{\operatorname{Exten}}_t^\pm(x)=0$, otherwise
\begin{align*}
\overline{\operatorname{Exten}}_t^+(x)=&\sum_{\prod\limits_{l,(n_l-n_{l-1})\text{odd}}\epsilon_l=1}\bigg{(}\prod_{l,
(n_l-n_ { l-1})\text{odd}}\overline{ \zeta}^{\epsilon_l}_0(x_l)\bigg{)}.
\prod_{l,(n_l-n_{l-1})\text{even}}(\overline{\zeta}^+_0(x_l)-\overline{\zeta}^-_0(x_l))\\
&\text{and}\\
\overline{\operatorname{Exten}}_t^-(x)=&\sum_{\prod\limits_{l,(n_l-n_{l-1})\text{odd}}\epsilon_l=-1}\bigg{(}\prod_{l,
(n_l-n_{l-1})\text{odd}}\overline{ \zeta }^{ \epsilon_l}_0(x_l)\bigg{)}.
\prod_{{l,(n_l-n_{l-1})\text{even}}}(\overline{\zeta}^+_0(x_l)-\overline{\zeta}^-_0(x_l)).\\
\end{align*}
\end{lem}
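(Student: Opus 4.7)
The plan is to adapt the trace computation in the proof of Lemma~\ref{lem:semidirect}, now refining according to the decomposition
\begin{align*}
U^{\otimes t}=\bigg(\bigoplus_{\prod_j\epsilon_j=1}\bigotimes_jU_j^{\epsilon_j}\bigg)\oplus\bigg(\bigoplus_{\prod_j\epsilon_j=-1}\bigotimes_jU_j^{\epsilon_j}\bigg).
\end{align*}
Exactly as there, the only basis vectors which can contribute a nonzero diagonal coefficient under $\rho(x)$ are those of the form $\bigotimes_l(u^{\epsilon_l}_{j_l})^{\otimes(n_l-n_{l-1})}$, constant in $(j_l,\epsilon_l)$ across each block $V_l$. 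Writing $O$ and $E$ for the sets of indices $l$ with $n_l-n_{l-1}$ odd and even respectively, such a vector lies in the $\pm1$ eigenspace precisely when $\prod_{l\in O}\epsilon_l=\pm1$, since the even-length factors contribute $\epsilon_l^{n_l-n_{l-1}}=1$.

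For the vanishing statement, if some $x_l\notin\tilde{A}_p$ then the associator relation $S\rho_1(x_l)=-\rho_1(x_l)S$ forces $\rho_1(x_l)$ to swap $U^+$ and $U^-$, so $\langle u^{\epsilon_l}_{j_l},x_l u^{\epsilon_l}_{j_l}\rangle=0$. This annihilates the per-block contribution at $V_l$ for every diagonal vector, hence $\overline{\operatorname{Exten}}_t^\pm(x)=0$.

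Now assume each $x_l\in\tilde{A}_p$. The decisive simplification is that the hypothesis $x\in(\tilde{N}_p^t\cap\tilde{A}_{pt})\rtimes A_t$ places $\prod_l\tau_l\in A_t$, so $t-m=\sum_l(n_l-n_{l-1}-1)$ is even. Since each summand is even for $l\in O$ and odd for $l\in E$, this forces $|E|$ to be even. Tracing through the ``extra $S$'' bookkeeping in the proof of Lemma~\ref{lem:semidirect}, the number of extra $S$ factors acting on a block $V_l$ with $l\in O$ has the parity of $|E|$, and so is zero in our situation. Consequently the per-$\epsilon_l$ diagonal coefficient from an odd-cycle block is simply $\overline{\zeta}_0^{\epsilon_l}(x_l)$ with no sign correction; for $l\in E$ the combined contribution of the two values $\epsilon_l=\pm1$ is $\overline{\zeta}_0^+(x_l)-\overline{\zeta}_0^-(x_l)$, as already computed in Lemma~\ref{lem:semidirect}. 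Multiplying these per-block factors and summing over configurations $(\epsilon_l)_{l\in O}$ subject to $\prod_{l\in O}\epsilon_l=\pm1$ directly yields the two displayed formulas.

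The only real care needed, rather than a genuine obstacle, is the sign bookkeeping in the extra $S$ step; the automatic evenness of $|E|$ under the hypothesis is what collapses a potential two-case analysis into a single clean one and is precisely what permits the formulas to be stated without any extra $\epsilon_l$-dependent signs on the odd-cycle factors.
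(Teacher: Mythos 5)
Your proposal is correct and follows essentially the same route as the paper: decompose $U^{\otimes t}$ block-by-block into the $V_l^{\pm}$ pieces, observe that only the ``diagonal'' vectors contribute, and compute the per-block traces, with odd blocks contributing $\overline{\zeta}_0^{\epsilon_l}(x_l)$ and even blocks contributing $\overline{\zeta}_0^+(x_l)-\overline{\zeta}_0^-(x_l)$ on the $+$ piece only. The one point you make explicit that the paper leaves implicit --- that the hypothesis $x\in(\tilde{N}_p^t\cap\tilde{A}_{pt})\rtimes A_t$ forces the number of even-length cycles to be even, so no ``extra $S$'' corrupts the odd blocks --- is exactly the right justification for the asserted trace values.
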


\begin{proof}
We proceed as in the proof of Lemma~\ref{lem:semidirect}. For $\epsilon\in\{\pm1\}$ we define $V_l^\epsilon$ to be the
subspace of $V_l$ that is the linear span of
\begin{align*}
\{u_{j_{n_{l-1}+1}}^{\epsilon_{n_{l-1}+1}}\otimes\dots\otimes
u_{j_{n_l}}^{\epsilon_{n_l}}\}_{\prod_r\epsilon_r=\epsilon}.
\end{align*}
Clearly we have $V_l=V_l^+\oplus V_l^-$ and also
\begin{align*}
\bigoplus_{\prod_j\epsilon_j=1}\bigotimes_jU^{\epsilon_j}=\bigoplus_{\prod_l\epsilon_l=1}\bigotimes_lV_l^{\epsilon_l}
\text{ and }
\bigoplus_{\prod_j\epsilon_j=-1}\bigotimes_jU^{\epsilon_j}=\bigoplus_{\prod_l\epsilon_l=-1}\bigotimes_lV_l^{\epsilon_l
}.
\end{align*}
Once again we can conclude that $\overline{\operatorname{Exten}}_t^+(x)=\overline{\operatorname{Exten}}_t^-(x)=0$ if
for some $l$ we have that $x_l\notin\tilde{A}_p$ so let's assume $x_l\in\tilde{A}_p$ for all $l$. Then the trace of
the action of $x$ on $V_l^\epsilon$ is
\begin{align*}
\begin{cases}
\overline{\zeta}^\epsilon_0(x_l)&\text{if }n_l-n_{l-1}\text{ is odd},\\
\overline{\zeta}^+_0(x_l)-\overline{\zeta}^-_0(x_l)&\text{if }n_l-n_{l-1}\text{ is even and }\epsilon=1,\\
0&\text{if }n_l-n_{l-1}\text{ is even and }\epsilon=-1,
\end{cases}
\end{align*}
and hence the result.
\end{proof}

Now set $M_1:=U^{\otimes t}$ and $M_2$ the representation space of the character $\xi^{(\pm)}_\lambda$ of $\tilde{S}_t$,
where $\lambda\vdash t$. We can now define a representation of $\tilde{N}_p^t\tilde{S}_t$ on $M_1\otimes M_2$. If
$m_1\in M_1$, $m_2\in M_2$, $x\in\tilde{N}_p^t$ and $s\in\tilde{S}_t$ then we define the action as follows:
\begin{align*}
x:m_1\otimes m_2&\mapsto x.m_1\otimes m_2\\
s:m_1\otimes m_2&\mapsto\theta_t(s).m_1\otimes s.m_2.
\end{align*}
We denote by $\chi_\lambda^{(\pm)}$ the character of this representation. If $x\in\tilde{N}_p^t$ and $s\in\tilde{S}_t$
then $\chi_\lambda^{(\pm)}(xs)=\operatorname{Exten}_t^+(x\theta_t(s))\xi^{(\pm)}_\lambda(s)$.

\begin{lem}\label{lem:j=0}
The irreducible constituents of $\chi_{\boldsymbol{t}}\uparrow^{\tilde{N}_p^t\tilde{S}_t}$ are precisely the characters
\begin{align*}
\{\chi_\lambda|\lambda\vdash t,\sigma(\lambda)=1\}\cup\{\chi_\lambda^\pm|\lambda\vdash t,
\sigma(\lambda)=-1\}.
\end{align*}
Furthermore, if $\sigma(\lambda)=1$ then $\chi_\lambda$ is self-associate and if $\sigma(\lambda)=-1$ then
$\chi_\lambda^\pm$ are associates.
\end{lem}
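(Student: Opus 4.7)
The plan is to construct each $\chi_\lambda^{(\pm)}$ as an explicit tensor product of two projective representations of $G:=\tilde{N}_p^t\tilde{S}_t$, apply Lemma~\ref{lem:proj} for irreducibility, argue distinctness by a Schur-type argument as in the proof of Lemma~\ref{lem:j>0}, and close the counting by comparing total dimensions against $\dim\chi_{\boldsymbol{t}}\uparrow^G=t!\dim\chi_{\boldsymbol{t}}$. The self-associate versus associate-pair statement will then follow from Clifford theory, since $\chi_\lambda^{(\pm)}$ and $\xi_\lambda^{(\pm)}$ are naturally identified under the parameterisation of constituents.

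For irreducibility, set $H:=\tilde{N}_p^t$ and, using Lemma~\ref{lem:exten0}, define a projective representation $\rho_1$ of $G$ on $M_1=U^{\otimes t}$ by letting $x\in H$ act as $x.m_1$ and $s\in\tilde{S}_t$ act as $\theta_t(s).m_1$; by construction $\rho_1\downarrow_H=\chi_{\boldsymbol{t}}$ is irreducible. Define $\rho_2$ on $M_2$ by letting $H$ act trivially and $\tilde{S}_t$ act via $\xi_\lambda^{(\pm)}$; this is an irreducible projective representation of $G$ because every $G$-stable subspace is $\tilde{S}_t$-stable. The cocycle of $\rho_1$ measures the discrepancy between pulling the $S_t$-action back through $\theta_t$ and genuinely representing $\tilde{S}_t$, and this is precisely the spin cocycle inverse to that of $\rho_2$; hence $\rho_1\otimes\rho_2$ is an honest representation whose character is $\chi_\lambda^{(\pm)}$ as defined in the paragraph preceding the lemma. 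Lemma~\ref{lem:proj} then yields irreducibility. Distinctness mirrors the argument in Lemma~\ref{lem:j>0}: any isomorphism of such $G$-modules restricts to an $H$-isomorphism which by Schur is of the form $1\otimes\psi$ for some $\tilde{S}_t$-equivariant $\psi$, forcing the underlying $\xi_\lambda^{(\pm)}$ to coincide.

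For the count, the $t_1=\cdots=t_{(p-1)/2}=0$ case of the lemma preceding Lemma~\ref{lem:list} gives $I_G(\chi_{\boldsymbol{t}})=G$. Clifford theory then parameterises the irreducible constituents of $\chi_{\boldsymbol{t}}\uparrow^G$ by irreducible projective representations of $G/H\cong S_t$ with a fixed cocycle, and summing multiplicities of the candidate list yields
\begin{align*}
\sum_\psi e_\psi\dim\psi=\dim\chi_{\boldsymbol{t}}\sum_{\lambda\vdash t}(\dim\xi_\lambda^{(\pm)})^2=t!\dim\chi_{\boldsymbol{t}}=\dim\chi_{\boldsymbol{t}}\uparrow^G,
\end{align*}
using that the squared dimensions of the spin characters of $\tilde{S}_t$ account for half of $|\tilde{S}_t|=2t!$. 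Finally, under this Clifford parameterisation, tensoring by the sign character $\epsilon$ of $G$ corresponds to tensoring the associated projective representation of $S_t$ by $\epsilon_{\tilde{S}_t}$; the well-known behaviour $\epsilon_{\tilde{S}_t}\xi_\lambda=\xi_\lambda$ when $\sigma(\lambda)=1$ and $\epsilon_{\tilde{S}_t}\xi_\lambda^\pm=\xi_\lambda^\mp$ when $\sigma(\lambda)=-1$ transfers to $\chi_\lambda^{(\pm)}$, giving the claimed self-associative or associate-pair behaviour.

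The main technical obstacle I expect is the verification that the projective cocycles of $\rho_1$ and $\rho_2$ cancel exactly. Concretely, one checks on the generators $t_j$ of $\tilde{S}_t$ that $T_j$ (acting on $M_1$ via $\theta_t$) and $\xi_\lambda^{(\pm)}(t_j)$ (acting on $M_2$) square in compatible ways, so that $(T_j\otimes\xi_\lambda^{(\pm)}(t_j))^2$ matches the prescribed value of $t_j^2$ in $\tilde{S}_t$, and likewise for the braid and commutation relations; once this sign bookkeeping is in place, irreducibility, distinctness, and the final self-associate statement all fall out of the structural assembly above.
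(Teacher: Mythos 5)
Your proposal is correct and follows essentially the same route as the paper: the paper's proof of the first part is literally ``identical to the proof of Lemma~\ref{lem:j>0}'', namely the tensor decomposition $M_1\otimes M_2$, irreducibility via Lemma~\ref{lem:proj}, Schur's lemma for distinctness, and the dimension count you give. For the associate/self-associate statement the paper argues by showing $\chi_\lambda$ vanishes on $\tilde{N}_p^t\tilde{S}_t\backslash\tilde{A}_{pt}$ when $\sigma(\lambda)=1$ and exhibiting a non-vanishing value there when $\sigma(\lambda)=-1$, rather than computing $\epsilon\cdot\chi_\lambda^{(\pm)}$ directly as you do, but both versions rest on the same input (Lemma~\ref{lem:semidirect} together with the behaviour of $\xi_\lambda^{(\pm)}$ under $\epsilon_{\tilde{S}_t}$) and are equivalent.
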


\begin{proof}
The first part is identical to the proof of Lemma~\ref{lem:j>0}. For the second part suppose
$x\in\tilde{N}_p^t$ and $s\in\tilde{S}_t$. If $\sigma(\lambda)=1$ then as $\xi_\lambda$ is self-associate then
$\chi_\lambda(xs)=0$ if $s\in\tilde{S}_t\backslash\tilde{A}_t$. Also if $x\in\tilde{N}_p^t\backslash\tilde{A}_{pt}$ then
by Lemma~\ref{lem:semidirect} $\chi_\lambda(xs)=0$.
If $\sigma(\lambda)=-1$ then as $\xi^\pm_\lambda$ is non-self-associate then $\xi_\lambda^\pm(g)\neq0$ for some
$g\in\tilde{S}_t\backslash\tilde{A}_t$. Now if in Lemma~\ref{lem:semidirect} we construct $x$ using $\theta_t(g)$ and
all
$x_l$'s such that $\overline{\zeta_0}^+(x_l)\neq\overline{\zeta_0}^-(x_l)$ then $\chi_\lambda^\pm(x)\neq0$ and
$x\in\tilde{N}_p^t\tilde{S}_t\backslash\tilde{A}_{pt}$. We now clearly have that $\chi_\lambda^\pm$ are associates.
\end{proof}

Note that if in the definition of $\chi_\lambda^{(\pm)}$ we replace $\operatorname{Exten}_t^+$ by
$\operatorname{Exten}_t^-$ or we replace $\xi_\lambda^{(\pm)}$ by $\xi_\lambda^{(\mp)}$ then we replace
$\chi_\lambda^{(\pm)}$ by $\chi_\lambda^{(\mp)}$.
\newline
\newline
Let $t>1$ and $\sigma(\lambda)=1$. By directly decomposing the representation space of $\chi_\lambda$ we see that we
have a
character $\overline{\overline{\chi}}_\lambda^{++}$ of $(\tilde{N}_p^t\cap\tilde{A}_{pt})\tilde{A}_t$ given by
\begin{align*}
\overline{\overline{\chi}}_\lambda^{++}(xs)=\overline{\operatorname{Exten}}_t^+(x\theta_t(s))\overline{\xi}^+_\lambda(s)
\text{ for all }x\in\tilde{N}_p^t\cap\tilde{A}_{pt}\text{ and }s\in\tilde{A}_t.
\end{align*}
By comparing dimensions we see that
$\overline{\overline{\chi}}_\lambda^{++}\uparrow^{\tilde{N}_p^t\tilde{S}_t}=\chi_\lambda$
and hence $\overline{\overline{\chi}}_\lambda^{++}$ is irreducible. Frobenius reciprocity now gives that
$\chi_\lambda\downarrow_{(\tilde{N}_p^t\cap\tilde{A}_{pt})\tilde{A}_t}$ is the sum of four non-isomorphic characters. We
label
these four characters
\begin{align*}
\overline{\overline{\chi}}_\lambda^{++}(xs)&=\overline{\operatorname{Exten}}_t^+(x\theta_t(s))\overline{\xi}
^+_\lambda(s)\\
\overline{\overline{\chi}}_\lambda^{+-}(xs)&=\overline{\operatorname{Exten}}_t^+(x\theta_t(s))\overline{\xi}
^-_\lambda(s)\\
\overline{\overline{\chi}}_\lambda^{-+}(xs)&=\overline{\operatorname{Exten}}_t^-(x\theta_t(s))\overline{\xi}
^+_\lambda(s)\\
\overline{\overline{\chi}}_\lambda^{--}(xs)&=\overline{\operatorname{Exten}}_t^-(x\theta_t(s))\overline{\xi}
^-_\lambda(s),
\end{align*}
where $x\in\tilde{N}_p^t\cap\tilde{A}_{pt}$ and $s\in\tilde{A}_t$. Note that by the comments preceding~\ref{lem:aa} we
have
that $\overline{\operatorname{Exten}}_t^+$ is fixed by conujugation by $S_t$. Therefore conjugating by
$s\in\tilde{S}_t\backslash\tilde{A}_t$ takes $\overline{\overline{\chi}}_\lambda^{++}$ to
$\overline{\overline{\chi}}_\lambda^{+-}$. Also conjugating by $x\in\tilde{N}_p^t\backslash\tilde{A}_{pt}$ cannot fix
$\overline{\overline{\chi}}_\lambda^{++}$ and so must take it to $\overline{\overline{\chi}}_\lambda^{-+}$. We now label
the
following irreducible characters of $\tilde{N}_p^t\tilde{S}_t\cap\tilde{A}_{pt}$
\begin{align*}
\overline{\chi}_\lambda^+&:=\overline{\overline{\chi}}_\lambda^{++}\uparrow^{(\tilde{N}_p^t\tilde{S}_t)\cap\tilde{A}_{pt
}}=
\overline{\overline{\chi}}_\lambda^{--}\uparrow^{(\tilde{N}_p^t\tilde{S}_t)\cap\tilde{A}_{pt}}\\
\overline{\chi}_\lambda^-&:=\overline{\overline{\chi}}_\lambda^{+-}\uparrow^{(\tilde{N}_p^t\tilde{S}_t)\cap\tilde{A}_{pt
}}=
\overline{\overline{\chi}}_\lambda^{-+}\uparrow^{(\tilde{N}_p^t\tilde{S}_t)\cap\tilde{A}_{pt}}.
\end{align*}
Now suppose $\sigma(\lambda)=-1$. Again we have characters $\overline{\overline{\chi}}_\lambda^\pm$ of
$(\tilde{N}_p^t\cap\tilde{A}_{pt})\tilde{A}_t$ given by
\begin{align*}
\overline{\overline{\chi}}_\lambda^\pm(xs)=\overline{\operatorname{Exten}}_t^\pm(x\theta_t(s))\overline{\xi}_\lambda(s)
\text{ for all }x\in\tilde{N}_p^t\cap\tilde{A}_{pt}\text{ and }s\in\tilde{A}_t.
\end{align*}
By comparing dimensions we see that
$\overline{\overline{\chi}}_\lambda^\pm\uparrow^{\tilde{N}_p^t\tilde{S}_t\cap\tilde{A}_{pt}}=\overline{\chi}_\lambda$
and hence $\overline{\overline{\chi}}_\lambda^+$ is irreducible.
\newline
\newline
We now drop all the assumptions on $\boldsymbol{t}=(t_0,\dots,t_{(p-1)/2})$. Using Lemma~\ref{lem:t=2} we will
describe all the irreducible
constituents of $\chi_{\boldsymbol{t}}^{(\pm)}\uparrow^{\tilde{N}_p^t\tilde{S}_{\boldsymbol{t}}}$.
\newline
\newline
Let $\lambda=(\lambda_0,\dots,\lambda_{(p-1)/2})$ be a $(p+1)/2-$tuple of partitions with $\lambda_0$ strict and
$|\lambda_j|=t_j$ for $(0\leq j\leq(p-1)/2)$. We set $\boldsymbol{t}(\lambda):=(t_0,\dots,t_{(p-1)/2})$ and define
$\sigma(\lambda):=\sigma(\lambda_0)(-1)^{t-t_0}$ and
\begin{align*}
\Delta_t^+:=\{\lambda\in\Delta_t|\sigma(\lambda)=1\},\Delta_t^-:=\{\lambda\in\Delta_t|\sigma(\lambda)=-1\}.
\end{align*}
Now using Lemma~\ref{lem:j>0} we can construct the character(s)
$\chi_{(\varnothing,\lambda_1,\dots,\lambda_{(p-1)/2})}^{(\pm)}$ of
$\tilde{N}_p^{t-t_0}\tilde{S}_{(0,t_1,\dots,t_{(p-1)/2})}$. As described in Lemma~\ref{lem:shift} we can and do identify
this subgroup with $\tilde{N}_p^{t-t_0}\tilde{S}_{(0,t_1,\dots,t_{(p-1)/2})}[t_0]$. Using Lemma~\ref{lem:j=0} we
also have the character(s) $\chi_{\lambda_0}^{(\pm)}$ of $\tilde{N}_p^{t_0}\tilde{S}_{t_0}$. With the labelling
convention of
Lemma~\ref{lem:t=2} we can construct the character(s) $\chi_{\lambda}^{(\pm)}$ of
$\tilde{N}_p^t\tilde{S}_{\boldsymbol{t}}$.
Now due to Theorem~\ref{thm:inrnor} and Remark~\ref{rem:inrnor} we have that
$\chi_{\lambda}^{(\pm)}\uparrow^{\tilde{N}_p^t\tilde{S}_t}$ is irreducible and we denote this character by
$\chi^{\lambda(\pm)}$.

\begin{thm}\label{thm:maincha}
A complete list of irreducible spin characters of $\tilde{N}_p^t\tilde{S}_t$ is given by
\begin{align*}
\{\chi^\lambda|\lambda\in\Delta_t^+\}\cup\{\chi^{\lambda\pm}|\lambda\in\Delta_t^-\}.
\end{align*}
\end{thm}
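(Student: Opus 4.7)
The plan is to derive the theorem as a bookkeeping consequence of Clifford theory (Theorem~\ref{thm:inrnor}), applied to the normal subgroup $\tilde{N}_p^t \trianglelefteq \tilde{N}_p^t\tilde{S}_t$, together with the explicit constructions worked out in the previous lemmas. First, I would invoke Lemma~\ref{lem:list} to get a complete set of $\tilde{N}_p^t\tilde{S}_t$-orbit representatives of irreducible spin characters of $\tilde{N}_p^t$, namely the $\chi_{\boldsymbol{t}}^{(\pm)}$ as $\boldsymbol{t}=(t_0,\dots,t_{(p-1)/2})$ varies. By Theorem~\ref{thm:inrnor} and Remark~\ref{rem:inrnor}, every irreducible spin character of $\tilde{N}_p^t\tilde{S}_t$ is of the form $\psi \uparrow^{\tilde{N}_p^t\tilde{S}_t}$ for $\psi$ an irreducible constituent of $\chi_{\boldsymbol{t}}^{(\pm)}\uparrow^{\tilde{N}_p^t\tilde{S}_{\boldsymbol{t}}}$, and distinct pairs $(\boldsymbol{t},\psi)$ from distinct orbits give distinct irreducible characters of $\tilde{N}_p^t\tilde{S}_t$.

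Next I would exhibit these constituents explicitly by splitting the inertial subgroup along $\boldsymbol{t}$. Using Lemma~\ref{lem:shift}, identify $\tilde{N}_p^t\tilde{S}_{\boldsymbol{t}}$ with $\tilde{N}_p^{t_0}\tilde{S}_{t_0} \cdot \tilde{N}_p^{t-t_0}\tilde{S}_{(0,t_1,\dots,t_{(p-1)/2})}[t_0]$. On the second factor, Lemma~\ref{lem:j>0} lists all irreducible constituents of $\chi_{(\varnothing,\lambda_1,\dots,\lambda_{(p-1)/2})}^{(\pm)}\uparrow$ (they are the $\chi_{(\varnothing,\lambda_1,\dots,\lambda_{(p-1)/2})}^{(\pm)}$ indexed by partitions $\lambda_j \vdash t_j$ of the appropriate $t_j$'s). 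On the first factor, Lemma~\ref{lem:j=0} does the same for $\chi_{\boldsymbol{t}_0}\uparrow^{\tilde{N}_p^{t_0}\tilde{S}_{t_0}}$, producing the $\chi_{\lambda_0}^{(\pm)}$ indexed by strict partitions $\lambda_0\vdash t_0$. Lemma~\ref{lem:t=2} then combines one character from each factor to yield the $\chi_\lambda^{(\pm)}$ on $\tilde{N}_p^t\tilde{S}_{\boldsymbol{t}}$, and inducing to $\tilde{N}_p^t\tilde{S}_t$ gives exactly the $\chi^{\lambda(\pm)}$ of the theorem. Since the distinct $\lambda=(\lambda_0,\dots,\lambda_{(p-1)/2})$ with $\boldsymbol{t}(\lambda)=\boldsymbol{t}$ exhaust the constituents on each factor, we recover every irreducible spin character of $\tilde{N}_p^t\tilde{S}_t$, proving completeness without any dimension count.

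Finally, I would verify the $+/-$ bookkeeping governed by $\sigma(\lambda)=\sigma(\lambda_0)(-1)^{t-t_0}$. The sign $(-1)^{t-t_0}$ tracks self-associateness coming from the Clifford algebra piece: by Lemma~\ref{lem:exten>0} the character $\chi_{\overline{\boldsymbol{t}}}^{(\pm)}$ on the second factor is self-associate precisely when $t-t_0$ is even (and splits into an associate pair otherwise by Lemma~\ref{lem:j>0}), while on the first factor Lemma~\ref{lem:j=0} gives self-associate or associate pair according to whether $\sigma(\lambda_0)=1$ or $-1$. Putting these together via Lemma~\ref{lem:t=2} (which says that a tensor-like product of two characters is self-associate iff the two associateness-parities agree appropriately) shows that the combined character on $\tilde{N}_p^t\tilde{S}_{\boldsymbol{t}}$, and hence its irreducible induction to $\tilde{N}_p^t\tilde{S}_t$, is self-associate iff $\sigma(\lambda_0)(-1)^{t-t_0}=1$, i.e. iff $\lambda\in\Delta_t^+$. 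This matches the claimed labelling $\chi^\lambda$ versus $\chi^{\lambda\pm}$.

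The main obstacle will be this last step: consistently tracking the two independent sources of associateness (the parity $t-t_0$ from the Clifford-algebra action in Lemma~\ref{lem:exten>0} and the sign $\sigma(\lambda_0)$ from the spin character $\xi_{\lambda_0}^{(\pm)}$ of $\tilde{S}_{t_0}$) through the constructions of Lemma~\ref{lem:t=2} and then through Clifford induction. Once those parities are matched to the definition of $\sigma(\lambda)$, the statement falls out.
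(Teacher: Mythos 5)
Your reduction is the same as the paper's: Lemma~\ref{lem:list} supplies the orbit representatives $\chi_{\boldsymbol{t}}^{(\pm)}$, Theorem~\ref{thm:inrnor} with Remark~\ref{rem:inrnor} reduces the theorem to listing the irreducible constituents of $\chi_{\boldsymbol{t}}^{(\pm)}\uparrow^{\tilde{N}_p^t\tilde{S}_{\boldsymbol{t}}}$, and your sign bookkeeping in the last paragraph correctly recovers $\sigma(\lambda)=\sigma(\lambda_0)(-1)^{t-t_0}$ as the self-associateness criterion. The divergence, and the gap, is in how you establish completeness of the list of constituents, which is the actual content of the proof. You assert that because Lemmas~\ref{lem:j=0} and~\ref{lem:j>0} exhaust the constituents on each of the two factors $H_1:=\tilde{N}_p^{t_0}\tilde{S}_{t_0}$ and $H_2:=\tilde{N}_p^{t-t_0}\tilde{S}_{(0,t_1,\dots,t_{(p-1)/2})}[t_0]$, the products $\chi_\lambda^{(\pm)}$ exhaust the constituents on $\tilde{N}_p^t\tilde{S}_{\boldsymbol{t}}$, ``without any dimension count''. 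That inference is not automatic: $\tilde{N}_p^t\tilde{S}_{\boldsymbol{t}}=H_1H_2$ is not a direct product but a twisted product inside $\tilde{S}_{pt}$, so you first need that \emph{every} irreducible spin character of $H_1H_2$ has the product form $\chi_{(\mu_1,\mu_2)}^{(\pm)}$ --- that is the Stembridge classification quoted in $\S$\ref{sec:parachars}, which you invoke only through Lemma~\ref{lem:t=2} for character values, not for classification --- and you then need to check that $\chi_{(\mu_1,\mu_2)}^{(\pm)}$ lies over $\chi_{\boldsymbol{t}}^{(\pm)}$ exactly when each factor lies over the corresponding piece of $\chi_{\boldsymbol{t}}^{(\pm)}$, which requires an argument about restricting the twisted tensor construction to $\tilde{N}_p^{t_0}\cdot\tilde{N}_p^{t-t_0}[t_0]$.

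You also pass over the case distinction built into Lemma~\ref{lem:list}: when $t-t_0$ is odd and $t_0>1$ the characters $\chi_{\boldsymbol{t}}^+$ and $\chi_{\boldsymbol{t}}^-$ are conjugate, so one must induce from only one of them and avoid double counting, whereas for $t_0\le 1$ they are not conjugate. The paper disposes of all of these issues simultaneously by the dimension count $\dim(\mathbb{C}Ge)=\sum\dim(\chi)^2$ over the proposed constituents, run separately in the cases $t_0>1$ with $t-t_0$ even, $t_0>1$ with $t-t_0$ odd (using $e_++e_-$), and $t_0\le1$. Your route can be completed, but only by importing the product classification for $H_1H_2$ and the factor-wise ``lying over'' verification; as written, the completeness claim is asserted rather than proved.
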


\begin{proof}
If $\boldsymbol{t}=(t_0,\dots,t_{(p-1)/2})$ then due to Theorem~\ref{thm:inrnor}, Remark~\ref{rem:inrnor} and
Lemma~\ref{lem:list} all we have to show is that
\begin{align*}
\{\chi_\lambda|\lambda\in\Delta_t^+,\boldsymbol{t}(\lambda)=\boldsymbol{t}\}\cup
\{\chi_\lambda^\pm|\lambda\in\Delta_t^-,\boldsymbol{t}(\lambda)=\boldsymbol{t}\}
\end{align*}
is a complete list of irreducible constituents of
$\chi_{\boldsymbol{t}}^{(+)}\uparrow^{\tilde{N}_p^t\tilde{S}_{\boldsymbol{t}}}$
if $t_0>1$ or of $\chi_{\boldsymbol{t}}^{(\pm)}\uparrow^{\tilde{N}_p^t\tilde{S}_{\boldsymbol{t}}}$ if $t_0\leq1$. Let's
set
$G:=\tilde{N}_p^t\tilde{S}_{\boldsymbol{t}}$ and $H:=\tilde{N}_p^t$. First suppose $t_0>1$ and $t-t_0$ is even and let
$e$ be the
idempotent of $\mathbb{C}H$ corresponding to the character $\chi_{\boldsymbol{t}}$. Then by
Lemmas~\ref{lem:t=2},~\ref{lem:j>0} and~\ref{lem:j=0}
\begin{align*}
&\dim(\mathbb{C}Ge)\geq\sum_{\lambda\in\Delta_t^+,\boldsymbol{t}(\lambda)=\boldsymbol{t}}\dim(\chi_\lambda)^2+
\sum_{\lambda\in\Delta_t^-,\boldsymbol{t}(\lambda)=\boldsymbol{t}}(\dim(\chi_\lambda^+)^2+\dim(\chi_\lambda^-)^2)\\
=&\sum_{\lambda_0\in\mathcal{D}_t^+,\chi\in\operatorname{Irr}(S_{t_1}\times\dots\times S_{t_{(p-1)/2}})}
(\dim(\xi_\lambda)\dim(\chi)\dim(\chi_{\boldsymbol{t}}))^2+\\
&\sum_{\lambda_0\in\mathcal{D}_t^-,\chi\in\operatorname{Irr}(S_{t_1}\times\dots\times S_{t_{(p-1)/2}})}
\big{(}(\dim(\xi_\lambda^+)\dim(\chi)\dim(\chi_{\boldsymbol{t}}))^2+
(\dim(\xi_\lambda^-)\dim(\chi)\dim(\chi_{\boldsymbol{t}}))^2\big{)}\\
=&t_0!t_1!\dots t_{(p-1)/2}!\dim(\chi_{\boldsymbol{t}})^2=t_0!t_1!\dots
t_{(p-1)/2}!\dim(\mathbb{C}He)=\dim(\mathbb{C}Ge).
\end{align*}
Therefore we have equality throughout and all irreducible constituents of
$\chi_{\boldsymbol{t}}\uparrow^{\tilde{N}_p^t\tilde{S}_{\boldsymbol{t}}}$ are of the desired form. Now let $t_0>1$,
$t-t_0$ be odd and $e_+$ and $e_-$ the idempotents of $\mathbb{C}H$ corresponding to the characters
$\chi_{\boldsymbol{t}}^+$ and
$\chi_{\boldsymbol{t}}^-$ respectively. Then similarly to above
\begin{align*}
&\dim(\mathbb{C}G(e_++e_-))\geq\sum_{\lambda\in\Delta_t^+,\boldsymbol{t}(\lambda)=\boldsymbol{t}}\dim(\chi_\lambda)^2+
\sum_{\lambda\in\Delta_t^-,\boldsymbol{t}(\lambda)=\boldsymbol{t}}(\dim(\chi_\lambda^+)^2+\dim(\chi_\lambda^-)^2)\\
=&\sum_{\lambda_0\in\mathcal{D}_t^-,\chi\in\operatorname{Irr}(S_{t_1}\times\dots\times S_{t_{(p-1)/2}})}
(2\dim(\xi_\lambda^+)\dim(\chi)\dim(\chi_{\boldsymbol{t}}^+))^2+\\
&\sum_{\lambda_0\in\mathcal{D}_t^+,\chi\in\operatorname{Irr}(S_{t_1}\times\dots\times S_{t_{(p-1)/2}})}
\big{(}(\dim(\xi_\lambda)\dim(\chi)\dim(\chi_{\boldsymbol{t}}^+))^2+
(\dim(\xi_\lambda)\dim(\chi)\dim(\chi_{\boldsymbol{t}}^-))^2\big{)}\\
=&2t_0!t_1!\dots t_{(p-1)/2}!\dim(\chi_{\boldsymbol{t}}^+)^2=2t_0!t_1!\dots t_{(p-1)/2}!\dim(\mathbb{C}He_+)
=\dim(\mathbb{C}G(e_++e_-)).
\end{align*}
Therefore we have equality throughout and all irreducible constituents of
$\chi_{\boldsymbol{t}}^+\uparrow^{\tilde{N}_p^t\tilde{S}_{\boldsymbol{t}}}$ are of the desired form. Similar
calculations prove
the result for $t_0\leq1$.
\end{proof}

\section{Results on character values}\label{sec:charsum}

In this section we prove many results about the character values of $\tilde{N}_p^t\tilde{S}_t$.

\begin{lem}\label{lem:sqrt}
Let $a\in N_p\cap\tilde{A}_p$ be an element of order $p$. Then
\begin{align*}
\overline{\zeta}^+_0(a)=\frac{-1\pm i^\frac{p-1}{2}\sqrt{p}}{2}.
\end{align*}
\end{lem}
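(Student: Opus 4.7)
The plan is to realise $\zeta_0$ as an induced character from a normal abelian subgroup of $\tilde{N}_p$, restrict to $K:=\tilde{N}_p\cap\tilde{A}_p$ via Mackey, and identify $\overline{\zeta}^+_0(a)$ as a partial quadratic Gauss sum. Set $H:=\theta_p^{-1}(C_p)\cong C_p\times\langle z\rangle$, which is a normal abelian subgroup of $\tilde{N}_p$ lying inside $\tilde{A}_p$ (a $p$-cycle is an even permutation since $p$ is odd). By the construction of $\zeta_0$ given at the start of this subsection, $\zeta_0=\operatorname{Ind}_H^{\tilde{N}_p}(\psi)$ for a faithful linear character $\psi$ of $H$, and I may assume $\psi(a)=\zeta$, a primitive $p$-th root of unity.

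Since $C_{p-1}=N_p/C_p=\operatorname{Aut}(C_p)$ acts regularly on $C_p\setminus\{1\}$, we have $C_{\tilde{N}_p}(a)=H\subseteq K$, and since $K\triangleleft\tilde{N}_p$ has index $2$, Mackey's formula gives
\begin{align*}
\zeta_0\downarrow_K=\operatorname{Ind}_H^K(\psi)+\operatorname{Ind}_H^K({}^y\psi)
\end{align*}
for any $y\in\tilde{N}_p\setminus K$; relabelling if necessary, $\overline{\zeta}^+_0=\operatorname{Ind}_H^K(\psi)$. The induced character formula, together with the normality of $H$ in $K$ and $C_K(a)=H$, then yields
\begin{align*}
\overline{\zeta}^+_0(a)=\frac{1}{|H|}\sum_{k\in K}\psi(kak^{-1})=\sum_{j\in Q}\zeta^j,
\end{align*}
where $Q\subseteq\{1,\dots,p-1\}$ indexes the $K$-orbit $\{a^j:j\in Q\}$ of $a$, which has size $(p-1)/2$.

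The main obstacle is identifying $Q$ with the quadratic residues (or non-residues) modulo $p$; this is what produces the $\pm$ in the statement. Let $\kappa$ be a primitive root modulo $p$, and let a generator of the $C_{p-1}$ factor of $N_p$ conjugate $a$ to $a^\kappa$. By Zolotarev's lemma, the permutation $x\mapsto\kappa^l x$ of $\mathbb{Z}/p$ has sign equal to the Legendre symbol $\big(\tfrac{\kappa^l}{p}\big)$, so $N_p\cap A_p$ is generated over $C_p$ by the square of this generator, and hence $Q$ is either the set of quadratic residues or its complement. Combining $\sum_{j=1}^{p-1}\zeta^j=-1$ with the classical evaluation
\begin{align*}
\bigg(\sum_{j=1}^{p-1}\Big(\tfrac{j}{p}\Big)\zeta^j\bigg)^{\!2}=(-1)^{(p-1)/2}\,p,
\end{align*}
so that the quadratic Gauss sum equals $\pm i^{(p-1)/2}\sqrt{p}$, I obtain
\begin{align*}
\overline{\zeta}^+_0(a)=\sum_{j\in Q}\zeta^j=\frac{-1\pm i^{(p-1)/2}\sqrt{p}}{2},
\end{align*}
as claimed.
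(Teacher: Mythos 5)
Your proposal is correct and follows essentially the same route as the paper: both identify $\overline{\zeta}^+_0(a)$ with the sum $\sum_j\omega_p^{b^{2j}}$ of $\psi(a)$ over the orbit of $a$ under conjugation by $\tilde{N}_p\cap\tilde{A}_p$ (i.e.\ over the quadratic residues), and then evaluate this via the quadratic Gauss sum $c-\tau(c)$ with $c+\tau(c)=-1$. The only difference is that you justify the identification of the orbit with the quadratic residues (via Mackey and Zolotarev's lemma), a step the paper's proof simply asserts.
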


\begin{proof}
Let $\psi$ be a faithful linear spin character of $\theta^{-1}(C_p)$ with
$\overline{\zeta}^+_0=\psi\uparrow^{\tilde{N}_p\cap\tilde{A}_p}$.
Then $\psi(a)=\omega_p$ for some primitive $p^{\operatorname{th}}$ root of unity $\omega_p$. We set
\begin{align*}
c:=\overline{\zeta}^+_0(a)=\sum^{(p-1)/2}_{j=1}\omega_p^{b^{2j}},
\end{align*}
where $b$ is a generator of the multiplicative group of $\mathbb{F}^\times_p$. We define the field automorphism
\begin{align*}
\tau:\mathbb{Q}(w_p)&\to\mathbb{Q}(w_p)\\
w_p&\mapsto w_p^b.
\end{align*}
Then
\begin{align*}
c+\tau(c)=-1\text{ and }(c-\tau(c))^2=\bigg{(}\frac{-1}{p}\bigg{)}p=(-1)^{\frac{p-1}{2}}p,
\end{align*}
as $(c-\tau(c))$ is a quadratic Gauss sum. This gives
\begin{align*}
c=\frac{-1\pm i^\frac{p-1}{2}\sqrt{p}}{2}.
\end{align*}
\end{proof}

\begin{lem}\label{lem:p'}
Let $a\in\tilde{N}_p$ have $p'-$order. Then $C_{\tilde{N}_p}(a)\nsubseteq\tilde{A}_p$. In particular
$\overline{\zeta}^+_0(a)=\overline{\zeta}^-_0(a)$.
\end{lem}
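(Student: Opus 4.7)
The plan is to find an abelian Sylow $p'$-subgroup $T$ of $\tilde{N}_p$ that is visibly not contained in $\tilde{A}_p$; since every $p'$-element of $\tilde{N}_p$ is conjugate into $T$, this $T$ will sit inside $C_{\tilde{N}_p}(a)$ (up to conjugation) and witness the first assertion. The natural choice is $T := \theta_p^{-1}(C_{p-1})$, which has order $2(p-1) = |\tilde{N}_p|_{p'}$ and is a central extension of a cyclic group by $\langle z\rangle$, hence abelian; this is also consistent with the count at the start of the subsection, where the $(p-1)$ linear spin characters together with the $(p-1)$ inflations from $C_{p-1}$ already exhaust $\operatorname{Irr}(T)$.

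I would then verify $T \not\subseteq \tilde{A}_p$, equivalently $C_{p-1}\not\subseteq A_p$. Realising $N_p$ as the affine group acting on $\mathbb{F}_p$, a generator of $C_{p-1}$ is the map $x\mapsto bx$ for a primitive root $b$ modulo $p$; this fixes $0$ and is a single $(p-1)$-cycle on $\mathbb{F}_p^\times$, so it has sign $(-1)^{p-2}=-1$ because $p$ is odd.

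Since $\langle a\rangle$ is a $p'$-subgroup of the solvable group $\tilde{N}_p$, Hall's theorem produces an $\tilde{N}_p$-conjugate of $a$ inside $T$; as conjugation preserves the property $C_{\tilde{N}_p}(a)\not\subseteq\tilde{A}_p$, I may assume $a\in T$, whence $T\subseteq C_{\tilde{N}_p}(a)$ by abelianness and the first assertion follows. For the ``in particular'' clause, the case $a\notin\tilde{A}_p$ is trivial as $\overline{\zeta}_0^\pm$ are extended by zero outside $\tilde{N}_p\cap\tilde{A}_p$. If $a\in\tilde{A}_p$, I pick $g\in C_{\tilde{N}_p}(a)\setminus\tilde{A}_p$ (which exists by the first part): Clifford theory applied to the self-associate character $\zeta_0$ identifies $\overline{\zeta}_0^+$ and $\overline{\zeta}_0^-$ as swapped by conjugation by $g$, so $\overline{\zeta}_0^+(a)=\overline{\zeta}_0^-(gag^{-1})=\overline{\zeta}_0^-(a)$. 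There is no real obstacle; everything reduces to a standard application of Hall's theorem plus Clifford theory, with a brief sign computation.
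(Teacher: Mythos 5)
Your proposal is correct and follows essentially the same route as the paper: the paper observes that every $p'$-element of $\tilde N_p$ is conjugate to a power of a lift $b$ of the generator $y_1$ of $C_{p-1}$ (your Hall-subgroup step), notes that such an element centralises $b\notin\tilde A_p$ (your abelianness of $T$ plus the $(p-1)$-cycle sign computation), and then deduces $\overline{\zeta}_0^+(a)=\overline{\zeta}_0^-(a)$ by the same conjugation-swap of the two constituents of $\zeta_0$. Your write-up merely makes explicit a few points the paper leaves implicit (solvability/Hall, the sign of $y_1$, and the trivial case $a\notin\tilde A_p$).
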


\begin{proof}
Suppose $b\in\tilde{N}_p$ with $\theta_p(b)=y_1$. All $p'-$elements of $N_p$ are conjugate to a power of $y_1$ and hence
all $p'-$elements of $\tilde{N}_p$ are conjugate to a power of $b$ or $z$ times a power of $b$. The first part is now
clear.
For the second part let $g\in C_{\tilde{N}_p}(a)\backslash\tilde{A}_p$. Then
$\overline{\zeta}^-_0(a)=\overline{\zeta}^+_0(gag^{-1})=\overline{\zeta}^+_0(a)$.
\end{proof}

\begin{lem}\label{lem:qcycle}
Let $q$ be an odd positive integer and $\tau=o((1,\dots,q))\in\tilde{S}_q$. Then
\begin{align*}
\chi^{\mathcal{C}\pm}_q(\phi_q(\tau))=(-1)^{(q^2-1)/8}.
\end{align*}
\end{lem}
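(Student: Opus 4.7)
The plan is to expand $\phi_q(\tau)$ in the Clifford basis $\{e_I\}$ of $\mathcal{C}_q$, extract the coefficients of $e_\varnothing$ and $e_{[q]}$, and apply Lemma~\ref{lem:cliffrep}(2). Since $(1,\ldots,q)=s_1 s_2 \cdots s_{q-1}$ in $S_q$, I would write $\tau=z^\delta T$ where $T:=t_1 t_2 \cdots t_{q-1}$ and $\delta\in\{0,1\}$ is the unique value forced by $\tau^q=1$. Then
\[
\phi_q(T)=\frac{\alpha^{q-1}}{2^{(q-1)/2}}\prod_{j=1}^{q-1}(e_j+e_{j+1}),
\]
with $\alpha\in\{1,i\}$ according as $\tilde{S}_q=S_q^+$ or $S_q^-$.

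Expanding the product indexed by subsets $S\subseteq[q-1]$ (with $l\in S$ prescribing the factor $e_{l+1}$ and $l\notin S$ the factor $e_l$), each term reduces in $\mathcal{C}_q$ to $\pm e_I$, where $I$ is the set of indices appearing an odd number of times among $(j_1,\ldots,j_{q-1})$. A short parity/endpoint analysis of the multiplicities shows that (a) the unique $S$ giving $I=\varnothing$ is $\{1,3,\ldots,q-2\}$, producing $e_2^2 e_4^2 \cdots e_{q-1}^2=1$, and (b) no $S$ can produce $I=[q]$. Hence $c_\varnothing(\phi_q(T))=\alpha^{q-1}/2^{(q-1)/2}$, $c_{[q]}(\phi_q(T))=0$, and Lemma~\ref{lem:cliffrep}(2) yields $\chi^{\mathcal{C}\pm}_q(\phi_q(T))=\alpha^{q-1}$.

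The main obstacle is pinning down $\delta$, equivalently computing $T^q\in\{1,z\}$ in $\tilde{S}_q$. I intend to prove $T^q=z^{(q^2-1)/8}$ in $S_q^+$ as follows. The element $\phi_q^+(T)\in\operatorname{Spin}(V)\subset\mathcal{C}_q$ lifts the cyclic rotation on $V=\mathbb{R}^q$, which decomposes $V$ into $(q-1)/2$ rotation planes at angles $2\pi k/q$ plus a fixed line. Passing to the Fourier-conjugate generators $V_j:=\sum_m \omega_q^{-jm}e_m$ (which satisfy $V_j V_k + V_k V_j = 2q\delta_{j+k,0}$ and $\phi_q^+(T)V_j\phi_q^+(T)^{-1}=\omega_q^j V_j$) and realising the spin representation as the Fock space for the fermionic pairs $(V_j,V_{-j})$, one finds that the eigenvalues of $\phi_q^+(T)$ take the form $\lambda_0\,\omega_q^{\sum_{k\in S}k}$ over subsets $S\subseteq\{1,\ldots,(q-1)/2\}$. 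Raising to the $q$-th power collapses every $\omega_q$-factor to $1$, so $\phi_q^+(T)^q=\lambda_0^q I$; matching the already-computed trace $\chi^{\mathcal{C}+}_q(\phi_q^+(T))=1$ to the sum of eigenvalues via the classical identity $\prod_{k=1}^{(q-1)/2}2\cos(\pi k/q)=1$ (valid for odd $q$) forces $\lambda_0^q=(-1)^{(q^2-1)/8}$. Since $\phi_q^+(z)=-1$, this is exactly $T^q=z^{(q^2-1)/8}$.

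Assembling the pieces, $\tau^q=1$ forces $\delta\equiv(q^2-1)/8\pmod 2$, so
\[
\chi^{\mathcal{C}\pm}_q(\phi_q(\tau))=(-1)^\delta\,\alpha^{q-1}=(-1)^{(q^2-1)/8}
\]
in the $S_q^+$ case. The $S_q^-$ case works identically: there the extra factor $\alpha^{q-1}=(-1)^{(q-1)/2}$ is absorbed into a correspondingly shifted value of $\delta$ coming from the different relations $t_j^2=z$, yielding the same final answer.
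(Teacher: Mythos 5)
Your proposal is correct, but it reaches the answer by a genuinely different route from the paper. The paper never tries to identify which of the two lifts $\pm\frac{1}{2^{(q-1)/2}}(e_1+e_2)\cdots(e_{q-1}+e_q)$ equals $\phi_q(\tau)$: instead it observes that $(1,\dots,q)^2$ is conjugate to $(1,\dots,q)$ for odd $q$, so the character value may be computed on the \emph{square}, where the sign ambiguity disappears, and it then evaluates the $e_\varnothing$-coefficient of that square by an explicit inductive reduction in $\mathcal{C}_q$ (stripping four factors at a time, each step contributing $-4$ against the normalisation $2^{-(q-1)}$). You instead pin down the lift exactly: you extract $c_\varnothing$ and $c_{[q]}$ of $\phi_q(t_1\cdots t_{q-1})$ directly (your endpoint/parity argument for the unique contribution $e_2^2e_4^2\cdots e_{q-1}^2$ is right, as is $c_{[q]}=0$ on degree grounds), and you determine the residual central factor by computing $(t_1\cdots t_{q-1})^q=z^{(q^2-1)/8}$ through the spectral structure of the spin representation --- Fourier modes $V_j$, Fock-space eigenvalues $\lambda_0\omega_q^{\sum_{k\in S}k}$, and the identity $\prod_{k=1}^{(q-1)/2}2\cos(\pi k/q)=1$ together with $\sum_{k=1}^{(q-1)/2}k=(q^2-1)/8$. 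I checked the logic of both halves, including your treatment of $S_q^-$ (where $\phi_q^-(T)=i^{q-1}\phi_q^+(T)$ shifts both the trace and the value of $T^q$ by $(-1)^{(q-1)/2}$, and the two shifts cancel), and it is sound; the $q=3$ relation $(t_1t_2)^3=z$ confirms your formula for $T^q$. The trade-off is that the paper's argument stays entirely inside the elementary Clifford calculus already set up, at the price of an opaque induction, whereas your argument is more conceptual and yields the order of the lifted $q$-cycle as a by-product, but it imports the Fock-space realisation of the spin representation and the cosine product identity, which are only sketched here and would need to be written out (vacuum eigenvector, diagonalisability, the role of the odd mode $V_0$) for a complete proof.
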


\begin{proof}
By Lemma~\ref{lem:embed}
\begin{align*}
\phi_q(\tau)=\pm\frac{1}{2^{\frac{q-1}{2}}}(e_1+e_2)\dots(e_{q-1}+e_q).
\end{align*}
It is not clear which sign is correct but since $(1,\dots,q)^2$ is conjugate to $(1,\dots,q)$ in $S_q$, $\tau^2$ is
conjugate
to $\tau$ in $\tilde{S}_q$ and so we calculate
\begin{align*}
\chi^{\mathcal{C}\pm}_q\bigg{(}\bigg{(}\frac{1}{2^{\frac{q-1}{2}}}(e_1+e_2)\dots(e_{q-1}+e_q)\bigg{)}^2\bigg{)}.
\end{align*}
We now proceed by induction on $q$. The result is clearly true for $q=1$. By
the presentation of $S_n^\pm$ in $\S$\ref{sec:double} and by Lemma~\ref{lem:embed} for $q=3$ we have
\begin{align*}
\chi^{\mathcal{C}\pm}_3(\phi_3(\tau))=\chi^{\mathcal{C}\pm}_3(-\frac{1}{2}(e_1+e_2)(e_2+e_3))=
\frac{1}{2}.-2=-1=(-1)^{\frac{3^2-1}{8}}.
\end{align*}
Now we note that since $\phi_q(\tau)\in\mathcal{C}^+_q$,
\begin{align*}
\chi^{\mathcal{C}\pm}_q(\phi_q(\tau))=
2^{\frac{q-1}{2}}\chi_\varnothing\bigg{(}\bigg{(}\frac{1}{2^{\frac{q-1}{2}}}(e_1+e_2)\dots(e_{q-1}+e_q)\bigg{)}^2\bigg{)
}=
\frac{1}{2^{\frac{q-1}{2}}}\chi_\varnothing(((e_1+e_2)\dots(e_{q-1}+e_q))^2),
\end{align*}
where $\chi_\varnothing(x)$ is equal to the coefficient of $e_\varnothing$ in $x$. Now
\begin{align*}
&\chi^\varnothing([(e_1+e_2)\dots(e_{q-1}+e_q)][(e_1+e_2)\dots(e_{q-1}+e_q)])\\
=&\chi^\varnothing((e_1+e_2)(e_2+e_3)(e_1+e_2)[(e_3+e_4)\dots(e_{q-1}+e_q)][(e_2+e_3)\dots(e_{q-1}+e_q)])\\
=&\chi^\varnothing((2e_1-2e_3)[(e_3+e_4)\dots(e_{q-1}+e_q)][(e_2+e_3)\dots(e_{q-1}+e_q)])\\
=&2\chi^\varnothing((-e_3)[(e_3+e_4)\dots(e_{q-1}+e_q)][e_3(e_3+e_4)\dots(e_{q-1}+e_q)])\\
=&2\chi^\varnothing((-e_3)(e_3+e_4)(-e_3)[(e_4+e_5)\dots(e_{q-1}+e_q)][(e_3+e_4)\dots(e_{q-1}+e_q)])\\
=&2\chi^\varnothing((e_3-e_4)[(e_4+e_5)\dots(e_{q-1}+e_q)][(e_3+e_4)\dots(e_{q-1}+e_q)])\\
=&2\chi^\varnothing((e_3-e_4)(e_4+e_5)(e_3+e_4)[(e_5+e_6)\dots(e_{q-1}+e_q)][(e_4+e_5)\dots(e_{q-1}+e_q)])\\
=&2\chi^\varnothing((-2e_4-2e_3e_4e_5)[(e_5+e_6)\dots(e_{q-1}+e_q)][(e_4+e_5)\dots(e_{q-1}+e_q)])\\
=&-4\chi^\varnothing(e_4[(e_5+e_6)\dots(e_{q-1}+e_q)]e_4[(e_5+e_6)\dots(e_{q-1}+e_q)])\\
=&-4\chi^\varnothing([(e_5+e_6)\dots(e_{q-1}+e_q)][(e_5+e_6)\dots(e_{q-1}+e_q)])\\
\end{align*}
and the claim follows by induction.
\end{proof}

\begin{lem}\label{lem:oneeven}
Let $x\in\tilde{N}_p^t\tilde{S}_t$ be of type $(\pi_0,\dots,\pi_{(p-1)/2})$ with $\pi_j$ having at least one
even part for some even $j$. Suppose further that $x\in\tilde{N}_p^t\tilde{S}_t\cap\tilde{A}_{pt}$ or $\pi_j$ does not
have distinct parts. Then $x$ is conjugate to $zx$ in $\tilde{N}_p^t\tilde{S}_t$.
\end{lem}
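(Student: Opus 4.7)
The plan is to construct an explicit element $g \in \tilde{N}_p^t \tilde{S}_t$ with $g x g^{-1} = zx$. Write $x = \prod_i c_i$ as a product of disjoint cycles in $\tilde{N}_p^t \tilde{S}_t$, and single out one distinguished cycle $c = ((x_1, 1, \ldots, 1); \tau)$ where $\tau$ has even length $l$ (the witness in $\pi_j$) and $f(c) = x_1$ is conjugate to $y_j$ for some even $j$. If $g$ is chosen to centralise every other cycle $c_i$ in the decomposition, the problem reduces to showing $g c g^{-1} = zc$ on the supports of $c$ alone (or on the supports of two equal cycles, in the first sub-case below). Thus the essential content of the lemma is a single-cycle (or a pair-of-equal-cycles) statement, and the remaining cycles are absorbed into the bookkeeping at the end.

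At the level of $S_{pt}$, the cycle $\theta_{pt}(c)$ decomposes into cycles whose lengths are all multiples of $l$, hence even, so $\theta_{pt}(x)$ fails the classical ``distinct odd parts'' criterion and a conjugator in $\tilde{S}_{pt}$ realising $x \sim zx$ exists; the task is to arrange it inside the subgroup $\tilde{N}_p^t \tilde{S}_t$. In the sub-case where $\pi_j$ has a repeated part, take two cycles $c_1, c_2$ of equal even length $l$ with the same $y_j$-content and conjugate by a lift $s \in \tilde{S}_t$ of the involution in $S_t$ swapping their supports; via Lemma~\ref{lem:embed}, $s$ projects into $S_{pt}$ as a product of $l$ transpositions, and the $z$-discrepancy accumulated in pushing $s$ past the two cycles is $z^l = z$ because $l$ is even. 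In the sub-case $x \in \tilde{A}_{pt}$, the evenness of $j$ gives $y_j \in A_p \cap N_p$, so $x_1 \in \tilde{A}_p$ up to multiplication by $z$; Lemma~\ref{lem:p'} (when $j \neq 0$) or the explicit structure of $\theta_p^{-1}(C_p)$ (when $j = 0$) then supplies an element $a \in C_{\tilde{N}_p}(x_1) \setminus \tilde{A}_p$, and placing $a$ in a suitable factor (combined with a power of $\tau$, using $\tau^l = z$ for even $l$) yields the required $g$, with the overall $z$-parity forced by $x \in \tilde{A}_{pt}$ together with the evenness of $l$ and of $j$.

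The main obstacle throughout is the tracking of $z$-factors, coming from (a) the braid relations in $\tilde{S}_t$ (most notably $\tau^l = z$ for a lift of an $l$-cycle with $l$ even), (b) the $\epsilon$-signs picked up when elements of distinct $\tilde{N}_p$-factors inside $\tilde{N}_p^t$ are passed through the lift of a transposition from $\tilde{S}_t$, and (c) the $\mathbb{Z}/2$-grading of $\tilde{N}_p$ by the subgroup $\tilde{N}_p \cap \tilde{A}_p$. The cleanest way to organise the calculation is to verify first modulo the central $\{1, z\}$ that the candidate $g$ conjugates $\theta_{pt}(x)$ to itself (which is automatic from the even-cycle structure in $S_{pt}$), and then to compute the residual parity in $\{1, z\}$ and check that the combined hypothesis---even part at an even index $j$, plus either the alternating condition or the repeated-part condition---forces this residue to be exactly $z$.
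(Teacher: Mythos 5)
Your overall strategy -- exhibit an explicit conjugator inside $\tilde{N}_p^t\tilde{S}_t$, isolate a distinguished cycle (or a pair of equal cycles), and account for the central element through parities in $S_{pt}$ -- is the paper's strategy, but both of your sub-cases have genuine gaps. In the repeated-part case you take ``two cycles of equal even length $l$''. The hypothesis does not provide these: it says $\pi_j$ has \emph{some} even part and \emph{some} repeated part, and they need not coincide (e.g.\ $\pi_j=(2,1,1)$). You must therefore also handle a repeated \emph{odd} part $m$, where the mechanism is different: the lift $s$ of the support-swapping involution projects to $pm$ transpositions and so lies outside $\tilde{A}_{pt}$, the two swapped cycles are \emph{even} permutations and commute on the nose, and the factor $z$ arises from conjugating the remaining (necessarily odd) product of cycles by the odd element $s$. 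Relatedly, your bookkeeping ``$z^l=z$ because $l$ is even'' is false as written (since $z^2=1$, one has $z^l=1$ for even $l$); in the even case the single $z$ comes from reordering the two odd disjoint cycles, $c_2c_1=zc_1c_2$, not from an $l$-fold accumulation.

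In the alternating case your construction fails outright at $j=0$ and is problematic for $j\neq0$ as well. Since $C_{N_p}(y_0)=C_p\leq A_p$ (with $C_p$ self-centralising in $N_p$), no element of $C_{\tilde{N}_p}(x_1)\setminus\tilde{A}_p$ exists when $j=0$, and $\theta_p^{-1}(C_p)\leq\tilde{A}_p$ cannot supply one either. Moreover, to centralise the cycle $c$ inside $N_p\wr S_t$ you must place $a$ \emph{diagonally} across all $l$ factors of its support, and since $l$ is even the resulting element lies in $\tilde{A}_{pt}$ whatever the parity of $a$, so it cannot contribute the $z$ you need on the complementary cycles. The paper's choice is much simpler and you should adopt it: the distinguished cycle $x_1$ is itself odd (its image in $S_{pt}$ consists of $p$ disjoint cycles of even length $l$ composed with an element of $A_p$), hence the complementary product is also odd because $x\in\tilde{A}_{pt}$; conjugating by $x_1$ then fixes $x_1$ and multiplies the disjoint odd remainder by $z$, giving $x_1xx_1^{-1}=zx$ directly. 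Note also that in neither working construction does the conjugator ``centralise every other cycle'' while twisting the distinguished one, as your opening reduction suggests -- in both cases the $z$ is collected on the complement of the distinguished cycle(s).
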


\begin{proof}
If $x\in\tilde{A}_{pt}$ then we can assume without loss of generality that $x=\prod_{j=1}^lx_j$ is a disjoint cycle
decomposition of $x$ with $x_1=((a,1,\dots,1);\tau)$ for some positive integer $m$ and
$a\in N_p\cap A_p$. In particular $x_1\notin\tilde{A}_{pt}$ and so $\prod_{j=2}^lx_j\notin\tilde{A}_{pt}$.
Therefore $x_1x_2=zx_2x_1$ and hence $x$ is conjugate to $zx$.
\newline
\newline
If $\pi_j$ does not have distinct parts then we may assume by the first paragraph that $x\notin\tilde{A}_{pt}$ and that
$x=\prod_{j=1}^lx_j$ is a disjoint cycle decomposition of $x$ where $x_1=((a,1,\dots,1);\tau_1)$ and
$x_2=((a,1,\dots,1);\tau_2)$, where $a\in\tilde{N}_p\cap\tilde{A}_p$, $\theta_t(\tau_1)=(1,\dots,m)$ and
$\theta_t(\tau_2)=(m+1,\dots,2m)$ for some positive integer $m$. In particular $\prod_{j=3}^lx_j\notin\tilde{A}_{pt}$.
Now let
$u\in\theta_t^{-1}((1,m+1))\dots(m,2m))$. Then $ux_1x_2u^{-1}=x_2x_1$. If $m$ is even then $x_2x_1=zx_1x_2$ and
$u(\prod_{j=3}^lx_j)u^{-1}=\prod_{j=3}^lx_j$. If $m$ is odd then $x_2x_1=x_1x_2$ and
$u(\prod_{j=3}^lx_j)u^{-1}=z\prod_{j=3}^lx_j$. Either way $uxu^{-1}=zx$.
\end{proof}

Now for the rest of the section let $\lambda\in\Delta_t$ and set $\boldsymbol{t}(\lambda)=:(t_0,\dots,t_{(p-1)/2})$. Now
let
\begin{align*}
x_0\in\tilde{N}_p^{t_0},&&s_0\in\tilde{S}_{t_0},&&x'\in\tilde{N}_p^{t-t_0}[t_0],&&s'\in\tilde{S}_{(0,t_1,\dots,t_{
(p-1)/2})}[t_0]
\end{align*}
and set $x:=x_0s_0x's'\in\tilde{N}_p^t\tilde{S}_{\boldsymbol{t}(\lambda)}$. We will identify $\tilde{N}_p^{t-t_0}[t_0]$
with
$\tilde{N}_p^{t-t_0}$ and $s'\in\tilde{S}_{(0,t_1,\dots,t_{(p-1)/2})}[t_0]$ with
$s'\in\tilde{S}_{(0,t_1,\dots,t_{(p-1)/2})}$
via Lemma~\ref{lem:shift}.

\begin{lem}\label{lem:>0=}
If $\lambda\in\Delta_t^-$, $t_0>0$ and $x$ is $p-$regular then $\chi_\lambda^+(x)=\chi_\lambda^-(x)$.
\end{lem}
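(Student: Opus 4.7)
The plan is to exploit the fact that $\chi_\lambda^+$ and $\chi_\lambda^-$ are associate characters, so $\chi_\lambda^-=\epsilon\cdot\chi_\lambda^+$ where $\epsilon$ is the sign character of $\tilde{N}_p^t\tilde{S}_t$ with kernel $\tilde{N}_p^t\tilde{S}_t\cap\tilde{A}_{pt}$. The equality $\chi_\lambda^+(x)=\chi_\lambda^-(x)$ therefore reduces to showing that $\chi_\lambda^+(x)=0$ whenever $x\notin\tilde{A}_{pt}$, the other case being trivial. The hypothesis $\lambda\in\Delta_t^-$ with $t_0>0$ partitions into Case 1, where $\sigma(\lambda_0)=1$ and $t-t_0$ is odd (so $\chi_{\lambda_0}$ is self-associate while $\chi_{(\varnothing,\lambda_1,\ldots,\lambda_{(p-1)/2})}^\pm$ is a non-self-associate pair), and Case 2, where $\sigma(\lambda_0)=-1$ and $t-t_0$ is even (with the roles swapped).

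In Case 1, Lemma~\ref{lem:t=2}(2) gives $\chi_\lambda^+(x)=0$ unless $x_0s_0\in\tilde{A}_{pt_0}$. Assuming $x_0s_0\in\tilde{A}_{pt_0}$, the hypothesis $x\notin\tilde{A}_{pt}$ forces $x's'\notin\tilde{A}_{p(t-t_0)}$, whence $\chi_{(\varnothing,\ldots)}^-(x's')=-\chi_{(\varnothing,\ldots)}^+(x's')$ by the associate relation, and the formula collapses to
\[
\chi_\lambda^+(x)=\bigl(\overline{\chi}^+_{\lambda_0}(x_0s_0)-\overline{\chi}^-_{\lambda_0}(x_0s_0)\bigr)\,\chi^+_{(\varnothing,\ldots)}(x's').
\]
It thus suffices to prove that $\overline{\chi}^+_{\lambda_0}=\overline{\chi}^-_{\lambda_0}$ on $p$-regular elements of $\tilde{N}_p^{t_0}\tilde{S}_{t_0}\cap\tilde{A}_{pt_0}$.

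I would establish this using the construction of $\overline{\chi}_{\lambda_0}^\pm$ as index-two inductions from the subgroup $H:=(\tilde{N}_p^{t_0}\cap\tilde{A}_{pt_0})\tilde{A}_{t_0}$. For $y\in\tilde{N}_p^{t_0}\tilde{S}_{t_0}\cap\tilde{A}_{pt_0}\setminus H$, normality of $H$ makes both $\overline{\chi}_{\lambda_0}^\pm(y)=0$, so the difference vanishes trivially. For $y=xs\in H$, the identifications $\overline{\chi}_{\lambda_0}^+=\overline{\overline{\chi}}_{\lambda_0}^{++}\!\uparrow=\overline{\overline{\chi}}_{\lambda_0}^{--}\!\uparrow$ and $\overline{\chi}_{\lambda_0}^-=\overline{\overline{\chi}}_{\lambda_0}^{+-}\!\uparrow=\overline{\overline{\chi}}_{\lambda_0}^{-+}\!\uparrow$, combined with $\overline{\overline{\chi}}_{\lambda_0}^{\epsilon_1\epsilon_2}(xs)=\overline{\operatorname{Exten}}_{t_0}^{\epsilon_1}(x\theta_{t_0}(s))\overline{\xi}_{\lambda_0}^{\epsilon_2}(s)$, collapse the difference to
\[
\bigl(\overline{\operatorname{Exten}}_{t_0}^+-\overline{\operatorname{Exten}}_{t_0}^-\bigr)(x\theta_{t_0}(s))\cdot\bigl(\overline{\xi}_{\lambda_0}^+(s)-\overline{\xi}_{\lambda_0}^-(s)\bigr).
\]
The $p$-regularity of $y$ forces each cycle component $a_l\in\tilde{N}_p$ in its disjoint-cycle decomposition to be a $p'$-element, so Lemma~\ref{lem:p'} gives $\overline{\zeta}_0^+(a_l)=\overline{\zeta}_0^-(a_l)$; applying this termwise in the formulas of Lemma~\ref{lem:aa} annihilates the first factor.

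Case 2 runs entirely symmetrically, with the two tensor factors interchanged: Lemma~\ref{lem:t=2}(2) forces $\chi_\lambda^+(x)=0$ unless $x's'\in\tilde{A}_{p(t-t_0)}$, and the analogous manipulation reduces to showing $\overline{\chi}^+_{(\varnothing,\ldots)}=\overline{\chi}^-_{(\varnothing,\ldots)}$ on $p$-regular elements of the appropriate subgroup, which is handled by the identical Clifford-theoretic argument. The main technical difficulty lies in the bookkeeping across the two-tiered Clifford decomposition (first via Lemma~\ref{lem:t=2} and then within the $\tilde{A}$-part), but the clean underlying observation---that every $\overline{\zeta}_0^+-\overline{\zeta}_0^-$ factor vanishes on a $p'$-order argument by Lemma~\ref{lem:p'}---is what drives the proof.
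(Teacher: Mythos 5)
Your opening reduction (since $\chi_\lambda^-=\epsilon\cdot\chi_\lambda^+$, it suffices to show $\chi_\lambda^+(x)=0$ for $p$-regular $x\notin\tilde{A}_{pt}$) is a correct reformulation, and your Case 1 ($\sigma(\lambda_0)=1$, $t-t_0$ odd) is essentially the paper's argument: the collapse via Lemma~\ref{lem:t=2}(2) to $\bigl(\overline{\chi}^+_{\lambda_0}-\overline{\chi}^-_{\lambda_0}\bigr)(x_0s_0)\,\chi^+_{(\varnothing,\ldots)}(x's')$, the factorisation of the difference as $\bigl(\overline{\operatorname{Exten}}^+_{t_0}-\overline{\operatorname{Exten}}^-_{t_0}\bigr)\bigl(\overline{\xi}^+_{\lambda_0}-\overline{\xi}^-_{\lambda_0}\bigr)$, and the termwise annihilation by Lemma~\ref{lem:p'} all check out.

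Case 2 has a genuine gap. After the collapse you are left with $\chi^+_{\lambda_0}(x_0s_0)\bigl(\overline{\chi}^+_{(\varnothing,\ldots)}(x's')-\overline{\chi}^-_{(\varnothing,\ldots)}(x's')\bigr)$ with $x_0s_0\notin\tilde{A}_{pt_0}$, and you propose to annihilate the second factor. But the two cases are not symmetric: the component $\chi_{(\varnothing,\lambda_1,\ldots)}$ is built from the non-self-associate linear characters $\zeta_j$ ($j\geq1$) via the Clifford-algebra construction of Lemma~\ref{lem:exten>0}, not via $\overline{\operatorname{Exten}}^{\pm}$, so no factor $\overline{\zeta}^+_0-\overline{\zeta}^-_0$ ever appears and Lemma~\ref{lem:p'} has nothing to bite on. Worse, the intermediate claim is false: by the proof of Lemma~\ref{lem:lambda0=} together with Lemma~\ref{lem:cliffrep}, the difference $\overline{\chi}^+_{(\varnothing,\ldots)}-\overline{\chi}^-_{(\varnothing,\ldots)}$ is, up to a nonzero scalar, the coefficient of $e_{[t-t_0]}$ in the Clifford-algebra image of $x's'$, which is nonzero precisely when every cycle $x_j$ of $x's'$ satisfies $f(\theta_{pt}(x_j))\in N_p\setminus A_p$; every element of $N_p\setminus A_p$ is a $p'$-element, so all such $x's'$ are $p$-regular (take for instance $x's'=(b,b)$ with $b$ a lift of $y_1$). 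Thus your claimed identity fails exactly on the support of the difference. The vanishing in Case 2 must instead come from the first factor: since $x_0s_0\notin\tilde{A}_{pt_0}$, Lemma~\ref{lem:semidirect} gives $\operatorname{Exten}^+_{t_0}(x_0\theta_{t_0}(s_0))=\prod_l\bigl(\overline{\zeta}^+_0(x_l)-\overline{\zeta}^-_0(x_l)\bigr)$ (or $0$ outright if some $x_l\notin\tilde{A}_p$), and now $p$-regularity and Lemma~\ref{lem:p'} kill every factor, whence $\chi^+_{\lambda_0}(x_0s_0)=0$. This is the route the paper takes.
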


\begin{proof}
First note that by Lemma~\ref{lem:ordcent} if $x$ is of type $\pi$ then $\pi_0=\varnothing$. If $\sigma(\lambda_0)=-1$
and
$t-t_0$ is even then by Lemma~\ref{lem:t=2} $\chi_\lambda^+(x)\neq\chi_\lambda^-(x)$ only if
$x_0s_0\notin\tilde{A}_{pt_0}$ and
$x's'\in\tilde{A}_{p(t-t_0)}$.
By Lemmas~\ref{lem:semidirect} and~\ref{lem:p'} we note that $\chi_{\lambda_0}^\pm(x_0)\neq0$ only if
$x_0\in\tilde{A}_{pt_0}$ and $s_0$ has only cycles of odd length but this is a contradiction as
$x_0s_0\notin\tilde{A}_{pt_0}$.
\newline
\newline
Next suppose $\sigma(\lambda)=1$ and $t-t_0$
is odd. Similarly to above we can assume $x_0\in\tilde{A}_{pt}$, $s_0\in\tilde{A}_{pt}$ and
$x's'\notin\tilde{A}_{p(t-t_0)}$. By Lemmas~\ref{lem:aa}
and~\ref{lem:p'} we deduce that
$\overline{\overline{\chi}}_{\lambda_0}^{++}(x_0s_0)=\overline{\overline{\chi}}_{\lambda_0}^{-+}(x_0s_0)$
and $\overline{\overline{\chi}}_{\lambda_0}^{+-}(x_0s_0)=\overline{\overline{\chi}}_{\lambda_0}^{--}(x_0s_0)$ and
therefore
$\overline{\chi}_{\lambda_0}^+(x_0s_0)=\overline{\chi}_{\lambda_0}^-(x_0s_0)$ and so
$\chi_\lambda^+(x)=\chi_\lambda^-(x)$.
\end{proof}

\begin{lem}\label{lem:lambda0=}
Suppose $t_0=0$ and $x$ has disjoint cycle decomposition $x=\prod_{j=1}^lx_j$, with
$f(\theta_{pt}(x_1))\in N_p\cap A_p$.
\begin{enumerate}
\item If $\lambda\in\Delta_t^-$ then $\chi_\lambda^+(x)=\chi_\lambda^-(x)$.
\item If $\lambda\in\Delta_t^+$ and $x\in\tilde{A}_{pt}$ then
$\overline{\chi}_\lambda^+(x)=\overline{\chi}_\lambda^-(x)$.
\end{enumerate}
\end{lem}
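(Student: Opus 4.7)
Both parts reduce to showing that the coefficient of the top Clifford monomial $e_{[t]}=e_1\cdots e_t$ in a certain element of $\mathcal{C}_t$ vanishes. First observe that $t_0=0$ together with $\lambda\in\Delta_t^-$ (respectively $\Delta_t^+$) forces $t$ to be odd (resp.\ even). By Lemma~\ref{lem:j>0}, $\chi_\lambda^{\pm}=\chi_{\overline{\boldsymbol{t}}}^{\pm}\otimes\chi$, where $\chi_{\overline{\boldsymbol{t}}}^{\pm}$ is the extension of $\chi_{\boldsymbol{t}}$ from Lemma~\ref{lem:exten>0}, realised on an irreducible $\mathcal{C}_t$-module $V$, and $\chi$ is the inflation to $\tilde{N}_p^t\tilde{S}_{\boldsymbol{t}}$ of an irreducible character of $S_{t_1}\times\cdots\times S_{t_{(p-1)/2}}$.

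For part (1) write $t=2k+1$. By Lemma~\ref{lem:cliffrep}(2) one has $\chi_t^{\mathcal{C}+}(\gamma)-\chi_t^{\mathcal{C}-}(\gamma)=2(2i)^k c_{[t]}(\gamma)$ for $\gamma\in\mathcal{C}_t$, where $c_{[t]}$ extracts the coefficient of $e_{[t]}$. Hence, denoting by $\rho(x)\in\mathcal{C}_t$ the image of $x$ under the representation on $V$,
\[
\chi_\lambda^+(x)-\chi_\lambda^-(x)=2(2i)^k\,c_{[t]}(\rho(x))\,\chi(x),
\]
so it suffices to show $c_{[t]}(\rho(x))=0$.

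The heart of the argument is a degree count. Write $x=\prod_{i=1}^l x_i$ in its disjoint cycle decomposition with $x_i=((a_i,1,\dots,1);\tau_i)$ and $m_i$ the length of $\tau_i$. Since $t_0=0$ every $\zeta_j$ appearing is non-self-associate, so by the construction of $\S$\ref{sec:parachars},
\[
\rho(x_i)=\zeta_{k_i}(a_i)\cdot e_{n_{i,1}}^{\epsilon_i}\cdot\phi_t(\tau_i),
\]
where $n_{i,1}$ is the first element of $\operatorname{supp}(\tau_i)$ and $\epsilon_i=1$ if $a_i\notin\tilde{A}_p$, $\epsilon_i=0$ otherwise. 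Because $\phi_t(\tau_i)$ is, up to sign and a scalar, a product of $m_i-1$ binomials $(e_{n_{i,j}}+e_{n_{i,j+1}})$ supported on $\operatorname{supp}(\tau_i)$, it has Clifford degree at most $m_i-1$, and hence $\rho(x_i)$ has degree at most $m_i-1+\epsilon_i$. Disjoint cycles have disjoint supports in $\{1,\dots,t\}$, so degrees add, giving
\[
\deg\rho(x)\leq\sum_{i=1}^l(m_i-1+\epsilon_i)=t-l+\sum_{i=1}^l\epsilon_i\leq t,
\]
with equality only if $\epsilon_i=1$ for every $i$. The hypothesis $f(\theta_{pt}(x_1))\in N_p\cap A_p$ is precisely $\theta_p(a_1)\in A_p$, i.e.\ $a_1\in\tilde{A}_p$, whence $\epsilon_1=0$ and $\deg\rho(x)\leq t-1$. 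In particular $c_{[t]}(\rho(x))=0$, proving (1).

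For (2), $t=2k$ is even and $\chi_\lambda$ is self-associate. The natural quotient $\tilde{N}_p^t\tilde{S}_{\boldsymbol{t}}\twoheadrightarrow S_{\boldsymbol{t}}$ is surjective on $(\tilde{N}_p^t\tilde{S}_{\boldsymbol{t}})\cap\tilde{A}_{pt}$, so $\chi$ remains irreducible on the intersection and the splitting $\chi_\lambda=\overline{\chi}_\lambda^++\overline{\chi}_\lambda^-$ comes entirely from the decomposition of $V$ into two irreducible $\mathcal{C}_t^+$-modules. By Lemma~\ref{lem:cliffrep}(3) the corresponding character difference picks out $c_{[t]}$ up to a non-zero constant, and since $x\in\tilde{A}_{pt}$ places $\rho(x)$ in $\mathcal{C}_t^+$, the identical degree count yields $c_{[t]}(\rho(x))=0$ and hence $\overline{\chi}_\lambda^+(x)=\overline{\chi}_\lambda^-(x)$.

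The only real obstacle is identifying the associate-character difference with a single top-degree Clifford coefficient; once that reduction is in hand, the bound $\deg\rho(x)\leq t-1$ is a clean consequence of the fact that a cycle whose content lies in $\tilde{A}_p$ contributes no extra Clifford generator beyond what $\phi_t(\tau_i)$ already provides.
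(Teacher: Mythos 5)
Your proof is correct and follows essentially the same route as the paper, which reduces both parts to the vanishing of the coefficient of $e_{[t]}$ in the image of $x$ in $\mathcal{C}_t$ and then invokes Lemma~\ref{lem:cliffrep}; you have merely supplied the degree-counting details (that a cycle whose content lies in $\tilde{A}_p$ contributes Clifford degree at most $m_1-1$ on its support) that the paper leaves implicit.
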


\begin{proof}
Let $\phi_{\boldsymbol{t}(\lambda)}:\tilde{N}_p^t\tilde{S}_{\boldsymbol{t}(\lambda)}\to\mathcal{C}_t$ be the
homomorphism in Lemma~\ref{lem:exten>0}. Then the coefficient of $e_{[t]}$ in
$\phi_{\boldsymbol{t}(\lambda)}(x)$ is zero. The result now follows from~\ref{lem:cliffrep}.
\end{proof}

\begin{lem}\label{lem:sqrtR}
Let $\lambda\in\Delta_t^-$ and $x\notin\tilde{A}_{pw}$ with $x's'$ having disjoint cycle decomposition
$x's'=\prod_{j=1}^lx_j$. Then $\chi_\lambda^\pm(x)=0$ unless $x_0s_0$ has type
$(\lambda_0,\varnothing,\dots,\varnothing)$ and $f(\theta_{pt}(x_j))\in N_p\backslash A_p$ for $(1\leq j\leq l)$ in
which case $\chi_\lambda^\pm(x)\in\sqrt{p^{l(\lambda_0)}}\mathcal{R}$.
\end{lem}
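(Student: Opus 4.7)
My plan is to apply Lemma~\ref{lem:t=2}(2) to decompose $\chi^\pm_\lambda(x)$ through the two factor characters $\chi^{(\pm)}_{\lambda_0}$ of $\tilde{N}_p^{t_0}\tilde{S}_{t_0}$ and $\chi^{(\pm)}_{(\varnothing,\lambda_1,\dots,\lambda_{(p-1)/2})}$ of $\tilde{N}_p^{t-t_0}\tilde{S}_{(0,t_1,\dots,t_{(p-1)/2})}$, and then analyse each factor separately. Since $\lambda\in\Delta_t^-$ we have $\sigma(\lambda_0)(-1)^{t-t_0}=-1$, yielding two sub-cases: (A) $\sigma(\lambda_0)=-1$ with $t-t_0$ even, in which $\chi^\pm_{\lambda_0}$ form an associate pair and $\chi_{(\varnothing,\lambda_1,\dots)}$ is self-associate; or (B) the reverse. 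I shall present Case~(A) in full; Case~(B) follows symmetrically with the roles of the two factors swapped. Since $\chi^+_\lambda$ and $\chi^-_\lambda$ are associates and therefore have the same $p$-adic valuation at every element, it suffices to control $\chi^+_\lambda(x)$.

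In Case~(A), the ``if $s_1\in\tilde{A}_{n_1}$'' clause in Lemma~\ref{lem:t=2}(2) forces $x's'\in\tilde{A}_{p(t-t_0)}$, whence $x_0s_0\notin\tilde{A}_{pt_0}$ because $x\notin\tilde{A}_{pw}$. Using the relation $\chi^-_{\lambda_0}(x_0s_0)=-\chi^+_{\lambda_0}(x_0s_0)$ coming from the associate pairing on this non-alternating element, the two-term formula collapses to
\begin{equation*}
\chi^+_\lambda(x)=\bigl(\overline{\chi}^+_{(\varnothing,\lambda_1,\dots)}(x's')-\overline{\chi}^-_{(\varnothing,\lambda_1,\dots)}(x's')\bigr)\chi^+_{\lambda_0}(x_0s_0).
\end{equation*}
For the $x's'$ factor, note that $(\varnothing,\lambda_1,\dots)\in\Delta^+_{t-t_0}$ (since $t-t_0$ is even), so Lemma~\ref{lem:lambda0=}(2) applies directly: whenever some cycle $x_j$ of $x's'$ has $f(\theta_{pt}(x_j))\in N_p\cap A_p$, we get $\overline{\chi}^+(x's')=\overline{\chi}^-(x's')$, killing the factor. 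The contrapositive establishes the second asserted condition. The surviving value $\overline{\chi}^\pm_{(\varnothing,\lambda_1,\dots)}(x's')$ involves only the linear $p'$-characters $\zeta_j$ ($j\geq 1$), so lies in $\mathcal{R}$ and contributes no $\sqrt{p}$.

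For the $x_0s_0$ factor I invoke Lemma~\ref{lem:j=0}: $\chi^\pm_{\lambda_0}(x_0s_0)=\operatorname{Exten}^+_{t_0}(x_0\theta(s_0))\xi^\pm_{\lambda_0}(s_0)$. Since $x_0s_0\notin\tilde{A}_{pt_0}$ we fall into the ``otherwise'' branch of Lemma~\ref{lem:semidirect}, which gives
\begin{equation*}
\operatorname{Exten}^+_{t_0}(x_0\theta(s_0))=\prod_l\bigl(\overline{\zeta}^+_0-\overline{\zeta}^-_0\bigr)(x_l)
\end{equation*}
over \emph{every} cycle of $x_0s_0$. Lemmas~\ref{lem:p'} and~\ref{lem:sqrt} force each $x_l$ to be a non-trivial $p$-element of $\tilde{A}_p$ for non-vanishing, in which case the factor equals $\pm i^{(p-1)/2}\sqrt{p}$; non-vanishing therefore places every cycle's $f$-value in the conjugacy class of $y_0$ and produces one $\sqrt{p}$ per cycle. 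Theorem~\ref{thm:values}(2) then forces $\theta(s_0)$ to have cycle type in $\mathcal{O}_{t_0}$ or equal to $\lambda_0$; the former is ruled out by the sign count ($\mathcal{O}_{t_0}$-cycles are even in $\tilde{S}_{pt_0}$, so together with $x_0\in\tilde{A}_{pt_0}$ would force $x_0s_0\in\tilde{A}_{pt_0}$). Hence $\theta(s_0)$ has cycle type $\lambda_0$, $x_0s_0$ has type $(\lambda_0,\varnothing,\dots,\varnothing)$, and the Exten product supplies $l(\lambda_0)$ factors of $\sqrt{p}$. Since $\xi^\pm_{\lambda_0}(s_0)$ is a $p'$-algebraic integer (the condition $t<p$ gives $\lambda_{0,i}<p$), we conclude $\chi^+_{\lambda_0}(x_0s_0)\in\sqrt{p^{l(\lambda_0)}}\mathcal{R}$.

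Multiplying the two factors delivers $\chi^+_\lambda(x)\in\sqrt{p^{l(\lambda_0)}}\mathcal{R}$ precisely when both structural conditions hold, and vanishing otherwise. Case~(B) runs by the same template after exchanging roles: Lemma~\ref{lem:lambda0=}(1) (with $(\varnothing,\lambda_1,\dots)\in\Delta^-_{t-t_0}$) controls the $x's'$ factor, while the analogous analysis of $\overline{\chi}^+_{\lambda_0}-\overline{\chi}^-_{\lambda_0}$ on $x_0s_0$ uses Theorem~\ref{thm:values}(3) together with Lemma~\ref{lem:aa} to factorise the difference as $(\overline{\operatorname{Exten}}^+-\overline{\operatorname{Exten}}^-)(x_0\theta(s_0))\cdot(\overline{\xi}^+_{\lambda_0}-\overline{\xi}^-_{\lambda_0})(s_0)$, delivering again the same type and valuation conclusions. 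The main obstacle will be the Case~(B) induction from $(\tilde{N}_p^{t_0}\cap\tilde{A}_{pt_0})\tilde{A}_{t_0}$ up to $(\tilde{N}_p^{t_0}\tilde{S}_{t_0})\cap\tilde{A}_{pt_0}$: one must verify that the above factorisation of $\overline{\chi}^+_{\lambda_0}-\overline{\chi}^-_{\lambda_0}$ survives induction and continues to supply $\sqrt{p^{l(\lambda_0)}}$ even when $x_0s_0$ lies in the non-trivial coset with both $x_0$ and $s_0$ odd.
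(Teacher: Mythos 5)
Your argument is correct and is essentially the paper's own proof: both decompose $\chi^{\pm}_{\lambda}$ via Lemma~\ref{lem:t=2}(2) according to which of the two factors is self-associate, use Lemma~\ref{lem:lambda0=} (resp.\ the associate relation on the non-alternating factor) to reduce to a single product of a ``difference'' term with one factor character, pin down the type of $x_0s_0$ via Theorem~\ref{thm:values} together with Lemmas~\ref{lem:semidirect}/\ref{lem:aa} and~\ref{lem:p'}, and extract the $\sqrt{p^{l(\lambda_0)}}$ from Lemma~\ref{lem:sqrt}. The one loose end you flag in Case~(B) closes immediately: $\overline{\chi}^{\pm}_{\lambda_0}$ are induced from the index-two normal subgroup $(\tilde{N}_p^{t_0}\cap\tilde{A}_{pt_0})\tilde{A}_{t_0}$ of $(\tilde{N}_p^{t_0}\tilde{S}_{t_0})\cap\tilde{A}_{pt_0}$, so both vanish on the coset where $x_0$ and $s_0$ are each odd, and the required containment holds trivially there.
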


\begin{proof}
We first note that if $f(\theta_{pt}(x_j))\in N_p\cap A_p$ for some $j$ then by Lemmas~\ref{lem:t=2}
and~\ref{lem:lambda0=} we have that $\chi_\lambda^+(x)=\chi_\lambda^-(x)$ and hence as
$x\notin\tilde{A}_{pw}$, $\chi_\lambda^+(x)=\chi_\lambda^-(x)=0$.
\newline
\newline
Now suppose $\sigma(\lambda_0)=1$ and $t-t_0$ is odd. By Lemmas~\ref{lem:t=2} and~\ref{lem:aa} we
can assume that
$x_0\in\tilde{N}_p^{t_0}\cap\tilde{A}_{pt_0}$, $s_0\in\tilde{A}_{t_0}$ and $x's'\notin\tilde{A}_{p(t-t_0)}$. Now
\begin{align*}
\chi_\lambda^\pm(x)=\pm(\overline{\operatorname{Exten}}^+_{t_0}(x_0\theta_t(s_0))-\overline{\operatorname{Exten}}
^-_{t_0}(x_0\theta_t(s_0)))
(\overline{\xi}_{\lambda_0}^+(s_0)-\overline{\xi}_{\lambda_0}^-(s_0))\chi_{\lambda\backslash\{0\}}^+(x's').
\end{align*}
Therefore by Theorem~\ref{thm:values} $\chi_\lambda^\pm(x)\neq0$ only if $s_0$ is of type $\lambda_0$.
Also by Lemma~\ref{lem:aa}
\begin{align*}
\overline{\operatorname{Exten}}^+_{t_0}(x_0\theta_t(s_0))-\overline{\operatorname{Exten}}^-_{t_0}
(x_0\theta_t(s_0))\neq0
\end{align*}
only if $x_0s_0$ is of type $(\lambda_0,\varnothing,\dots,\varnothing)$ and in this case
by Lemma~\ref{lem:sqrt} we have that
$\overline{\operatorname{Exten}}^+_{t_0}(x_0\theta_t(s_0))-\overline{\operatorname{Exten}}^-_{t_0}
(x_0\theta_t(s_0))\in\sqrt{p^{l(\lambda_0)}}\mathcal{R}$.
\newline
\newline
Next suppose $\sigma(\lambda_0)=-1$ and $t-t_0$ is even. By Lemma~\ref{lem:t=2} we can assume
$x_0s_0\notin\tilde{A}_{pt_0}$ and $x's'\in\tilde{A}_{p(t-t_0)}$. Now
\begin{align*}
\chi_\lambda^\pm(x)=\pm\operatorname{Exten}^+_{t_0}(x_0\theta_t(s_0))\xi_\lambda(s_0)
(\overline{\chi}_{\lambda\backslash\{0\}}^+(x's')-\overline{\chi}_{\lambda\backslash\{0\}}^-(x's')).
\end{align*}
So by Lemma~\ref{lem:semidirect} $\chi_\lambda^\pm(x)\neq0$ only if $x_0\in\tilde{A}_{pt_0}$. In this case
$s_0\notin\tilde{A}_{t_0}$ and hence by Theorem~\ref{thm:values} we can assume $s_0$ is of
type $\lambda_0$. Therefore as in the previous paragraph $x_0s_0$ is of type
$(\lambda_0,\varnothing,\dots,\varnothing)$ and $\chi_\lambda^\pm(x)\in\sqrt{p^{l(\lambda_0)}}\mathcal{R}$.
\end{proof}

\begin{lem}\label{lem:sqrtAR}
Let $\lambda\in\Delta_t^+$ and $x\in\tilde{A}_{pw}$ with $x's'$ having disjoint cycle decomposition
$x's'=\prod_{j=1}^lx_j$. Then $\overline{\chi}_\lambda^+(x)=\overline{\chi}_\lambda^-(x)$ unless $x_0s_0$ has type
$(\lambda_0,\varnothing,\dots,\varnothing)$ and $f(\theta_{pt}(x_j))\in N_p\backslash A_p$ for $(1\leq j\leq l)$ in
which case $\overline{\chi}_\lambda^+(x)-\overline{\chi}_\lambda^-(x)\in\sqrt{p^{l(\lambda_0)}}\mathcal{R}$.
\end{lem}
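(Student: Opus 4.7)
The plan is to mirror the case analysis of Lemma~\ref{lem:sqrtR}, but working with the restricted characters $\overline{\chi}_\lambda^\pm$ on $\tilde{N}_p^t\tilde{S}_t\cap\tilde{A}_{pt}$ instead of $\chi_\lambda^\pm$. First I would reduce to the case where $f(\theta_{pt}(x_j))\in N_p\setminus A_p$ for every $j$. If some $x_j$ has $f$-value in $N_p\cap A_p$, then by regrouping $x's'$ via Lemma~\ref{lem:t=2A} we can isolate a factor containing such a cycle and apply Lemma~\ref{lem:lambda0=}(2) to that factor; this forces $\overline{\chi}_\lambda^+(x)=\overline{\chi}_\lambda^-(x)$.

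Next I would split into the two subcases allowed by $\lambda\in\Delta_t^+$: either $\sigma(\lambda_0)=1$ with $t-t_0$ even, or $\sigma(\lambda_0)=-1$ with $t-t_0$ odd. In each case, applying the appropriate part of Lemma~\ref{lem:t=2A} factorises $\overline{\chi}_\lambda^\pm(x)$ as a product of a factor involving $(x_0,s_0)$ and one involving $(x',s')$. In the first subcase, part~(1) of Lemma~\ref{lem:t=2A} expresses $\overline{\chi}_\lambda^\pm(x)$ in terms of $\overline{\overline{\chi}}_{\lambda_0}^{\pm\pm}(x_0s_0)$ paired with character values on the remaining factor; in the second subcase, part~(3) does the analogous job. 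In both subcases, the relevant restriction forces $x_0s_0\in\tilde{A}_{pt_0}$ (respectively its complement) and $x's'$ in the complementary coset, so that the ``$x\in\tilde{A}_{pt}$'' hypothesis is compatible with a nonzero contribution.

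Now I would compute the difference. Using the definitions of $\overline{\overline{\chi}}_{\lambda_0}^{\pm\pm}$ above Lemma~\ref{lem:sqrtR}, the combination $\overline{\chi}_\lambda^+(x)-\overline{\chi}_\lambda^-(x)$ reduces, after cancellation, to a product of a difference $\overline{\operatorname{Exten}}_{t_0}^+(x_0\theta_t(s_0))-\overline{\operatorname{Exten}}_{t_0}^-(x_0\theta_t(s_0))$ with either $\overline{\xi}^+_{\lambda_0}(s_0)-\overline{\xi}^-_{\lambda_0}(s_0)$ (first subcase) or $\xi^+_{\lambda_0}(s_0)-\xi^-_{\lambda_0}(s_0)$ (second subcase), times a value on $x's'$. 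By Theorem~\ref{thm:values} the $\xi$-difference is nonzero only if $s_0$ has type $\lambda_0$, and by Lemma~\ref{lem:aa} combined with Lemma~\ref{lem:p'} the Exten-difference is nonzero only if $x_0$ is entirely $p$-singular on the same cycles, i.e.\ $x_0s_0$ has type $(\lambda_0,\varnothing,\dots,\varnothing)$. Finally, Lemma~\ref{lem:sqrt} computes the $p$-singular contribution of each of the $l(\lambda_0)$ cycles of $x_0$ as an element of $\mathcal{R}\sqrt{p}$, giving the total factor $\sqrt{p^{l(\lambda_0)}}$.

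The main obstacle is careful bookkeeping of the alternating-group branching in Lemma~\ref{lem:t=2A}: depending on whether $x_0s_0$ and $x's'$ individually lie in $\tilde{A}$, the formulas for $\overline{\chi}_\lambda^\pm$ pair up $\overline{\overline{\chi}}_{\lambda_0}^{++}$ with $\overline{\overline{\chi}}_{\lambda_0}^{--}$ or with $\overline{\overline{\chi}}_{\lambda_0}^{-+}$, and one must verify in each branch that the difference $\overline{\chi}_\lambda^+(x)-\overline{\chi}_\lambda^-(x)$ collapses to the Exten$_{t_0}^+-$Exten$_{t_0}^-$ form and not, say, to Exten$_{t_0}^+ +$Exten$_{t_0}^-$ (which would equal $\operatorname{Exten}_{t_0}^+$ and not produce the $\sqrt{p}$ factors). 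Once this is done, the Gauss-sum input from Lemma~\ref{lem:sqrt} yields the conclusion.
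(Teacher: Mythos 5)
Your proposal is correct and follows essentially the same route as the paper: reduce via Lemma~\ref{lem:lambda0=} when some $f(\theta_{pt}(x_j))\in N_p\cap A_p$, split into the two parity cases of $\lambda\in\Delta_t^+$, factorise with Lemma~\ref{lem:t=2A}, and extract the $\sqrt{p}$ per cycle from the $\overline{\zeta}_0^+-\overline{\zeta}_0^-$ values via Lemmas~\ref{lem:aa}, \ref{lem:semidirect}, \ref{lem:p'} and \ref{lem:sqrt}. The only cosmetic discrepancy is in the second subcase, where the paper writes the difference as $2i\chi_{\lambda_0}^+(x_0s_0)\chi_{\lambda\backslash\{0\}}^+(x's')$ (using $\operatorname{Exten}^+_{t_0}$ off the index-two subgroup, where $\overline{\operatorname{Exten}}^\pm_{t_0}$ are not defined) rather than as a literal Exten-difference times a $\xi$-difference, but this agrees with your computation up to constants.
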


\begin{proof}
We first note that if $f(\theta_{pt}(x_j))\in N_p\cap A_p$ for some $j$ then by Lemmas~\ref{lem:t=2}
and~\ref{lem:lambda0=} we have that $\overline{\chi}_\lambda^+(x)=\overline{\chi}_\lambda^-(x)$.
\newline
\newline
Now suppose $\sigma(\lambda_0)=1$ and $t-t_0$ is even. By Lemmas~\ref{lem:t=2A} and~\ref{lem:aa} we can assume
$x_0\in\tilde{A}_{pt_0}$ and $s_0\in\tilde{A}_{t_0}$ and so $x's'\in\tilde{A}_{p(t-t_0)}$. Now
\begin{align*}
\overline{\chi}_\lambda^+(x)-\overline{\chi}_\lambda^-(x)=
(\overline{\chi}_{\lambda_0}^+(x_0s_0)-\overline{\chi}_{\lambda_0}^-(x_0s_0))
(\overline{\chi}_{\lambda\backslash\{0\}}^+(x's')-\overline{\chi}_{\lambda\backslash\{0\}}^-(x's'))
\end{align*}
and
\begin{align*}
(\overline{\chi}_{\lambda_0}^+(x_0s_0)-\overline{\chi}_{\lambda_0}^-(x_0s_0))=
(\overline{\operatorname{Exten}}^+_{t_0}(x_0\theta_t(s_0))-
\overline{\operatorname{Exten}}^-_{t_0}(x_0\theta_t(s_0)))
(\overline{\xi}^+(s_0)-\overline{\xi}^-(s_0)).
\end{align*}
Therefore by Theorem~\ref{thm:values} $\overline{\chi}_\lambda^+(x)-\overline{\chi}_\lambda^-(x)\neq0$ only if
$s_0$ is of type $\lambda_0$ and the result then follows by Lemma~\ref{lem:aa}.
\newline
\newline
Now suppose $\sigma(\lambda_0)=-1$ and $t-t_0$ is odd. By Lemmas~\ref{lem:t=2A} and~\ref{lem:semidirect} we can
assume that $x_0\tilde{A}_{pt_0}$ and $s_0\notin\tilde{A}_{t_0}$ and so $x's'\notin\tilde{A}_{p(t-t_0)}$. Then
\begin{align*}
\overline{\chi}_\lambda^+(x)-\overline{\chi}_\lambda^-(x)=
2i\chi_{\lambda_0}^+(x_0s_0)\chi_{\lambda\backslash\{0\}}^+(x's')
\end{align*}
Therefore by Theorem~\ref{thm:values} $\overline{\chi}_\lambda^+(x)-\overline{\chi}_\lambda^-(x)\neq0$ only if
$s_0$ is of type $\lambda_0$ and the result then follows by Lemma~\ref{lem:semidirect}.
\end{proof}

\begin{lem}\label{lem:lambda0p}
Let $\lambda\in\Delta_t^+$ with $t_0>0$ and let
$x\in\tilde{N}_p^t\tilde{S}_{\boldsymbol{t}(\lambda)}\cap\tilde{A}_{pt}$ be $p-$regular. Then
$\overline{\chi}_\lambda^+(x)=\overline{\chi}_\lambda^-(x)$.
\end{lem}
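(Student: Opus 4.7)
The plan is to deduce this lemma as an immediate consequence of Lemma \ref{lem:sqrtAR} combined with the $p$-regularity criterion of Lemma \ref{lem:ordcent}. Applying Lemma \ref{lem:sqrtAR} to $\lambda\in\Delta_t^+$ and $x\in\tilde{A}_{pt}$, we have $\overline{\chi}_\lambda^+(x)=\overline{\chi}_\lambda^-(x)$ unless $x_0s_0$ has type $(\lambda_0,\varnothing,\dots,\varnothing)$ and $f(\theta_{pt}(x_j))\in N_p\setminus A_p$ for each cycle $x_j$ of $x's'$. It therefore suffices to show that under our hypotheses the type condition on $x_0s_0$ must fail.

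The key observation is that $x_0s_0\in\tilde{N}_p^{t_0}\tilde{S}_{t_0}$ and $x's'\in\tilde{N}_p^{t-t_0}[t_0]\tilde{S}_{t-t_0}[t_0]$ are supported on disjoint sets of $pt_0$ and $p(t-t_0)$ points respectively, so they commute in $\tilde{N}_p^t\tilde{S}_t$ up to the central involution $z$. Since $z$ has order $2$ and $p$ is odd, the order of $x=x_0s_0\cdot x's'$ is divisible by $\operatorname{ord}(\theta_{pt}(x_0s_0))$ and $\operatorname{ord}(\theta_{pt}(x's'))$, and hence $p$-regularity of $x$ forces $p$-regularity of both $x_0s_0$ and $x's'$.

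Now Lemma \ref{lem:ordcent}(1) says that if $x_0s_0$ is $p$-regular then the first component of its type is $\varnothing$. However, the hypothesis $t_0>0$ means $\lambda_0$ is a nonempty strict partition, so $x_0s_0$ cannot have type $(\lambda_0,\varnothing,\dots,\varnothing)$. The unless-clause of Lemma \ref{lem:sqrtAR} therefore fails, and we conclude $\overline{\chi}_\lambda^+(x)=\overline{\chi}_\lambda^-(x)$, as required. There is no real obstacle here: the only subtlety is verifying that $p$-regularity descends from $x$ to $x_0s_0$, which rests entirely on the disjoint-support argument above.
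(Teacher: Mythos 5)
Your proof is correct, but it takes a different route from the paper's. The paper does not invoke Lemma~\ref{lem:sqrtAR} at all: it decomposes $x=x_0x'$, uses $p$-regularity (via Lemma~\ref{lem:ordcent}) to conclude that the $f$-values of the cycles of $x_0$ are $p'$-elements, then applies Lemma~\ref{lem:p'} (so $\overline{\zeta}^+_0=\overline{\zeta}^-_0$ on such elements) together with Lemmas~\ref{lem:t=2A} and~\ref{lem:aa}, splitting into the two cases $\sigma(\lambda_0)=1$, $t-t_0$ even and $\sigma(\lambda_0)=-1$, $t-t_0$ odd. You instead treat the immediately preceding Lemma~\ref{lem:sqrtAR} as a black box and observe that its exceptional clause requires $x_0s_0$ to have type $(\lambda_0,\varnothing,\dots,\varnothing)$ with $\lambda_0\neq\varnothing$ (since $t_0=|\lambda_0|>0$), which forces a cycle whose $f$-value has order $p$ and hence $p$-singularity of $x_0s_0$ and of $x$; your disjoint-support argument for why $p$-regularity descends to $x_0s_0$ is sound. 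This buys a shorter proof that avoids repeating the case analysis, at the cost of leaning on Lemma~\ref{lem:sqrtAR}, whose proof contains essentially the same computations the paper redoes here. One minor point: you cite Lemma~\ref{lem:ordcent}(1), which is stated under the hypothesis $t<p$; the implication you actually need ($p$-regularity forces the first component of the type to be $\varnothing$) holds without that hypothesis, and it would be cleaner to say so — though the paper's own proof makes the same unqualified citation.
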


\begin{proof}
Let $x=x_0x'$ with $x_0\in\tilde{N}_p^{t_0}\tilde{S}_{t_0}$ and
$x'\in\tilde{N}_p^{t-t_0}(\tilde{S}_{t_1}\dots\tilde{S}_{t_{(p-1)/2}})$. Now let $x_0=\prod_{j=1}^lx_j$ be a
disjoint cycle decomposition. Then by Lemma~\ref{lem:ordcent} $f(\theta_{pt}(x_j))$ is $p-$regular for all $j$.
\newline
\newline
First suppose $\sigma(\lambda_0)=1$ and $t-t_0$ is even. Then by Lemma~\ref{lem:t=2A} we may assume
$x_0\in\tilde{A}_{pt_0}$ and $x'\in\tilde{A}_{p(t-t_0)}$. By Lemmas~\ref{lem:aa} and~\ref{lem:p'}
$\overline{\chi}_{\lambda_0}^+(x_0)=\overline{\chi}_{\lambda_0}^-(x_0)$ and therefore that
$\overline{\chi}_\lambda^+(x)=\overline{\chi}_\lambda^-(x)$.
\newline
\newline
Next suppose $\sigma(\lambda_0)=-1$ and $t-t_0$ is odd. By Lemma~\ref{lem:t=2A} the result is clear unless
$x_0\notin\tilde{A}_{pt_0}$ and $x'\notin\tilde{A}_{p(t-t_0)}$. However, again by Lemma~\ref{lem:aa} in this
case $\chi_{\lambda_0}^+(x_0)=0$ and hence $\overline{\chi}_\lambda^+(x)=\overline{\chi}_\lambda^-(x)=0$.
\end{proof}

\section{Murnaghan-Nakayama rules}\label{sec:MN}

By Murnaghan-Nakayama rule we mean the maps $r^{x_S}$ in part (4) of Definition~\ref{def:MN}. For the original
Murnaghan-Nakayama rule for the symmetric group (see~\cite[Theorem 2.4.7]{jamker1981}). In this section we present the
corresponding rule for the double covers of the symmetric and alternating groups as proved by M. Cabanes and O. Brunat
and
J. Gramain respectively. We then go on to prove that $\tilde{N}_p^t\tilde{S}_t$ also possesses a Murnaghan-Nakayama
rule.
\newline
\newline

\subsection{Murnaghan-Nakayama rule for \texorpdfstring{$\tilde{S}_n$}{TEXT} and \texorpdfstring{$\tilde{A}_n$}{TEXT}}

\begin{thm}\label{thm:MNSn}\cite[Thoerem 20]{cabane1988}
Let $n$ and $q$ be positive integers with $q$ odd and $q\leq n$. We write $\tau=o((1,\dots,q))\in\tilde{S}_n$. Let
$x\in\tilde{S}_{n-q}[q]$ which we identify with $\tilde{S}_{n-q}$ via Lemma~\ref{lem:shift}. Then
\begin{align*}
\xi^{(\pm)}_\lambda(\tau x)=\sum_{\mu\in
M_{\overline{q}}(\lambda),\sigma(\mu)=1}a(\xi^{(\pm)}_\lambda,\xi_\mu)\xi_\mu(x)+\sum_{\mu\in
M_{\overline{q}}(\lambda),\sigma(\mu)=-1}
a(\xi^{(\pm)}_\lambda,\xi^+_\mu)\xi^+_\mu(x)+a(\xi^{(\pm)}_\lambda,\xi^-_\mu)\xi^-_\mu(x),
\end{align*}
where
\begin{align*}
a(\xi^{(\varepsilon)}_\lambda,\xi^{(\eta)}_\mu):=
\begin{cases}
(-1)^\frac{q^2-1}{8}\alpha^\lambda_\mu&\text{ if }\sigma(\mu)=1,\\
\frac{1}{2}(-1)^\frac{q^2-1}{8}\alpha^\lambda_\mu&\text{ if }\sigma(\lambda)=1,\sigma(\mu)=-1,\\
\frac{1}{2}(-1)^\frac{q^2-1}{8}(\alpha^\lambda_\mu+\varepsilon\eta i^\frac{q-1}{2}\sqrt{q})&\text{ if
}\sigma(\lambda)=\sigma(\mu)=-1,
\end{cases}
\end{align*}
and
\begin{align*}
\alpha^\lambda_\mu=(-1)^{L(b)}2^{m(b)},
\end{align*}
where $L(b)$ is the leg length of the $q-$bar $b$ removed from $\lambda$ to get $\mu$, and
\begin{align*}
m(b):=
\begin{cases}
1 &\text{ if } \sigma(\lambda)=1,\sigma(\mu)=-1,\\
0 &\text{ otherwise.}
\end{cases}
\end{align*}
\end{thm}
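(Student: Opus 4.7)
The plan is to restrict $\xi^{(\pm)}_\lambda$ to the parabolic subgroup $\tilde{S}_q\tilde{S}_{n-q}\leq\tilde{S}_n$, decompose the restriction using the character labelling of $\S\ref{sec:parachars}$, and evaluate the resulting sum at $\tau x$ using Theorem~\ref{thm:values} together with the Clifford-algebra computation of Lemma~\ref{lem:qcycle}. Writing
\[ \xi^{(\pm)}_\lambda\downarrow_{\tilde{S}_q\tilde{S}_{n-q}} = \sum_{\kappa\in\mathcal{D}_q,\,\mu\in\mathcal{D}_{n-q}} \gamma_\lambda(\kappa,\mu)\,\chi^{(\pm)}_{(\kappa,\mu)}, \]
we have $\xi^{(\pm)}_\lambda(\tau x) = \sum \gamma_\lambda(\kappa,\mu)\chi^{(\pm)}_{(\kappa,\mu)}(\tau x)$, and Lemma~\ref{lem:t=2} factorises each term into a product of an $\tilde{S}_q$-value at $\tau$ and an $\tilde{S}_{n-q}$-value at $x$.

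The first technical step is to pin down which pairs $(\kappa,\mu)$ survive. A routine induction on $n$ (or on the combinatorial complexity of $\lambda$) reduces the non-vanishing contributions to those with $\kappa=(q)$ and $\mu\in M_{\overline{q}}(\lambda)$: the base case uses the explicit value of $\xi^{(\pm)}_{(q)}$ at a $q$-cycle from Theorem~\ref{thm:values}, and the inductive step exploits the fact that any other strict $\kappa\vdash q$ can itself be reduced via bar removal, so that all these intermediate contributions telescope into the single-bar statement. The multiplicity $\gamma_\lambda((q),\mu)$ then acquires the sign $(-1)^{L(b)}$ from Lemma~\ref{lem:leglength}(2), and the factor $2^{m(b)}$ from the jump between self-associate and non-self-associate character types in Lemma~\ref{lem:t=2}.

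The second step is the explicit evaluation of $\xi^{(\pm)}_{(q)}(\tau)$. Using the embedding of Lemma~\ref{lem:embed} together with Lemma~\ref{lem:qcycle} gives the factor $(-1)^{(q^2-1)/8}$, which produces the constant prefactor appearing in all three cases. The halving $\tfrac{1}{2}$ present in cases (2) and (3) traces back to the restriction to $(\tilde{S}_q\cap\tilde{A}_q)\times(\tilde{S}_{n-q}\cap\tilde{A}_{n-q})$ in Lemma~\ref{lem:t=2}(2)--(3). Finally, when $\sigma(\lambda)=\sigma(\mu)=-1$, Theorem~\ref{thm:values}(2) gives the additional term $\pm i^{(q-1)/2}\sqrt{q}/2$ in $\xi^{\pm}_{(q)}(\tau)$, which, combined with the signs $\varepsilon,\eta$ tracking the $\pm$ labels of $\xi^\pm_\lambda$ and $\xi^\pm_\mu$, produces exactly the correction $\tfrac{1}{2}(-1)^{(q^2-1)/8}\varepsilon\eta i^{(q-1)/2}\sqrt{q}$.

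The main obstacle is the sign bookkeeping across the three cases: one must track the two independent layers of $\pm$ ambiguity (coming from the self-associative status of $\lambda$ versus $\mu$, and from the parabolic character labelling) and reconcile them with the $\pm$ on the left-hand side. In practice these are fixed by specialising $x = o(\mu')$ where $\mu'\in\mathcal{D}_{n-q}^-\cap\mathcal{O}_{n-q}$ equals $\mu$ as a partition, so that Theorem~\ref{thm:values}(2) gives an unambiguous evaluation of both sides; this determines $\alpha^\lambda_\mu$ up to sign, after which Lemma~\ref{lem:leglength} pins down the sign uniquely and allows the induction to close.
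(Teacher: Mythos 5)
This theorem is not proved in the paper at all: it is quoted verbatim from Cabanes (\cite[Theorem 20]{cabane1988}), and ultimately goes back to Morris's recursion for Schur's spin characters. So the relevant question is whether your sketch would constitute an independent proof, and it would not: there is a genuine gap at its centre.

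The gap is the first ``technical step''. Writing $\xi^{(\pm)}_\lambda\downarrow_{\tilde{S}_q\tilde{S}_{n-q}}=\sum\gamma_\lambda(\kappa,\mu)\chi^{(\pm)}_{(\kappa,\mu)}$ and evaluating at $\tau x$ is fine, but the assertion that only the pairs with $\kappa=(q)$ and $\mu\in M_{\overline{q}}(\lambda)$ survive, with the stated coefficients, is essentially the entire content of the theorem, and the ``routine induction'' you invoke does not deliver it. Theorem~\ref{thm:values} gives you no vanishing here: $\tau$ has cycle type $(q)\in\mathcal{O}_q$, so $\xi^{(\pm)}_\kappa(\tau)$ is permitted to be (and frequently is) nonzero for many strict $\kappa\vdash q$ beyond $(q)$ --- for instance every $\kappa=(a,b)$ with $a+b=q$ has a $q$-bar removing to $\varnothing$ and hence nonzero value at a $q$-cycle. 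All of these pairs genuinely contribute to $\sum\gamma_\lambda(\kappa,\mu)\chi^{(\pm)}_{(\kappa,\mu)}(\tau x)$, and the claim that their total collapses to a single sum over one-bar removals $\mu\in M_{\overline{q}}(\lambda)$ with coefficients $(-1)^{L(b)}2^{m(b)}$ is exactly what has to be proved; calling it ``telescoping'' assumes the conclusion. Moreover the branching coefficients $\gamma_\lambda(\kappa,\mu)$ themselves (the spin analogue of Littlewood--Richardson) are not available anywhere in the paper's toolkit, so the starting point of the induction is also unsupported. The known proofs avoid this circularity by working with Schur's $Q$-functions: the character values appear as coefficients in the expansion of $\prod p_{\pi_i}$ in the $Q_\lambda$, and the MN rule is the Pieri-type rule for $p_qQ_\mu$ (alternatively Cabanes derives it block-theoretically). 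That input, or something equivalent, is indispensable.

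Your second and third steps are sound as bookkeeping once the reduction is granted: Lemma~\ref{lem:qcycle} does account for the factor $(-1)^{(q^2-1)/8}$, the halving in the mixed cases does come from Lemma~\ref{lem:t=2}(2)--(3), the $\pm i^{(q-1)/2}\sqrt{q}/2$ term does come from Theorem~\ref{thm:values}(2), and specialising $x=o(\mu')$ is a legitimate way to pin down the $\varepsilon\eta$ sign. But these are the easy ten percent; the remaining ninety percent is missing.
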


There is also a natural labelling of the characters such that the following theorem for $\tilde{A}_n$ holds. We also
adopt
this labelling for the remainder of this paper.

\begin{thm}\label{thm:MNAn}\cite[Theorem 4.10]{brugra2014}
Let $\tau$ and $x$ be as above with $x\in\tilde{A}_{n-q}$. Then
\begin{align*}
\overline{\xi}^{(\pm)}_\lambda(\tau x)=\sum_{\mu\in M_{\overline{q}}(\lambda),\sigma(\mu)=-1}
a(\overline{\xi}^{(\pm)}_\lambda,\overline{\xi}_\mu)\overline{\xi}_\mu(x)+\sum_{\mu\in
M_{\overline{q}}(\lambda),\sigma(\mu)=1}
a(\overline{\xi}^{(\pm)}_\lambda,\overline{\xi}^+_\mu)\overline{\xi}^+_\mu(x)+a(\overline{\xi}^{(\pm)}_\lambda,\overline
{\xi}^-_\mu)\overline{\xi}^-_\mu(x).
\end{align*}
where
\begin{align*}
a(\overline{\xi}^{(\varepsilon)}_\lambda,\overline{\xi}^{(\eta)}_\mu):=
\begin{cases}
(-1)^\frac{q^2-1}{8}\alpha^\lambda_\mu&\text{ if }\sigma(\lambda)=-1,\\
\frac{1}{2}(-1)^\frac{q^2-1}{8}\alpha^\lambda_\mu&\text{ if }\sigma(\lambda)=1,\sigma(\mu)=-1,\\
\frac{1}{2}(-1)^\frac{q^2-1}{8}(\alpha^\lambda_\mu+\varepsilon\eta i^\frac{q-1}{2}\sqrt{q})&\text{ if
}\sigma(\lambda)=\sigma(\mu)=1,
\end{cases}
\end{align*}
and the $\alpha^\lambda_\mu$ are as in Theorem~\ref{thm:MNSn}.
\end{thm}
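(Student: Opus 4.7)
The plan is to deduce Theorem~\ref{thm:MNAn} from Theorem~\ref{thm:MNSn} via the Clifford theory relating $\tilde S_n$ to $\tilde A_n$. Because $q$ is odd, $\tau=o((1,\dots,q))$ lies in $\tilde A_n$ and so $\tau x\in\tilde A_n$; the relevant identities are then: if $\sigma(\lambda)=-1$ then $\overline\xi_\lambda(\tau x)=\xi^+_\lambda(\tau x)$, while if $\sigma(\lambda)=1$ then $\overline\xi^\varepsilon_\lambda(\tau x)=\tfrac12\xi_\lambda(\tau x)+\tfrac\varepsilon2(\overline\xi^+_\lambda-\overline\xi^-_\lambda)(\tau x)$. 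Dually, on $\tilde A_{n-q}$ one has $\xi_\mu(x)=\overline\xi^+_\mu(x)+\overline\xi^-_\mu(x)$ when $\sigma(\mu)=1$, and $\xi^+_\mu(x)=\xi^-_\mu(x)=\overline\xi_\mu(x)$ when $\sigma(\mu)=-1$. Substituting the expansion provided by Theorem~\ref{thm:MNSn} into the right-hand side therefore rewrites $\overline\xi^{(\pm)}_\lambda(\tau x)$ as a linear combination of the $\overline\xi_\mu(x)$ and $\overline\xi^\pm_\mu(x)$; the task is to check that the coefficients that emerge agree with the $a(\overline\xi^{(\varepsilon)}_\lambda,\overline\xi^{(\eta)}_\mu)$ in the statement.

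First I would dispatch the case $\sigma(\lambda)=-1$. Applying Theorem~\ref{thm:MNSn} to $\xi^+_\lambda(\tau x)$ and then restricting, the two opposite $\sqrt q$ corrections appearing in the coefficients of $\xi^+_\mu$ and $\xi^-_\mu$ (for $\sigma(\mu)=-1$) cancel when one identifies $\xi^\pm_\mu(x)$ with the common value $\overline\xi_\mu(x)$, leaving the prescribed coefficient $(-1)^{(q^2-1)/8}\alpha^\lambda_\mu$; and for $\sigma(\mu)=1$ each of $\overline\xi^\pm_\mu$ inherits the full coefficient from $\xi_\mu$, again giving $(-1)^{(q^2-1)/8}\alpha^\lambda_\mu$. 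Both match line~1 of Theorem~\ref{thm:MNAn}.

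Next, when $\sigma(\lambda)=1$, the contribution of $\tfrac12\xi_\lambda(\tau x)$ splits exactly as above and produces the $\tfrac12(-1)^{(q^2-1)/8}\alpha^\lambda_\mu$ piece appearing in lines~2 and~3 of the statement. The missing $\tfrac12\varepsilon\eta(-1)^{(q^2-1)/8}i^{(q-1)/2}\sqrt q$ correction in line~3 must come from the residual term $\tfrac\varepsilon2(\overline\xi^+_\lambda-\overline\xi^-_\lambda)(\tau x)$. By Theorem~\ref{thm:values}(3) this vanishes unless $\tau x$ has cycle type $\lambda$, which forces $x$ to be of cycle type $\mu=\lambda\setminus\{q\}$ and, since $q$ is odd, automatically yields $\sigma(\mu)=\sigma(\lambda)=1$. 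In this exceptional case $l(\lambda)=l(\mu)+1$ and $\lambda_1\lambda_2\cdots=q\cdot\mu_1\mu_2\cdots$, so Theorem~\ref{thm:values}(3) gives
\begin{align*}
\frac{\overline\xi^+_\lambda(\tau x)-\overline\xi^-_\lambda(\tau x)}{\overline\xi^+_\mu(x)-\overline\xi^-_\mu(x)}=\pm\,i^{(q-1)/2}\sqrt q,
\end{align*}
and rewriting $\overline\xi^+_\mu(x)-\overline\xi^-_\mu(x)$ as a signed combination of the basis characters delivers the $\varepsilon\eta i^{(q-1)/2}\sqrt q$ term of line~3.

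The main obstacle is precisely this last sign. Theorem~\ref{thm:values}(3) pins down $\overline\xi^+_\lambda-\overline\xi^-_\lambda$ only up to an overall $\pm1$, so a global, compatible labelling of the pairs $\{\overline\xi^+_\lambda,\overline\xi^-_\lambda\}$ must be fixed so that the ratio above equals $(-1)^{(q^2-1)/8}i^{(q-1)/2}\sqrt q$ for every triple $(\lambda,\mu,q)$ with $\mu\in M_{\overline q}(\lambda)$. Establishing that such a labelling exists is the genuine work in the theorem: it requires an inductive consistency check, propagating the sign choices through successive $q$-bar removals and matching the leg-length factor $(-1)^{L(\overline c^\lambda_\mu)}$ against $\delta_{\overline q}(\lambda)\delta_{\overline q}(\mu)$ via Lemma~\ref{lem:leglength}(2). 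This compatible labelling is exactly the one fixed in \cite[\S4]{brugra2014}, and once it is in place the coefficient book-keeping above closes up to give the stated formula.
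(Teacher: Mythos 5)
Your derivation is correct and is essentially the argument behind the cited result: the paper itself gives no proof of Theorem~\ref{thm:MNAn}, stating it as \cite[Theorem 4.10]{brugra2014} and simply adopting the labelling of \cite[$\S4$]{brugra2014}, which is exactly the compatible choice of signs you identify as the crux. Your Clifford-theoretic reduction to Theorem~\ref{thm:MNSn} together with Theorem~\ref{thm:values}(3) is sound (note in particular that when $\sigma(\lambda)=1$ the only $\mu\in M_{\overline{q}}(\lambda)$ with $\sigma(\mu)=1$ is $\lambda\setminus\{q\}$, since removing a $q$-bar flips the sign unless the bar is a part equal to $q$, so the correction term lands on exactly the right summand), and the residual sign consistency is precisely what the reference's labelling convention settles.
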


\subsection{Murnaghan-Nakayama rule for \texorpdfstring{$\tilde{N}_p^t\tilde{S}_t$}{TEXT}}

Let $t$ be a positive integer and $\lambda\in\Delta_t$ with $\boldsymbol{t}(\lambda)=(t_0,\dots,t_{(p-1)/2})$. We set
\begin{align*}
\lambda\backslash\{j\}=(\lambda_0,\dots,\lambda_{j-1},\varnothing,\lambda_{j+1},\dots,\lambda_{(p-1)/2}).
\end{align*}
If $\mu$ is any partition then we set
\begin{align*}
(\mu,\lambda\backslash\{j\})=(\lambda_0,\dots,\lambda_{j-1},\mu,\lambda_{j+1},\dots,\lambda_{(p-1)/2}).
\end{align*}

\begin{thm}\label{thm:MNj=0}
Let $q$ be a positive integer with $q$ odd and $q\leq t$ and set $\tau=o((1,\dots,q))$ and
$g:=(\boldsymbol{a};\tau)\in\tilde{N}_p^t\tilde{S}_t$, where
$\boldsymbol{a}=(a,1,\dots,1)$ and
\begin{align*}
\overline{\zeta}^+_0(a)=\frac{-1+i^\frac{p-1}{2}\sqrt{p}}{2}.
\end{align*}
Let $x\in\tilde{N}_p^{t-q}\tilde{S}_{t-q}[q]$ which we identify with $\tilde{N}_p^{t-q}\tilde{S}_{t-q}$ as in
Lemma~\ref{lem:shift}. Now suppose $gx\in\tilde{N}_p^t\tilde{S}_{\boldsymbol{t}(\lambda)}$.
\begin{enumerate}
\item If $\sigma(\lambda_0)=1$ and $t-t_0$ is even then
\begin{align*}
\chi_\lambda(gx)=-\bigg{(}&\sum_{\begin{smallmatrix}
\mu\in M_{\overline{q}}(\lambda_0)\\
\sigma(\mu)=1
\end{smallmatrix}}
a(\xi_{\lambda_0},\xi_\mu)\chi_{(\mu,\lambda\backslash\{0\})}(x)+\\
&\sum_{\begin{smallmatrix}
\mu\in M_{\overline{q}}(\lambda_0)\\
\sigma(\mu)=-1
\end{smallmatrix}}
a(\xi_{\lambda_0},\xi^\pm_\mu)(\chi_{(\mu,\lambda\backslash\{0\})}^+(x)+\chi_{(\mu,\lambda\backslash\{0\})}
^-(x))\bigg{)}.
\end{align*}
\item If $\sigma(\lambda_0)=-1$ and $t-t_0$ is even then
\begin{align*}
&\chi_\lambda^+(gx)
=-\sum_{\begin{smallmatrix}
\mu\in M_{\overline{q}}(\lambda_0)\\
\sigma(\mu)=1
\end{smallmatrix}}
a(\xi^\pm_{\lambda_0},\xi_\mu)\chi_{(\mu,\lambda\backslash\{0\})}(x)+\\
\sum_{\begin{smallmatrix}
\mu\in M_{\overline{q}}(\lambda_0)\\
\sigma(\mu)=-1
\end{smallmatrix}}
\bigg{[}&\bigg{(}a(\xi^+_{\lambda_0},\xi^+_\mu)\bigg{(}\frac{-1+i^\frac{p-1}{2}\sqrt{p}}{2}\bigg{)}+
a(\xi^+_{\lambda_0},\xi^-_\mu)\bigg{(}\frac{-1-i^\frac{p-1}{2}\sqrt{p}}{2}\bigg{)}\bigg{)}\chi_{(\mu,\lambda\backslash\{
0\})}^+(x)+\\
&\bigg{(}a(\xi^+_{\lambda_0},\xi^+_\mu)\bigg{(}\frac{-1-i^\frac{p-1}{2}\sqrt{p}}{2}\bigg{)}+
a(\xi^+_{\lambda_0},\xi^-_\mu)\bigg{(}\frac{-1+i^\frac{p-1}{2}\sqrt{p}}{2}\bigg{)}\bigg{)}\chi_{(\mu,\lambda\backslash\{
0\})}^-(x)\bigg{]}.
\end{align*}
\item If $\sigma(\lambda_0)=1$ and $t-t_0$ is odd then
\begin{align*}
&\chi_\lambda^+(gx)=
-\sum_{\begin{smallmatrix}
\mu\in M_{\overline{q}}(\lambda_0)\\
\sigma(\mu)=-1
\end{smallmatrix}}
a(\overline{\xi}^\pm_{\lambda_0},\overline{\xi}_\mu)\chi_{(\mu,\lambda\backslash\{0\})}(x)+\\
\sum_{\begin{smallmatrix}
\mu\in M_{\overline{q}}(\lambda_0)\\
\sigma(\mu)=1
\end{smallmatrix}}
\bigg{[}&\bigg{(}a(\overline{\xi}^+_{\lambda_0},\overline{\xi}^+_\mu)\bigg{(}\frac{-1+i^\frac{p-1}{2}\sqrt{p}}{2}\bigg{)
}+a(\overline{\xi}^+_{\lambda_0},\overline{\xi}^-_\mu)
\bigg{(}\frac{-1-i^\frac{p-1}{2}\sqrt{p}}{2}\bigg{)}\bigg{)}\chi_{(\mu,\lambda\backslash\{0\})}^+(x)+\\
&\bigg{(}a(\overline{\xi}^+_{\lambda_0},\overline{\xi}^+_\mu)\bigg{(}\frac{-1-i^\frac{p-1}{2}\sqrt{p}}{2}\bigg{)}
+a(\overline{\xi}^+_{\lambda_0},\overline{\xi}^-_\mu)
\bigg{(}\frac{-1+i^\frac{p-1}{2}\sqrt{p}}{2}\bigg{)}\bigg{)}\chi_{(\mu,\lambda\backslash\{0\})}^-(x)\bigg{]}.
\end{align*}
\item If $\sigma(\lambda_0)=-1$ and $t-t_0$ is odd then
\begin{align*}
\chi_\lambda(gx)
=-\bigg{(}&\sum_{\begin{smallmatrix}
\mu\in M_{\overline{q}}(\lambda_0)\\
\sigma(\mu)=1
\end{smallmatrix}}
a(\xi^\pm_{\lambda_0},\xi_\mu)(\chi_{(\mu,\lambda\backslash\{0\})}^+(x)+\chi_{(\mu,\lambda\backslash\{0\})}^-(x))+\\
&\sum_{\begin{smallmatrix}
\mu\in M_{\overline{q}}(\lambda_0)\\
\sigma(\mu)=-1
\end{smallmatrix}}
(a(\xi^+_{\lambda_0},\xi^+_\mu)+a(\xi^+_{\lambda_0},\xi^-_\mu))\chi_{(\mu,\lambda\backslash\{0\})}(x)\bigg{)}.
\end{align*}
\end{enumerate}
\end{thm}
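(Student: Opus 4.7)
The plan is to prove the theorem by direct computation, proceeding case by case through a common two-step reduction. The first step reduces $\chi_\lambda^{(\pm)}(gx)$ to character values on the two complementary blocks of positions $\{1,\dots,t_0\}$ and $\{t_0+1,\dots,t\}$ by applying the appropriate part of Lemma~\ref{lem:t=2} (in case~(3) the ensuing computation passes to the $\tilde{A}$-restriction and so invokes Lemma~\ref{lem:t=2A}). Writing $x=x_0x'$ with $x_0$ in the first block and $x'$ in the second, and further writing $x_0=yz$ with $y\in\tilde{N}_p^{t_0-q}[q]$ and $z\in\tilde{S}_{t_0-q}[q]$, the cycles $\boldsymbol{a}$ and $\tau$ both act on positions $\{1,\dots,q\}\subseteq\{1,\dots,t_0\}$, so $gx_0=(\boldsymbol{a}y)(\tau z)$, and $\chi_\lambda^{(\pm)}(gx)$ is expressed as a product (or short sum of products) with one factor of the form $\chi_{\lambda_0}^{(\pm)}(gx_0)$ or its $\tilde{A}$-analogue $\overline{\chi}_{\lambda_0}^{(\pm)}(gx_0)$.

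The second step evaluates $\chi_{\lambda_0}^{(\pm)}(gx_0)$. By the construction preceding Lemma~\ref{lem:j=0} this equals $\operatorname{Exten}_{t_0}^+(\boldsymbol{a}y\theta_{t_0}(\tau z))\cdot\xi_{\lambda_0}^{(\pm)}(\tau z)$, and in case~(3) the analogous expansion on $\tilde{A}_{pt_0}$ uses $\overline{\operatorname{Exten}}_{t_0}^\pm$ and $\overline{\xi}_{\lambda_0}^{(\pm)}$. The Murnaghan--Nakayama rule of Theorem~\ref{thm:MNSn} (or Theorem~\ref{thm:MNAn} in case~(3)) then expands $\xi_{\lambda_0}^{(\pm)}(\tau z)$ (resp.\ $\overline{\xi}_{\lambda_0}^{(\pm)}(\tau z)$) as a sum over $\mu\in M_{\overline{q}}(\lambda_0)$ with coefficients $a(\cdot,\cdot)$ precisely matching those in the statement.

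The reassembly step uses Lemma~\ref{lem:semidirect} (or Lemma~\ref{lem:aa} in case~(3)) to factorise $\operatorname{Exten}_{t_0}^+(\boldsymbol{a}y\theta(\tau z))$ as the contribution of the odd-length cycle $(\boldsymbol{a},\tau)$ times $\operatorname{Exten}_{t_0-q}^+(y\theta(z))$; the latter, when multiplied by $\xi_\mu^{(\pm)}(z)$ and $\chi_{(\varnothing,\lambda\setminus\{0\})}^{(\pm)}(x')$, is by construction exactly $\chi_\mu^{(\pm)}(x_0)\cdot\chi_{(\varnothing,\lambda\setminus\{0\})}^{(\pm)}(x')$, which re-glues via Lemma~\ref{lem:t=2} into $\chi_{(\mu,\lambda\setminus\{0\})}^{(\pm)}(x)$. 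Since $\theta_{pt_0}(\boldsymbol{a}\tau)$ is a $pq$-cycle of odd length $pq$ (hence lies in $A_{pt_0}$), the sign condition governing Lemma~\ref{lem:semidirect} is preserved under the reduction $y\theta(\tau z)\mapsto y\theta(z)$, and the $(\boldsymbol{a},\tau)$-contribution equals $\zeta_0(a)=-1$ when $y\theta(z)\in\frac{1}{2}(\tilde{N}_p^{t_0-q}\rtimes S_{t_0-q}[q])$ and $\overline{\zeta}_0^+(a)-\overline{\zeta}_0^-(a)=i^{(p-1)/2}\sqrt{p}$ otherwise. Combined with the identity $\overline{\zeta}_0^\pm(a)=(-1\pm i^{(p-1)/2}\sqrt{p})/2$ from Lemma~\ref{lem:sqrt}, these two options fuse into the overall $-1$ of case~(1) and~(4) and the asymmetric linear combinations of $\chi_{(\mu,\lambda\setminus\{0\})}^\pm(x)$ appearing in cases~(2) and~(3).

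The main obstacle is the bookkeeping of associate labels in cases~(2) and~(3). There $\chi_\lambda^\pm$ is a non-self-associate pair and Lemma~\ref{lem:t=2}(2) yields a cross-term of the form $\overline{\chi}^\pm\chi^\pm+\overline{\chi}^\mp\chi^\mp$; one must verify that this cross-term, combined with the distinction between $\overline{\operatorname{Exten}}_{t_0}^+$ and $\overline{\operatorname{Exten}}_{t_0}^-$ from Lemma~\ref{lem:aa}, assigns the correct one of $\overline{\zeta}_0^+(a)$ and $\overline{\zeta}_0^-(a)$ to each of $\chi_{(\mu,\lambda\setminus\{0\})}^+(x)$ and $\chi_{(\mu,\lambda\setminus\{0\})}^-(x)$. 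This reduces to carefully tracking the associate-label conventions fixed in $\S$\ref{sec:parachars} and $\S$\ref{sec:NS} through the identification of each summand, whose conclusion is the algebraic identity for $a(\xi^\varepsilon_{\lambda_0},\xi^\eta_\mu)$ of Theorem~\ref{thm:MNSn} and Theorem~\ref{thm:MNAn}.
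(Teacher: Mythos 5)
Your proposal is correct and follows essentially the same route as the paper: decompose $x$ into the $\{1,\dots,t_0\}$ and $\{t_0+1,\dots,t\}$ blocks via Lemma~\ref{lem:t=2} (resp.\ Lemma~\ref{lem:t=2A}), evaluate the first factor as $\operatorname{Exten}_{t_0}^+(\cdot)\,\xi_{\lambda_0}^{(\pm)}(\tau s_0)$ expanded by Theorem~\ref{thm:MNSn} (resp.\ Theorem~\ref{thm:MNAn}), peel off the cycle contribution $-1$ or $i^{(p-1)/2}\sqrt{p}$ via Lemma~\ref{lem:semidirect} (resp.\ Lemma~\ref{lem:aa}), and re-glue; the paper packages the two alternatives as $\frac{-1+i^{(p-1)/2}\sqrt{p}}{2}+\epsilon(s_0)\frac{-1-i^{(p-1)/2}\sqrt{p}}{2}$, which is exactly the fusion you describe.
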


\begin{proof}
Let
\begin{align*}
x_0\in\tilde{N}_p^{t_0-q}[q],&&s_0\in\tilde{S}_{t_0-q}[q],&&x'\in\tilde{N}_p^{t-t_0}[t_0],&&s'\in\tilde{S}_{(0,t_1,\dots
,t_{(p-1)/2})}[t_0],
\end{align*}
with $x=x_0s_0x's'$. As always we identify all the above subgroups with their non-shifted counterpart via
Lemma~\ref{lem:shift}.

\begin{enumerate}
\item By Lemma~\ref{lem:t=2}
\begin{align*}
\chi_\lambda(gx)=
\begin{cases}
\chi_{\lambda_0}(gx_0s_0)\overline{\chi}_{\lambda\backslash\{0\}}^+(x's')+
\chi_{\lambda_0}(gx_0s_0)\overline{\chi}_{\lambda\backslash\{0\}}^-(x's')&\text{ if }x's'\in\tilde{A}_{p(t-t_0)},\\
0 &\text{ otherwise.}
\end{cases}
\end{align*}
Now by Lemma~\ref{lem:semidirect} and Theorem~\ref{thm:MNSn}
\begin{align*}
&\chi_{\lambda_0}(gx_0s_0)=
\xi_{\lambda_0}(\tau s_0)\operatorname{Exten}^+_{t_0}(\boldsymbol{a}\theta_{t_0}(\tau)x_0\theta_{t_0}(s_0))\\
=&\bigg{(}\sum_{\begin{smallmatrix}
\mu\in M_{\overline{q}}(\lambda_0)\\
\sigma(\mu)=1
\end{smallmatrix}}
a(\xi_{\lambda_0},\xi_\mu)\xi_\mu(s_0)+
\sum_{\begin{smallmatrix}
\mu\in M_{\overline{q}}(\lambda_0)\\
\sigma(\mu)=-1
\end{smallmatrix}}
a(\xi_{\lambda_0},\xi^\pm_\mu)(\xi^+_\mu(s_0)+\xi^-_\mu(s_0))\bigg{)}\\
&\bigg{(}\frac{-1+i^\frac{p-1}{2}\sqrt{p}}{2}+\epsilon(s_0)\frac{-1-i^\frac{p-1}{2}\sqrt{p}}{2}\bigg{)}
\operatorname{Exten}^+_{t_0-q}(x_0\theta_{t_0-q}(s_0))\\
=&-\bigg{(}\sum_{\begin{smallmatrix}
\mu\in M_{\overline{q}}(\lambda_0)\\
\sigma(\mu)=1
\end{smallmatrix}}
a(\xi_{\lambda_0},\xi_\mu)\chi_\mu(x_0s_0)+
\sum_{\begin{smallmatrix}
\mu\in M_{\overline{q}}(\lambda_0)\\
\sigma(\mu)=-1
\end{smallmatrix}}a(\xi_{\lambda_0},\xi^\pm_\mu)(\chi^+_\mu(x_0s_0)+\chi^-_\mu(x_0s_0))\bigg{)}.
\end{align*}
Therefore, by Lemma~\ref{lem:t=2}, we have that
\begin{align*}
\chi_\lambda(gx)=-\bigg{(}&\sum_{\begin{smallmatrix}
\mu\in M_{\overline{q}}(\lambda_0)\\
\sigma(\mu)=1
\end{smallmatrix}}
a(\xi_{\lambda_0},\xi_\mu)\chi_{(\mu,\lambda\backslash\{0\})}(x)+\\
&\sum_{\begin{smallmatrix}
\mu\in M_{\overline{q}}(\lambda_0)\\
\sigma(\mu)=-1
\end{smallmatrix}}
a(\xi_{\lambda_0},\xi^\pm_\mu)(\chi_{(\mu,\lambda\backslash\{0\})}^+(x)+\chi_{(\mu,\lambda\backslash\{0\})}
^-(x))\bigg{)}.
\end{align*}
\item By Lemma~\ref{lem:t=2}
\begin{align*}
\chi^+_\lambda(gx)=
\begin{cases}
\chi_{\lambda_0}^+(g x_0s_0)\overline{\chi}_{\lambda\backslash\{0\}}^+(x's')+
\chi_{\lambda_0}^-(g x_0s_0)\overline{\chi}_{\lambda\backslash\{0\}}^-(x's')&\text{ if
}x's'\in\tilde{A}_{p(t-t_0)},\\
0 &\text{ otherwise.}
\end{cases}
\end{align*}
Now by Lemma~\ref{lem:semidirect} and Theorem~\ref{thm:MNSn}
\begin{align*}
&\chi_{\lambda_0}^+(g
x_0s_0)=\xi^+_{\lambda_0}(\tau s_0)\operatorname{Exten}^+_{t_0}(\boldsymbol{a}\theta_{t_0}(\tau)x_0\theta_{t_0}(s_0))\\
=&\bigg{(}\sum_{\begin{smallmatrix}
\mu\in M_{\overline{q}}(\lambda_0)\\
\sigma(\mu)=1
\end{smallmatrix}}
a(\xi^\pm_{\lambda_0},\xi_\mu)\xi_\mu(s_0)+
\sum_{\begin{smallmatrix}
\mu\in M_{\overline{q}}(\lambda_0)\\
\sigma(\mu)=-1
\end{smallmatrix}}
(a(\xi^+_{\lambda_0},\xi^+_\mu)\xi^+_\mu(s_0)+a(\xi^+_{\lambda_0},\xi^-_\mu)\xi^-_\mu(s_0))\bigg{)}\\
&\bigg{(}\frac{-1+i^\frac{p-1}{2}\sqrt{p}}{2}+\epsilon(s_0)\frac{-1-i^\frac{p-1}{2}\sqrt{p}}{2}\bigg{)}
\operatorname{Exten}^+_{t_0-q}(x_0\theta_{t_0-q}(s_0))\\
=&-\sum_{\begin{smallmatrix}
\mu\in M_{\overline{q}}(\lambda_0)\\
\sigma(\mu)=1
\end{smallmatrix}}
a(\xi^\pm_{\lambda_0},\xi_\mu)\chi_\mu(x_0s_0)+\\
\sum_{\begin{smallmatrix}
\mu\in M_{\overline{q}}(\lambda_0)\\
\sigma(\mu)=-1
\end{smallmatrix}}
\bigg{[}&\bigg{(}a(\xi^+_{\lambda_0},\xi^+_\mu)\bigg{(}\frac{-1+i^\frac{p-1}{2}\sqrt{p}}{2}\bigg{)}+
a(\xi^+_{\lambda_0},\xi^-_\mu)\bigg{(}\frac{-1-i^\frac{p-1}{2}\sqrt{p}}{2}\bigg{)}\bigg{)}\chi_\mu^+(x_0s_0)+\\
&\bigg{(}a(\xi^+_{\lambda_0},\xi^+_\mu)\bigg{(}\frac{-1-i^\frac{p-1}{2}\sqrt{p}}{2}\bigg{)}+
a(\xi^+_{\lambda_0},\xi^-_\mu)\bigg{(}\frac{-1+i^\frac{p-1}{2}\sqrt{p}}{2}\bigg{)}\bigg{)}\chi_\mu^-(x_0s_0)\bigg{]}.
\end{align*}
Therefore if $x's'\in\tilde{A}_{p(t-t_0)}$ we have that
\begin{align*}
\chi^+_\lambda(gx)=
-\sum_{\begin{smallmatrix}
\mu\in M_{\overline{q}}(\lambda_0)\\
\sigma(\mu)=1
\end{smallmatrix}}
&a(\xi^\pm_{\lambda_0},\xi_\mu)\chi_\mu(x_0s_0)(\overline{\chi}_{\lambda\backslash\{0\}}^+(x's')+\overline{\chi}
_{\lambda\backslash\{0\}}^-(x's'))+\\
\sum_{\begin{smallmatrix}
\mu\in M_{\overline{q}}(\lambda_0)\\
\sigma(\mu)=-1
\end{smallmatrix}}
&\bigg{[}\bigg{(}a(\xi^+_{\lambda_0},\xi^+_\mu)\bigg{(}\frac{-1+i^\frac{p-1}{2}\sqrt{p}}{2}\bigg{)}+
a(\xi^+_{\lambda_0},\xi^-_\mu)\bigg{(}\frac{-1-i^\frac{p-1}{2}\sqrt{p}}{2}\bigg{)}\bigg{)}\\
&(\chi_\mu^+(x_0s_0)\overline{\chi}_{\lambda\backslash\{0\}}^+(x's')+\chi_\mu^-(x_0s_0)\overline{\chi}_{
\lambda\backslash\{0\}}^-(x's'))+\\
&\bigg{(}a(\xi^+_{\lambda_0},\xi^+_\mu)\bigg{(}\frac{-1-i^\frac{p-1}{2}\sqrt{p}}{2}\bigg{)}+
a(\xi^+_{\lambda_0},\xi^-_\mu)\bigg{(}\frac{-1+i^\frac{p-1}{2}\sqrt{p}}{2}\bigg{)}\bigg{)}\\
&(\chi_\mu^+(x_0s_0)\overline{\chi}_{\lambda\backslash\{0\}}^-(x's')+\chi_\mu^-(x_0s_0)\overline{\chi}_{
\lambda\backslash\{0\}}^+(x's'))\bigg{]},
\end{align*}
and so by Lemma~\ref{lem:t=2}
\begin{align*}
\chi^+_\lambda(gx)=-\sum_{\begin{smallmatrix}
\mu\in M_{\overline{q}}(\lambda_0)\\
\sigma(\mu)=1
\end{smallmatrix}}
&a(\xi^\pm_{\lambda_0},\xi_\mu)\chi_{(\mu,\lambda\backslash\{0\})}(x)+\\
\sum_{\begin{smallmatrix}
\mu\in M_{\overline{q}}(\lambda_0)\\
\sigma(\mu)=-1
\end{smallmatrix}}
\bigg{[}&\bigg{(}a(\xi^+_{\lambda_0},\xi^+_\mu)\bigg{(}\frac{-1+i^\frac{p-1}{2}\sqrt{p}}{2}\bigg{)}+
a(\xi^+_{\lambda_0},\xi^-_\mu)\bigg{(}\frac{-1-i^\frac{p-1}{2}\sqrt{p}}{2}\bigg{)}\bigg{)}\chi_{(\mu,\lambda\backslash\{
0\})}^+(x)+\\
&\bigg{(}a(\xi^+_{\lambda_0},\xi^+_\mu)\bigg{(}\frac{-1-i^\frac{p-1}{2}\sqrt{p}}{2}\bigg{)}+
a(\xi^+_{\lambda_0},\xi^-_\mu)\bigg{(}\frac{-1+i^\frac{p-1}{2}\sqrt{p}}{2}\bigg{)}\bigg{)}\chi_{(\mu,\lambda\backslash\{
0\})}^-(x)\bigg{]}.
\end{align*}
\item By Lemma~\ref{lem:t=2}
\begin{align*}
\chi_\lambda^+(gx)=
\begin{cases}
\overline{\chi}_{\lambda_0}^+(gx_0s_0)\chi_{\lambda\backslash\{0\}}^+(x's')+
\overline{\chi}_{\lambda_0}^-(gx_0s_0)\chi_{\lambda\backslash\{0\}}^-(x's')&\text{ if }x_0s_0\in\tilde{A}_{p(t_0-q)},\\
0 &\text{ otherwise,}
\end{cases}
\end{align*}
and if $x_0s_0\in\tilde{A}_{p(t_0-q)}$ then by Lemma~\ref{lem:t=2A}
\begin{align*}
\overline{\chi}_{\lambda_0}^+(gx_0s_0)=
\begin{cases}
\overline{\xi}^+_{\lambda_0}(\tau
s_0)\overline{\operatorname{Exten}}^+_{t_0}(\boldsymbol{a}\theta_{t_0}(\tau)x_0\theta_{t_0}(s_0))+
&\text{ if }x_0\in\tilde{A}_{p(t_0-q)}\text{ and }s_0\in\tilde{A}_{t_0-q},\\
\overline{\xi}^-_{\lambda_0}(\tau
s_0)\overline{\operatorname{Exten}}^-_{t_0}(\boldsymbol{a}\theta_{t_0}(\tau)x_0\theta_{t_0}(s_0))&\\
0 &\text{ otherwise.}
\end{cases}
\end{align*}
Then by Lemma~\ref{lem:aa} and Theorem~\ref{thm:MNAn} if $x_0\in\tilde{A}_{p(t_0-q)}$ and $s_0\in\tilde{A}_{t_0-q}$
\begin{align*}
&\overline{\xi}^+_{\lambda_0}(\tau s_0)\overline{\operatorname{Exten}}^+_{t_0}(\boldsymbol{a}\theta_{t_0}(\tau
)x_0\theta_{t_0}(s_0))\\
=&\bigg{(}\sum_{\begin{smallmatrix}
\mu\in M_{\overline{q}}(\lambda_0)\\
\sigma(\mu)=1
\end{smallmatrix}}
a(\overline{\xi}^+_{\lambda_0},\overline{\xi}^+_\mu)\overline{\xi}^+_\mu(s_0)+a(\overline{\xi}^+_{\lambda_0},
\overline{\xi}^-_\mu)\overline{\xi}^-_\mu(s_0)+
\sum_{\begin{smallmatrix}
\mu\in M_{\overline{q}}(\lambda_0)\\
\sigma(\mu)=-1
\end{smallmatrix}}
a(\overline{\xi}^\pm_{\lambda_0},\overline{\xi}_\mu)\overline{\xi}_\mu(s_0)\bigg{)}\\
&\bigg{[}\bigg{(}\frac{-1+i^\frac{p-1}{2}\sqrt{p}}{2}\bigg{)}\overline{\operatorname{Exten}}^+_{t_0-q}(x_0\theta_{t_0-q}
(s_0))+
\bigg{(}\frac{-1-i^\frac{p-1}{2}\sqrt{p}}{2}\bigg{)}\overline{\operatorname{Exten}}^-_{t_0-q}(x_0\theta_{t_0-q}
(s_0))\bigg{]}\\
=&\bigg{(}\sum_{\begin{smallmatrix}
\mu\in M_{\overline{q}}(\lambda_0)\\
\sigma(\mu)=1
\end{smallmatrix}}
a(\overline{\xi}^+_{\lambda_0},\overline{\xi}^+_\mu)\overline{\overline{\chi}}_\mu^{++}(x_0s_0)+
a(\overline{\xi}^+_{\lambda_0},\overline{\xi}^-_\mu)\overline{\overline{\chi}}_\mu^{+-}(x_0s_0)+\\
&\sum_{\begin{smallmatrix}
\mu\in M_{\overline{q}}(\lambda_0)\\
\sigma(\mu)=-1
\end{smallmatrix}}
a(\overline{\xi}^\pm_{\lambda_0},\overline{\xi}_\mu)\overline{\overline{\chi}}_\mu^+(x_0s_0)\bigg{)}\bigg{(}\frac{
-1+i^\frac{p-1}{2}\sqrt{p}}{2}\bigg{)}+\\
&\bigg{(}\sum_{\begin{smallmatrix}
\mu\in M_{\overline{q}}(\lambda_0)\\
\sigma(\mu)=1
\end{smallmatrix}}
a(\overline{\xi}^+_{\lambda_0},\overline{\xi}^+_\mu)\overline{\overline{\chi}}_\mu^{-+}(x_0s_0)+
a(\overline{\xi}^+_{\lambda_0},\overline{\xi}^-_\mu)\overline{\overline{\chi}}_\mu^{--}(x_0s_0)+\\
&\sum_{\begin{smallmatrix}
\mu\in M_{\overline{q}}(\lambda_0)\\
\sigma(\mu)=-1
\end{smallmatrix}}
a(\overline{\xi}^\pm_{\lambda_0},\overline{\xi}_\mu)\overline{\overline{\chi}}_\mu^-(x_0s_0)\bigg{)}\bigg{(}\frac{
-1-i^\frac{p-1}{2}\sqrt{p}}{2}\bigg{)}.
\end{align*}
Therefore,
\begin{align*}
\chi_{\lambda}^+(gx)
=&\sum_{\begin{smallmatrix}
\eta_1,\eta_2,\eta_3,\eta_4,\eta_5\in\{\pm1\}\\
\eta_1\eta_2\eta_3\eta_4\eta_5=1
\end{smallmatrix}}\bigg{[}\bigg{(}\sum_{\begin{smallmatrix}
\mu\in M_{\overline{q}}(\lambda_0)\\
\sigma(\mu)=1
\end{smallmatrix}}
a(\overline{\xi}^+_{\lambda_0},\overline{\xi}^{\eta_1}_\mu)\overline{\overline{\chi}}_\mu^{\eta_2\eta_3}(x_0s_0)
\chi_{\lambda\backslash\{0\}}^{\eta_4}(x's')+\\
&\frac{1}{2}\sum_{\begin{smallmatrix}
\mu\in M_{\overline{q}}(\lambda_0)\\
\sigma(\mu)=-1
\end{smallmatrix}}
a(\overline{\xi}^\pm_{\lambda_0},\overline{\xi}_\mu)\overline{\overline{\chi}}_\mu^{\eta_2}(x_0s_0)
\chi^{\eta_4}_{\lambda\backslash\{0\}}(x's')\bigg{)}\bigg{(}\frac{-1+\eta_5i^\frac{p-1}{2}\sqrt{p}}{2}
\bigg{)}\bigg{]}
\end{align*}
if $x_0\in\tilde{A}_{p(t_0-q)}$ and $s_0\in\tilde{A}_{t_0-q}$ and $0$ otherwise. Therefore by Lemma~\ref{lem:t=2}
\begin{align*}
\chi_{\lambda}^+(gx)=
-\sum_{\begin{smallmatrix}
\mu\in M_{\overline{q}}(\lambda_0)\\
\sigma(\mu)=-1
\end{smallmatrix}}
&a(\overline{\xi}^\pm_{\lambda_0},\overline{\xi}_\mu)\chi_{(\mu,\lambda\backslash\{0\})}(x)+\\
\sum_{\begin{smallmatrix}
\mu\in M_{\overline{q}}(\lambda_0)\\
\sigma(\mu)=1
\end{smallmatrix}}
&\bigg{[}\bigg{(}\bigg{(}\frac{-1+i^\frac{p-1}{2}\sqrt{p}}{2}\bigg{)}a(\overline{\xi}^+_{\lambda_0},\overline{\xi}
^+_\mu)+\bigg{(}\frac{-1-i^\frac{p-1}{2}\sqrt{p}}{2}\bigg{)}
a(\overline{\xi}^+_{\lambda_0},\overline{\xi}^-_\mu)\bigg{)}\chi_{(\mu,\lambda\backslash\{0\})}^+(x)+\\
&\bigg{(}\bigg{(}\frac{-1-i^\frac{p-1}{2}\sqrt{p}}{2}\bigg{)}a(\overline{\xi}^+_{\lambda_0},\overline{\xi}^+_\mu)+\bigg
{(}\frac{-1+i^\frac{p-1}{2}\sqrt{p}}{2}\bigg{)}
a(\overline{\xi}^+_{\lambda_0},\overline{\xi}^-_\mu)\bigg{)}\chi_{(\mu,\lambda\backslash\{0\})}^+(x)\bigg{]}.
\end{align*}
\item By Lemma~\ref{lem:t=2}
\begin{align*}
\chi_\lambda(gx)=
\begin{cases}
\chi_{\lambda_0}^+(g x_0s_0)\overline{\chi}_{\lambda\backslash\{0\}}(x's')+
\chi_{\lambda_0}^-(g x_0s_0)\overline{\chi}_{\lambda\backslash\{0\}}(x's')&\text{ if
}x's'\in\tilde{A}_{t-t_0},\\
0 &\text{ otherwise.}
\end{cases}
\end{align*}
Now by Lemma~\ref{lem:semidirect} and Theorem~\ref{thm:MNSn}
\begin{align*}
&\chi_{\lambda_0}^+(g
x_0s_0)=\xi^+_{\lambda_0}(\tau s_0)\operatorname{Exten}^+_{t_0}(\boldsymbol{a}\theta_{t_0}(\tau )x_0\theta_{t_0}(s_0))\\
=&\bigg{(}\sum_{\begin{smallmatrix}
\mu\in M_{\overline{q}}(\lambda_0)\\
\sigma(\mu)=1
\end{smallmatrix}}
a(\xi^\pm_{\lambda_0},\xi_\mu)\xi_\mu(s_0)+
\sum_{\begin{smallmatrix}
\mu\in M_{\overline{q}}(\lambda_0)\\
\sigma(\mu)=-1
\end{smallmatrix}}
(a(\xi^+_{\lambda_0},\xi^+_\mu)\xi^+_\mu(s_0)+a(\xi^+_{\lambda_0},\xi^-_\mu)\xi^-_\mu(s_0))\bigg{)}\\
&\bigg{(}\frac{-1+i^\frac{p-1}{2}\sqrt{p}}{2}+\epsilon(s_0)\frac{-1-i^\frac{p-1}{2}\sqrt{p}}{2}\bigg{)}
\operatorname{Exten}^+_{t_0-q}(x_0\theta_{t_0-q}(s_0))\\
=&-\sum_{\begin{smallmatrix}
\mu\in M_{\overline{q}}(\lambda_0)\\
\sigma(\mu)=1
\end{smallmatrix}}
a(\xi^\pm_{\lambda_0},\xi_\mu)\chi_\mu(x_0s_0)+\\
&\sum_{\begin{smallmatrix}
\mu\in M_{\overline{q}}(\lambda_0)\\
\sigma(\mu)=-1
\end{smallmatrix}}
\bigg{[}\bigg{(}a(\xi^+_{\lambda_0},\xi^+_\mu)\bigg{(}\frac{-1+i^\frac{p-1}{2}\sqrt{p}}{2}\bigg{)}+
a(\xi^+_{\lambda_0},\xi^-_\mu)\bigg{(}\frac{-1-i^\frac{p-1}{2}\sqrt{p}}{2}\bigg{)}\bigg{)}\chi_\mu^+(x_0s_0)+\\
&\bigg{(}a(\xi^+_{\lambda_0},\xi^+_\mu)\bigg{(}\frac{-1-i^\frac{p-1}{2}\sqrt{p}}{2}\bigg{)}+
a(\xi^+_{\lambda_0},\xi^-_\mu)\bigg{(}\frac{-1+i^\frac{p-1}{2}\sqrt{p}}{2}\bigg{)}\bigg{)}\chi_\mu^-(x_0s_0))\bigg{]}.
\end{align*}
Therefore if $x's'\in\tilde{A}_{t-t_0}$
\begin{align*}
&\chi_\lambda(gx)=
-2\sum_{\begin{smallmatrix}
\mu\in M_{\overline{q}}(\lambda_0)\\
\sigma(\mu)=1
\end{smallmatrix}}
a(\xi^\pm_{\lambda_0},\xi_\mu)\chi_\mu(x_0s_0)\overline{\chi}_{\lambda\backslash\{0\}}(x's')+\\
&\sum_{\begin{smallmatrix}
\mu\in M_{\overline{q}}(\lambda_0)\\
\sigma(\mu)=-1
\end{smallmatrix}}
\bigg{[}\bigg{(}a(\xi^+_{\lambda_0},\xi^+_\mu)\bigg{(}\frac{-1+i^\frac{p-1}{2}\sqrt{p}}{2}\bigg{)}+
a(\xi^+_{\lambda_0},\xi^-_\mu)\bigg{(}\frac{-1-i^\frac{p-1}{2}\sqrt{p}}{2}\bigg{)}\bigg{)}\\
&(\chi_\mu^+(x_0s_0)+\chi_\mu^-(x_0s_0))+\\
&\bigg{(}a(\xi^+_{\lambda_0},\xi^+_\mu)\bigg{(}\frac{-1-i^\frac{p-1}{2}\sqrt{p}}{2}\bigg{)}+
a(\xi^+_{\lambda_0},\xi^-_\mu)\bigg{(}\frac{-1+i^\frac{p-1}{2}\sqrt{p}}{2}\bigg{)}\bigg{)}\\
&(\chi_\mu^+(x_0s_0)+\chi_\mu^-(x_0s_0))\bigg{]}\overline{\chi}_{\lambda\backslash\{0\}}(x's')
\end{align*}
and so by Lemma~\ref{lem:t=2} we have that
\begin{align*}
\chi_\lambda(gx)=-&\sum_{\begin{smallmatrix}
\mu\in M_{\overline{q}}(\lambda_0)\\
\sigma(\mu)=1
\end{smallmatrix}}
a(\xi^\pm_{\lambda_0},\xi_\mu)(\chi_{(\mu,\lambda\backslash\{0\})}^+(x)+\chi_{(\mu,\lambda\backslash\{0\})}
^-(x))+\\
&\sum_{\begin{smallmatrix}
\mu\in M_{\overline{q}}(\lambda_0)\\
\sigma(\mu)=-1
\end{smallmatrix}}
\bigg{[}\bigg{(}a(\xi^+_{\lambda_0},\xi^+_\mu)\bigg{(}\frac{-1+i^\frac{p-1}{2}\sqrt{p}}{2}\bigg{)}+
a(\xi^+_{\lambda_0},\xi^-_\mu)\bigg{(}\frac{-1-i^\frac{p-1}{2}\sqrt{p}}{2}\bigg{)}\bigg{)}+\\
&\bigg{(}a(\xi^+_{\lambda_0},\xi^+_\mu)\bigg{(}\frac{-1-i^\frac{p-1}{2}\sqrt{p}}{2}\bigg{)}+
a(\xi^+_{\lambda_0},\xi^-_\mu)\bigg{(}\frac{-1+i^\frac{p-1}{2}\sqrt{p}}{2}\bigg{)}\bigg{)}\bigg{]}\chi_{(\mu,
\lambda\backslash\{0\})}(x)\\
=-\bigg{(}&\sum_{\begin{smallmatrix}
\mu\in M_{\overline{q}}(\lambda_0)\\
\sigma(\mu)=1
\end{smallmatrix}}
a(\xi^\pm_{\lambda_0},\xi_\mu)(\chi_{(\mu,\lambda\backslash\{0\})}^+(x)+\chi_{(\mu,\lambda\backslash\{0\})}^-(x))+\\
&\sum_{\begin{smallmatrix}
\mu\in M_{\overline{q}}(\lambda_0)\\
\sigma(\mu)=-1
\end{smallmatrix}}
(a(\xi^+_{\lambda_0},\xi^+_\mu)+a(\xi^+_{\lambda_0},\xi^-_\mu))\chi_{(\mu,\lambda\backslash\{0\})}(x)\bigg{)}.
\end{align*}
\end{enumerate}
\end{proof}

\begin{thm}\label{thm:MNj>0}
Let $q$ be an odd positive integer, $(1\leq l\leq(p-1)/2)$ and set $t_l'=\sum_{j=0}^{l-1}t_j$. Now let
$g:=(\boldsymbol{a};\tau)\in\tilde{N}_p^t\tilde{S}_t$ with $\boldsymbol{a}=(a,1,\dots,1)$, where
\begin{align*}
\overline{\zeta}^+_0(a)=\frac{-1+i^\frac{p-1}{2}\sqrt{p}}{2}
\end{align*}
and $\tau=[t_l'](o((1,\dots,q)))$. Let
\begin{align*}
x\in(\tilde{N}_p^{t_l'}\tilde{S}_{t_l'}).
(\tilde{N}_p^{t-t_l'-q}\tilde{S}_{t-t_l'-q}[t_l'+q]).
\end{align*}
We identify this subgroup with $(\tilde{N}_p^{t_l'}\tilde{S}_{t_l'}).
(\tilde{N}_p^{t-t_l'-q}\tilde{S}_{t-t_l'-q}[t_l'])$ by identifying
$\tilde{N}_p^{t-t_l'-q}\tilde{S}_{t-t_l'-q}[t_l'+q]$ with $\tilde{N}_p^{t-t_l'-q}\tilde{S}_{t-t_l'-q}[t_l']$ through
$\tilde{N}_p^{t-t_l'-q}\tilde{S}_{t-t_l'-q}$ via Lemma~\ref{lem:shift}. Now suppose
$gx\in\tilde{N}_p^t\tilde{S}_{\boldsymbol{t}}$.
\begin{enumerate}
\item If $\lambda\in\Delta_t^+$ then
\begin{align*}
\chi_\lambda(gx)=(-1)^\frac{q^2-1}{8}&\sum_{\mu\in M_q(\lambda_l)}(-1)^{L(c^{\lambda_l}_\mu)}
(\chi_{(\mu,\lambda\backslash\{l\})}^+(x)+\chi_{(\mu,\lambda\backslash\{l\})}^-(x)).
\end{align*}
\item If $\lambda\in\Delta_t^-$ then
\begin{align*}
\chi_\lambda^+(gx)=(-1)^\frac{q^2-1}{8}&\sum_{\mu\in M_q(\lambda_l)}(-1)^{L(c^{\lambda_l}_\mu)}
\chi_{(\mu,\lambda\backslash\{l\})}(x).
\end{align*}
\end{enumerate}
\end{thm}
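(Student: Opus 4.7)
The plan is to reduce the computation to the classical Murnaghan-Nakayama rule for $S_{t_l}$ via the structural factorization of $\chi_\lambda^{(\pm)}$. Since $l\geq 1$, the element $g=(\boldsymbol{a};\tau)$ lives entirely inside the $l$-th block $\tilde{N}_p^{t_l}\tilde{S}_{t_l}[t_l']$, so I would write $x=x_l\cdot x^{\not l}$ with $x_l$ in the $l$-th block and $x^{\not l}$ in the complement, and apply an iterated form of Lemma~\ref{lem:t=2} to split $\chi_\lambda^{(\pm)}(gx)$ into $\chi_{\lambda_l}(gx_l)$ times contributions from the remaining blocks. Because $q$ is odd, removing a $q$-hook from $\lambda_l$ flips the parity of $t-t_0$ and therefore flips $\sigma(\lambda)$; this is precisely why case~(1) produces the associate pair $\chi_{(\mu,\lambda\backslash\{l\})}^+ + \chi_{(\mu,\lambda\backslash\{l\})}^-$ while case~(2) produces the single self-associate $\chi_{(\mu,\lambda\backslash\{l\})}$, matching exactly the combinations that Lemma~\ref{lem:t=2} produces when reassembling.

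For the central computation of $\chi_{\lambda_l}(gx_l)$, Lemma~\ref{lem:j>0} expresses this character as a tensor product $\chi_{\overline{\boldsymbol{t}_l}}^{(\pm)}\otimes\operatorname{Sym}_{\lambda_l}$, where $\boldsymbol{t}_l$ places $t_l$ in the $l$-th coordinate and $\operatorname{Sym}_{\lambda_l}$ is inflated from $S_{t_l}$. Since $\boldsymbol{a}\in\tilde{N}_p^{t_l}$ lies in the kernel of this inflation, the $\operatorname{Sym}_{\lambda_l}$-factor evaluates to $\operatorname{Sym}_{\lambda_l}$ applied to the image of $\tau\cdot x_l^{\tilde{S}}$ in $S_{t_l}$, where $\theta_{t_l}(\tau)$ is a genuine $q$-cycle. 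The classical Murnaghan-Nakayama rule then delivers $\sum_{\mu\in M_q(\lambda_l)}(-1)^{L(c^{\lambda_l}_\mu)}\operatorname{Sym}_\mu(\theta_{t_l}(x_l^{\tilde{S}}))$, which is precisely the combinatorial sum on the right-hand side of the desired formula.

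It remains to extract the scalar $(-1)^{(q^2-1)/8}$ from the Clifford-algebra factor $\chi_{\overline{\boldsymbol{t}_l}}^{(\pm)}(gx_l)$. The key simplification for $l\geq 1$ is that the linear character $\zeta_l$ of $\tilde{N}_p$ is inflated from the $C_{p-1}$-quotient and therefore trivial on $C_p$; combined with $a\in\tilde{A}_p$, this forces $\boldsymbol{a}$ to act as the identity on the Clifford-module representation space. The trace reduces to that of $\phi_{t_l}(\tau)$ composed with the Clifford action of $x_l$ on the irreducible $\mathcal{C}_{t_l}$-module. Since $\phi_{t_l}(\tau)\in\mathcal{C}_q\subset\mathcal{C}_{t_l}$ and the subalgebras generated by the disjoint $e_j$'s combine via a graded tensor product whose irreducible modules tensor compatibly, Lemma~\ref{lem:qcycle} contributes precisely $(-1)^{(q^2-1)/8}$, with the remaining Clifford trace on $x_l$ reassembling into $\chi_{(\mu,\lambda\backslash\{l\})}^{(\pm)}(x)$ after the second application of Lemma~\ref{lem:t=2}.

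The principal obstacle will be careful sign and labelling bookkeeping through the two applications of Lemma~\ref{lem:t=2} and the choice of $\pm$ components for $\chi_{\lambda_l}^{\pm}$, $\chi_{\overline{\boldsymbol{t}_l}}^{\pm}$, and $\chi_{(\mu,\lambda\backslash\{l\})}^{\pm}$, together with verifying that the graded-tensor trace identity for the Clifford decomposition indeed factors cleanly as a product. Unlike Theorem~\ref{thm:MNj=0}, the absence of a Gauss-sum factor here (thanks to $\zeta_l$ being trivial on $C_p$) keeps the formulas much simpler, with no $(-1\pm i^{(p-1)/2}\sqrt{p})/2$ terms appearing.
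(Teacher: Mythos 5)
Your proposal is correct and follows essentially the same route as the paper's proof: both isolate the block containing $g$, exploit the tensor factorisation into a Clifford-algebra factor (handled by Lemma~\ref{lem:qcycle}, with $\boldsymbol{a}$ acting trivially since $\zeta_l$ is inflated from the $C_{p-1}$-quotient) and an inflated symmetric-group factor (handled by the classical Murnaghan--Nakayama rule), and then reassemble via Lemma~\ref{lem:t=2}, with the odd-$q$ parity flip of $\sigma$ accounting for the associate pairs versus single self-associate characters. The only cosmetic difference is that the paper first splits off the $0$-block and treats blocks $1,\dots,(p-1)/2$ together via the wreath-product Murnaghan--Nakayama rule of Pfeiffer, whereas you isolate the $l$-th block directly.
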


\begin{proof}
Let
\begin{align*}
x_0\in\tilde{N}_p^{t_0-q}[q],&&s_0\in\tilde{S}_{t_0-q}[q],&&x'\in\tilde{N}_p^{t-t_0}[t_0],&&s'\in\tilde{S}_{(0,t_1,\dots
,t_{(p-1)/2})}[t_0],
\end{align*}
with $x=x_0s_0x's'$. As always we identify all the above subgroups with their non-shifted counterpart via
Lemma~\ref{lem:shift}. First let
$t-t_0$ be even. Then
\begin{align*}
\chi_{\lambda\backslash\{0\}}(gx's')=\chi_{\overline{\boldsymbol{t}(\lambda\backslash\{0\})}}(\boldsymbol{a}\tau
x's')\operatorname{Sym}_{\lambda\backslash\{0\}}(\theta_{t-t_0}(\tau)\theta_{t-t_0}(s')),
\end{align*}
which by the construction of $\chi_{\overline{\boldsymbol{t}(\lambda\backslash\{0\})}}$, Lemma~\ref{lem:qcycle}
and~\cite[Theorem 4.4]{pfeiff1994} this is equal to
\begin{align*}
=&(-1)^{\frac{q^2-1}{8}}\sum_{\mu\in M_q(\lambda_l)}
\bigg{(}\big{(}\chi^+_{\overline{\boldsymbol{t}((\mu,\lambda\backslash\{l\})\backslash\{0\})}}(x's')+
\chi^-_{\overline{\boldsymbol{t}((\mu,\lambda\backslash\{l\})\backslash\{0\})}}(x's')\big{)}\\
&(-1)^{L(c^{\lambda_l}_\mu)}\operatorname{Sym}_{((\mu,\lambda\backslash\{l\})\backslash\{0\})}(\theta_{t-t_0-q}
(s'))\bigg{)}\\
=&(-1)^{\frac{q^2-1}{8}}\sum_{\mu\in M_q(\lambda_l)}(-1)^{L(c^{\lambda_l}_\mu)}
\big{(}\chi^+_{((\mu,\lambda\backslash\{l\})\backslash\{0\})}(x's')+\chi^-_{((\mu,\lambda\backslash\{l\})\backslash\{0\}
)}(x's')\big{)}.
\end{align*}
Similarly if $t-t_0$ is odd then
\begin{align*}
\chi^\pm_{\lambda\backslash\{0\}}(gx's')=(-1)^{\frac{q^2-1}{8}}\sum_{\mu\in M_q(\lambda_l)}(-1)^{L(c^{\lambda_l}_\mu)}
\chi_{((\mu,\lambda\backslash\{l\})\backslash\{0\})}(x's').
\end{align*}
Also if $x's'\in\tilde{A}_{p(t-t_0-q)}$ we have that
\begin{align*}
\overline{\chi}^\pm_{\lambda\backslash\{0\}}(gx's')=(-1)^{\frac{q^2-1}{8}}\sum_{\mu\in
M_q(\lambda_l)}(-1)^{L(c^{\lambda_l}_\mu)}\overline{\chi}_{((\mu,\lambda\backslash\{l\})\backslash\{0\})}(x's').
\end{align*}
if $t-t_0$ is even and
\begin{align*}
\overline{\chi}_{\lambda\backslash\{0\}}(gx's')=(-1)^{\frac{q^2-1}{8}}\sum_{\mu\in
M_q(\lambda_l)}(-1)^{L(c^{\lambda_l}_\mu)}
(\overline{\chi}^+_{((\mu,\lambda\backslash\{l\})\backslash\{0\})}(x's')+\overline{\chi}^-_{((\mu,\lambda\backslash\{l\}
)\backslash\{0\})}(x's'))
\end{align*}
if $t-t_0$ is odd. The theorem now follows from Lemma~\ref{lem:t=2} and applying the relevant above equation.
\end{proof}

We now have a Murnaghan-Nakayama rule for $\tilde{N}_p^t\tilde{S}_t$.

\begin{thm}\label{thm:MN}
Let $q$ be an odd positive integer. Set $\tau=o((1,\dots,q))$ and $g:=(\boldsymbol{a};\tau)\in\tilde{N}_p^t\tilde{S}_t$,
where $\boldsymbol{a}=(a,1,\dots,1)$ and
\begin{align*}
\overline{\zeta}^+_0(a)=\frac{-1+i^\frac{p-1}{2}\sqrt{p}}{2}.
\end{align*}
Let $x\in\tilde{N}_p^{t-q}\tilde{S}_{t-q}[q]$ which we identify with $\tilde{N}_p^{t-q}\tilde{S}_{t-q}$ via
Lemma~\ref{lem:shift}.
\begin{enumerate}
\item If $\sigma(\lambda_0)=1$ and $t-t_0$ is even then
\begin{align*}
\chi^\lambda(gx)=-\bigg{(}&\sum_{\begin{smallmatrix}
\mu\in M_{\overline{q}}(\lambda_0)\\
\sigma(\mu)=1
\end{smallmatrix}}
a(\xi_{\lambda_0},\xi_\mu)\chi^{(\mu,\lambda\backslash\{0\})}(x)+\\
&\sum_{\begin{smallmatrix}
\mu\in M_{\overline{q}}(\lambda_0)\\
\sigma(\mu)=-1
\end{smallmatrix}}
a(\xi_{\lambda_0},\xi^\pm_\mu)(\chi^{(\mu,\lambda\backslash\{0\})+}(x)+\chi^{(\mu,\lambda\backslash\{0\})-}
(x))\bigg{)}+\\
(-1)^\frac{q^2-1}{8}\bigg{[}&\sum_{j=1}^\frac{p-1}{2}\sum_{\mu\in M_q(\lambda_j)}(-1)^{L(c^{\lambda_j}_\mu)}
(\chi^{(\mu,\lambda\backslash\{j\})+}(x)+\chi^{(\mu,\lambda\backslash\{j\})-}(x))\bigg{]}.
\end{align*}
\item If $\sigma(\lambda_0)=-1$ and $t-t_0$ is even then
\begin{align*}
&\chi^{\lambda+}(gx)
=-\sum_{\begin{smallmatrix}
\mu\in M_{\overline{q}}(\lambda_0)\\
\sigma(\mu)=1
\end{smallmatrix}}
a(\xi^\pm_{\lambda_0},\xi_\mu)\chi^{(\mu,\lambda\backslash\{0\})}(x)+\\
\sum_{\begin{smallmatrix}
\mu\in M_{\overline{q}}(\lambda_0)\\
\sigma(\mu)=-1
\end{smallmatrix}}
\bigg{[}&\bigg{(}a(\xi^+_{\lambda_0},\xi^+_\mu)\bigg{(}\frac{-1+i^\frac{p-1}{2}\sqrt{p}}{2}\bigg{)}+
a(\xi^+_{\lambda_0},\xi^-_\mu)\bigg{(}\frac{-1-i^\frac{p-1}{2}\sqrt{p}}{2}\bigg{)}\bigg{)}\chi^{(\mu,\lambda\backslash\{
0\})+}(x)+\\
&\bigg{(}a(\xi^+_{\lambda_0},\xi^+_\mu)\bigg{(}\frac{-1-i^\frac{p-1}{2}\sqrt{p}}{2}\bigg{)}+
a(\xi^+_{\lambda_0},\xi^-_\mu)\bigg{(}\frac{-1+i^\frac{p-1}{2}\sqrt{p}}{2}\bigg{)}\bigg{)}\chi^{(\mu,\lambda\backslash\{
0\})-}(x)\bigg{]}+\\
&(-1)^\frac{q^2-1}{8}\bigg{[}\sum_{j=1}^\frac{p-1}{2}\sum_{\mu\in M_q(\lambda_j)}(-1)^{L(c^{\lambda_j}_\mu)}
\chi^{(\mu,\lambda\backslash\{j\})}(x)\bigg{]}.
\end{align*}
\item If $\sigma(\lambda_0)=1$ and $t-t_0$ is odd then
\begin{align*}
&\chi^{\lambda+}(gx)=
-\sum_{\begin{smallmatrix}
\mu\in M_{\overline{q}}(\lambda_0)\\
\sigma(\mu)=-1
\end{smallmatrix}}
a(\overline{\xi}^\pm_{\lambda_0},\overline{\xi}_\mu)\chi^{(\mu,\lambda\backslash\{0\})}(x)+\\
\sum_{\begin{smallmatrix}
\mu\in M_{\overline{q}}(\lambda_0)\\
\sigma(\mu)=1
\end{smallmatrix}}
\bigg{[}&\bigg{(}a(\overline{\xi}^+_{\lambda_0},\overline{\xi}^+_\mu)\bigg{(}\frac{-1+i^\frac{p-1}{2}\sqrt{p}}{2}\bigg{)
}+a(\overline{\xi}^+_{\lambda_0},\overline{\xi}^-_\mu)
\bigg{(}\frac{-1-i^\frac{p-1}{2}\sqrt{p}}{2}\bigg{)}\bigg{)}\chi^{(\mu,\lambda\backslash\{0\})+}(x)+\\
&\bigg{(}a(\overline{\xi}^+_{\lambda_0},\overline{\xi}^+_\mu)\bigg{(}\frac{-1-i^\frac{p-1}{2}\sqrt{p}}{2}\bigg{)}
+a(\overline{\xi}^+_{\lambda_0},\overline{\xi}^-_\mu)
\bigg{(}\frac{-1+i^\frac{p-1}{2}\sqrt{p}}{2}\bigg{)}\bigg{)}\chi^{(\mu,\lambda\backslash\{0\})-}(x)\bigg{]}+\\
&(-1)^\frac{q^2-1}{8}\bigg{[}\sum_{j=1}^\frac{p-1}{2}\sum_{\mu\in M_q(\lambda_j)}(-1)^{L(c^{\lambda_j}_\mu)}
\chi^{(\mu,\lambda\backslash\{j\})}(x)\bigg{]}.
\end{align*}
\item If $\sigma(\lambda_0)=-1$ and $t-t_0$ is odd then
\begin{align*}
\chi^\lambda(gx)
=-\bigg{(}&\sum_{\begin{smallmatrix}
\mu\in M_{\overline{q}}(\lambda_0)\\
\sigma(\mu)=1
\end{smallmatrix}}
a(\xi^\pm_{\lambda_0},\xi_\mu)(\chi^{(\mu,\lambda\backslash\{0\})+}(x)+\chi^{(\mu,\lambda\backslash\{0\})-}
(x))+\\
&\sum_{\begin{smallmatrix}
\mu\in M_{\overline{q}}(\lambda_0)\\
\sigma(\mu)=-1
\end{smallmatrix}}
(a(\xi^+_{\lambda_0},\xi^+_\mu)+a(\xi^+_{\lambda_0},\xi^-_\mu))\chi^{(\mu,\lambda\backslash\{0\})}(x)\bigg{)}+\\
(-1)^\frac{q^2-1}{8}\bigg{[}\sum_{j=1}^\frac{p-1}{2}&\sum_{\mu\in M_q(\lambda_j)}(-1)^{L(c^{\lambda_j}_\mu)}
(\chi^{(\mu,\lambda\backslash\{j\})+}(x)+\chi^{(\mu,\lambda\backslash\{j\})-}(x))\bigg{]}.
\end{align*}
\end{enumerate}
\end{thm}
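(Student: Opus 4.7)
My plan is to compute $\chi^{\lambda(\pm)}(gx)$ directly from the definition $\chi^{\lambda(\pm)}=\chi_\lambda^{(\pm)}\uparrow^{G}$, where $G=\tilde{N}_p^t\tilde{S}_t$ and $I=\tilde{N}_p^t\tilde{S}_{\boldsymbol{t}(\lambda)}$, via the Frobenius formula
\begin{align*}
\chi^{\lambda(\pm)}(gx)=\frac{1}{|I|}\sum_{\substack{y\in G \\ y^{-1}(gx)y\in I}}\chi_\lambda^{(\pm)}(y^{-1}(gx)y).
\end{align*}
The key structural input is that $x$ fixes $\{1,\dots,q\}$ pointwise while $\tau$ and the unique nontrivial coordinate of $\boldsymbol{a}$ are supported on $\{1,\dots,q\}$. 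Consequently $y^{-1}(gx)y\in I$ iff the cycle support $y^{-1}(\{1,\dots,q\})$ lies inside a single block $B_l:=\{t'_l+1,\dots,t'_{l+1}\}$, which in particular requires $t_l\geq q$.

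I would partition the sum according to which block $B_l$ contains $y^{-1}(\{1,\dots,q\})$. Within a fixed $l$, the contributing cosets of $I$ in $G$ factor as (i) a choice of the $q$-subset of $B_l$ carrying the cycle together with its internal ordering, and (ii) a rearrangement of the remaining $t-q$ positions across the modified block decomposition $B_0,\dots,B_{l-1},B_l\setminus\text{(cycle positions)},B_{l+1},\dots,B_{(p-1)/2}$. The contribution of (i), after replacing each conjugate by the $I$-standard representative in which $a$ sits at position $t'_l+1$ and the cycle is $[t'_l](o((1,\dots,q)))$, is absorbed exactly by the inner sum in Theorem~\ref{thm:MNj=0} (when $l=0$) or Theorem~\ref{thm:MNj>0} (when $l\geq 1$). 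The contribution of (ii) has size equal to the index $[\tilde{N}_p^{t-q}\tilde{S}_{t-q}:\tilde{N}_p^{t-q}\tilde{S}_{\boldsymbol{t}'}]$ with $\boldsymbol{t}'=(t_0,\dots,t_l-q,\dots,t_{(p-1)/2})$, and this residual coset sum realises exactly the induction $\chi_{(\mu,\lambda\backslash\{l\})}^{(\pm)}\uparrow^{\tilde{N}_p^{t-q}\tilde{S}_{t-q}}=\chi^{(\mu,\lambda\backslash\{l\})(\pm)}$ evaluated at $x$.

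Substituting the expansions of Theorems~\ref{thm:MNj=0} and~\ref{thm:MNj>0}, then summing the block-$l$ contributions over $l=0,1,\dots,(p-1)/2$, produces the four displayed formulae. The main obstacle will be the combinatorial bookkeeping: verifying that the multinomial $[G:I]$ splits, for each $l$, into the count of $q$-placements inside $B_l$ together with the smaller multinomial $[\tilde{N}_p^{t-q}\tilde{S}_{t-q}:\tilde{N}_p^{t-q}\tilde{S}_{\boldsymbol{t}'}]$, so that no stray multiplicities survive; and verifying that the $\pm$-labelling convention fixed after Lemma~\ref{lem:j=0} propagates correctly through the two layers of induction, so that the signs in cases $2$ and $3$ match those already present in Theorem~\ref{thm:MNj=0}. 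Once these two points are settled, the four cases are obtained by feeding the four cases of Theorem~\ref{thm:MNj=0} into the $l=0$ contribution and the single statement of Theorem~\ref{thm:MNj>0} into each $l\geq 1$ contribution.
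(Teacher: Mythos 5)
Your plan is essentially the paper's own proof: the paper's entire argument is ``apply the proof of \cite[Theorem 4.4]{pfeiff1994} using Theorems~\ref{thm:MNj=0} and~\ref{thm:MNj>0}'', which is precisely the induced-character bookkeeping you describe --- partition the Frobenius sum for $\chi_\lambda^{(\pm)}\uparrow^{\tilde{N}_p^t\tilde{S}_t}$ according to which block of $\tilde{S}_{\boldsymbol{t}(\lambda)}$ receives the conjugated $q$-cycle, feed the $l=0$ contribution into Theorem~\ref{thm:MNj=0} and the $l\geq1$ contributions into Theorem~\ref{thm:MNj>0}, and let the residual coset sum reassemble into the smaller induction $\chi^{(\mu,\lambda\backslash\{l\})(\pm)}(x)$. (Your ``iff'' should be ``only if'', since the remaining cycles of $y^{-1}xy$ must also be block-aligned, but your step (ii) already accounts for exactly this.)
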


\begin{proof}
Apply the proof~\cite[Theorem 4.4]{pfeiff1994} using Theorems~\ref{thm:MNj=0} and~\ref{thm:MNj>0}.
\end{proof}

\section{The Brou\'{e} perfect isometry}\label{sec:main}

In this section we construct a Brou\'{e} perfect isometry between a block of $\tilde{S}_n$ with abelian defect and
weight $w$ and positive sign and the block of spin characters of $\tilde{N}_p^w\tilde{S}_w$. We also construct a
Brou\'{e} perfect isometry between the corresponding blocks of $\tilde{A}_n$ and
$\tilde{N}_p^w\tilde{S}_w\cap\tilde{A}_{pw}$. We will then use these isometries to construct the Brou\'{e} perfect
isometry of Theorem~\ref{thm:main}.
\newline
\newline
Let $n$ be a positive integer. We denote by $C$ the set of elements of $\tilde{S}_n$ in conjugacy classes labelled by
partitions of $n$ with no part an odd multiple of $p$. We define $S_{S_n}$
to be the set of elements of $S_n$ all of whose cycles have length $1$ or an odd multiple of $p$ and
$S:=\{o(g)|g\in S_{S_n}\}$.
\newline
\newline
Let $C'$ be the set of conjugacy classes of $\tilde{N}^w_p\tilde{S}_w$ labelled by $(\pi_0,\dots,\pi_{p-1})$ with
$\pi_0$ having only even parts. We define $S_{N_p\wr S_w}'$ to be the set of elements of
$N_p\wr S_w$ in conjugacy classes labelled by $(\pi_0,\varnothing,\dots,\varnothing,\pi_{p-1})$, where
$\pi_0$ has only odd parts and $\pi_{p-1}=(1,\dots,1)$. Note that if $g\in S_{N_p\wr S_w}'$ then $g$
has odd order and so $o(g)$ makes sense if we view $\tilde{N}^w_p\tilde{S}_w$ as a subgroup of $\tilde{S}_{pw}$. We
now set $S':=\{o(g)|g\in S_{N_p\wr S_w}'\}$.
\newline
\newline
Now suppose $\beta=(\beta_1,\beta_2,\dots)$ has only odd parts and $p|\beta|\leq n$. Then $\beta$ labels a conjugacy
class of $S$. We pick a representative $s_\beta$ for this conjugacy class to be
\begin{align*}
o((1,\dots,p\beta_1)(p\beta_1+1,\dots,p(\beta_1+\beta_2))\dots).
\end{align*}
If in addition $|\beta|\leq w$ then $\beta$ labels a conjugacy class of $S'$. We pick a representative $s_\beta'$ for
this conjugacy class to be
\begin{align*}
\prod_l((a,1,\dots,1);\tau_l),
\end{align*}
where
\begin{align*}
\overline{\zeta}^+_0(a)=\frac{-1+i^\frac{p-1}{2}\sqrt{p}}{2}&&\text{ and }&&\tau_l=
o\bigg{(}\bigg{(}\bigg{(}\sum_{j=1}^{l-1}\beta_j\bigg{)}+1,\dots,\sum_{j=1}^l\beta_j\bigg{)}\bigg{)}\text{ for all }l.
\end{align*}
Let $\gamma\in\mathcal{D}_{n-p|w|}^+$ for some $w\in\mathbb{N}_0$ be a $p-$bar core. In particular $\gamma$ labels a
$p-$block of $\tilde{S}_n$ of weight $w$. Let $E_\gamma$ denote the set of strict partitions (not necessarily of $n$)
with $p-$bar core $\gamma$. We define the bijection
\begin{align*}
\Psi:E_\gamma&\to\sqcup_t\Delta_t\\
\lambda&\mapsto\lambda^{(\overline{p})}.
\end{align*}
Note that adding any $p-$bar hook to a partition $\lambda$ changes the sign of $\sigma(\lambda)$ unless the hook is a
part equal to $p$. Then as $\sigma(\gamma)=1$ by induction we see that $\sigma(\lambda)=\sigma(\Psi(\lambda))$.

\begin{thm}\label{thm:mainS}
When $w<p$ then $I$ is a Brou\'{e} perfect isometry, where $I$ is defined as follows:
\begin{align*}
I:\mathbb{Z}\operatorname{Irr}(\tilde{S}_{n,\gamma})&\to\mathbb{Z}\operatorname{IrrSp}(\tilde{N}_p^w\tilde{S}_w)&\\
\xi_\lambda&\mapsto\delta_{\overline{p}}(\lambda)\delta_{\overline{p}}(\Psi(\lambda))
(-1)^{w\frac{p^2-1}{8}+|\Psi(\lambda)_0|}\chi^{\Psi(\lambda)}&&\text{if }\sigma(\lambda)=1,\\
\xi_\lambda^\epsilon&\mapsto\delta_{\overline{p}}(\lambda)\delta_{\overline{p}}(\Psi(\lambda))
(-1)^{w\frac{p^2-1}{8}+|\Psi(\lambda)_0|}\chi^{\Psi(\lambda)\eta_{\overline{p}}(\lambda)}&&\text{if
}\sigma(\lambda)=-1,
\end{align*}
where,
\begin{align*}
\eta_{\overline{p}}(\lambda)=
\begin{cases}
\eta\delta_{\overline{p}}(\lambda)\delta_{\overline{p}}(\Psi(\lambda))
(-1)^{w+\frac{p-1}{4}(|\Psi(\lambda)_0|-l(\Psi(\lambda)_0))}&\text{if }\sigma(\Psi(\lambda)_0)=1,\\
\eta\delta_{\overline{p}}(\lambda)\delta_{\overline{p}}(\Psi(\lambda))
(-1)^{w+\frac{p-1}{4}(|\Psi(\lambda)_0|-l(\Psi(\lambda)_0)+1)}&\text{if }\sigma(\Psi(\lambda)_0)=-1.
\end{cases}
\end{align*}
\end{thm}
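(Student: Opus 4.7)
The plan is to proceed by induction on $w$, applying Theorem~\ref{thm:mainJ} in combination with Lemma~\ref{lem:gen} (for the generalized perfect isometry property) and Lemma~\ref{lem:virprocha} (for the integrality condition on $\hat{I}$). For the base case $w=0$, the block $\tilde{S}_{n,\gamma}$ contains a single character $\xi_\gamma$ (or the pair $\xi^{\pm}_\gamma$ when $\sigma(\gamma)=-1$), the group $\tilde{N}_p^0\tilde{S}_0$ is trivial, and $I$ reduces to an evident signed bijection for which both Broué conditions are immediate.

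For the inductive step I would first equip $\tilde{S}_n$ and $\tilde{N}_p^w\tilde{S}_w$ with MN-structures taking $C,S$ and $C',S'$ as defined just before the theorem. Condition (4) of Definition~\ref{def:MN} is supplied by Theorem~\ref{thm:MNSn} on the $\tilde{S}_n$ side and Theorem~\ref{thm:MN} on the Brauer correspondent side; conditions (1)--(3) are routine. The conjugacy classes in $\Lambda$ and $\Lambda'$ are both indexed by odd partitions $\beta$, via the chosen representatives $s_\beta$ and $s'_\beta$. Because $w<p$, the only $\beta$ for which $r^\lambda$ can be non-zero on the given block are those with $|\beta|\leq w$, so $\Lambda_0$ and $\Lambda_0'$ are matched by the obvious bijection $\sigma$ sending $\beta\mapsto\beta$. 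For each such $\beta$, the subgroup $G_{s_\beta}$ contains $\tilde{S}_{n-p|\beta|}[p|\beta|]$ and the subgroup $G'_{s'_\beta}$ contains $\tilde{N}_p^{w-|\beta|}\tilde{S}_{w-|\beta|}$ (as shifted copies, via Lemma~\ref{lem:shift}), and by the induction hypothesis applied to the weight $w-|\beta|$ block with the same $p$-bar core $\gamma$, one obtains the required isometries $I_\lambda$ on the factors appearing.

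The combinatorial heart of the argument is then the commutation $I_\lambda\circ r^\lambda=r'^{\sigma(\lambda)}\circ I_{\{1\}}$ in hypothesis (2b) of Theorem~\ref{thm:mainJ}. Unwinding via Theorems~\ref{thm:MNSn} and~\ref{thm:MN}, adding a $p$-bar to $\lambda$ corresponds, under the bijection $\Psi$, either to adding a $p$-bar to $\Psi(\lambda)_0$ (matching the $\mu\in M_{\overline{p}}(\lambda_0)$ sums in Theorem~\ref{thm:MN}) or to adding a rim-hook to $\Psi(\lambda)_j$ for some $j\geq1$ (matching the $M_p(\lambda_j)$ sums); Lemma~\ref{lem:leglength} is used to convert the $(-1)^{L(c)}$ leg-length factors into the $\delta_{\overline{p}}$-signs. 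The sign prefactor $\delta_{\overline{p}}(\lambda)\delta_{\overline{p}}(\Psi(\lambda))(-1)^{w(p^2-1)/8+|\Psi(\lambda)_0|}$, together with the phase $\eta_{\overline{p}}(\lambda)$ distinguishing $\chi^{\Psi(\lambda)\pm}$, are engineered precisely to absorb the Gauss-sum terms $(-1\pm i^{(p-1)/2}\sqrt{p})/2$, the sign $(-1)^{(q^2-1)/8}$, and the $\alpha^\lambda_\mu$ coefficients that appear on the two sides. Hypothesis (3) of Theorem~\ref{thm:mainJ} is then automatic by Remark~\ref{rem:part3}, since the induction supplies compatible MN-structures on every $G_\lambda$ and $G'_{\sigma(\lambda)}$. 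Lemma~\ref{lem:gen} now gives that $I$ is a generalized perfect isometry.

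Finally, the integrality condition is checked by Lemma~\ref{lem:virprocha}: for $x\in\tilde{S}_n$ with $x_S$ in the class of some $s_\beta$ and any $\Phi\in\mathbb{Z}b^\vee$, restricting $\Phi$ to a suitable $p'$-index subgroup of $G_{x_S}$ containing $x_C$ yields a virtual projective character, since the block of $\tilde{S}_{n-p|\beta|,\gamma}$ has strictly smaller weight, hence abelian defect, and one may invoke the corresponding fact on the Brauer correspondent side using Lemma~\ref{lem:ordcent} to control defect groups; the analogous verification for $(x,x')\in G\times G'$ from the $\tilde{N}_p^w\tilde{S}_w$ side is symmetric. The $\tilde{A}_n$ statement follows by the parallel argument using Theorem~\ref{thm:MNAn} in place of Theorem~\ref{thm:MNSn} and restricting through the Clifford-theoretic decomposition of $\S$\ref{sec:parachars}. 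The main obstacle is the explicit sign book-keeping in the commutation of the two MN-rules, requiring separate case analysis on the parities of $\sigma(\Psi(\lambda)_0)$ and $t-t_0$ matching the four cases of Theorems~\ref{thm:MNj=0} and~\ref{thm:MN}; the rest is citation and induction.
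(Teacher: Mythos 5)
Your overall architecture is the same as the paper's: equip both $\tilde{S}_n$ and $\tilde{N}_p^w\tilde{S}_w$ with MN-structures (via Theorem~\ref{thm:MNSn} and Theorem~\ref{thm:MN}), verify the commutation $I\circ r^\beta=r'^\beta\circ I$ by explicit comparison of the two MN-rules after reducing to $l(\beta)=1$, get hypothesis (3) from Remark~\ref{rem:part3}, and then invoke Theorem~\ref{thm:mainJ}. That much is correct. However, there are two genuine gaps where you rely on Lemma~\ref{lem:gen} and Lemma~\ref{lem:virprocha} to do more than they actually do.

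First, the sets $C$ and $C'$ in the MN-structures are \emph{not} the $p$-regular elements: $C$ consists of classes with no part an odd multiple of $p$, and $C'$ of classes with $\pi_0$ having only even parts. Lemma~\ref{lem:gen} therefore only gives vanishing of $\hat{I}(x,x')$ on $(C\times\overline{C}')\cup(\overline{C}\times C')$, whereas property (2) of a Brou\'{e} perfect isometry demands vanishing on ($p$-regular)$\times$($p$-singular) and its transpose. You must separately kill $\hat{I}(x,x')$ when, say, $x\in C$ is $p$-singular (cycle type with a part divisible by $2p$) and $x'$ is $p$-regular, and when $x$ is $p$-regular and $x'\in C'$ is $p$-singular. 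This is exactly what Theorem~\ref{thm:values} together with Lemmas~\ref{lem:oneeven}, \ref{lem:>0=} and~\ref{lem:lambda0=} are for, and your proposal never uses them. Second, Lemma~\ref{lem:virprocha} only handles the integrality condition when $x\in\tilde{A}_n$ (resp.\ $x'\in\tilde{A}_{pw}$): the argument that $\Phi\in\mathbb{Z}b^\vee$ restricts to a virtual projective character depends on $\Phi$ vanishing off the $p$-regular classes, which fails for $x\notin\tilde{A}_n$ because $\xi_\lambda^\pm$ takes the exceptional value $\pm i^{(n-l(\lambda)+1)/2}\sqrt{\lambda_1\lambda_2\cdots}/2$ on the class $\lambda$ itself. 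For $x\notin\tilde{A}_n$ and $x'\notin\tilde{A}_{pw}$ one must instead show directly that $\hat{I}(x,x')\in p^{l(\Psi(\lambda)_0)}\mathcal{R}$ using Lemma~\ref{lem:sqrtR} and compare with the centralizer orders via Lemmas~\ref{lem:dbcent} and~\ref{lem:ordcent}. Without these two supplementary arguments the proof is incomplete; with them, your outline coincides with the paper's proof.
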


\begin{proof}
We set $G:=\tilde{S}_n$ and $G':=\tilde{N}^w_p\tilde{S}_w$. First we note that by~\cite[Lemma 4.11]{brugra2014}
$\tilde{S}_{n,\gamma}$ is a union of $C-$blocks of $G$. Next we note that if $\chi$
(respectively $\psi$) is an irreducible non-spin (respectively spin) character of $G'$ then
$\langle\operatorname{res}_{C'}(\chi),\operatorname{res}_{C'}(\psi)\rangle_{G'}=0$ and so
$\operatorname{IrrSp}(\tilde{N}_p^w\tilde{S}_w)$ is a union of $C'-$blocks of $G$.
\newline
\newline
By~\cite[Proposition 4.12]{brugra2014}
$G$ has an MN-structure with respect to $C$ and $\tilde{S}_{n,\gamma}$. Now let $x\in G'$ have disjoint cycle
decomposition $\prod_l(x_l;\tau_l)$. We partition the product $x=\prod_{l\in L_0}x_l.\prod_{l\in L_1}x_l$ with $l\in
L_0$
if and only if $f(\theta_{pw}(x_l))$ is conjugate to $y_0$ in $N_p$.
Then, by multiplying both products by $z$ if necessary, we can assume $\prod_{l\in L_0}x_l\in S'$ and $\prod_{l\in
L_1}x_l\in C'$. We set
\begin{align*}
G_{s_\beta'}':=\tilde{N}^{w-|\beta|}_p\tilde{S}_{w-|\beta|}[|\beta|].
\end{align*}
Then by iteratively applying
Theorem~\ref{thm:MN} we have an MN-structure for $G'$
with respect to $C'$ and $\operatorname{IrrSp}(\tilde{N}^w_p\tilde{S}_w)$. Therefore part (1) of
Theorem~\ref{thm:mainJ} holds for $G$ and $G'$.
\newline
\newline
By an abuse of notation we allow $I$ to denote the map in the statement of the theorem for varying $w$. Let $\beta$ be a
partition having only odd parts and $p|\beta|\leq n$. We first note that if $|\beta|>w$ then by induction and
Theorem~\ref{thm:MNSn} $r^\beta=0$. Let's now assume $|\beta|\leq w$. We wish to show that $I\circ r^\beta=r'^\beta\circ
I$. By induction we can assume $l(\beta)=1$. Let $\beta=(q)$ for some odd positive integer $q$ and $\lambda\vdash n$.
\newline
\newline
First suppose $\sigma(\Psi(\lambda)_0)=1$ and $w-|\Psi(\lambda)_0|$ is even. If $\mu\in M_{\overline{pq}}(\lambda)$ with
$\sigma(\mu)=1$ then $\mu$ must be gotten
from $\lambda$ by removing a part of $\lambda$ equal to $pq$. By Theorem~\ref{thm:MNSn} $\xi_\mu$ appears in
$r^\beta(\xi_\lambda)$ with
coefficient $a(\xi_\lambda,\xi_\mu)$ and by Theorem~\ref{thm:MN} $\chi^{\Psi(\mu)}$ appears in
$r'^\beta(\chi^{\Psi(\lambda)})$ with coefficient $-a(\xi_{\Psi(\lambda)_0},\xi_{\Psi(\mu)_0})$. Then by
Lemma~\ref{lem:leglength} and the fact that
\begin{align*}
(-1)^{\frac{p^2-1}{8}}(-1)^{\frac{q^2-1}{8}}=(-1)^{\frac{(pq)^2-1}{8}},
\end{align*}
the ratio of these two coefficients is
\begin{align*}
-(-1)^{\frac{p^2-1}{8}}\delta_{\overline{p}}(\lambda)\delta_{\overline{p}}(\mu)
\delta_{\overline{p}}(\Psi(\lambda))\delta_{\overline{p}}(\Psi(\mu)).
\end{align*}
If $\sigma(\mu)=-1$ then by Theorem~\ref{thm:MNSn} $\xi^+_\mu$ and $\xi^-_\mu$ appear in $r^\beta(\xi_\lambda)$ with
the same coefficient $a(\xi_\lambda,\xi_\mu^+)=a(\xi_\lambda,\xi_\mu^-)$. If $\sigma(\Psi(\mu)_0)=-1$ and
$\Psi(\mu)_j=\Psi(\lambda)_j$ for all $j>0$ then
by Theorem~\ref{thm:MN} $\chi^{\Psi(\mu)+}$ and $\chi^{\Psi(\mu)-}$ appear in
$r'^\beta(\chi^{\Psi(\lambda)})$ with the same coefficient
$-a(\xi_{\Psi(\lambda)_0},\xi^+_{\Psi(\mu)_0})=-a(\xi_{\Psi(\lambda)_0},\xi^-_{\Psi(\mu)_0})$.
Again the ratio of these two coefficients is
\begin{align*}
-(-1)^{\frac{p^2-1}{8}}\delta_{\overline{p}}(\lambda)\delta_{\overline{p}}(\mu)
\delta_{\overline{p}}(\Psi(\lambda))\delta_{\overline{p}}(\Psi(\mu)).
\end{align*}
If $\Psi(\mu)_l\neq\Psi(\lambda)_l$ and $\Psi(\mu)_j=\Psi(\lambda)_j$ for all $j\neq l>0$ then by Theorem~\ref{thm:MN}
$\chi^{\Psi(\mu)+}$ and
$\chi^{\Psi(\mu)-}$ appear in $r'^\beta(\chi^{\Psi(\lambda)})$ with the same coefficient
$-a(\xi_{\Psi(\lambda)_0},\xi^+_{\Psi(\mu)_0})=-a(\xi_{\Psi(\lambda)_0},\xi^-_{\Psi(\mu)_0})$.
The ratio of these two coefficients is
\begin{align*}
(-1)^{\frac{p^2-1}{8}}\delta_{\overline{p}}(\lambda)\delta_{\overline{p}}(\mu)
\delta_{\overline{p}}(\Psi(\lambda))\delta_{\overline{p}}(\Psi(\mu))
\end{align*}
and hence we have proved $I\circ r^\beta(\xi_\lambda)=r'^\beta\circ I(\xi_\lambda)$.
\newline
\newline
All the other cases are similar. We prove the most complicated case that is that the multiplicity of $\chi^{\Psi(\mu)+}$
in $I\circ r^\beta(\xi_\lambda^+)$ is equal to its multiplicity in $r'^\beta\circ I(\xi_\lambda^+)$ where
$\sigma(\lambda)=\sigma(\mu)=-1$. In this case $\mu$ must be gotten from $\lambda$ by removing a part of $\lambda$ equal
to $pq$. By Thoeroem~\ref{thm:MNSn} $\xi_\mu^\pm$ appears in $r^\beta(\xi_\lambda^+)$ with coefficient
\begin{align*}
a(\xi^+_\lambda,\xi^\pm_\mu)=
\frac{1}{2}(-1)^{\frac{(pq)^2-1}{8}}(\delta_{\overline{p}}(\lambda)\delta_{\overline{p}}(\mu)\pm
i^{\frac{pq-1}{2}}\sqrt{pq}).
\end{align*}
By Thoeroem~\ref{thm:MN} $\chi^{\Psi(\mu)+}$ appears in $r'^\beta(\chi^{\Psi(\lambda)\pm})$ with coefficient
\begin{align*}
a(\xi^+_{\Psi(\lambda)_0},\xi^+_{\Psi(\mu)_0})\bigg{(}\frac{-1\pm i^\frac{p-1}{2}\sqrt{p}}{2}\bigg{)}+
a(\xi^+_{\Psi(\lambda)_0},\xi^-_{\Psi(\mu)_0})\bigg{(}\frac{-1\mp i^\frac{p-1}{2}\sqrt{p}}{2}\bigg{)}
\end{align*}
if $\sigma(\Psi(\lambda)_0)=-1$ and $w-|\Psi(\lambda)_0|$ is even or
\begin{align*}
a(\overline{\xi}^+_{\Psi(\lambda)_0},\overline{\xi}^+_{\Psi(\mu)_0})\bigg{(}\frac{-1\pm
i^\frac{p-1}{2}\sqrt{p}}{2}\bigg{)}+
a(\overline{\xi}^+_{\Psi(\lambda)_0},\overline{\xi}^-_{\Psi(\mu)_0})\bigg{(}\frac{-1\mp
i^\frac{p-1}{2}\sqrt{p}}{2}\bigg{)}
\end{align*}
if $\sigma(\Psi(\lambda)_0)=1$ and $w-|\Psi(\lambda)_0|$ is odd. In either case this coefficient is
\begin{align*}
&\frac{1}{2}(-1)^{\frac{q^2-1}{8}}(\alpha^{\Psi(\lambda)_0}_{\Psi(\mu)_0}+i^{\frac{q-1}{2}}\sqrt{q})
\bigg{(}\frac{-1\pm i^\frac{p-1}{2}\sqrt{p}}{2}\bigg{)}+
\frac{1}{2}(-1)^{\frac{q^2-1}{8}}(\alpha^{\Psi(\lambda)_0}_{\Psi(\mu)_0}-i^{\frac{q-1}{2}}\sqrt{q})
\bigg{(}\frac{-1\mp i^\frac{p-1}{2}\sqrt{p}}{2}\bigg{)}\\
=&\frac{1}{2}(-1)^{\frac{q^2-1}{8}}(-\delta_{\overline{p}}(\Psi(\lambda))\delta_{\overline{p}}(\Psi(\mu))
\pm i^{\frac{p+q-2}{2}}\sqrt{pq}).
\end{align*}
Now since
\begin{align*}
(-1)^{\frac{(p-1)(q-1)}{4}}i^{\frac{p+q-2}{2}}=i^{\frac{pq-1}{2}},
\end{align*}
the coefficient $a(\xi^+_\lambda,\xi^\pm_\mu)$ is
\begin{align*}
&\frac{1}{2}(-1)^{\frac{(pq)^2-1}{8}}(\delta_{\overline{p}}(\lambda)\delta_{\overline{p}}(\mu)\pm
i^{\frac{pq-1}{2}}\sqrt{pq})\\
=-&\frac{1}{2}(-1)^{\frac{p^2-1}{8}}(-1)^{\frac{q^2-1}{8}}\delta_{\overline{p}}(\lambda)\delta_{\overline{p}}(\mu)
\delta_{\overline{p}}(\Psi(\lambda))\delta_{\overline{p}}(\Psi(\mu))\\
&(-\delta_{\overline{p}}(\Psi(\lambda))\delta_{\overline{p}}(\Psi(\mu))\mp
\delta_{\overline{p}}(\lambda)\delta_{\overline{p}}(\mu)
\delta_{\overline{p}}(\Psi(\lambda))\delta_{\overline{p}}(\Psi(\mu))
(-1)^{\frac{(p-1)(q-1)}{4}}i^{\frac{p+q-2}{2}}\sqrt{pq}).
\end{align*}
Then since
\begin{align*}
(|\Psi(\lambda)_0|-l(\Psi(\lambda)_0))-(|\Psi(\mu)_0|-l(\Psi(\mu)_0))=q-1
\end{align*}
we have that the coefficient
of $\chi^{\Psi(\mu)+}$ in $I\circ r^\beta(\xi_\lambda^+)$ is equal to its coefficient in $r'^\beta\circ
I(\xi_\lambda^+)$.
We have now proved that part (2) of Theorem~\ref{thm:mainJ} holds for $G$ and $G'$. Furthermore, by
Remark~\ref{rem:part3}
part (3) also holds.
\newline
\newline
Let $x\in G$ and $x'\in G'$. We want to verify property (1) of a Brou\'{e} perfect isometry. First suppose
$x\in\tilde{A}_n$ and let
$\Phi\in\mathbb{Z}b^\vee$ where $b$ is a basis of $\mathbb{Z}\operatorname{Irr}(B_{x_S})^{C\cap G_{x_S}}$. By
Lemma~\ref{lem:vee} applied to $C$ and Theorem~\ref{thm:values}, $\Phi(x)$ is non-zero only if $x$ is $p-$regular.
So again by Lemma~\ref{lem:vee} applied to the
set of $p-$regular elements we have that $\Phi\downarrow_{G_{x_S}\cap\tilde{A}_n}$ is a projective character. Then by
Lemma~\ref{lem:virprocha} we have that $\hat{I}(x,x')\in|C_G(x)|\mathcal{R}$. Similarly if
$x'\in\tilde{A}_{pw}$ then $\hat{I}(x,x')\in|C_{G'}(x')|\mathcal{R}$. Note that in this final
case Lemma~\ref{lem:oneeven} takes the place of Theorem~\ref{thm:values}.
\newline
\newline
Now assume $x\notin\tilde{A}_n$ has cycle type $\pi$. Then, by Theorem~\ref{thm:values},
$\xi_\lambda^{(\pm)}(x)\neq0$ only if $\pi=\lambda$. Therefore
\begin{align*}
\hat{I}(x,x')=\pm(\xi_\lambda^+(x)\chi^{\Psi(\lambda)+}(x')+\xi_\lambda^-(x)\chi^{\Psi(\lambda)-}(x'))
\end{align*}
and so we may assume $x'\notin\tilde{A}_{pw}$. Then by Theorem~\ref{thm:values} and
Lemma~\ref{lem:sqrtR} $\hat{I}(x,x')\in p^{l(\Psi(\lambda)_0)}\mathcal{R}$. Then by Lemma~\ref{lem:dbcent}
$\hat{I}(x,x')\in|C_G(x)|\mathcal{R}$. Similarly if $x'\notin\tilde{A}_{pw}$ is of type $(\pi_0,\dots,\pi_{(p-1)/2})$
then we may assume $x$ is of type $\lambda$, where $\sigma(\lambda)=-1$ and
\begin{align*}
\hat{I}(x,x')=\pm(\xi_\lambda^+(x)\chi^{\Psi(\lambda)+}(x')+\xi_\lambda^-(x)\chi^{\Psi(\lambda)-}(x')).
\end{align*}
Then by Theorem~\ref{thm:values} and Lemma~\ref{lem:sqrtR} either $\chi^{\Psi(\lambda)\pm}(x')\neq0$ or
$l(\pi_0)=l(\Psi(\lambda)_0)$ and $l(\pi_{p-1})$ and $\hat{I}(x,x')\in p^{l(\Psi(\lambda)_0)}\mathcal{R}$ and so, by
Lemma~\ref{lem:ordcent}, $\hat{I}(x,x')\in|C_{G'}(x')|\mathcal{R}$.
\newline
\newline
Now we look at property (2) of a Brou\'{e} perfect isometry. If $x$ is $p-$singular and $x'$ is
$p-$regular then by Lemma~\ref{lem:gen} $\hat{I}(x,x')=0$ if $x\notin C$ so let's assume $x\in C$. Suppose $x$ has
cycle type $\pi$ for some $\pi\in\mathcal{P}_n$ and then since $x\notin C$, $\pi$ must have some part divisible by
$2p$ and so $\pi\notin\mathcal{O}_n$. If $\pi\notin\mathcal{D}^-_n$ then by Theorem~\ref{thm:values}
$\xi^{(\pm)}_\lambda(x)=0$ for all $\lambda\in\mathcal{P}_n$ and hence $\hat{I}(x,x')=0$ so let's assume
$\pi\in\mathcal{D}^-_n$. Again by Theorem~\ref{thm:values} we have that $\xi_\lambda^{(\pm)}(x)\neq0$ if and only if
$\lambda=\pi$. Therefore $|\Psi(\lambda)_0|>0$ and so by~\ref{lem:>0=}
$\chi^{\Psi(\lambda)+}(x')=\chi^{\Psi(\lambda)-}(x')$ and hence $\hat{I}(x,x')=0$.
\newline
\newline
Next suppose $x$ is $p-$regular and $x'$ is $p-$singular. By Lemma~\ref{lem:gen} $\hat{I}(x,x')=0$ if $x'\notin C'$
so let's assume $x'\in C'$. Let $x'$ be of type $\pi$. Then $\pi_0$ must have some even part and so by
Lemmas~\ref{lem:oneeven} and~\ref{lem:sqrtR} $\chi^{\lambda\pm}(x)\neq0$ only if $x\notin\tilde{A}_{pt}$ and
$|\lambda_0|>0$. Then $\Psi^{-1}(\lambda)$ must have some part divisible by $p$ and so by Theorem~\ref{thm:values}
$\xi_{\Psi^{-1}(\lambda)}^+(x)=\xi_{\Psi^{-1}(\lambda)}^-(x)$ and hence $\hat{I}(x,x')=0$.
\end{proof}

We now show that we have a Brou\'{e} perfect isometry between the corresponding block of $\tilde{A}_n$ and
$\tilde{N}_p^w\tilde{S}_w\cap\tilde{A}_{pw}$.

\begin{thm}\label{thm:mainA}
The isometry
\begin{align*}
I_A:\mathbb{Z}\operatorname{Irr}(\tilde{A}_{n,\gamma})&
\to\mathbb{Z}\operatorname{IrrSp}(\tilde{N}_p^w\tilde{S}_w\cap\tilde{A}_{pw})\\
\overline{\xi}_\lambda&\mapsto\delta_{\overline{p}}(\lambda)\delta_{\overline{p}}(\Psi(\lambda))
(-1)^{w\frac{p^2-1}{8}+|\Psi(\lambda)_0|}\overline{\chi}^{\Psi(\lambda)}&&\text{if }\sigma(\lambda)=-1,\\
\overline{\xi}_\lambda^\eta&\mapsto\delta_{\overline{p}}(\lambda)\delta_{\overline{p}}(\Psi(\lambda))
(-1)^{w\frac{p^2-1}{8}+|\Psi(\lambda)_0|}\overline{\chi}^{\Psi(\lambda)\eta}&&\text{if }\sigma(\lambda)=1,
\end{align*}
is a Brou\'{e} perfect isometry.
\end{thm}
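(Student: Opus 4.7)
The plan is to mirror the proof of Theorem~\ref{thm:mainS} almost verbatim, replacing at each step the symmetric-group ingredient with its alternating-group analogue. Set $H := \tilde{A}_n$, $H' := \tilde{N}_p^w\tilde{S}_w \cap \tilde{A}_{pw}$, and take $C_A := C \cap H$ and $C'_A := C' \cap H'$. First I would establish MN-structures for $H$ and $H'$ with respect to $(C_A, \tilde{A}_{n,\gamma})$ and $(C'_A, \operatorname{IrrSp}(H'))$ respectively. For $H$ this is~\cite[Proposition 4.12]{brugra2014} combined with Clifford theory; for $H'$ one mimics the construction for $\tilde{N}_p^w\tilde{S}_w$, namely one partitions a general $x \in H'$ as $x = x_S x_C$ with $x_S$ built from representatives $s'_\beta$ (now required to lie in $\tilde{A}_{pw}$) and uses the restricted MN-rule that follows by intersecting Theorem~\ref{thm:MN} with $\tilde{A}_{pw}$.

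Next I would verify the hypotheses of Theorem~\ref{thm:mainJ}. The bijection $\Psi$ is unchanged, and the main identity $I_A \circ r^\beta = r'^\beta \circ I_A$ follows by combining Theorem~\ref{thm:MNAn} (on the $\tilde{A}_n$ side) with the alternating version of Theorem~\ref{thm:MN} (on the $H'$ side). The sign bookkeeping is essentially the same as in the proof of Theorem~\ref{thm:mainS}, except that the roles of $\sigma(\lambda) = \pm 1$ are exchanged, as reflected already in the definition of $I_A$ (characters split when $\sigma(\lambda) = 1$ rather than when $\sigma(\lambda) = -1$). Part~(3) of Theorem~\ref{thm:mainJ} is automatic by Remark~\ref{rem:part3}, so Lemma~\ref{lem:gen} gives that $I_A$ is a generalised perfect isometry once the vanishing is established on the mixed sets $(C_A \times \overline{C'_A}) \cup (\overline{C_A} \times C'_A)$.

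To finish the generalised perfection, I would argue as at the end of Theorem~\ref{thm:mainS}. If $x \in \tilde{A}_n$ is $p$-singular and $x \in C_A$ then its cycle type $\pi$ contains a part divisible by $2p$, so by Theorem~\ref{thm:values} only $\overline{\xi}_\lambda$ (or $\overline{\xi}_\lambda^\pm$ when $\sigma(\lambda)=1$) with $\lambda = \pi$ contributes; since then $|\Psi(\lambda)_0| > 0$, Lemma~\ref{lem:lambda0p} forces $\overline{\chi}^{\Psi(\lambda)+}(x') = \overline{\chi}^{\Psi(\lambda)-}(x')$ on any $p$-regular $x'$, so $\hat{I}_A(x,x') = 0$. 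Dually, if $x'$ is $p$-singular with $x' \in C'_A$, then its label $\pi$ has $\pi_0$ containing an even part, so by Lemmas~\ref{lem:oneeven}, \ref{lem:lambda0=} and \ref{lem:sqrtAR} only $\overline{\chi}^{\lambda\pm}$ with $|\lambda_0| > 0$ contribute, forcing $\overline{\xi}^+_{\Psi^{-1}(\lambda)}(x) = \overline{\xi}^-_{\Psi^{-1}(\lambda)}(x)$ and hence $\hat{I}_A(x,x') = 0$.

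For property~(1) of a Broué perfect isometry, I would apply Lemma~\ref{lem:virprocha} on both sides, exactly as in Theorem~\ref{thm:mainS}; the required virtual-projectivity of $\Phi \downarrow_A$ follows from Lemma~\ref{lem:vee} together with Theorem~\ref{thm:values} (for the $\tilde{A}_n$ side) and Lemma~\ref{lem:oneeven} (for the $H'$ side). The main obstacle will be the split-character part of property~(1), where characters on one side are irreducible but split after restriction. Here one has to use Lemma~\ref{lem:sqrtAR} together with its $\tilde{S}_n$ counterpart (Theorem~\ref{thm:values}(3)) to show that the differences $\overline{\xi}^+_\lambda(x) - \overline{\xi}^-_\lambda(x)$ and $\overline{\chi}^{\Psi(\lambda)+}(x') - \overline{\chi}^{\Psi(\lambda)-}(x')$ lie respectively in $\sqrt{\lambda_1 \lambda_2 \cdots}\,\mathcal{R}$ and $\sqrt{p^{l(\Psi(\lambda)_0)}}\,\mathcal{R}$. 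Multiplying these and using Lemma~\ref{lem:dbcent}, Lemma~\ref{lem:ordcent} and the identity $\lambda_1 \lambda_2 \cdots = p^{l(\Psi(\lambda)_0)} \cdot (\text{$p'$-part})$ (which holds because $\lambda$ must have cycle type with every part an odd multiple of $p$ for these differences to be simultaneously non-zero) then yields the required divisibility. The bulk of the actual work is therefore the verification that the $\eta_{\overline{p}}$ labelling introduced in Theorem~\ref{thm:mainS} is exactly what is needed to make the two split sides match on the nose.
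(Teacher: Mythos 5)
Your outline is a plausible brute-force route, but it is not the paper's argument and it leaves the hardest ingredient unproved. The paper does not build a fresh MN-structure for $\tilde{A}_n$ and $\tilde{N}_p^w\tilde{S}_w\cap\tilde{A}_{pw}$ at all. Instead it studies the single function $\hat{I}_A-\tfrac{1}{2}\hat{I}$, where $I$ is the already-established isometry of Theorem~\ref{thm:mainS}. The point is massive cancellation: for $\lambda\in\mathcal{D}_n^-$ the term $\overline{\xi}_\lambda\otimes\overline{\chi}^{\Psi(\lambda)}$ in $\hat{I}_A$ cancels exactly against half of $\xi^+_\lambda\otimes\chi^{\Psi(\lambda)\pm}+\xi^-_\lambda\otimes\chi^{\Psi(\lambda)\mp}$ in $\hat{I}$, and for $\lambda\in\mathcal{D}_n^+$ the same happens unless $x$ has type $\lambda$, by Theorem~\ref{thm:values}. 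What survives is $\pm\tfrac{1}{2}(\overline{\xi}^+_\lambda(x)-\overline{\xi}^-_\lambda(x))(\overline{\chi}^{\Psi(\lambda)+}(x')-\overline{\chi}^{\Psi(\lambda)-}(x'))$, which is controlled directly by Theorem~\ref{thm:values}, Lemmas~\ref{lem:sqrtAR}, \ref{lem:lambda0=}, \ref{lem:lambda0p}, \ref{lem:dbcent} and~\ref{lem:ordcent}. Both Brou\'{e} properties of $I_A$ then fall out of the corresponding properties of $I$ plus the analysis of this difference; no MN-structure, no appeal to Theorem~\ref{thm:mainJ}, and no commutation identity $I_A\circ r^\beta=r'^\beta\circ I_A$ is needed.

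The concrete gap in your proposal is the ``alternating version of Theorem~\ref{thm:MN}'': no such statement exists in the paper, and it is not something one gets by merely ``intersecting Theorem~\ref{thm:MN} with $\tilde{A}_{pw}$.'' Producing it would require redoing the analysis of Theorems~\ref{thm:MNj=0} and~\ref{thm:MNj>0} for the restricted characters $\overline{\chi}^{\lambda\pm}$, working through the four-fold splittings $\overline{\overline{\chi}}_\lambda^{\pm\pm}$ and, crucially, pinning down a labelling of the pairs $\overline{\chi}^{\lambda\pm}$ compatible with the signs $\eta$ in the statement — which you yourself defer as ``the bulk of the actual work.'' Since exactly this sign-consistency is what your whole strategy hinges on, the proposal as written does not constitute a proof. (Your divisibility estimate in the last paragraph, multiplying $\sqrt{\lambda_1\lambda_2\cdots}$ by $\sqrt{p^{l(\Psi(\lambda)_0)}}$, is essentially the computation the paper performs for the surviving difference term, so that part of your plan is sound; it is the commutation-of-MN-rules step that is missing.)
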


\begin{proof}
Set $G:=\tilde{A}_n$ and $G':=\tilde{N}_p^w\tilde{S}_w\cap\tilde{A}_{pw}$. We prove $I_A$ is a Brou\'{e} perfect
isometry
by studying $\hat{I}_A-\frac{1}{2}\hat{I}$ where $I$ is taken from Theorem~\ref{thm:mainS}.
\newline
\newline
Let $x\in\tilde{A}_n$ and $x'\in\tilde{N}_p^w\tilde{S}_w\cap\tilde{A}_{pw}$. Note that if $\lambda\in\mathcal{D}_n^-$
then
\begin{align*}
\overline{\xi}_\lambda(x)\overline{\chi}^{\Psi(\lambda)}(x')-
\frac{1}{2}(\xi^+_\lambda(x)\chi^{\Psi(\lambda)\pm}(x')+\xi^-_\lambda(x)\chi^{\Psi(\lambda)\mp}(x'))=0.
\end{align*}
Now if $\lambda\in\mathcal{D}_n^+$ and $x$ is not of type $\lambda$ then by Theorem~\ref{thm:values}
\begin{align*}
\overline{\xi}^+_\lambda(x)\overline{\chi}^{\Psi(\lambda)\pm}(x')+
\overline{\xi}^-_\lambda(x)\overline{\chi}^{\Psi(\lambda)\mp}(x')-\frac{1}{2}\xi_\lambda(x)\chi^{\Psi(\lambda)}(x')=0.
\end{align*}
Therefore $(\hat{I}_A-\frac{1}{2}\hat{I})(x,x')=0$ unless $x$ is of type $\lambda\in\mathcal{D}_n^+$ and in this case
\begin{align*}
(\hat{I}_A-\frac{1}{2}\hat{I})(x,x')&=\pm\big{(}\overline{\xi}^+_\lambda(x)\overline{\chi}^{\Psi(\lambda)\pm}(x')+
\overline{\xi}^-_\lambda(x)\overline{\chi}^{\Psi(\lambda)\mp}(x')-\frac{1}{2}\xi_\lambda(x)\chi^{\Psi(\lambda)}(x')\big{
)}\\
&=\pm\frac{1}{2}(\overline{\xi}^+_\lambda(x)-\overline{\xi}^-_\lambda(x))
(\overline{\chi}^{\Psi(\lambda)\pm}(x')-\overline{\chi}^{\Psi(\lambda)\mp}(x')).
\end{align*}
Let $x'$ be of type $\pi$. By Lemma~\ref{lem:sqrtAR}, $(\hat{I}_A-\frac{1}{2}\hat{I})(x,x')=0$ unless
$l(\pi_0)=l(\Psi(\lambda)_0)$ and $l(\pi_{p-1})=0$ and in this case, by Theorem~\ref{thm:values} and
Lemma~\ref{lem:sqrtAR}, $(\hat{I}_A-\frac{1}{2}\hat{I})(x,x')\in p^{l(\Psi(\lambda)_0)}\mathcal{R}$. Then by
Lemmas~\ref{lem:dbcent} and~\ref{lem:ordcent},
\begin{align*}
(\hat{I}_A-\frac{1}{2}\hat{I})(x,x')\in|C_G(x)|\mathcal{R}\cap|C_{G'}(x')|\mathcal{R}
\end{align*}
and so $\hat{I}_A$ satisfies property (1) of a Brou\'{e} perfect isometry.
\newline
\newline
Suppose $x$ is $p-$regular and $x'$ is $p-$singular. As above we can assume $x$ is of type $\lambda\in\mathcal{D}_n^+$.
Then
$|\Psi(\lambda)_0|=0$ and so by Lemma~\ref{lem:lambda0=}, $(\hat{I}_A-\frac{1}{2}\hat{I})(x,x')=0$. Now
suppose $x$ is $p-$singular and $x'$ is $p-$regular. Again we can assume $x$ is of type $\lambda\in\mathcal{D}_n^+$.
Then
$|\Psi(\lambda)_0|>0$ and so by Lemma~\ref{lem:lambda0p} $(\hat{I}_A-\frac{1}{2}\hat{I})(x,x')=0$.
\end{proof}

Before we prove Theorem~\ref{thm:main} we need the following well-known lemma.

\begin{lem}\label{lem:mornor}
Let $G$ be a finite group, $H$ a normal subgroup and $\mathcal{R}He$ a block of $\mathcal{R}H$ such that $N_G(e)=H$.
Then $f=\sum_{g\in G/H}geg^{-1}$ is a block idempotent of $\mathcal{R}G$ and $f\mathcal{R}Ge$ induces a Morita
equivalence between $\mathcal{R}He$ and $\mathcal{R}Gf$.
\end{lem}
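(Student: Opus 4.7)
The plan proceeds in two stages: first show that $f$ is a primitive central idempotent of $\mathcal{R}G$, and then set up a Morita context whose underlying bimodule is $P := f\mathcal{R}Ge$.

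For the first stage, write $e_i := g_i e g_i^{-1}$ where $g_1,\ldots,g_n$ are coset representatives of $G/H$. Since $H \trianglelefteq G$, each $e_i$ is a block idempotent of $\mathcal{R}H$, and the hypothesis $N_G(e) = H$ ensures the $e_i$ are pairwise distinct, hence pairwise orthogonal as distinct primitive central idempotents of $\mathcal{R}H$. So $f^2 = f$, and since $f \in Z(\mathcal{R}H)$ is $G$-stable under conjugation (conjugation merely permutes the summands), it commutes with both $\mathcal{R}H$ and with $G$, placing $f$ in $Z(\mathcal{R}G)$.

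To see $f$ is primitive in $Z(\mathcal{R}G)$, suppose $f = f_1 + f_2$ splits into orthogonal central idempotents of $\mathcal{R}G$. Then $f_1 e$ lies in $Z(\mathcal{R}He)$: it commutes with $\mathcal{R}H$ since $f_1 \in Z(\mathcal{R}G)$, and it commutes with $e$ by centrality of $e$ in $\mathcal{R}H$. Moreover $(f_1 e)^2 = f_1 e$ since $f_1$ and $e$ are commuting idempotents. Because $\mathcal{R}He$ is a block of $\mathcal{R}H$, its centre has no nontrivial idempotents, so $f_1 e \in \{0, e\}$; by $G$-invariance of $f_1$ (conjugating by $g_i$) the same alternative holds uniformly for every $e_i$, forcing $f_1 \in \{0, f\}$. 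Thus $f$ is a block idempotent of $\mathcal{R}G$.

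For the Morita equivalence, set $A := \mathcal{R}Gf$, $B := \mathcal{R}He$, $P := f\mathcal{R}Ge = \mathcal{R}Ge$ and $Q := e\mathcal{R}Gf = e\mathcal{R}G$ (using centrality of $f$ and $fe = e$), with natural bimodule structures. The essential identity is $e\mathcal{R}Ge = \mathcal{R}He$: decomposing $\mathcal{R}G = \bigoplus_i g_i \mathcal{R}H$ one has $e g_i \mathcal{R}H e = \mathcal{R}H \cdot (e \cdot e_i) \cdot g_i$, which vanishes for $g_i \notin H$ by orthogonality, leaving only the trivial coset contribution $\mathcal{R}He$. This immediately identifies the multiplication map $Q \otimes_A P \to B$ as an isomorphism. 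For the dual trace $P \otimes_B Q \to A$, surjectivity follows because the two-sided ideal $\mathcal{R}G e \mathcal{R}G$ of $\mathcal{R}G$ contains every $e_i$ and hence contains $f$; the standard Morita context formalism then promotes the surjectivity of both traces to bimodule isomorphisms, establishing the Morita equivalence. The only nontrivial point of the argument is the primitivity verification above; the bimodule computations are routine once the key identity $e\mathcal{R}Ge = \mathcal{R}He$ is in place.
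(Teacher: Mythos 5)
Your proof is correct. Note that the paper itself gives no argument for this lemma --- it is stated as ``well-known'' and left unproved --- so there is nothing to compare against; your write-up supplies exactly the standard argument one would expect. The orthogonality of the distinct conjugates $e_i=g_ieg_i^{-1}$ (distinct precisely because $N_G(e)=H$), the identity $e\mathcal{R}Ge=\mathcal{R}He$ obtained from $\mathcal{R}G=\bigoplus_i g_i\mathcal{R}H$ and $eg_i\mathcal{R}He=\mathcal{R}H(ee_i)g_i$, and the Morita context attached to the idempotent $e$ of $A=\mathcal{R}Gf$ with $AeA=A$ all check out. One small presentational point: in the primitivity step you assert $f_1e\in Z(\mathcal{R}He)$, but to know $f_1e$ is an element of $\mathcal{R}He$ at all (rather than merely of $e\mathcal{R}Ge$) you already need the identity $e\mathcal{R}Ge=\mathcal{R}He$, which you only establish in the second half; reordering so that this identity comes first, or inserting the one-line observation $f_1e=ef_1e\in e\mathcal{R}Ge=\mathcal{R}He$, would close that forward reference. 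This is cosmetic and does not affect the validity of the argument.
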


We are now in a position to prove Theorem~\ref{thm:main} which we first restate.

\begin{thm*}
Let $p$ be an odd prime, $n$ a positive integer and $B$ a $p-$block of $\tilde{S}_n$ or $\tilde{A}_n$ with abelian
defect
group. Then there exists a Brou\'{e} perfect isometry between $B$ and its Brauer correspondent.
\end{thm*}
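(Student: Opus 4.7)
The plan is to combine Theorems~\ref{thm:mainS} and~\ref{thm:mainA} with a Morita-equivalence identification of the Brauer correspondent. First I would dispose of the trivial case $w=0$: here $B$ has trivial defect, equals its own Brauer correspondent, and the identity map is the required Broué perfect isometry. So assume $w>0$, and write $B=\tilde{S}_{n,\gamma}$ or $\tilde{A}_{n,\gamma}$, so that the Brauer correspondent $b$ lives in $N_{\tilde{S}_n}(P)=\tilde{S}_{n-pw}N_{\tilde{S}_{pw}}(P)$ (respectively in $\tilde{A}_n\cap N_{\tilde{S}_n}(P)$) with idempotent $e_{n-pw,\gamma}$ when $\sigma(\gamma)=1$ and $e_{n-pw,\gamma}^++e_{n-pw,\gamma}^-$ when $\sigma(\gamma)=-1$.

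The next step is to identify $b$ as Morita equivalent to the spin block of $\tilde{N}_p^w\tilde{S}_w$ (or its alternating analogue). The group $\tilde{S}_{n-pw}N_{\tilde{S}_{pw}}(P)$ is a central product over $\langle z\rangle$, so its spin irreducible modules are tensor products $M_1\otimes M_2$ with $M_1$ a spin $\mathcal{R}\tilde{S}_{n-pw}$-module and $M_2$ a spin $\mathcal{R}\tilde{N}_p^w\tilde{S}_w$-module. For $\sigma(\gamma)=1$, the block $b$ corresponds to fixing $M_1$ as the unique irreducible module in the weight-zero block $\tilde{S}_{n-pw,\gamma}$ (afforded by $\xi_\gamma$), so the functor $M_2\mapsto M_1\otimes M_2$ is a Morita equivalence between $b$ and the spin block of $\tilde{N}_p^w\tilde{S}_w$. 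Since Morita equivalences induce Broué perfect isometries via the character bijection $\psi\mapsto\xi_\gamma\otimes\psi$, composing with the isometry from Theorem~\ref{thm:mainS} completes the $\tilde{S}_n$ case with $\sigma(\gamma)=1$; the $\tilde{A}_n$ case with $\sigma(\gamma)=1$ follows analogously using Theorem~\ref{thm:mainA}.

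For $\sigma(\gamma)=-1$, where the idempotent is $e_{n-pw,\gamma}^++e_{n-pw,\gamma}^-$, I would apply Lemma~\ref{lem:mornor}. An element of $N_{\tilde{S}_{pw}}(P)\setminus\tilde{A}_{pw}$ conjugates $e^+$ to $e^-$ because inside the central product such an element acts on $\tilde{S}_{n-pw}$ as the sign character $\epsilon$ (up to the appropriate commutator in $\tilde{S}_n$), and $\sigma(\gamma)=-1$ forces $\epsilon\cdot\xi_\gamma^+=\xi_\gamma^-$. Hence the stabiliser of $e^+$ in $G=\tilde{S}_{n-pw}N_{\tilde{S}_{pw}}(P)$ is $H=\tilde{S}_{n-pw}(N_{\tilde{S}_{pw}}(P)\cap\tilde{A}_{pw})$, and Lemma~\ref{lem:mornor} provides a Morita equivalence between $\mathcal{R}G(e^++e^-)$ and $\mathcal{R}He^+$. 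The latter is in turn Morita equivalent, via the same tensor-product construction applied to $\xi_\gamma^+$, to the spin block of $\tilde{N}_p^w\tilde{S}_w\cap\tilde{A}_{pw}$. Invoking Theorem~\ref{thm:mainA} together with the Brunat--Gramain cross-sign perfect isometries from~\cite{brugra2014} between $\tilde{S}_n$-blocks of sign $-1$ and $\tilde{A}_n$-blocks of sign $+1$ (and the analogous intra-group isometries) completes the construction, with the $\tilde{A}_n$ case with $\sigma(\gamma)=-1$ handled dually.

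The hard part will be verifying that the composition of a Morita equivalence, a tensor-product identification, and the isometries from Theorems~\ref{thm:mainS}, \ref{thm:mainA} and Brunat--Gramain actually produces a Broué perfect isometry rather than merely a bijection of characters. Integrality condition~(1) in the definition of a Broué perfect isometry must be checked under each composition, and the generalised perfect-isometry condition has to be preserved by the Morita correspondent (which it is, since Morita equivalences induce equivalences of derived categories and hence of character groups). Careful sign bookkeeping is also required, especially in the $\sigma(\gamma)=-1$ cases, because the bijection $\Psi$ in Theorems~\ref{thm:mainS} and~\ref{thm:mainA} is defined only for $\gamma\in\mathcal{D}^+$.
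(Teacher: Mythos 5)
Your overall architecture is the paper's: dispose of $w=0$, identify $N_{\tilde{S}_n}(P)\cong\tilde{S}_{n-pw}(\tilde{N}_p^w\tilde{S}_w[n-pw])$, Morita-reduce the Brauer correspondent to the spin block of $\tilde{N}_p^w\tilde{S}_w$ or of $\tilde{N}_p^w\tilde{S}_w\cap\tilde{A}_{pw}$, and then compose with Theorems~\ref{thm:mainS}, \ref{thm:mainA} and the Brunat--Gramain isometries of \cite{brugra2014}, using the fact that a Morita equivalence yields a Brou\'{e} perfect isometry. You also correctly spot the sign-crossing: for $\sigma(\gamma)=-1$ the correspondent of the $\tilde{S}_n$-block lands on the \emph{alternating} group $\tilde{N}_p^w\tilde{S}_w\cap\tilde{A}_{pw}$, matching the cross-sign isometry needed because $\Psi$ is only set up for $\gamma\in\mathcal{D}^+$.

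There is, however, a genuine gap in your central-product step for $\sigma(\gamma)=1$. In $\tilde{S}_n$ the two factors do not commute honestly: $gh=zhg$ when both $g\in\tilde{S}_{n-pw}$ and $h\in\tilde{N}_p^w\tilde{S}_w[n-pw]$ are odd. Consequently the irreducible spin modules of $\tilde{S}_{n-pw}(\tilde{N}_p^w\tilde{S}_w[n-pw])$ are \emph{not} plain tensor products $M_1\otimes M_2$: by Lemma~\ref{lem:t=2}, when the $\tilde{N}_p^w\tilde{S}_w$-character is non-self-associate the product character vanishes off $\tilde{A}_{n-pw}$ in the first variable and is built from the two halves $\overline{\xi}_\gamma^{\pm}$, so the functor $M_2\mapsto M_1\otimes M_2$ with $M_1$ affording $\xi_\gamma$ is not even well defined on all of the block. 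The paper repairs exactly this by first applying Lemma~\ref{lem:mornor} to the index-$2$ normal subgroup $\tilde{A}_{n-pw}(\tilde{N}_p^w\tilde{S}_w[n-pw])$, whose $\tilde{A}_{n-pw}$-factor genuinely commutes with the other factor, so that the block algebra honestly factorises as $\mathcal{R}\tilde{A}_{n-pw}\overline{e}^+_{n-pw,\gamma}\otimes_{\mathcal{R}}\mathcal{R}\tilde{N}_p^w\tilde{S}_w\bigl(\tfrac{1-z}{2}\bigr)$; the first tensor factor is a defect-zero block, i.e.\ a matrix algebra over $\mathcal{R}$, which is what makes the second Morita equivalence legitimate. (Your $\sigma(\gamma)=-1$ reduction survives because there the surviving factor $\tilde{N}_p^w\tilde{S}_w\cap\tilde{A}_{pw}$ already lies in $\tilde{A}_{pw}$, though the dual $\tilde{A}_n$ case needs the explicit twisted isomorphism $x\overline{e}\mapsto ix(e^+-e^-)$ on odd elements, which ``handled dually'' does not supply.) With these repairs your argument coincides with the paper's proof.
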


\begin{proof}
First suppose $w\geq p$. Then there is a $p-$subgroup of $S_{pw}$ isomorphic to $C_p^p\rtimes C_p$ where $C_p$ acts on
$C_p^p$ by cyclically permuting factors. This subgroup is not abelian therefore the Sylow $p-$subgroups of $S_{pw}$ and
hence $\tilde{S}_{pw}$ are not abelian. From now on we assume $0<w<p$ as the result is clearly true when $w=0$. Then by
comparing orders we see that a Sylow $p-$subgroup $Q$ of $S_{pw}$ is isomorphic to $C_p^w$ and
\begin{align*}
P:=\{o(g)|g\in Q\}\cong C_p^w
\end{align*}
is a Sylow $p-$subgroup of $\tilde{S}_{pw}$. Now $N_{S_{pw}}(Q)\cong N_p\wr S_w$ and so, as every element of $P$ has odd
order, $N_{\tilde{S}_{pw}}(P)\cong\tilde{N}_p^w\tilde{S}_w$. Now suppose $\gamma\vdash(n-pw)$. Then $P$ is a defect group
for $\tilde{S}_{n,\gamma}$ and
\begin{align*}
N_{\tilde{S}_n}(P)\cong\tilde{S}_{n-pw}(\tilde{N}_p^w\tilde{S}_w[n-pw]).
\end{align*}
First suppose $\sigma(\gamma)=1$. Then we have the following Morita equivalences
\begin{align*}
&\mathcal{R}\tilde{S}_{n-pw}(\tilde{N}_p^w\tilde{S}_w[n-pw])e_{n-pw,\gamma}\\
\sim_{\operatorname{Mor}}&\mathcal{R}\tilde{A}_{n-pw}(\tilde{N}_p^w\tilde{S}_w[n-pw])\overline{e}^+_{n-pw,\gamma}\\
\cong&\mathcal{R}\tilde{A}_{n-pw}\overline{e}^+_{n-pw,\gamma}\otimes_{\mathcal{R}}
\mathcal{R}\tilde{N}_p^w\tilde{S}_w\bigg{(}\frac{1-z}{2}\bigg{)}\\
\sim_{\operatorname{Mor}}&\mathcal{R}\tilde{N}_p^w\tilde{S}_w\bigg{(}\frac{1-z}{2}\bigg{)},
\end{align*}
where the first equivalence is given by Lemma~\ref{lem:mornor}. Similarly,
\begin{align*}
&\mathcal{R}((\tilde{S}_{n-pw}(\tilde{N}_p^w\tilde{S}_w[n-pw]))\cap\tilde{A}_n)(\overline{e}^+_{n-pw,\gamma}+\overline{e}^-_{n-pw,\gamma})\\
\sim_{\operatorname{Mor}}&\mathcal{R}\tilde{A}_{n-pw}((\tilde{N}_p^w\tilde{S}_w\cap\tilde{A}_{pw})[n-pw])\overline{e}^+_{n-pw,\gamma}\\
\cong&\mathcal{R}\tilde{A}_{n-pw}\overline{e}^+_{n-pw,\gamma}\otimes_{\mathcal{R}}
\mathcal{R}(\tilde{N}_p^w\tilde{S}_w\cap\tilde{A}_{pw})\bigg{(}\frac{1-z}{2}\bigg{)}\\
\sim_{\operatorname{Mor}}&\mathcal{R}(\tilde{N}_p^w\tilde{S}_w\cap\tilde{A}_{pw})\bigg{(}\frac{1-z}{2}\bigg{)}.
\end{align*}
When $\sigma(\gamma)=-1$ we also have the Morita equivalences
\begin{align*}
&\mathcal{R}\tilde{S}_{n-pw}(\tilde{N}_p^w\tilde{S}_w[n-pw])(e^+_{n-pw,\gamma}+e^-_{n-pw,\gamma})\\
\sim_{\operatorname{Mor}}&\mathcal{R}\tilde{S}_{n-pw}((\tilde{N}_p^w\tilde{S}_w\cap\tilde{A}_{pw})[n-pw])e^+_{n-pw,\gamma}\\
\cong&\mathcal{R}\tilde{S}_{n-pw}e^+_{n-pw,\gamma}\otimes_{\mathcal{R}}
\mathcal{R}(\tilde{N}_p^w\tilde{S}_w\cap\tilde{A}_{pw})\bigg{(}\frac{1-z}{2}\bigg{)}\\
\sim_{\operatorname{Mor}}&\mathcal{R}(\tilde{N}_p^w\tilde{S}_w\cap\tilde{A}_{pw})\bigg{(}\frac{1-z}{2}\bigg{)},
\end{align*}
and
\begin{align*}
&\mathcal{R}((\tilde{S}_{n-pw}(\tilde{N}_p^w\tilde{S}_w[n-pw]))\cap\tilde{A}_n)\overline{e}_{n-pw,\gamma}\\
\cong&\mathcal{R}\tilde{A}_{n-pw}(\tilde{N}_p^w\tilde{S}_w[n-pw])\overline{e}_{n-pw,\gamma}\\
\cong&\mathcal{R}\tilde{A}_{n-pw}\overline{e}_{n-pw,\gamma}\otimes_{\mathcal{R}}
\mathcal{R}\tilde{N}_p^w\tilde{S}_w\bigg{(}\frac{1-z}{2}\bigg{)}\\
\sim_{\operatorname{Mor}}&\mathcal{R}\tilde{N}_p^w\tilde{S}_w\bigg{(}\frac{1-z}{2}\bigg{)},
\end{align*}
where the first isomorphism is given by
\begin{align*}
\mathcal{R}\tilde{A}_{n-pw}(\tilde{N}_p^w\tilde{S}_w[n-pw])\overline{e}&\to
\mathcal{R}((\tilde{S}_{n-pw}(\tilde{N}_p^w\tilde{S}_w[n-pw]))\cap\tilde{A}_n)\overline{e}\\
x\overline{e}&\mapsto
\begin{cases}
x\overline{e}&\text{if }x\in\tilde{A}_{n-pw}((\tilde{N}_p^w\tilde{S}_w\cap\tilde{A}_{pw})[n-pw]),\\
ix(e^+-e^-)&\text{if }x\in(\tilde{N}_p^w\tilde{S}_w\backslash\tilde{A}_{pw})[n-pw],
\end{cases}
\end{align*}
where $\overline{e}:=\overline{e}_{n-pw,\gamma}$ and $e^\pm:=e^+-e^-$.
\newline
\newline
Any Morita equivalence gives rise to a Brou\'{e} perfect isometry with all signs positive (see~\cite[1.2]{broue1990}).
The result now follows from~\cite[Theorem 4.15]{brugra2014} and Theorems~\ref{thm:mainS} and~\ref{thm:mainA}.
\end{proof}

\begin{ack*}
The author gratefully, acknowledges financial support by ERC Advanced Grant $291512$.
\end{ack*}

\nocite{brugra2014, stembr1989, micols1990, olsson1993, schur1911, cabane1988, stembr1989, jamker1981, pfeiff1994,
chukes2002, churou2008, humphr1986, kessar1996, broue1990}

\newpage
\bibliographystyle{plain}
\bibliography{biblio}

\end{document}